\theoremstyle{plain}
\newtheorem{theorem}                 {\bf Theorem}      [chapter]
\newtheorem{proposition}  [theorem]  {\bf Proposition}
\newtheorem{corollary}    [theorem]  {\bf Corollary}
\newtheorem{lemma}        [theorem]  {\bf Lemma}
\theoremstyle{definition}
\newtheorem{example}      [theorem]  {\bf Example}
\newtheorem{definition}   [theorem]  {\bf Definition}
\newtheorem{observation}  [theorem]  {\bf Observation}
\newtheorem{remark}       [theorem]  {\bf Remark}
\newtheorem{conjecture}   [theorem]  {\bf Conjecture}
\def\nab#1#2#3{\nabla^{\hbox{$\scriptstyle{#1}$}}_{\hbox{$\scriptstyle{#2}$}}{\hbox{$#3$}}}
\def\snab#1#2#3{\hbox{$\nabla$\kern-.1em\raise 1.2 ex\hbox{$\scriptstyle{#1}$}\kern-.5em\lower 0.8 ex\hbox{$#2$}\kern-.0em{$#3$}}}
\def\nsnab#1{\hbox{$\nabla$\kern-.1em\raise 1.0 ex\hbox{$\scriptstyle{#1}$}}}
\def\rn{\mathbb R}
\def\g{\mathfrak{g}}
\def\nab#1#2{\hbox{$\nabla$\kern -.3em\lower 1.0 ex
		\hbox{$#1$}\kern -.1 em {$#2$}}}
\def\snab#1#2#3{\hbox{$\nabla$\kern-.1em\raise 1.2 ex\hbox{$\scriptstyle{#1}$}\kern-.5em\lower 0.8 ex\hbox{$#2$}\kern-.0em{$#3$}}}
\def\nsnab#1{\hbox{$\nabla$\kern-.1em\raise 1.0 ex\hbox{$\scriptstyle{#1}$}}}
\def \cn{{\mathbb C}}
\def \C{{\mathbb C}}
\def \hn{{\mathbb H}}
\def \H{{\mathbb H}}
\def \rn{{\mathbb R}}
\def \R{{\mathbb R}}
\def \H{\mathcal H}
\def \cn{{\mathbb C}}
\def \C{{\mathbb C}}
\def \hn{{\mathbb H}}
\def \H{{\mathbb H}}
\def \rn{{\mathbb R}}
\def \R{{\mathbb R}}
\def\nab#1#2{\hbox{$\nabla$\kern -.3em\lower 1.0 ex
		\hbox{$#1$}\kern -.1 em {$#2$}}}
\def \Re{\mathfrak R\mathfrak e}
\def \Im{\mathfrak I\mathfrak m}
\def \a{\mathfrak{a}}
\def \b{\mathfrak{b}}
\def \g{\mathfrak{g}}
\def \k{\mathfrak{k}}
\def \m{\mathfrak{m}}
\def \o{\mathfrak{o}}
\DeclareMathOperator{\ad}{ad}
\DeclareMathOperator{\Ad}{Ad}
\DeclareMathOperator{\End}{End}
\DeclareMathOperator{\diff}{\mathrm{d}}
\def \GLR#1{\mathbf{GL}_{#1}(\rn)}
\def \glr#1{\mathfrak{gl}_{#1}(\rn)}
\def \GLC#1{\mathbf{GL}_{#1}(\cn)}
\def \glc#1{\mathfrak{gl}_{#1}(\cn)}
\def \GLH#1{\mathbf{GL}_{#1}(\hn)}
\def \SLR#1{\mathbf{SL}_{#1}(\rn)}
\def \SL2{\widetilde{\text{\bf SL}}_{2}(\rn)}
\def \slr#1{\mathfrak{sl}_{#1}(\rn)}
\def \SLC#1{\mathbf{SL}_{#1}(\cn)}
\def \slc#1{\mathfrak{sl}_{#1}(\cn)}
\def \O#1{\mathbf{O}(#1)}
\def \SO#1{\mathbf{SO}(#1)}
\def \so#1{\mathfrak{so}(#1)}
\def \Spin#1{\text{\bf Spin}(#1)}
\def \U#1{\text{\bf U}(#1)}
\def \u#1{\mathfrak{u}(#1)}
\def \SU#1{\text{\bf SU}(#1)}
\def \su#1{\mathfrak{su}(#1)}
\def \Sp#1{\text{\bf Sp}(#1)}
\def \sp#1{\mathfrak{sp}(#1)}
\DeclareMathOperator{\Aut}{Aut}
\DeclareMathOperator{\Div}{div} 
\DeclareMathOperator{\grad}{grad}
\DeclareMathOperator{\trace}{trace}
\DeclareMathOperator{\rank}{rank}
\DeclareMathOperator{\Id}{Id}
\DeclareMathOperator{\pf}{pf}
\DeclareMathOperator{\GL}{GL}
\def\rn{\mathbb R}
\def\g{\mathfrak{g}}
\def \GLR#1{\mathbf{GL}_{#1}(\rn)}
\def \glr#1{\mathfrak{gl}_{#1}(\rn)}
\def \GLC#1{\mathbf{GL}_{#1}(\cn)}
\def \glc#1{\mathfrak{gl}_{#1}(\cn)}
\def \GLH#1{\mathbf{GL}_{#1}(\hn)}
\def \SLR#1{\mathbf{SL}_{#1}(\rn)}
\def \SL2{\widetilde{\text{\bf SL}}_{2}(\rn)}
\def \slr#1{\mathfrak{sl}_{#1}(\rn)}
\def \SLC#1{\mathbf{SL}_{#1}(\cn)}
\def \slc#1{\mathfrak{sl}_{#1}(\cn)}
\def \O#1{\mathbf{O}(#1)}
\def \SO#1{\mathbf{SO}(#1)}
\def \so#1{\mathfrak{so}(#1)}
\def \Spin#1{\text{\bf Spin}(#1)}
\def \U#1{\text{\bf U}(#1)}
\def \u#1{\mathfrak{u}(#1)}
\def \SU#1{\text{\bf SU}(#1)}
\def \su#1{\mathfrak{su}(#1)}
\def \Sp#1{\text{\bf Sp}(#1)}
\def \sp#1{\mathfrak{sp}(#1)}
\begin{document}
\newpage
\thispagestyle{empty}
\maintitle{Minimal Submanifolds \\ of the Classical Compact \\ Riemannian Symmetric Spaces}
\authors{Johanna Marie Gegenfurtner}

\issnum{2024}{17}
\sernum{LUNFMA}{3150}{2024}

\frontcover
%\includepdf[pages=1]{Cover.pdf}
%\includepdf[pages=1]{2024 Gegenfurtner E17.pdf}
%\centering
%\includegraphics[width=\textwidth]{Cover.pdf}

%\newpage
%\thispagestyle{empty}
%\phantom{m}
%\newpage
\large 
%%%%%%%%%%%%%%%%%%%% abstract
\thispagestyle{empty}
\centerline {\bf\Large Abstract}\vskip1cm
Minimal submanifolds constitute a central area within the realm of differential geometry, due to their many applications in various branches of physics. An illustrative example is found in the behavior of soap films, which in striving to balance pressure, naturally assume surfaces characterised by zero mean curvature.

In this thesis we will employ a recent result of S. Gudmundsson and T.J. Munn to construct minimal submanifolds of the classical compact Riemannian symmetric spaces using eigenfunctions. In some of those, we obtain families of compact minimal submanifolds. 

Chapter 1 serves as an introduction to our work. It is followed by four chapters which aim to familiarise the reader with different aspects of the theoretical background. 
In Chapter 2 we will discuss some basic facts on harmonic morphisms. We then proceed with Lie groups and Lie algebras in Chapter 3, with a special focus on the classical matrix Lie groups. Further, we give a brief overview of Riemannian symmetric spaces in Chapter 4. In Chapter 5 we take a closer look at the above mentioned result of Gudmundsson and Munn, which we aim to employ.\smallskip

In the following part we apply their theorem to ten different cases of symmetric spaces. In Chapters 6-8, we look at the compact Lie groups $$\SO{n},\ \SU{n}, \ \Sp{n}.$$ In Chapters 7-12, we study the symmetric spaces $$\SU n/\SO n,\  \Sp n/\U n,\ \SO {2n}/\U n ,\  \SU{2n}/\Sp n, $$ respectively. In all those spaces we construct families of compact minimal submanifolds.

In Chapters 13-15, we investigate minimal submanifolds of the real, complex and quaternionic Grassmannians $$ \SO{m+n}/\SO{m}\times\SO{n}, \ \U{m+n}/\U{m}\times\U{n},$$ $$\Sp{m+n}/\Sp{m}\times\Sp{n}.$$
In the Appendix we clarify our notation and prove some useful lemmas.
\vskip1cm
{\it Throughout this work it has been my firm intention to give reference to the stated results and credit to the work of others. All theorems, propositions, lemmas and examples left unmarked are either assumed to be well known, or  are the fruits of my own efforts.}
%%%%%%%%%%%%%%%%%%%

\newpage % tom sida
\thispagestyle{empty}
\phantom{m}

\newpage % tacksida
\thispagestyle{empty}
\centerline {\bf\Large Acknowledgments}
\vskip 1cm
First of all, I would like to thank my supervisor Sigmundur Gudmundsson. He once passed on to me the wisdom of James Eells, that mathematics is to be taken seriously, but also has to be fun. True to this conviction, he chose an interesting and challenging topic for my thesis, which I greatly enjoyed working on. I also immensely appreciate how supportive and kind he has been to me as a junior mathematician.\smallskip\newline
I would also like to thank Thomas Jack Munn and Adam Lindström of Lund's compact but positively oriented Differential Geometry group for the many helpful discussions, their brilliant comments and suggestions and their camaraderie.\smallskip\newline
Thanks are also owed to my dear friends, in particular Alex and Emily. They have been a great help in all kinds of mathematical and non-mathematical matters.  \smallskip\newline
Lastly, I want to express my deepest gratitude to my family, especially my parents and my older brother (who keeps setting the bar so damn high) for their unwavering support and encouragement. \smallskip\newline
I humbly dedicate this thesis to my late grandmothers Marianne and Rosemarie Jeanne Sophie, who have been great inspirations to me. 
\vskip1pc
\hskip9cm Johanna Marie Gegenfurtner
\phantom{m}

\newpage % tom sida
\thispagestyle{empty}

\newpage % innehåll
 % sätt noll för chapter 1 sida 3
\tableofcontents
\thispagestyle{empty}
\newpage
\thispagestyle{empty}
\phantom{m}

%%%%%%%%%%%%%%%%%%%%%%%%%%%Ändras här%%%%%%%%%%%%%%%%%%%%%%%%%%%%%%%%%%

%\maketitle
\setcounter{page}{0}

\chapter{Introduction}
%\begin{wrapfigure}{r}{0.3\textwidth}
%  \centering
%   \includegraphics[width=0.3\textwidth,trim={0 3.5cm 0 5cm},clip]{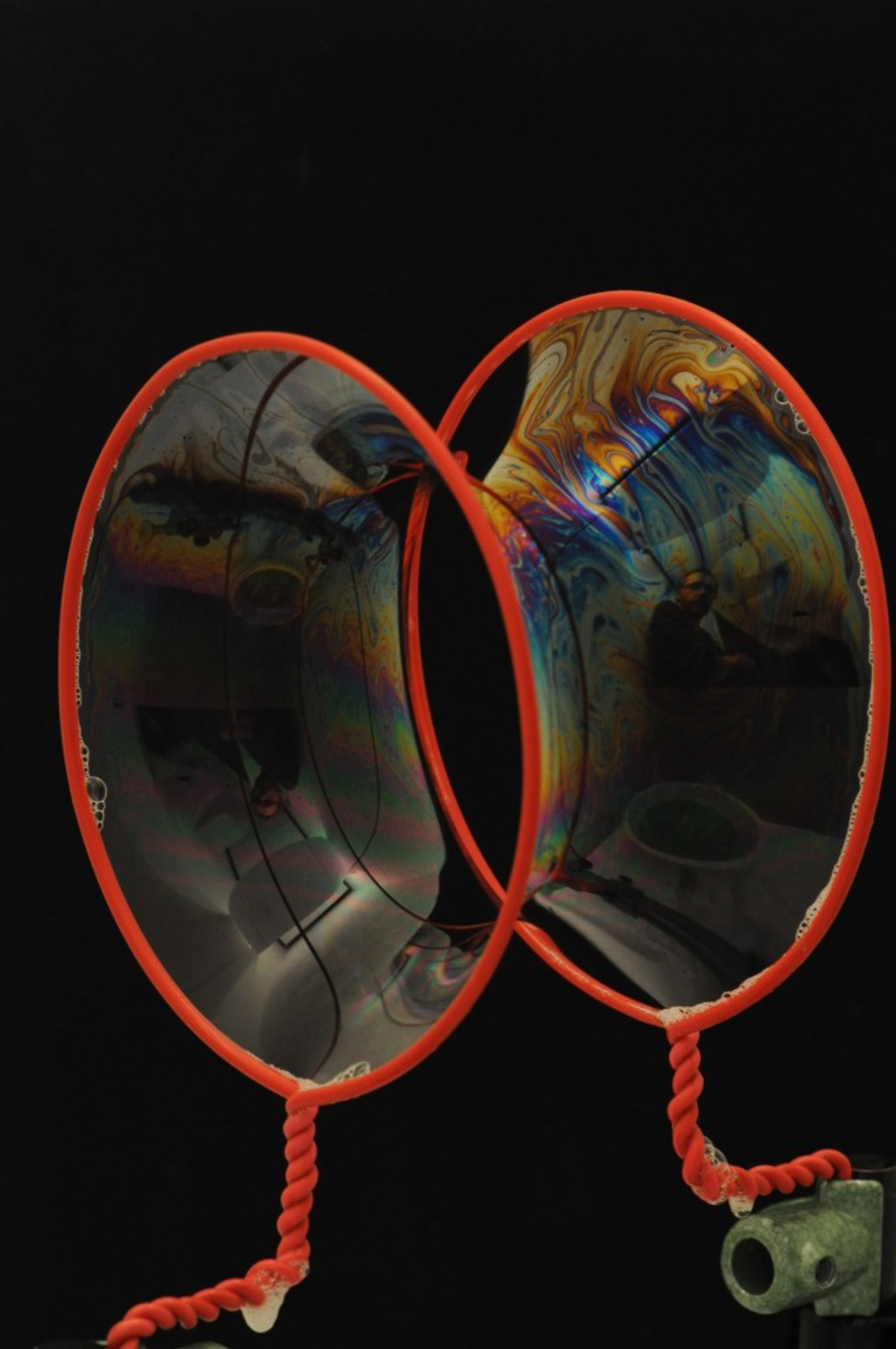}

 %       \caption{A catenoid. Photo courtesy of {\it soapbubble.dk}}
%\end{wrapfigure}
In the three-dimensional Euclidean space, a surface is minimal if, given certain constraints, it minimises the area. Due to the many applications in physics and engineering, generations of mathematicians have studied minimal surfaces. Among them, Lagrange characterised minimal surfaces with the {\it Lagrange equation} (\cite{Lag}, 1760). He showed that if a surface, locally parameterised by $$z=z(x,y),$$ is minimal, it satisfies $$(1+z_y^2)z_{xx}-2z_xz_yz_{xy}+(1+z_x^2)z_{yy}=0.$$
Later, this was generalised to minimal submanifolds of other ambient spaces. %\smallskip
\begin{definition}{\rm \cite{Gud-Rie}}
Let $(N,h)$ be a Riemannian manifold and $M$ a submanifold with the induced metric. Let $\{X_1,\dots,X_m\}$ be a local orthonormal frame for the tangent bundle $TM.$ The submanifold $M$ is said to be {\it minimal} if its second fundamental form
$$B:C^\infty(TM)\otimes C^\infty(TM)\rightarrow C^\infty(NM)$$ is traceless,
i.e. $$\trace B=\sum_{k=1}^m B(X_k,X_k)=0.$$
\end{definition}
More than two hundred years after Lagrange, 
Eells and Sampson provided the following result (1964).
\begin{theorem}{\rm \cite{Bai-Woo-book}}
    A weakly conformal map from a Riemannian manifold of dimension two is harmonic if and only if its image is minimal at regular points.
\end{theorem}
This establishes an important connection to harmonic maps from Riemannian manifolds. Recall that a $C^2$-regular function $f:U\rightarrow\mathbb{R}$, where $U\subset\mathbb{R}^n,$ is harmonic if it satisfies the Laplace equation $$\Delta f=\frac{\partial^2 f}{\partial x_1^2}+\dots+\frac{\partial^2 f}{\partial x_n^2}=0.$$ A harmonic morphism $\phi:M\rightarrow N$ on the other hand is a smooth mapping between Riemannian manifolds such that
for every harmonic function $f:V\rightarrow\mathbb{R}$ defined on an open subset $V$ of $N$ with $\phi^{-1}(V)$ nonempty, the composition $f\circ\phi$ is harmonic on $\phi^{-1}(V).$ 
In Chapter 1 we will give a more detailed introduction to harmonic morphisms, mainly based on Baird and Wood's classic textbook \cite{Bai-Woo-book}.

Chapter 2 serves as an introduction to Lie groups and Lie algebras. A particular focus is put on matrix Lie groups, which will play an important role in this thesis.

In Chapter 3 we will define Riemannian symmetric spaces, which are the main site of action of this thesis. 

For $C^2$-regular functions $\phi,\psi:M\rightarrow\mathbb{C}$ on a Riemannian manifold, we define the Laplace-Beltrami operator $\tau$ by $$\tau(\phi)=\Div\nabla\phi,$$ 
and the conformality operator $\kappa$ by $$\kappa(\phi,\psi)=g(\nabla\phi,\nabla\psi).$$ We see that a harmonic morphism $\phi:M\rightarrow\cn$  satisfies $$\tau(\phi)=0\ \ {\rm and }\ \ \kappa(\phi,\phi)=0.$$
This motivates the definition of eigenfunctions.
A function $\phi:M\rightarrow\mathbb{C}$ on a Riemannian manifold $(M,g)$ is an eigenfunction if there exist $\lambda,\mu\in\mathbb{C}$ such that $$\tau(\phi)=\lambda\cdot\phi\  {\rm and } \ \kappa(\phi,\phi)=\mu\cdot\phi^2.$$
We will discuss eigenfunctions in more detail in Chapter 4.
In their paper \cite{Gud-Sak-1} from 2008, Gudmundsson and Sakovich constructed eigenfunctions on $\SO{n}, \SU{n},$ and $ \Sp{n}.$ Gudmundsson, Siffert and Sobak found eigenfunctions on the Riemannian symmetric spaces $$\SU n/\SO n,\ \ \Sp n/\U n, \ \ \SO {2n}/\U n \ \ \textrm{and} \ \ \SU{2n}/\Sp n$$ in their work \cite{Gud-Sif-Sob-2} from 2022. In their articles \cite{Gha-Gud-4} and \cite{Gha-Gud-5} from 2023, Ghandour and Gudmundsson constructed eigenfunctions from the real, complex and quaternionic Grassmannians. 

As shown by Gudmundsson and Munn in their work \cite{Gud-Mun-1} from 2023, eigenfunctions can be used to construct minimal submanifolds: 
\begin{theorem}{\rm \cite{Gud-Mun-1}}
    Let $\phi:(M,g)\rightarrow\mathbb{C}$ be a complex-valued eigenfunction on a Riemannian manifold, such that $0\in\phi(M)$ is a regular value for $\phi.$ Then the fibre $\phi^{-1}(\{0\})$ is a minimal submanifold of $M$ of codimension two.
\end{theorem}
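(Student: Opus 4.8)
The plan is to realise $\Sigma := \phi^{-1}(\{0\})$ as a smooth codimension-two submanifold and then compute the trace of its second fundamental form directly, showing that it vanishes. First I would write $\phi = u + iv$ with $u,v\colon M\to\rn$, so that $\Sigma = \{u=0\}\cap\{v=0\}$. Since $0$ is a regular value of $\phi$, the real differentials $du,dv$ are linearly independent along $\Sigma$, hence $\Sigma$ is a smooth submanifold of codimension two whose normal space at each point is spanned by $\nabla u$ and $\nabla v$. The crucial first observation is that, restricted to $\Sigma$, the two eigenfunction equations degenerate to the harmonic-morphism conditions: because $\phi\equiv 0$ on $\Sigma$, the equation $\tau(\phi)=\lambda\phi$ forces $\tau(u)=\tau(v)=0$ on $\Sigma$, while $\kappa(\phi,\phi)=\mu\phi^2$ forces $g(\nabla\phi,\nabla\phi)=0$ there, i.e. $\lvert\nabla u\rvert = \lvert\nabla v\rvert =: \rho>0$ and $g(\nabla u,\nabla v)=0$. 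Consequently $\nu_1 := \nabla u/\rho$ and $\nu_2 := \nabla v/\rho$ form an orthonormal frame for the normal bundle of $\Sigma$.

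Next I would reduce minimality to two scalar identities. Writing $H = \trace B$ for the mean curvature vector and $\{X_1,\dots,X_m\}$ for a local orthonormal frame of $T\Sigma$, it suffices to prove $g(H,\nu_1)=g(H,\nu_2)=0$. Using the Gauss formula $B(X,Y)=(\nabla_X Y)^\perp$ together with $g(X_k,\nu_1)=0$ and $g(\nabla u,X_k)=0$ along $\Sigma$, a short computation gives $g(H,\nu_1) = -\rho^{-1}\sum_{k}\mathrm{Hess}\,u(X_k,X_k)$, where $\mathrm{Hess}\,u(X,Y)=g(\nabla_X\nabla u,Y)$. Completing $\{X_1,\dots,X_m\}$ to the orthonormal frame $\{X_1,\dots,X_m,\nu_1,\nu_2\}$ of $TM$ and invoking $\trace\mathrm{Hess}\,u = \tau(u)=0$ on $\Sigma$, this reduces to
\[
g(H,\nu_1)=\tfrac{1}{\rho}\bigl(\mathrm{Hess}\,u(\nu_1,\nu_1)+\mathrm{Hess}\,u(\nu_2,\nu_2)\bigr),
\]
and symmetrically $g(H,\nu_2)=\tfrac{1}{\rho}\bigl(\mathrm{Hess}\,v(\nu_1,\nu_1)+\mathrm{Hess}\,v(\nu_2,\nu_2)\bigr)$. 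Thus the whole problem is reduced to controlling the Hessians of $u$ and $v$ in the two normal directions.

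The decisive step is to differentiate the conformality equation, which holds on all of $M$, rather than merely restricting it. Applying an arbitrary vector field $Z$ to $\kappa(\phi,\phi)=\mu\phi^2$ yields $\mathrm{Hess}\,\phi(Z,\nabla\phi)=\mu\,\phi\,d\phi(Z)$; because the right-hand side carries a factor $\phi$, it vanishes on $\Sigma$, so $\mathrm{Hess}\,\phi(Z,\nabla\phi)=0$ along $\Sigma$ for every $Z$. Taking $Z=\nabla u$ and $Z=\nabla v$ and separating real and imaginary parts (with $\nabla\phi = \nabla u + i\nabla v$ and $\mathrm{Hess}\,\phi=\mathrm{Hess}\,u+i\,\mathrm{Hess}\,v$) produces four bilinear relations among $\mathrm{Hess}\,u(\nabla u,\nabla u)$, $\mathrm{Hess}\,u(\nabla v,\nabla v)$, $\mathrm{Hess}\,u(\nabla u,\nabla v)$ and their $v$-counterparts. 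Combining the appropriate pair gives $\mathrm{Hess}\,u(\nabla u,\nabla u)+\mathrm{Hess}\,u(\nabla v,\nabla v)=0$ and likewise $\mathrm{Hess}\,v(\nabla u,\nabla u)+\mathrm{Hess}\,v(\nabla v,\nabla v)=0$, whence $g(H,\nu_1)=g(H,\nu_2)=0$ and $\Sigma$ is minimal.

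I expect the main obstacle to be precisely this last step. The eigenfunction hypotheses constrain $\phi$ only pointwise, so the normal-direction Hessian terms cannot be read off from the restriction of the equations to $\Sigma$; they require differentiating the conformality identity and exploiting that $\phi^{2}$ vanishes to second order on $\Sigma$, so that the inhomogeneous term $\mu\,\phi\,d\phi(Z)$ drops out. Bookkeeping the real and imaginary parts carefully, and verifying that the two normal Hessian contributions genuinely cancel rather than merely simplify, is where the full strength of both eigenvalue conditions enters; this is also what explains why an eigenfunction — and not only a genuine horizontally conformal harmonic map — already suffices.
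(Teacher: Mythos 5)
Your argument is correct and is essentially the proof given by Gudmundsson and Munn in \cite{Gud-Mun-1}: restrict the two eigenfunction identities to the fibre to get an orthonormal normal frame from $\nabla u,\nabla v$ and harmonicity of $u,v$ there, express the normal components of the mean curvature through the normal-direction Hessians via $\trace\mathrm{Hess}=\tau=0$, and kill those Hessian terms by differentiating $\kappa(\phi,\phi)=\mu\phi^2$ and using that the right-hand side vanishes to second order on the fibre (note that the thesis itself only cites this theorem and does not reproduce a proof). The sole presentational point to repair is at the start: the linear independence of $du$ and $dv$ along the fibre does not follow from the regularity hypothesis $\nabla\phi\neq 0$ alone, but only after combining it with the conformality relation $|\nabla u|=|\nabla v|$, $g(\nabla u,\nabla v)=0$ which you establish a few lines later, so that observation should be moved before the claim that the fibre is a codimension-two submanifold.
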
 
The aim of this thesis is to apply the previous result to the ten classical families of compact Riemannian symmetric spaces: 
the compact Lie groups $$\SO{n},\ \ \SU{n},\ \ \Sp{n},$$ the symmetric spaces
$$\SU n/\SO n,\ \ \Sp n/\U n,\ \ \SO {2n}/\U n,\ \ \SU{2n}/\Sp n, $$ and the real, complex and quaternionic Grassmannians
$$ \SO{m+n}/\SO{m}\times\SO{n},\ \U{m+n}/\U{m}\times\U{n},\ \Sp{m+n}/\Sp{m}\times\Sp{n}.$$ 
We have obtained the following results.
On the symmetric spaces $\SU n/\SO n$, $\Sp n/\U n$,   $\SO {2n}/\U n$ and $\SU{2n}/\Sp{n}$ we have constructed families of compact minimal submanifolds. 
In their paper \cite{Gud-Mun-1}, Gudmundsson and Munn have given some easy examples of the application of Theorem \ref{GM} on the compact Lie groups $\SO{n},$ $\SU{n}$ and $\Sp{n}$. We generalise those examples and construct families of compact minimal submanifolds.

On the complex Grassmannians $\U{m+n}/\U{m}\times\U{n},$ we were able to construct new eigenfunctions. However, we also showed that none of the known eigenfunctions on $$ \SO{m+n}/\SO{m}\times\SO{n},\ \U{m+n}/\U{m}\times\U{n},\  \Sp{m+n}/\Sp{m}\times\Sp{n}$$ are regular over $0\in\cn.$ 

\begin{figure}[htbp]\label{catenoid}
        \centering    \includegraphics[width=0.4\textwidth,trim={0 1,5cm 0 2,4cm},clip]{dsc_1540.eps}
        \includegraphics[width=0.4\textwidth]{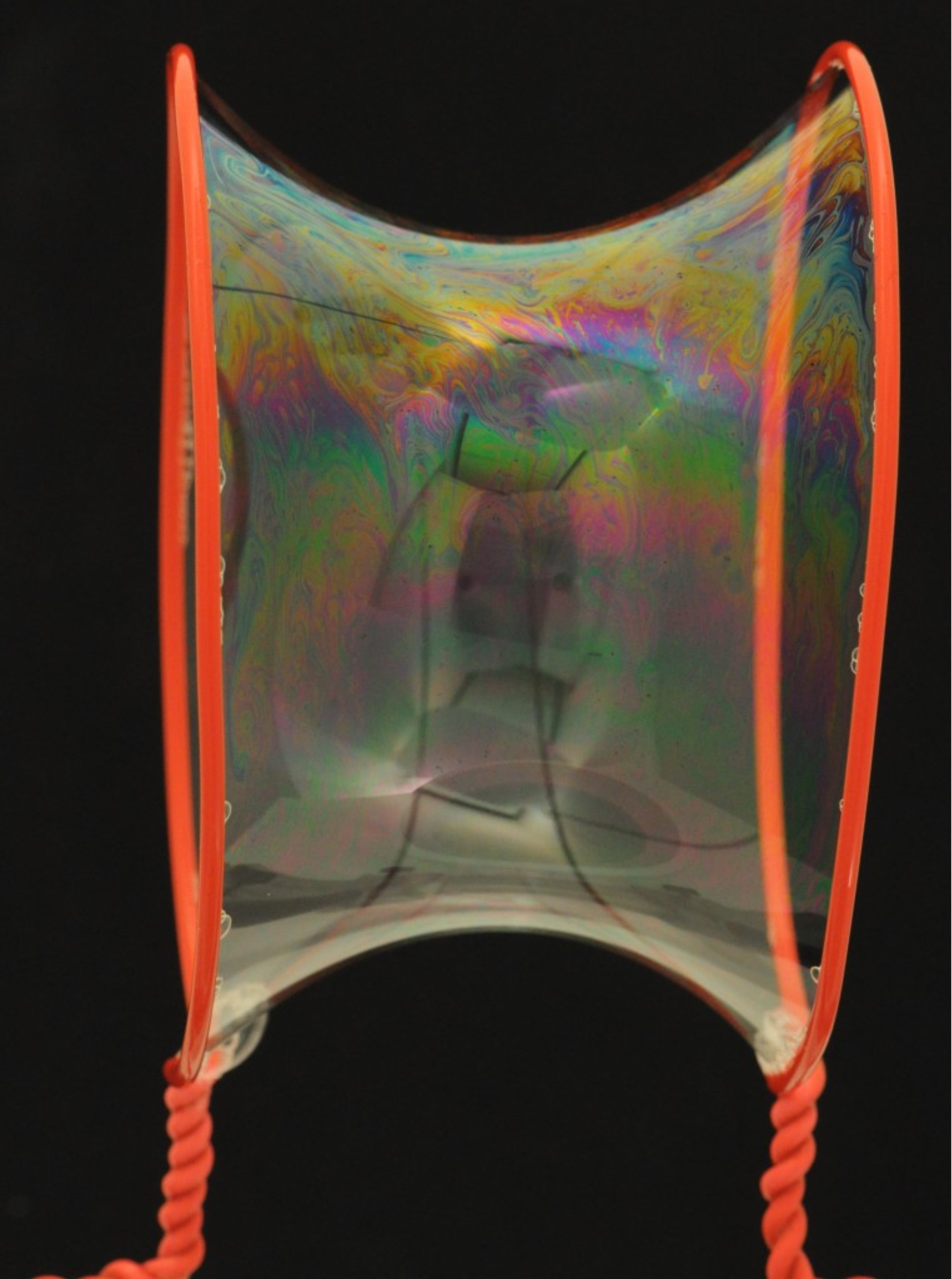}
        \caption{A catenoid, one of the first known non-trivial minimal surfaces. It is well known that soap film spanning wire frames takes the shape of a minimal surface. Photo courtesy of {\it soapbubble.dk}}
\end{figure}

%%%%%%%%%%%%%%%%%%%%%%%%%%%%%%%%%%%%%%%%%%%
\chapter{Harmonic Morphisms}

In this chapter we give an introduction to harmonic morphisms. We will start off with some basic definitions the reader might be familiar with.
We will define the Laplace-Beltrami and conformality operators. This will lead us to some useful results, for example by Fuglede and Ishihara (see Theorem \ref{fuglede}). A standard reference here is the textbook \cite{Bai-Woo-book} by Baird and Wood. For further reading, we recommend the frequently updated online Bibliography of Harmonic Morphisms \cite{Gud-bib}.

We begin with the definition of harmonic functions from $\rn^n$ into $\rn$, which the reader has probably seen before in a course on Complex Analysis. 
\begin{definition}{\rm \cite{Bai-Woo-book}}
Let $U$ be an open subset of $\rn^n$ and consider a $C^2$-regular function
$f:U\rightarrow\mathbb{R}$. Then $f$ is {\it harmonic} if it satisfies the {\it Laplace equation} $$\Delta f=\Div\grad f=0.$$ Explicitly,  the {\it Laplacian} of $f$ is given by $$\Delta f=\frac{\partial^2 f}{\partial x_1^2}+\dots+\frac{\partial^2 f}{\partial x_n^2}.$$
\end{definition}

\begin{example}
The real and the imaginary part of a holomorphic function are both harmonic. 
\end{example}
We now aim to generalise this notion to functions on Riemannian manifolds. Recall the definition of the gradient of a real-valued function from a Riemannian manifold.

\begin{definition}{\rm \cite{O'Neill}}
Let $(M,g)$ be a Riemannian manifold and $\phi:M\rightarrow\mathbb{R}$ a continuously differentiable function on $M.$ Then the {\it gradient} $\nabla \phi$ of $\phi$ is the unique vector field satisfying $$\langle\nabla \phi,X\rangle =\diff\phi(X)=X(\phi)$$ for all $X\in C^{\infty}(TM).$ With respect to local coordinates, $$\nabla \phi=\sum_{i,j}g^{i,j}\frac{\partial \phi}{\partial x_i}\frac{\partial}{\partial x_j}.$$
\end{definition}
We wish to extend the definition of the gradient to complex-valued functions.
\begin{definition}
    Let $(M,g)$ be a Riemannian manifold of dimension $m.$ We extend the metric $g$ to a complex bilinear form on the complexification $T^{\mathbb{C}}M$ of the tangent bundle $TM$ of $M.$ Further, let $\phi:(M,g)\rightarrow\mathbb{C}$ be a complex-valued $C^2$-regular function on $M.$ We may decompose $\phi$ into its real and complex part, i.e. $$\phi=u+iv.$$ Then the gradient of $\phi$ is the smooth vector field given by $$\nabla\phi=\nabla u+i\nabla v.$$
    In particular, if $\mathcal{B}$ is a local orthonormal frame of $TM,$ then the gradient of $\phi$ is the element of the complexified tangent bundle $T^{\mathbb{C}}M$ given by  $$\nabla\phi=\sum_{X\in\mathcal{B}}X(\phi)\cdot X.$$
\end{definition}

\begin{definition}{\rm \cite{Bai-Woo-book}}
Let $X\in C^{\infty}(T^{\mathbb{C}}M)$ be a smooth vector field and $\nabla$ denote the Levi-Civita connection on $(M,g)$. Then the {\it divergence} of $X$ is given by $$\Div X=\trace\nabla X.$$
\end{definition}
We are now ready to define the Laplace-Beltrami operator, which we will frequently encounter throughout the thesis. 
\begin{definition}\label{LB}{\rm \cite{Bai-Woo-book}}
Let $\phi:(M,g)\rightarrow\cn$ be a $C^2$-regular function defined on an open set $U\subset M.$ The {\it Laplace-Beltrami operator} (alternatively: {\it tension field}) $\tau$ on $(M,g)$ is defined by $$\tau(\phi)=\Div\nabla \phi=\trace\nabla\phi.$$ With respect to local coordinates, $\tau$ is given by $$\tau(\phi)=\sum_{i,j=1}^m \frac{1}{\sqrt{|g|}}\frac{\partial}{\partial x_j}\left( g^{ij}\sqrt{|g|}\frac{\partial\phi}{\partial x_i}\right).$$ 
We say that $\phi:(M,g)\rightarrow\rn$ is {\it harmonic} if $\tau(\phi)=0.$
\end{definition} 
This allows us to define the iterated Laplace-Beltrami operator and to introduce p-harmonic functions. 
\begin{definition}{\rm \cite{Gud-Sob-1}}
    Let $(M,g)$ be a Riemannian manifold. We recursively define the {\it iterated Laplace-Beltrami operator} $\tau^{p}$ for a non-negative integer $p$ as follows:
    $$\tau^{0}(\phi)=\phi, \quad \tau^{p}(\phi)=\tau(\tau^{p-1}(\phi)).$$
    A function $\phi:(M,g)\rightarrow\cn$ is said to be \begin{enumerate}
        \item $p$-harmonic if $\tau^p(\phi)=0,$ 
        \item proper $p$-harmonic if $\tau^p(\phi)=0,$ but $\tau^{p-1}(\phi)$ is not identically zero.
    \end{enumerate}
\end{definition}
We now define harmonic morphisms. 
\begin{definition}{\rm \cite{Bai-Woo-book}}\label{harmonicmorphism}
    Let $\phi:M\rightarrow N$ be a smooth mapping between Riemannian manifolds. The mapping $\phi$ is called a {\it harmonic morphism} if for every harmonic function $f:V\rightarrow\mathbb{R}$ defined on an open subset $V$ of $N$ with $\phi^{-1}(V)$ nonempty, the composition $f\circ\phi$ is harmonic on $\phi^{-1}(V).$
\end{definition}
In other words, as explained by Baird and Wood in \cite{Bai-Woo-book}, a harmonic morphism is a smooth map pulling back harmonic functions to harmonic functions. 

We have the following composition rule for harmonic morphisms.
\begin{proposition}{\rm \cite{Bai-Woo-book}}
    Let $\phi:M\rightarrow N$ be a harmonic morphism with image dense in $N$ and let $\psi:N\rightarrow P$ be a smooth map to another Riemannian manifold. Then
    \begin{enumerate}
        \item $\psi$ is a harmonic map if and only if the composition $\psi\circ\phi$ is a harmonic map,
        \item $\psi$ is a harmonic morphism if and only if the composition $\psi\circ\phi$ is a harmonic morphism.
        
    \end{enumerate}
    
\end{proposition}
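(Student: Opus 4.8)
The plan is to reduce both equivalences to the characterisation of harmonic morphisms as precisely the harmonic, horizontally weakly conformal maps (the theorem of Fuglede and Ishihara, Theorem~\ref{fuglede}), combined with the standard composition formula for the tension field from \cite{Bai-Woo-book}. Since a constant $\phi$ would force $N$ to be a point, I may assume $\phi$ is non-constant; write $\lambda\geq 0$ for its dilation and let $R\subseteq M$ be the open, dense set of regular points, i.e. where $\lambda\neq 0$. A useful soft fact is that $\phi(R)$ is then dense in $N$: for any nonempty open $V\subseteq N$, density of the image makes $\phi^{-1}(V)$ a nonempty open set, which meets the dense set $R$ and so yields a regular point of $\phi$ mapping into $V$.

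The computational core is the composition formula
$$\tau(\psi\circ\phi)=\diff\psi(\tau(\phi))+\trace\,\nabla\diff\psi(\diff\phi,\diff\phi).$$
As $\phi$ is harmonic the first term drops out. On $R$ the differential $\diff\phi$ annihilates the vertical distribution and is conformal with factor $\lambda$ on the horizontal one, so tracing over $TM$ collapses the remaining term to $\lambda^{2}$ times the trace over $TN$; at critical points every term vanishes. This yields the key identity
$$\tau(\psi\circ\phi)=\lambda^{2}\,(\tau(\psi)\circ\phi)$$
on all of $M$.

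Part (i) now follows quickly. If $\psi$ is harmonic then $\tau(\psi)=0$ and the identity gives $\tau(\psi\circ\phi)=0$. Conversely, if $\psi\circ\phi$ is harmonic the identity forces $\tau(\psi)\circ\phi=0$ throughout $R$; since $\phi(R)$ is dense in $N$ and $\tau(\psi)$ is continuous, $\tau(\psi)\equiv 0$, so $\psi$ is harmonic. For part (ii) the forward direction is purely formal and needs no density: if $g$ is harmonic on an open $W\subseteq P$, then $g\circ\psi$ is harmonic because $\psi$ is a harmonic morphism, and $g\circ\psi\circ\phi$ is harmonic because $\phi$ is, so $\psi\circ\phi$ pulls back harmonic functions to harmonic functions.

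The converse of part (ii) is where the real work lies, and I would establish the two Fuglede--Ishihara conditions for $\psi$. Harmonicity is immediate from part (i), since the harmonic morphism $\psi\circ\phi$ is in particular harmonic. For horizontal weak conformality I work at a regular point $x\in R$ with $y=\phi(x)$: there $\diff\phi_x\,\diff\phi_x^{*}=\lambda(x)^{2}\,\mathrm{id}_{T_yN}$, while horizontal conformality of $\psi\circ\phi$ reads $\diff(\psi\circ\phi)_x\,\diff(\psi\circ\phi)_x^{*}=\nu(x)\,\mathrm{id}$; composing and cancelling gives $\lambda(x)^{2}\,\diff\psi_y\,\diff\psi_y^{*}=\nu(x)\,\mathrm{id}$, so $\diff\psi_y\,\diff\psi_y^{*}$ is a scalar multiple of the identity and $\psi$ is horizontally weakly conformal at $y$. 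This holds on the dense set $\phi(R)$, and since being a scalar multiple of the identity is a closed condition on the continuous field $y\mapsto\diff\psi_y\,\diff\psi_y^{*}$, it propagates to all of $N$. The main obstacle is exactly this passage from the regular set to the whole of $N$: it rests on the openness and density of $R$, on the resulting density of $\phi(R)$, and on the fact that both harmonicity and horizontal conformality are closed conditions, the underlying pointwise linear algebra being routine.
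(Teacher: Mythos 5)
The paper does not actually prove this proposition: it is quoted verbatim from Baird and Wood's monograph \cite{Bai-Woo-book}, so there is no in-paper argument to compare against. Your proof is essentially the standard one from that reference --- the identity $\tau(\psi\circ\phi)=\lambda^{2}\,(\tau(\psi)\circ\phi)$ derived from the composition law, combined with the Fuglede--Ishihara characterisation and a density argument to pass from $\phi(R)$ to all of $N$ --- and it is sound. The one step you assert without justification is the density of the regular set $R$ in $M$: that $\lambda$ cannot vanish on a nonempty open set unless $\phi$ is constant requires unique continuation for harmonic maps (or Fuglede's stronger result that the critical set of a non-constant harmonic morphism is polar); this is standard but deserves a citation rather than a bare assertion. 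The remaining ingredients --- the density of $\phi(R)$ in $N$, the closedness of the conditions $\tau(\psi)=0$ and $\diff\psi_{y}\circ\diff\psi_{y}^{*}\in\rn\cdot\mathrm{Id}$, the vanishing of both sides of the key identity at critical points, and the pointwise linear algebra at regular points --- are all handled correctly.
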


Next, we will introduce the notion of horizontally weakly conformal maps. As we will see in Theorem \ref{fuglede}, this is closely linked to harmonic morphisms.
\begin{definition}{\rm \cite{Bai-Woo-book}}\label{horizontallyconformal}
Let $\phi:(M^m,g)\rightarrow(N^n,h)$ be a smooth map between Riemannian manifolds, and let $x\in M.$ The vertical and horizontal space of $\phi$ at $x$ are given by $$\mathcal{V}_x=\ker \diff\phi_x,\quad \mathcal{H}_x=\mathcal{V}_x^\bot.$$
Then $\phi$ is said to be {\it horizontally weakly conformal} or {\it semiconformal} at $x$ if either
\begin{enumerate}

    \item $\diff\phi_x=0,$ or 
    \item $\diff\phi_x$ is surjective and
there is a number $\Lambda(x)\neq0$ such that for all $X,Y\in\mathcal{H}_x$ $$h(\diff\phi_x(X),\diff\phi_x(Y))=\Lambda(x)g(X,Y).$$\end{enumerate}
The map $\phi$ is called {\it horizontally weakly conformal} or {\it semiconformal} on $M$ if it is horizontally weakly conformal at every point $x$ of $M.$
\end{definition}

A different characterisation of harmonic morphisms is due to Fuglede and Ishihara (1978 and 1979).
\begin{theorem}{\rm \cite{Bai-Woo-book}}\label{fuglede}
    A smooth map $\phi:M\rightarrow N$ between Riemannian manifolds is a harmonic morphism if and only if $\phi$ is both harmonic and horizontally weakly conformal. 
\end{theorem}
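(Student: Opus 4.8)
The plan is to base everything on the composition law for the tension field. For a smooth map $\phi:M\to N$ and a smooth $f:N\to\rn$, one has, in a local orthonormal frame $\{X_1,\dots,X_m\}$ of $TM$,
$$\tau(f\circ\phi)=\sum_{k=1}^m (\nabla\diff f)(\diff\phi(X_k),\diff\phi(X_k))+\diff f(\tau(\phi)),$$
where $\nabla\diff f$ is the Hessian of $f$ on $N$ and $\tau(\phi)$ is the tension field of $\phi$ (so $\phi$ is harmonic precisely when $\tau(\phi)=0$). I would first establish this identity by a direct computation with the Levi-Civita connections of $g$ and $h$; it is the computational backbone of both implications.

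For the easy direction, assume $\phi$ is harmonic and horizontally weakly conformal with dilation $\Lambda$. Choosing the frame so that $X_1,\dots,X_n$ span $\mathcal{H}_x$ and the remaining vectors span $\mathcal{V}_x=\ker\diff\phi_x$, the vertical terms drop out and $\{\diff\phi(X_k)/\sqrt{\Lambda}\}_{k=1}^n$ is orthonormal in $T_{\phi(x)}N$ by Definition \ref{horizontallyconformal}. The first sum then collapses to $\Lambda\,(\tau f)\circ\phi$, while harmonicity of $\phi$ kills the second term, giving $\tau(f\circ\phi)=\Lambda\,(\tau f)\circ\phi$. Hence $f$ harmonic forces $f\circ\phi$ harmonic, so $\phi$ is a harmonic morphism.

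For the converse, assume $\phi$ is a harmonic morphism and fix $x_0\in M$, $y_0=\phi(x_0)$. Since every local harmonic $f$ near $y_0$ pulls back to a harmonic $f\circ\phi$, the composition law gives
$$\sum_{k}(\nabla\diff f)(\diff\phi(X_k),\diff\phi(X_k))+\diff f(\tau(\phi)(x_0))=0$$
for all such $f$. The key input I would prove as a lemma is that, on any Riemannian manifold, for a prescribed covector and a prescribed trace-free symmetric Hessian at $y_0$ there is a local harmonic function realising them; concretely one writes a degree-two polynomial in normal coordinates with the desired $1$-jet and $2$-jet and corrects it by an $O(|y|^2)$ term to solve $\tau_N f=0$, the trace-free hypothesis being exactly what makes $\tau_N f(y_0)=0$ consistent. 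Feeding in functions with vanishing Hessian but arbitrary differential yields $\diff f(\tau(\phi)(x_0))=0$ for every covector, hence $\tau(\phi)(x_0)=0$ and $\phi$ is harmonic. Feeding in functions with vanishing differential and arbitrary trace-free Hessian $S$ then gives $\langle S,T\rangle=0$, where $T^{ij}=\sum_k \diff\phi(X_k)^i\,\diff\phi(X_k)^j$ represents the symmetric endomorphism $\diff\phi_{x_0}\circ\diff\phi_{x_0}^{*}$. Since this holds for all trace-free symmetric $S$, the tensor $T$ must be a multiple of $h$, i.e. $\diff\phi_{x_0}\,\diff\phi_{x_0}^{*}=\Lambda\,\Id$; by the standard linear-algebra characterisation this is equivalent to $\phi$ being horizontally weakly conformal at $x_0$.

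The main obstacle is the harmonic-jet lemma used in the converse: everything hinges on having enough local harmonic functions to separate the gradient and the trace-free Hessian directions. I expect the rest --- the composition identity and the final linear-algebra step identifying $T=\Lambda h$ with horizontal weak conformality --- to be routine. One should also take care that the realisation lemma only controls jets at the single point $y_0$, which suffices because $f\circ\phi$ is harmonic on all of $\phi^{-1}(V)$ and we evaluate at $x_0$.
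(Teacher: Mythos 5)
The paper does not actually prove this result; it is quoted directly from Baird and Wood, and the proof there is precisely the one you outline — the composition law for the tension field together with the existence of local harmonic functions realising an arbitrary prescribed differential and an arbitrary prescribed trace-free Hessian at a point, followed by the linear-algebra identification of $\diff\phi\,\diff\phi^{*}=\Lambda\,\Id$ with horizontal weak conformality. Your proposal is correct and follows that standard route; the only slip is that the correction term in your harmonic-jet lemma must vanish to third order, $O(|y|^3)$ rather than $O(|y|^2)$, since an $O(|y|^2)$ perturbation could alter the prescribed Hessian at $y_0$.
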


This motivates the definition of the conformality operator, which we will encounter frequently in the rest of this thesis. 

\begin{definition}\label{conformalityop}
    Let $\phi,\psi:(M^m,g)\rightarrow\cn$ be two complex-valued functions on a Riemannian manifold $M.$ The {\it conformality operator} is the complex-valued bilinear form given by $$\kappa(\phi,\psi)=g(\nabla\phi,\nabla\psi).$$ Locally it satisfies $$\kappa(\phi,\psi)=\sum_{i,j=1}^m g^{ij}\frac{\partial \phi}{\partial x_i}\frac{\partial\psi}{\partial x_j}.$$
\end{definition}

\begin{remark}
    A smooth map $\phi:(M,g)\rightarrow\cn$ is horizontally conformal if and only if $$g(\nabla\phi,\nabla\phi)=0.$$ If we decompose $\phi=u+i\cdot v$ into its real and imaginary parts, then $$\kappa(\phi,\phi)=g(\nabla\phi,\nabla\phi)=|\nabla u|^2-|\nabla v|^2+2i\cdot g(\nabla u,\nabla v).$$
    Thus, $\phi$ is horizontally conformal if $\nabla u$ and $\nabla v$ are orthogonal and of the same length at every point $p$ of $M.$
\end{remark}
%\begin{definition}{\rm \cite{Bai-Woo-book}}
%    A Riemannian or {\it isometric immersion} is a conformal map with conformality factor identically equal to $1.$
%\end{definition}

We will now state the well-known product rules for $\tau$ and $\kappa.$
\begin{proposition}{\rm \cite{Gud-Sif-1}}\label{taukappa}
    The Laplace-Beltrami and conformality operators $\tau,\kappa$ satisfy the following product rules.
    \begin{eqnarray*}
\tau(\phi\cdot\psi)&=&\phi\cdot\tau(\psi)+2\cdot\kappa(\phi,\psi)+\psi\cdot\tau(\phi),\\
\kappa(\phi_1\cdot\phi_2,\psi_1\cdot\psi_2)&=&\phi_1\cdot\psi_1\cdot\kappa(\phi_2,\psi_2)+\phi_1\cdot\psi_2\cdot\kappa(\phi_2,\psi_2)\\
&+&\phi_2\cdot\psi_1\cdot\kappa(\phi_1,\psi_2)+\phi_2\cdot\psi_2\cdot\kappa(\phi_1,\psi_1),
    \end{eqnarray*}
    for all smooth complex-valued functions $\phi,\psi,\phi_1,\phi_2,\psi_1,\psi_2$ on a manifold $M.$
\end{proposition}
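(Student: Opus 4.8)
The plan is to prove the two product rules directly from the definitions of $\tau$ and $\kappa$, reducing everything to the Leibniz rule for the gradient and the connection. The key observation is that both operators are built from the gradient $\nabla$ and the Levi-Civita connection $\nabla$, both of which are derivations, so the product structure should fall out by bilinearity once I expand in a local orthonormal frame.

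First I would establish the product rule for $\kappa$. Recall from Definition \ref{conformalityop} that $\kappa(\phi,\psi)=g(\nabla\phi,\nabla\psi)$, and that the gradient satisfies the Leibniz rule $\nabla(\phi\cdot\psi)=\phi\cdot\nabla\psi+\psi\cdot\nabla\phi$, which follows immediately from the derivation property $X(\phi\cdot\psi)=\phi\cdot X(\psi)+\psi\cdot X(\phi)$ applied to every frame vector in the expression $\nabla(\phi\cdot\psi)=\sum_{X\in\mathcal B}X(\phi\cdot\psi)\cdot X$. Substituting $\nabla(\phi_1\cdot\phi_2)=\phi_1\nabla\phi_2+\phi_2\nabla\phi_1$ and the analogous expression for $\psi_1\cdot\psi_2$ into $g(\nabla(\phi_1\phi_2),\nabla(\psi_1\psi_2))$ and expanding by bilinearity of $g$ yields four terms, each of which is a product of two functions with a $\kappa$ of the remaining pair. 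I would simply collect these four terms to match the stated formula.

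For the rule governing $\tau(\phi\cdot\psi)$, I would start from $\tau(\phi\cdot\psi)=\Div\nabla(\phi\cdot\psi)=\trace\nabla\nabla(\phi\cdot\psi)$ and again insert the Leibniz expansion $\nabla(\phi\cdot\psi)=\phi\cdot\nabla\psi+\psi\cdot\nabla\phi$. Applying the connection $\nabla$ to the product $\phi\cdot\nabla\psi$ gives $\diff\phi\otimes\nabla\psi+\phi\cdot\nabla\nabla\psi$, and taking the trace converts $\diff\phi\otimes\nabla\psi$ into $g(\nabla\phi,\nabla\psi)=\kappa(\phi,\psi)$ and $\phi\cdot\nabla\nabla\psi$ into $\phi\cdot\tau(\psi)$; the symmetric term $\psi\cdot\nabla\phi$ contributes $\kappa(\psi,\phi)+\psi\cdot\tau(\phi)$. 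Using the symmetry $\kappa(\phi,\psi)=\kappa(\psi,\phi)$ from the symmetry of $g$, the two mixed terms combine into $2\cdot\kappa(\phi,\psi)$, giving the claimed identity.

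The calculations here are routine, so the only genuine point requiring care is the bookkeeping of the trace in the $\tau$ rule: one must verify that $\trace(\diff\phi\otimes\nabla\psi)$ really equals $\kappa(\phi,\psi)$ rather than some other contraction. I expect this to be the main (though modest) obstacle, and it is cleanest to resolve it by working in a local orthonormal frame $\{X_1,\dots,X_m\}$ and writing out $\tau(\phi\cdot\psi)=\sum_k\langle\nabla_{X_k}\nabla(\phi\cdot\psi),X_k\rangle$ explicitly, so that the contraction of $\diff\phi$ against $\nabla\psi$ visibly produces $\sum_k X_k(\phi)\,X_k(\psi)=g(\nabla\phi,\nabla\psi)$. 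The stated second formula for $\kappa$ appears to contain a typographical repetition of $\kappa(\phi_2,\psi_2)$ where $\kappa(\phi_1,\psi_1)$ and the other cross-terms are intended; the correct four-term expansion is exactly what the bilinear computation above produces.
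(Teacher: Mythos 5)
Your argument is correct and complete: the paper itself gives no proof of this proposition, citing it directly from the reference, and your computation (Leibniz rule for the gradient, bilinearity of $g$ for the $\kappa$ identity, and the frame computation $\trace\nabla(\phi\nabla\psi)=\kappa(\phi,\psi)+\phi\,\tau(\psi)$ for the $\tau$ identity) is the standard derivation one finds in the cited source. You are also right that the second term of the displayed $\kappa$ formula is a typo and should read $\phi_1\cdot\psi_2\cdot\kappa(\phi_2,\psi_1)$, exactly as your four-term expansion produces.
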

We will conclude this chapter with a recent result. For the reader who is not familiar with symmetric spaces, we recommend to take a look at Chapter 4. In the nineties, Gudmundsson conjectured the following about the existence of harmonic morphisms from irreducible Riemannian symmetric spaces.  
\begin{conjecture}{\rm \cite{Gud-Sve-1}}\label{Gudconjecture}
Let $(M^m,g)$ be an irreducible Riemannian symmetric space of dimension $m\geq2.$ For each point $p\in M,$ there exists a complex-valued harmonic morphism $\phi:U\rightarrow\cn$ defined on an open neighbourhood $U$ of $p.$ If the space $(M,g)$ is of non-compact type, then the domain $U$ can be chosen to be the whole of $M.$
\end{conjecture}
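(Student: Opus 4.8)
The plan is to reduce the conjecture to a purely algebraic construction problem and then to solve that problem case by case along Cartan's classification of the irreducible Riemannian symmetric spaces. The central object is an \emph{eigenfamily}: a finite set $\mathcal{E}=\{\phi_1,\dots,\phi_k\}$ of complex-valued functions on $(M,g)$ for which there exist constants $\lambda,\mu\in\cn$ with $\tau(\phi_i)=\lambda\,\phi_i$ and $\kappa(\phi_i,\phi_j)=\mu\,\phi_i\,\phi_j$ for all $i,j$. First I would prove the reduction lemma: if $\phi,\psi$ lie in the complex linear span of such a family and are linearly independent, then the quotient $F=\phi/\psi$ is a harmonic morphism on the open set $\{\psi\neq0\}$. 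Note that the span is automatically closed under the two defining relations, so $\phi,\psi$ again satisfy $\tau=\lambda\cdot{}$ and $\kappa=\mu\,(\,\cdot\,)(\,\cdot\,)$. By the Fuglede--Ishihara characterisation (Theorem \ref{fuglede}) it suffices to check that $\tau(F)=0$ and $\kappa(F,F)=0$; both follow from the product rules of Proposition \ref{taukappa} after computing $\tau(\psi^{-1})=(2\mu-\lambda)\,\psi^{-1}$ and $\kappa(\phi,\psi^{-1})=-\mu\,\phi\,\psi^{-1}$, which force the offending terms to cancel. This step is routine and independent of the geometry of $M$.

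With the reduction in hand, the conjecture becomes the statement that \emph{every irreducible Riemannian symmetric space carries an eigenfamily with at least two linearly independent members.} Given such a family and a point $p\in M$, I would choose $\psi$ in its span with $\psi(p)\neq0$ and a second member $\phi$ so that $F=\phi/\psi$ has non-vanishing differential at $p$; then $U=\{\psi\neq0\}$ is the required neighbourhood and $F|_U$ the required harmonic morphism. To build the eigenfamilies I would invoke the classification and, for each type $M=G/K$ with $G$ a classical matrix group, restrict the complex-linear combinations $\phi_A(x)=\sum_{i,j}A_{ij}\,x_{ij}$ of the entries of the standard representation. The Casimir operator forces each such matrix coefficient to be an eigenfunction of $\tau$, while the conformality requirement $\kappa(\phi_A,\phi_B)=\mu\,\phi_A\,\phi_B$ cuts out, via a quadratic isotropy constraint on the coefficient matrices $A,B$, a nontrivial space of admissible $\phi_A$. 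This is exactly the mechanism exploited in this thesis for the ten classical families, and the matrix-coefficient method extends to the remaining classical symmetric spaces as well.

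For a space $(M,g)$ of non-compact type the extra demand is that the domain be all of $M$, which requires a globally defined eigenfamily together with a zero-free member $\psi$. Here I would pass to the non-compact dual: an eigenfamily on the compact dual $M^{*}$ dualises, by analytic continuation of the matrix coefficients to the complexified group and restriction to the non-compact real form, to a global eigenfamily on $M$. Since a symmetric space of non-compact type is diffeomorphic to a Euclidean space and the dualised eigenfunctions are of exponential type, one can arrange $\psi$ to be nowhere vanishing, so that $F=\phi/\psi$ is globally defined and submersive away from a set of codimension at least two.

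The hard part is the construction of eigenfamilies in full generality. For the classical families the matrix-coefficient method is explicit, but the exceptional irreducible symmetric spaces (those built from $E_6,E_7,E_8,F_4,G_2$) admit no faithful low-dimensional matrix model, so the isotropy calculation above has no direct analogue; one would instead have to produce the families intrinsically, for instance from the lowest nontrivial eigenspace of $\tau$ together with a representation-theoretic argument that it contains an isotropic pair for $\kappa$. Securing global regularity and zero-freeness of $\psi$ in the non-compact case is a second genuine difficulty. It is precisely these two points that keep the statement a conjecture rather than a theorem, and a complete proof would have to settle them uniformly across the whole of Cartan's list.
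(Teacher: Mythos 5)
You should first note that the statement you are proving is presented in this thesis as a \emph{conjecture}: the paper supplies no proof of it, only the historical record that Gudmundsson and Svensson verified it for every irreducible symmetric space except $G_2/\SO{4}$ and its non-compact dual, and that Burstall (2024) settled the remaining global non-compact case, whence the local compact $G_2/\SO{4}$ case follows by duality. Your reduction lemma is correct and is in fact already in the paper in stronger form: it is the special case of Theorem \ref{eigenfamharmonicmorph} in which $P$ and $Q$ are homogeneous of degree one, and your computations $\tau(\psi^{-1})=(2\mu-\lambda)\psi^{-1}$ and $\kappa(\phi,\psi^{-1})=-\mu\,\phi\,\psi^{-1}$, combined with Proposition \ref{taukappa} and Theorem \ref{fuglede}, do check out. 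The matrix-coefficient construction of eigenfamilies you describe is likewise the mechanism actually used for the classical spaces. So up to that point your proposal faithfully reproduces the known strategy.

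The genuine gap is that the two points you defer at the end are not loose ends of an otherwise complete argument; they \emph{are} the conjecture. First, the passage from ``$M$ carries an eigenfamily with two independent members'' to ``every point $p$ has a neighbourhood on which a harmonic morphism is defined'' silently assumes one can choose $\psi$ in the span with $\psi(p)\neq 0$; since all members of a given eigenfamily may vanish simultaneously at some point, this requires either an equivariance/translation argument or a supplementary family, and you give neither. Second, and more seriously, the exceptional spaces admit no analogue of the matrix-coefficient isotropy computation, and your suggested replacement (a representation-theoretic argument producing a $\kappa$-isotropic pair in the lowest eigenspace of $\tau$) is precisely what nobody had been able to carry out for $G_2/\SO{4}$ for nearly thirty years; Burstall's resolution does not proceed through eigenfamilies at all but through harmonic Riemannian submersions between non-compact symmetric spaces. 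Third, in the non-compact global case your claim that ``one can arrange $\psi$ to be nowhere vanishing'' because the dualised eigenfunctions are of exponential type is asserted, not proved, and is a substantive analytic point. As written, the proposal is an accurate description of the state of the art circa 2006 together with an honest list of what remains to be done, but it is not a proof of Conjecture \ref{Gudconjecture}.
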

We will explain in more detail what we mean by duality in Section \ref{duality}. For now, we point out that the irreducible Riemannian symmetric spaces come in pairs of compact and non-compact spaces. Gudmundsson and Svensson have shown in \cite{Gud-Sve-1} that a harmonic morphism on a compact space induces a harmonic morphism on the non-compact dual space, and vice-versa. 

Gudmundsson and Svensson were able to prove the statement of Conjecture \ref{Gudconjecture} in their papers \cite{Gud-Sve-1},\cite{Gud-Sve-2},\cite{Gud-Sve-3} and \cite{Gud-Sve-4} for all such spaces different from $G_2/\SO{4}$ and its non-compact dual.
\begin{theorem}{\rm \cite{Gud-Sve-7}}
Let $(M,g)$ be an irreducible Riemannian symmetric space which is neither $G_2/\SO{4}$ nor its non-compact dual. Then $M$ carries local complex-valued harmonic morphisms. If $M$ is of non-compact type, then global solutions exist.
\end{theorem}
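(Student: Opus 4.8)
The plan is to reduce the existence of harmonic morphisms to the purely algebraic task of producing certain families of eigenfunctions, and then to verify the latter case by case along the Cartan classification. By the Fuglede--Ishihara characterisation (Theorem \ref{fuglede}), a complex-valued map is a harmonic morphism precisely when it is harmonic and horizontally conformal, which for $\phi:(M,g)\to\cn$ amounts to the two equations $\tau(\phi)=0$ and $\kappa(\phi,\phi)=0$. First I would invoke the duality principle of Gudmundsson and Svensson: since a harmonic morphism on a compact irreducible symmetric space induces one on its non-compact dual and conversely, it suffices to treat a single representative from each dual pair. I would work on the compact side, where the spaces are quotients $G/K$ of compact Lie groups and the natural candidate functions are real-analytic and globally defined.

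The central device is the notion of an \emph{eigenfamily}: a finite set $\mathcal{E}=\{\phi_1,\dots,\phi_k\}$ of complex-valued functions on $(M,g)$ for which there exist constants $\lambda,\mu\in\cn$ with
\[
\tau(\phi_i)=\lambda\,\phi_i,\qquad \kappa(\phi_i,\phi_j)=\mu\,\phi_i\,\phi_j
\]
for all $i,j$. The key algebraic step is to show, using only the product rules of Proposition \ref{taukappa}, that whenever $\phi,\psi$ lie in $\span_{\cn}(\mathcal{E})$ the quotient $\phi/\psi$ satisfies $\tau(\phi/\psi)=0$ and $\kappa(\phi/\psi,\phi/\psi)=0$ on the open set $\{\psi\neq0\}$. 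Concretely, writing $\nabla(\psi^{-1})=-\psi^{-2}\nabla\psi$ and expanding $\tau$ and $\kappa$ applied to $\phi\cdot\psi^{-1}$, the eigenvalue relations force every term to cancel in pairs, so that $\phi/\psi$ is a local harmonic morphism. This reduces the whole theorem to exhibiting one eigenfamily on each space.

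It then remains to construct such an eigenfamily on every irreducible compact symmetric space. For the classical groups $\SO n$, $\SU n$ and $\Sp n$ with their bi-invariant metrics, I would take the entries of the standard matrix representation and compute $\tau$ and $\kappa$ in a left-invariant orthonormal frame; the structure constants of the classical Lie algebras then yield the eigenvalue relations. For the coset spaces $\SU n/\SO n$, $\Sp n/\U n$, $\SO{2n}/\U n$, $\SU{2n}/\Sp n$ and the Grassmannian families, I would descend these functions through the Cartan embedding, realising $G/K$ inside a matrix group and restricting the matrix-entry eigenfunctions. The remaining exceptional spaces would be handled one at a time using their concrete octonionic or Jordan-algebra models. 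Global existence on the non-compact duals then follows from duality: there the denominator $\psi$ corresponds to a nowhere-vanishing function, so $\phi/\psi$ is defined on all of $M$.

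The hard part will be the construction of eigenfamilies for the exceptional symmetric spaces, where no faithful small matrix representation is available and one must work with non-associative algebraic models; it is precisely here that the method breaks down for $G_2/\SO 4$, which is why this space and its dual must be excluded. Even for the classical families the computation of $\kappa$ demands a careful choice of frame so that the cross terms $\kappa(\phi_i,\phi_j)$ collapse to multiples of $\phi_i\phi_j$, and checking that a single pair of constants $(\lambda,\mu)$ works uniformly across a whole family is the main technical burden.
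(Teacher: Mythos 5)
You should first note that the thesis does not prove this statement at all: it is quoted as a deep result from \cite{Gud-Sve-7} (building on \cite{Gud-Sve-1}--\cite{Gud-Sve-4}), so there is no internal proof to compare against. Your outline does reproduce the skeleton that the thesis itself assembles for the classical families --- Fuglede--Ishihara (Theorem \ref{fuglede}), the product rules (Proposition \ref{taukappa}), eigenfamilies and the quotient construction (Theorem \ref{eigenfamharmonicmorph}), and the compact/non-compact duality --- and for $\SO n$, $\SU n$, $\Sp n$, the four coset spaces and the three Grassmannian families this is indeed how the cited papers proceed.

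However, two steps in your proposal are genuine gaps rather than routine verifications. First, the exceptional symmetric spaces are the actual content of \cite{Gud-Sve-3} and \cite{Gud-Sve-7}; they are not all amenable to the eigenfamily method, and the cited papers instead use ad hoc tools (totally geodesic Riemannian submersions onto lower-rank targets, Hermitian structures on the Hermitian symmetric members of the list, and so on). Dismissing them with ``octonionic or Jordan-algebra models'' does not constitute a proof, and your stated reason for excluding $G_2/\SO{4}$ --- the absence of a small faithful representation --- is not correct ($G_2$ sits inside $\SO 7$); the obstruction there is of a different nature. Second, your globality claim on the non-compact duals does not follow from duality alone: duality transports the eigenfunction equations (with a sign change in $(\lambda,\mu)$), but it does not by itself guarantee that the denominator $\psi$ is nowhere zero on the dual space. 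On the compact side the Riedler--Siffert criterion (Corollary \ref{phi0}) even forces $\psi$ to vanish whenever $\lambda\neq\mu$, so non-vanishing on the non-compact side has to be verified directly; in the literature, global solutions of non-compact type are obtained by separate arguments, for instance via the Iwasawa decomposition and solvable Lie groups as in \cite{Gud-Sve-4}. Both points would need to be filled in before your sketch becomes a proof.
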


In 2024, Burstall confirmed Conjecture \ref{Gudconjecture}.
\begin{theorem}{\rm \cite{Bur}}
    Let $M$ be a Riemannian symmetric space of non-compact type. Then there is a non-constant, globally defined harmonic morphism from $M$ to $\cn.$
\end{theorem}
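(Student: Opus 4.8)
The plan is to attack the statement through the Fuglede--Ishihara characterisation, Theorem \ref{fuglede}: since the target is $\cn$, it suffices to produce a single non-constant $\phi:M\to\cn$ that is simultaneously harmonic and horizontally conformal, i.e. $\tau(\phi)=0$ and $\kappa(\phi,\phi)=g(\nabla\phi,\nabla\phi)=0$. The first move is structural. Writing $M=G/K$ with Cartan decomposition $\g=\k\oplus\p$ and fixing an Iwasawa decomposition $G=KAN$, the solvable subgroup $S=AN$ acts simply transitively on $M$; hence $M$ is isometric to $S$ with the induced left-invariant metric, and I would work on $S$ throughout. The decisive gain is globality: any function written in the (globally defined) exponential coordinates of $S$ automatically lives on all of $M$, so once the two equations above hold the resulting harmonic morphism is global, as the statement demands.

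Next I would set up the restricted root space decomposition $\n=\bigoplus_{\alpha\in\Sigma^{+}}\g_{\alpha}$ and look for $\phi$ among complex linear combinations of the coordinate functions attached to root vectors. In a left-invariant frame the Levi--Civita connection of the metric is purely algebraic, computed from Koszul's formula in terms of the brackets and the inner product $B_{\theta}=-B(\,\cdot\,,\theta\,\cdot\,)$, so $\tau$ and $\kappa$ become explicit. Using the product rules of Proposition \ref{taukappa}, one finds that for an ansatz $\phi=z_{E}+i\,z_{F}$ built from root vectors $E,F$ the conformality condition $\kappa(\phi,\phi)=0$ reduces to the algebraic requirements $B_{\theta}(E,E)=B_{\theta}(F,F)$ and $B_{\theta}(E,F)=0$, while $\tau(\phi)=0$ reduces to a divergence identity whose only potentially surviving contribution is along the distinguished mean-curvature direction dual to $2\rho=\sum_{\alpha\in\Sigma^{+}}(\dim\g_{\alpha})\,\alpha$. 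Choosing coordinates orthogonal to, and constant along, that direction forces the offending term to vanish.

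The heart of the matter, and the step I expect to be the main obstacle, is to exhibit such a pair $(E,F)$ uniformly for every irreducible non-compact symmetric space. When some positive root space has multiplicity at least two, two orthonormal vectors inside a single $\g_{\alpha}$ work at once, since they scale identically across the horospherical coordinates. The delicate regime is when all multiplicities equal one: then one is forced to combine vectors from two distinct root spaces of equal length, and one must control both the cross term $[E,F]$ arising from the nonabelian structure of $\n$ and the mismatched exponential scalings of the two coordinates, which is precisely where the earlier case-by-case arguments of Gudmundsson and Svensson stalled on $G_{2}/\SO{4}$. The plan is therefore to replace the enumeration by a uniform argument reading an admissible isotropic pair directly off the restricted root data, after disposing of the rank-one factors where the explicit hyperbolic coordinate functions already solve both equations. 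I expect this uniform reduction to be the technically demanding part, and it is exactly the content supplied by Burstall in \cite{Bur}. Finally, through the duality recalled above, the global harmonic morphism on $M$ yields a local one on the compact dual, closing the loop with Conjecture \ref{Gudconjecture}.
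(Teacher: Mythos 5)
The paper offers no proof of this statement at all: it is quoted verbatim as a result of Burstall, cited from \cite{Bur}, exactly like the Eells--Sampson and Baird--Eells theorems elsewhere in the text. So there is no argument of the author's to compare yours against; the only question is whether your proposal would stand on its own as a proof. It would not, and the gap is located precisely where you place it yourself. The reduction to the solvable model $S=AN$ via the Iwasawa decomposition, the left-invariant ansatz $\phi=z_E+i\,z_F$ built from root vectors, and the translation of $\tau(\phi)=0$ and $\kappa(\phi,\phi)=0$ into algebraic conditions on $B_\theta$ and on the mean-curvature direction dual to $2\rho$ -- all of this is the framework already used by Gudmundsson and Svensson in \cite{Gud-Sve-1}--\cite{Gud-Sve-4}, and it is exactly the framework that, run case by case, settles every irreducible space \emph{except} the non-compact dual of $G_2/\SO{4}$. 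The entire content of the theorem being proved is the step you defer with the sentence that the uniform reduction ``is exactly the content supplied by Burstall in \cite{Bur}.'' Invoking the theorem's own source to supply its decisive step is circular; as written, the proposal proves nothing beyond what was already known before \cite{Bur}.

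Two further cautions. First, your claim that $\kappa(\phi,\phi)=0$ reduces to $B_\theta(E,E)=B_\theta(F,F)$ and $B_\theta(E,F)=0$ is only the leading-order statement: on $S$ with a left-invariant metric the gradients of exponential coordinates attached to root spaces $\g_\alpha$ carry the scalings $e^{-\alpha}$ you mention, so isotropy of the pair at the identity does not propagate for free when $E$ and $F$ sit in distinct root spaces; this is not a detail but the reason the multiplicity-one case is hard. Second, your closing remark that duality then yields a local harmonic morphism on the compact dual is a separate theorem (Gudmundsson--Svensson, \cite{Gud-Sve-1}) and is fine, but it is a corollary of the statement, not part of its proof. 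To make the proposal into a proof you would need to actually exhibit the construction that works uniformly -- or at least on the one outstanding space -- rather than describe where it should come from.
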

By the duality principle, we note that there must also exist a harmonic morphism defined locally on the compact $G_2/\SO{4}.$

\chapter{Lie Groups and Lie Algebras}
This chapter summarises some important facts about Lie groups and Lie algebras. We provide several examples and will state some basic properties. We will then talk about representations of Lie groups and Lie algebras, including the adjoint representation. In the last section of this chapter, we introduce the classical matrix Lie groups, as well as their Lie algebras.

The chapter is mainly based on the very accessible yet highly informative book \cite{Arv} by Arvanitoyeorgos. For further reading, we recommend the books \cite{Hall} by Hall, the classic textbook \cite{Helg} by Helgason and \cite{Kna} by Knapp.

\section{Lie Groups}
In this section we take a first look at Lie groups and Lie group homomorphisms. We will give some concrete examples as well.
\begin{definition}\label{defliegroup}
A smooth manifold $G$ is called a {\it Lie group} if it is also a group $(G,\cdot),$ such that the group operation $$\cdot:G\times G\rightarrow G, \ (x,y)\mapsto x\cdot y$$ and taking inverses $$G\rightarrow G, \ x\mapsto x^{-1}$$ are smooth functions.
\end{definition}

\begin{remark}
    Alternatively, we may combine the two conditions given in Definition \ref{defliegroup} into one condition. In particular, $G$ is a Lie group if it is a smooth manifold, a group and the map $$G\times G\rightarrow G, \quad (x,y)\mapsto xy^{-1}$$ is smooth. This is used for example in Helgason's book \cite{Helg}.
\end{remark}
We state some well-known examples. They appear in many introductory texts on Lie groups, for example \cite{Arv}.
\begin{example}\label{vectorspaceliegroup}
    For $n\in\mathbb{N},$ the sets $\mathbb{R}^n,\mathbb{C}^n,\mathbb{H}^n$ with the standard vector addition as group operation are Lie groups.
\end{example}

%\begin{definition}
%Let $\H$ denote the set of quaternions $$\H=\{(z,w)\in\cn^2\}.$$ We define addition, multiplication and conjugation as follows:\begin{eqnarray*}
%    (z_1,w_1)+(z_2,w_2)&=&(z_1+z_2,w_1+w_2),\\
 %   (z_1,w_1)\cdot(z_2,w_2)&=&(z_1z_2-\Bar{w}_1w_2,w_1z_2+\Bar{z}_1w_2),\\
%    \overline{(z,w)}&=&(\Bar{z},-w).
%\end{eqnarray*}\end{definition}

\begin{example}
    Let $G$ be a Lie group with the group operation $\cdot:G\times G\rightarrow G.$ Then if $H$ is both a submanifold and a subgroup of $G,$ $(H,\cdot)$ is also a Lie group.
\end{example}

\begin{example}\label{rstarliegroup}
    The sets $\mathbb{R}^*,\mathbb{C}^*,\mathbb{H}^*$ are Lie groups with the usual multiplication. Thus, the unit circle $$S^1=\{z\ \rvert \ z\in\cn,\ |z|=1\}\subset\mathbb{C}^*$$ is also a Lie group.
    Similarly, $$S^3=\{z+jw \ \rvert \ z,w\in\cn, \ |z|^2+|w|^2=1\}\subset\H^* $$ is a Lie group.
\end{example}
Next, we turn our attention to some important maps on Lie groups.
\begin{definition}{\rm \cite{Arv}}
    Let G be a Lie group and $a\in G.$ The the {\it left} and {\it right translations} are the maps \begin{eqnarray*}
        L_a:G\rightarrow G, &&L_a(g)=ag\\
        R_a:G\rightarrow G, &&R_a(g)=ga
    \end{eqnarray*}
    respectively. 
\end{definition}

\begin{definition}{\rm \cite{Lee}}
    Let $G$ and $H$ be Lie groups. A map $\phi:G\rightarrow H$ is a {\it Lie group homomorphism} if $\phi$ is both a group homomorphism and a diffeomorphism. 
\end{definition}

\begin{example}{\rm \cite{Lee}}
    We have seen in Example \ref{vectorspaceliegroup} that $\rn$ is a Lie group under addition, and in Example \ref{rstarliegroup} that $\rn^*$ is a Lie group under multiplication. Then the map $$\exp:(\rn,+)\rightarrow(\rn^*,\cdot),\quad x\mapsto \exp(x)$$ is smooth with inverse $\log:\rn^+\rightarrow \rn.$ Further, the exponential map is a Lie group homomorphism since $$\exp(x+y)=\exp(x)\cdot\exp(y).$$
\end{example}

\begin{proposition}\label{innerauto}
    Let $G$ be a Lie group and $x\in G.$ Then the map $$I_x:G\rightarrow G, \ g\mapsto xgx^{-1}$$ is an automorphism. We will call it the {\it inner automorphism}.
\end{proposition}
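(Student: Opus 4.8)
The plan is to verify directly the two properties required of a Lie group automorphism in the sense of the preceding definition: that $I_x$ is a group homomorphism, and that it is a diffeomorphism of $G$ onto itself. The first is purely algebraic and the second is where the smooth structure of $G$ actually enters.

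First I would check the homomorphism property by a one-line computation. For $g,h\in G$ one has
$$I_x(gh)=x(gh)x^{-1}=(xgx^{-1})(xhx^{-1})=I_x(g)\,I_x(h),$$
the middle equality obtained by inserting $x^{-1}x$ between $g$ and $h$; note also $I_x(e)=xex^{-1}=e$, so $I_x$ fixes the identity. This settles that $I_x$ respects the group operation.

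Next I would establish that $I_x$ is a diffeomorphism, and the clean way to do so is to factor it through the translation maps introduced above. Indeed $I_x=L_x\circ R_{x^{-1}}$, since $(L_x\circ R_{x^{-1}})(g)=L_x(gx^{-1})=xgx^{-1}$. Each of $L_x$ and $R_{x^{-1}}$ is smooth, being obtained by restricting the smooth multiplication map of Definition \ref{defliegroup}, and each is a bijection with smooth inverse ($L_{x^{-1}}$ and $R_x$ respectively). Hence $I_x$ is a composition of diffeomorphisms, and is therefore itself a diffeomorphism; equivalently, its two-sided inverse is the smooth map $I_{x^{-1}}$, as one checks from $I_x\circ I_{x^{-1}}=\mathrm{id}_G=I_{x^{-1}}\circ I_x$.

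Combining the two steps, $I_x$ is a smooth bijective group homomorphism with smooth inverse, hence an automorphism of $G$. I do not expect any genuine obstacle here: the only point requiring care is to justify the smoothness of $I_x$ and of its inverse rather than treat it as obvious, and the factorisation $I_x=L_x\circ R_{x^{-1}}$ into translations — whose smoothness is immediate from the definition of a Lie group — is precisely what makes this transparent.
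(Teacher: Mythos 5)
Your proposal is correct and follows essentially the same route as the paper: verify the homomorphism property by inserting $x^{-1}x$, then factor $I_x$ through the translation maps (the paper writes $I_x=R_{x^{-1}}\circ L_x$, which is the same map as your $L_x\circ R_{x^{-1}}$ since left and right translations commute) and conclude it is a composition of diffeomorphisms. Your additional remark that the inverse is $I_{x^{-1}}$ is a harmless refinement of the same argument.
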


\begin{proof}
    The map $I_x$ is a homomorphism since $$I_x(gh)=xghx^{-1}=xgx^{-1}xhx^{-1}=I_x(g)I_x(h).$$ Further, it is clear that $$I_x=R_{x^{-1}}\circ L_{x}.$$ For any $x\in G,$ the maps $L_x,R_x$ are smooth by definition of a Lie group. Since there exists an inverse $x^{-1}$ of $x$ in $G,$ the inverse maps $L_{x^{-1}}, R_{x^{-1}}$ are smooth too. Consequently, the left and right translations are diffeomorphisms, and so is the composition $I_x$. 
\end{proof}

\section{Lie Algebras}
In this section we review Lie algebras. Most importantly, we recall that the left-invariant vector fields of a Lie group together with the so-called Lie bracket form a Lie algebra.

\begin{definition}
    Let the field $F$ be either $\rn$ or $\cn.$ A {\it Lie algebra} is a vector space $\g$ over $F,$ equipped with a bracket operation $$[\cdot,\cdot]:\g\times\g\rightarrow\g, \quad (X,Y)\mapsto[X,Y],$$ satisfying the following properties for all $X,Y,Z\in\g,$ and all $a,b\in F:$
    \begin{enumerate}
        \item bilinearity: 
            $$[aX+bY,Z]=a[X,Z]+b[Y,Z],$$
          $$[Z,aX+bY]=a[Z,X]+b[Z,Y.]$$
        \item antisymmetry: $$[X,Y]=-[Y,X].$$
        \item the Jacobi identity:
        $$[X,[Y,Z]]+[Y,[Z,X]]+[Z,[X,Y]]=0.$$
    \end{enumerate}
\end{definition}

\begin{example}{\rm \cite{Arv}}
    The vector space $\rn^3$ together with the standard cross product $\times$ is a Lie algebra. 
\end{example}

We will now define the Lie bracket for vector fields on a manifold. As we will see, this yields a Lie algebra.
\begin{definition}{\rm \cite{Gud-Rie}}
    Let $M$ be a differentiable manifold. Then the {\it Lie bracket } $$[\cdot,\cdot]:C^{\infty}(TM)\times C^{\infty}(TM)\mapsto C^{\infty}(TM),$$ is given by $$[X,Y]_p(f)=X_p(Y(f))-Y_p(X(f)),$$ for $f\in C^{\infty}(M),p\in M.$ 
\end{definition}
\begin{theorem}{\rm \cite{Gud-Rie}}
Let $M$ be a smooth manifold. Then the vector space $C^{\infty}(TM)$ of smooth vector fields on $M$ together with the Lie bracket $[\cdot,\cdot]$ is a Lie algebra.
\end{theorem}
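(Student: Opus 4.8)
The statement to prove is that the smooth vector fields $C^\infty(TM)$ together with the Lie bracket form a Lie algebra. My plan is to verify the three defining axioms of a Lie algebra directly: bilinearity, antisymmetry, and the Jacobi identity. Since $C^\infty(TM)$ is already a vector space over $\rn$ under pointwise addition and scalar multiplication, the only work is to check that the bracket takes values in $C^\infty(TM)$ and satisfies the three properties.

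\begin{proof}[Proof sketch]
The plan is to verify the Lie algebra axioms one by one, working from the defining formula
$$[X,Y]_p(f)=X_p(Y(f))-Y_p(X(f)).$$
First I would confirm that $[X,Y]$ is genuinely a smooth vector field, i.e.\ an element of $C^\infty(TM)$. The key point is that the map $f\mapsto X(Y(f))-Y(X(f))$ is a derivation on $C^\infty(M)$: it is clearly $\rn$-linear, and one checks the Leibniz rule by applying the formula to a product $fg$. Here the second-order terms $X(Y(f))$ involving two derivatives of one factor cancel in the antisymmetric combination, leaving exactly the first-order Leibniz expression $[X,Y](f)\,g+f\,[X,Y](g)$. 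Since derivations of $C^\infty(M)$ correspond to smooth vector fields, this shows $[X,Y]\in C^\infty(TM)$.

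Next I would check bilinearity and antisymmetry, both of which are immediate from the formula. Bilinearity over $\rn$ follows since $X_p$ acts linearly and the expression is linear in each of $X$ and $Y$ separately. Antisymmetry, $[X,Y]=-[Y,X]$, is visible by simply swapping the two terms in the definition. These steps require no real computation.

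The main obstacle, and the only step involving genuine calculation, is the Jacobi identity
$$[X,[Y,Z]]+[Y,[Z,X]]+[Z,[X,Y]]=0.$$
Here I would expand each double bracket applied to an arbitrary $f\in C^\infty(M)$ in terms of the iterated actions $X(Y(Z(f)))$, obtaining twelve terms in total. The strategy is to track the six possible orderings of the three vector fields acting on $f$; each ordering appears with opposite signs from two of the three cyclic summands, so every term cancels in pairs. The delicate bookkeeping lies in getting the signs right, but conceptually this is the same cancellation of second-order terms that made the bracket a derivation, now carried out at the level of the full cyclic sum. Once all terms are shown to cancel, the three axioms are verified and $C^\infty(TM)$ is a Lie algebra.
\end{proof}
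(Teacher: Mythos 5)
Your proposal is correct: verifying that the bracket is a derivation (so that $[X,Y]\in C^\infty(TM)$), then checking $\rn$-bilinearity, antisymmetry, and the Jacobi identity via the cancellation of the twelve third-order terms is the standard and complete argument. The paper itself states this theorem without proof, citing Gudmundsson's lecture notes, so there is no internal proof to compare against; your sketch fills that gap correctly, and the only point worth making explicit in a full write-up is the cancellation of the second-order cross terms $Y(f)X(g)+X(f)Y(g)$ in the Leibniz computation, which is exactly what guarantees the bracket is first order.
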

We now introduce a special subset of vector fields on a Lie group.

\begin{definition}{\rm \cite{Arv}}
    A vector field $X$ on a Lie group $G$ is {\it left invariant} if for all $a\in G,$
    $$X\circ L_a=\diff L_a(X).$$
    More explicitly, for all $a,g\in G,$
    $$X_{ag}=(\diff L_a)_g(X_g).$$
\end{definition}

\begin{remark}
    Left-invariant vector fields are entirely determined by their value at the origin $e$, which is evident from the following identity. $$X_a=(\diff L_a)_e(X_e)$$
\end{remark}

%\begin{definition}{\rm \cite{Gud-Rie}}
%    If $\phi:M\rightarrow N$ is a surjective map between differentiable manifolds, then two vector fields $X\in C^{\infty}(TM)$ and $\Bar{X}\in C^{\infty}(TN)$ are said to be $\phi$-{\it related} if $$\diff\phi_p(X_p)=\Bar{X}_{\phi(p)}.$$ In this case, we write $\diff\phi(X)=\Bar{X}.$
%\end{definition}

%\begin{remark}
%    Let $G$ be a Lie group and consider the left translation $L_g:G\rightarrow G$ for some $g\in G.$ Then a left invariant vector field $X\in\g$ is $L_g$-related to itself. 
%\end{remark}

\begin{definition}{\rm \cite{Helg}}
    Let $\b$ be a vector subspace of a Lie algebra $\a.$ Then $\b$ is a {\it subalgebra} of $\a$ if  $$[\b,\b]\subset\b$$ and an {\it ideal} if $$[\b,\a]\subset\b.$$ If $\b$ is an ideal, then the factor space $\a/\b$ is a Lie algebra with the bracket operation inherited from $\a.$
\end{definition}
It can be shown that the left invariant vector fields of a Lie group $G$ form a subalgebra of $C^\infty(TG).$
\begin{theorem}{\rm \cite{Gud-Rie}}
    Let $G$ be a Lie group. Then the set of left invariant vector fields $\g$ together with the Lie bracket forms a Lie algebra. 
\end{theorem}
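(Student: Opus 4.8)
The plan is to show that the set of left-invariant vector fields $\g$ on a Lie group $G$ is closed under the two operations it inherits from $C^\infty(TG)$, namely the vector-space structure and the Lie bracket, so that it constitutes a subalgebra of the Lie algebra $C^\infty(TG)$. Since we already know from the preceding theorem that $(C^\infty(TG),[\cdot,\cdot])$ is a Lie algebra, it suffices to verify that $\g$ is a linear subspace closed under $[\cdot,\cdot]$; the bilinearity, antisymmetry and Jacobi identity are then inherited automatically from the ambient Lie algebra and need not be reproved.

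First I would check that $\g$ is a vector subspace. Fix $a\in G$. Since $(\diff L_a)_g$ is a linear map on each tangent space, if $X,Y$ are left invariant and $\lambda,\mu\in F$, then for all $g\in G$
\begin{eqnarray*}
(\diff L_a)_g\big((\lambda X+\mu Y)_g\big)
&=&\lambda\,(\diff L_a)_g(X_g)+\mu\,(\diff L_a)_g(Y_g)\\
&=&\lambda\,X_{ag}+\mu\,Y_{ag}=(\lambda X+\mu Y)_{ag},
\end{eqnarray*}
so $\lambda X+\mu Y$ is again left invariant. The zero vector field is trivially left invariant, so $\g$ is a linear subspace of $C^\infty(TG)$.

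The heart of the argument, and the step I expect to be the main obstacle, is showing $[X,Y]\in\g$ whenever $X,Y\in\g$. The key tool is the naturality of the Lie bracket under smooth maps: for any diffeomorphism $\Phi$ one has $\diff\Phi([X,Y])=[\diff\Phi(X),\diff\Phi(Y)]$, equivalently that $\Phi$-relatedness is preserved by the bracket. Applying this with $\Phi=L_a$ and using that each of $X,Y$ is $L_a$-related to itself (which is exactly the left-invariance condition $X\circ L_a=\diff L_a(X)$), one concludes that $[X,Y]$ is $L_a$-related to itself, i.e. $[X,Y]\circ L_a=\diff L_a([X,Y])$, for every $a\in G$. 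Hence $[X,Y]$ is left invariant. The care required here is in correctly invoking the $f$-relatedness lemma for the bracket; I would either cite it as the standard fact that if $X_i$ is $\Phi$-related to $Y_i$ for $i=1,2$ then $[X_1,X_2]$ is $\Phi$-related to $[Y_1,Y_2]$, or verify it directly from the defining formula $[X,Y]_p(f)=X_p(Y(f))-Y_p(X(f))$ by computing how each side transforms under precomposition with $L_a$ acting on $f\in C^\infty(G)$.

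Having established closure under both operations, $\g$ is a subalgebra of $C^\infty(TG)$ in the sense of the subalgebra definition given above, and therefore $(\g,[\cdot,\cdot])$ is itself a Lie algebra, completing the proof.
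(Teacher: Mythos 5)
Your proof is correct and follows exactly the route the paper indicates (the paper states that $\g$ is a subalgebra of $C^{\infty}(TG)$ and defers the details to the cited reference, giving no proof of its own). Your verification of closure under linear combinations and, via the $L_a$-relatedness of the bracket, under $[\cdot,\cdot]$ is the standard argument and fills in precisely what the paper leaves to \cite{Gud-Rie}.
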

The following shows that the Lie algebra $\g$ of a Lie group $G$ is isomorphic to the tangent space of $G$ at the neutral element. As a consequence, $\g$ and $G$ have the same dimension. 
\begin{proposition}{\rm \cite{Arv}}\label{liealgisototangentate}
The function $X\mapsto X_e$ defines a linear isomorphism between the vector spaces $\g$ and $T_eG.$\end{proposition}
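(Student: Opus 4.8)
The plan is to show that the assignment $X \mapsto X_e$ is a well-defined linear map from $\g$ to $T_eG$ and then exhibit an explicit inverse, thereby proving it is a linear isomorphism. Linearity is immediate from the pointwise vector space structure on vector fields: for left invariant $X,Y \in \g$ and scalars $a,b$, the field $aX+bY$ is again left invariant (since $\diff L_a$ is linear on each fibre), and evaluation at $e$ satisfies $(aX+bY)_e = a\,X_e + b\,Y_e$. So the map is linear, and it remains to prove it is bijective.

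For injectivity, I would invoke the defining property of left invariance together with the preceding remark in the excerpt, which records that $X_a = (\diff L_a)_e(X_e)$ for every $a \in G$. Thus if $X_e = 0$, then $X_a = (\diff L_a)_e(0) = 0$ for all $a$, so $X$ is the zero vector field; this shows the kernel is trivial. For surjectivity, given an arbitrary tangent vector $v \in T_eG$, I would define a vector field $X^v$ by the formula
$$X^v_a = (\diff L_a)_e(v), \qquad a \in G.$$
This field evaluates to $v$ at $e$ because $L_e = \mathrm{id}_G$ gives $(\diff L_e)_e = \mathrm{id}_{T_eG}$.

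The key step is to verify that $X^v$ so defined is genuinely left invariant, i.e. that it lies in $\g$. This reduces to checking $(\diff L_a)_g(X^v_g) = X^v_{ag}$, which follows from the chain rule applied to the identity $L_a \circ L_g = L_{ag}$: differentiating at $e$ gives $(\diff L_a)_g \circ (\diff L_g)_e = (\diff L_{ag})_e$, and applying both sides to $v$ yields exactly the required relation. One should also confirm smoothness of $X^v$, which follows from the smoothness of the group multiplication and hence of the map $(a,g)\mapsto (\diff L_a)_e(v)$; this is the one place where the manifold structure of $G$ (rather than just its algebraic structure) is used, and it is the mildly technical point worth stating carefully.

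The main obstacle, such as it is, is precisely this smoothness verification for $X^v$: abstractly the formula produces a section of $TG$, but one must argue it is a smooth section to conclude $X^v \in C^\infty(TG)$ and hence $X^v \in \g$. Everything else is formal. Once $X^v \in \g$ with $(X^v)_e = v$ is established, surjectivity follows, and combined with the triviality of the kernel this proves that $X \mapsto X_e$ is a linear isomorphism. The dimension statement $\dim \g = \dim T_eG = \dim G$ then follows immediately.
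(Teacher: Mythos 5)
Your proposal is correct and follows essentially the same route as the paper's proof: linearity by inspection, injectivity from $X_g=(\diff L_g)_e(X_e)$, and surjectivity by defining $X^v_g=(\diff L_g)_e(v)$. You simply supply the details (the chain-rule verification of left invariance and the smoothness of $X^v$) that the paper leaves as assertions.
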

\begin{proof}
    The function clearly is linear. It is injective since $X_e=0$ implies $X_g=\diff L_g(X_e)=0$ for all $g\in G.$ Finally, the function is surjective. For any $v\in T_eG$ define the vector field $X^{v}$ by $X^{v}_g=(\diff L_g)_e(v)$ for all $g\in G.$ Then $X^{v}$ is left-invariant and by construction and $X_e^{v}=v.$
\end{proof}
    
\begin{definition}{\rm \cite{Helg}}
    Let $\a,\b$ be two Lie algebras over the same field $F$ of characteristic zero, and $\sigma:\a\rightarrow\b$ a linear mapping. Then $\sigma$ is called a {\it homomorphism} if $$\sigma([X,Y])=[\sigma X,\sigma Y]$$ for all $X,Y\in\a.$ If $\ker\sigma=\{0\},$ then $\sigma$ is called an {\it isomorphism}.
\end{definition}

\begin{theorem}{\rm \cite{Helg}}
    Let $G$ be a Lie group. If $K$ is a Lie subgroup of $G,$ then the Lie algebra $\k$ of $K$ is a Lie subalgebra of the Lie algebra $\g$ of $G.$ Each subalgebra of $\g$ is the Lie algebra of exactly one connected Lie subgroup of $G.$
\end{theorem}

\section{Matrix Lie Groups}
We now introduce matrix Lie groups, which we will reencounter in the later chapters. This sections follows the approach Hall takes in his book \cite{Hall}. We first consider matrix Lie groups as closed subgroups of the complex general linear group $\GLC n$, as stated in Definition \ref{matrixliegroup}. We then show that any such matrix Lie group is a Lie group in the sense of Definition \ref{defliegroup}. This can also be shown in concrete cases using the Implicit Function Theorem.
In the remainder of the section, we give several examples and discuss the corresponding Lie algebras. 

\begin{definition}
    The {\it complex general linear group} $$\GLC n=\{x\in\cn^n \ \rvert \ \det z\neq0\}$$ is the group of invertible $n\times n$-matrices with complex entries. The group operation is the usual matrix multiplication. 
\end{definition}
For the remainder of the thesis, we will equip $\GLC n$ with the following inner product.

\begin{definition}\label{standardmetric}
    The standard metric on $\GLC{n}$ is given by $$g(Z,W)=\Re\trace(\Bar{Z}^t\cdot W).$$
\end{definition}
A matrix Lie group is a closed subgroup $G$ of $\GLC n.$ Formally, we define it as follows.

\begin{definition}{\rm \cite{Hall}}\label{matrixliegroup}
    A {\it matrix Lie group} is a subgroup $G$ of $\GLC n$ with the following property: If $\{z_k\}$ is any sequence of matrices in $G,$ and $\{z_k\}$ converges (entrywise) to some matrix $z,$ then either $z\in G$ or $z$ is not invertible.
\end{definition}
Theorem \ref{matrixliegroupeqliegroup} shows that a matrix Lie group $G$ satisfies the conditions of Definition \ref{defliegroup}. We first need to introduce the corresponding Lie algebras of matrix Lie groups.

\begin{definition}
    Let $X\in M_n(\cn).$ Then we define the exponential function as follows: $$\exp(X)=\sum_{k=0}^\infty \frac{X^k}{k!}.$$ Here, define $X^0=I_n.$
\end{definition}

\begin{definition}{\rm \cite{Hall}}\label{liealg}
Let $G$ be a matrix Lie group. The Lie algebra $\g$ of $G$ is the set of all matrices $X$ such that $\exp({sX})\in G$ for all $s\in\rn.$ The bracket operation is given by $$[X,Y]=XY-YX$$ for $X,Y\in\g.$
\end{definition}
It is shown in sections 3.7 and 3.8 of \cite{Hall} that subgroups of $\GLC n$ that are closed in the sense of Definition \ref{matrixliegroup} are in fact Lie groups in the sense of Definition \ref{defliegroup}. We state the following Theorem.

\begin{theorem}{\rm \cite{Hall}}\label{matrixliegroupeqliegroup}
    Let $G\subseteq\GLC{n}$ be a matrix Lie group with Lie algebra $\g$ and let $m$ be the dimension of $\g$ as a real vector space. Then $G$ is a smooth embedded submanifold of $M_n(\cn)$ of dimension $m$ and hence a Lie group.
\end{theorem}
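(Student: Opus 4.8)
The plan is to use the matrix exponential of Definition~\ref{liealg} as a coordinate chart identifying $G$ near the identity with the linear subspace $\g\subset M_n(\cn)$, and then to spread this chart over all of $G$ via the left translations $L_a$, which are diffeomorphisms of the ambient group.

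First I would record that $\g$ is a real vector subspace of $M_n(\cn)$. Closure under real scalars is immediate from $\exp(s(tX))=\exp((st)X)\in G$, while closure under addition follows from the Lie product formula $\exp(s(X+Y))=\lim_{k\to\infty}\bigl(\exp(sX/k)\exp(sY/k)\bigr)^k$ together with the closedness of $G$ in the sense of Definition~\ref{matrixliegroup}. Fix a real-linear complement $\m$ so that $M_n(\cn)=\g\oplus\m$, and set $m=\dim_\rn\g$. Consider $\Phi(X,W)=\exp(X)\exp(W)$ on $\g\oplus\m$; since $(\diff\Phi)_{(0,0)}$ is the identity, $\Phi$ is a diffeomorphism from a neighbourhood of $(0,0)$ onto a neighbourhood $U$ of $I_n$.

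The heart of the argument, and the only place the closedness hypothesis is genuinely used, is the local claim that after shrinking $U$, the relation $\exp(W)\in G$ with $W\in\m$ forces $W=0$. Arguing by contradiction, suppose $W_k\in\m\setminus\{0\}$ with $W_k\to 0$ and $\exp(W_k)\in G$; passing to a subsequence, $W_k/|W_k|$ converges to some unit vector $W\in\m$. For fixed $t\in\rn$ write $t/|W_k|=q_k+r_k$ with $q_k\in\zn$ and $r_k\in[0,1)$; then $\exp(W_k)^{q_k}=\exp(q_k W_k)\in G$ since $G$ is a group, and $q_k W_k\to tW$, so closedness yields $\exp(tW)\in G$ for every $t$. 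Hence $W\in\g$, contradicting $0\neq W\in\m$ and $\g\cap\m=\{0\}$. I expect this normalisation-and-limit step to be the main obstacle, as it is where the topological hypothesis of Definition~\ref{matrixliegroup} must be converted into the algebraic conclusion $W\in\g$.

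With the claim in hand, for $(X,W)$ near $(0,0)$ one has $\Phi(X,W)=\exp(X)\exp(W)\in G$ if and only if $W=0$: indeed $\exp(X)\in G$ and $G$ is a group, so $\exp(W)=\exp(X)^{-1}\Phi(X,W)\in G$, forcing $W=0$ by the claim. Thus $\Phi$ is a slice chart exhibiting $G\cap U$ as a smooth embedded submanifold near $I_n$ of dimension $m$. To obtain charts at an arbitrary $a\in G$, I would apply the left translation $L_a$, which is a diffeomorphism of $\GLC n$ preserving $G$ and sending $I_n$ to $a$; then $L_a(G\cap U)=G\cap aU$ is a slice in $aU$, and the resulting atlas makes $G$ a smooth embedded submanifold of $M_n(\cn)$ of dimension $m$. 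Finally, since $G$ is a subgroup and both multiplication and inversion are smooth on the ambient $\GLC n$, their restrictions to the embedded submanifold $G$ are smooth, so $G$ satisfies Definition~\ref{defliegroup} and is a Lie group.
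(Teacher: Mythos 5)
Your argument is correct and is precisely the standard closed-subgroup proof from Hall (Sections 3.7--3.8), which is exactly the source the thesis cites for this theorem without reproducing a proof of its own: the Lie product formula to show $\g$ is a real subspace, the slice chart $\Phi(X,W)=\exp(X)\exp(W)$, the normalisation-and-limit step converting closedness into $W\in\g$, and the spreading by left translations are all in order. There is nothing to add.
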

We resume by stating some important examples.
\begin{definition}
    The {\it real general linear group} $$\GLR n=\{x\in\mathbb{R}^n | \det x\neq0\} $$ is the group of invertible $n\times n$-matrices with real entries. The group operation is the usual matrix multiplication. 
    The {\it orthogonal} and {\it real special linear group} are the following subgroups of $\GLR n.$
\begin{eqnarray*}
   \SLR{n}&=&\{x\in\R^{n\times n} \ \rvert \ \det x=1\},\\
   \O n&=&\{x\in\R^{n\times n}\ \rvert \ x\cdot x^t=I_n\}. 
\end{eqnarray*}
The {\it special orthogonal group} $\SO{n}$ is the intersection of the orthogonal and the real special linear groups.
\begin{eqnarray*}
    \SO{n}&=& \O{n}\cap\SLR n\\
    &=&\{x\in\R^{n\times n} \, | \, x\cdot x^t=I_n, \ \det x=1\}. 
\end{eqnarray*}
\end{definition}

\begin{proposition}\label{propmatrixliegroups}{\rm \cite{Hall}}
    The subgroups $\GLR n, \O n, \SLR n, \SO{n}$ of $\GLC n$ are matrix Lie groups.
\end{proposition}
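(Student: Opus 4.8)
The plan is to verify directly that each of the four sets satisfies the two requirements in Definition \ref{matrixliegroup}: that it is a subgroup of $\GLC n$, and that it is closed in the sequential sense demanded there. The subgroup property is immediate, since $\GLR n\subset\GLC n$ and each of $\O n$, $\SLR n$, $\SO n$ is readily checked to be closed under matrix multiplication and inversion (for instance, if $xx^t=I_n$ then $x^{-1}=x^t$, and $(xy)(xy)^t=xyy^tx^t=I_n$). So the real content lies in the limit condition.

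The key observation I would use is that each defining relation is a \emph{closed condition}, i.e. preserved under entrywise limits, because the maps $x\mapsto\det x$ and $x\mapsto x^t$ and matrix multiplication are all continuous. Concretely, suppose $\{z_k\}$ is a sequence in $G$ converging entrywise to a matrix $z$. Since each $z_k$ has real entries, so does $z$. For $\GLR n$, $z$ is real, so either $\det z\neq0$ and hence $z\in\GLR n$, or $\det z=0$ and hence $z$ is not invertible; in either case the requirement of Definition \ref{matrixliegroup} is met. For $\SLR n$, continuity of the determinant gives $\det z=\lim_k\det z_k=1$, so $z\in\SLR n$. For $\O n$, continuity of transposition and multiplication gives $zz^t=\lim_k z_kz_k^t=I_n$, so $z\in\O n$. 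Finally, for $\SO n=\O n\cap\SLR n$ both relations pass to the limit, so $z\in\SO n$.

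In fact, for $\SLR n$, $\O n$ and $\SO n$ the limit $z$ always lies in the group, so these three are honestly closed subsets of $\GLC n$; the escape clause ``or $z$ is not invertible'' is only invoked for $\GLR n$, where a sequence of invertible real matrices can degenerate to a singular one. Once the matrix Lie group property is established, Theorem \ref{matrixliegroupeqliegroup} upgrades each to a genuine Lie group in the sense of Definition \ref{defliegroup}.

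I do not expect a serious obstacle: the argument is a routine application of continuity. The only point requiring a moment's care is the distinction in Definition \ref{matrixliegroup} between ``the limit lies in $G$'' and ``the limit is singular'', which is precisely what accommodates $\GLR n$. Unlike the other three groups, $\GLR n$ is not a closed subset of the full space $M_n(\cn)$, but it does satisfy the weaker closedness built into the definition, and isolating this case is the one place where one must not simply appeal to ``closedness'' uncritically.
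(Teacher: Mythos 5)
Your proposal is correct and follows essentially the same route as the paper: both verify the sequential limit condition of Definition \ref{matrixliegroup} case by case, using that the defining relations ($x$ real, $xx^t=I_n$, $\det x=1$) are preserved under entrywise limits by continuity, with the ``or $z$ is not invertible'' clause invoked only for $\GLR n$. You are slightly more thorough in that you also check the subgroup axioms and treat $\SO n$ explicitly as the intersection of two closed conditions, which the paper leaves implicit.
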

\begin{proof}
    Let $\{x_k\}$ be a sequence of matrices in $\GLR n$ converging to a matrix $x.$ Clearly the entries of $x$ must be real. Now either $x$ is invertible, in which case $x\in\GLR n,$ or $x$ is not invertible. We conclude that $\GLR n$ is a matrix Lie group.

    If $\{x_k\}$ is a sequence of matrices in $\O n,$ then each matrix $x_k$ satisfies $x_kx_k^t=I_n.$ Since this relation is preserved when taking the limit, we see that $\O n$ is a matrix Lie group.

    Finally, $\SLR n $ is a matrix Lie group since the function $$x\mapsto\det x$$ is continuous. Indeed, if the sequence $\{x_k\}$ of matrices in $\SLR n$ converges to some matrix $x$, then $\det x=1$ and thus $x\in\SLR n$.
\end{proof}
We define groups of matrices with complex-valued entries. 
\begin{definition}
   % The {\it complex general linear group} $$\GLC n=\{x\in\cn^n \ \rvert \ \det z\neq0\}$$ is the group of invertible $n\times n$-matrices with complex entries. The group operation is the usual matrix multiplication. 
    The {\it unitary} and {\it complex special linear group} are the subgroups of $\GLC n$ given by
\begin{eqnarray*}
    \U n&=&\{z\in\cn^{n\times n}\ \rvert \ z\cdot z^*=I_n\},  \\
    \SLC{n}&=&\{z\in\cn^{n\times n} \ \rvert \ \det z=1\}.
\end{eqnarray*}
The {\it special unitary group} is the intersection of the unitary and the complex special linear group.
\begin{eqnarray*}
    \SU{n}&=& \U{n}\cap\SLC n\\
    &=&\{z\in\cn^{n\times n} \ \rvert \ z\cdot z^*=I_n, \ \det z=1\}. 
\end{eqnarray*}
\end{definition}

\begin{proposition}\label{unmatrixlie}
    The subgroups $\U n, \SLC n, \SU{n}$ of $\GLC n$ are matrix Lie groups. 
\end{proposition}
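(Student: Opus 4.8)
The plan is to verify the closedness condition of Definition \ref{matrixliegroup} directly for each of the three subgroups, exactly as was done for $\GLR n$, $\O n$ and $\SLR n$ in Proposition \ref{propmatrixliegroups}. The key observation is that $\U n$, $\SLC n$ and $\SU n$ are each cut out of $\GLC n$ by continuous matrix equations, so the defining relations survive passage to an entrywise limit; in fact the limit matrix will always land back inside the subgroup, so the ``$z$ is not invertible'' alternative of Definition \ref{matrixliegroup} need never be invoked.

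First I would treat $\SLC n$. Given a sequence $\{z_k\}$ in $\SLC n$ converging entrywise to a matrix $z$, I would use that the determinant is a polynomial in the entries, hence continuous, to conclude $\det z = \lim_k \det z_k = 1$. In particular $z$ is invertible and $z \in \SLC n$. Next I would handle $\U n$: the map $z \mapsto z z^*$ is continuous, so taking limits in $z_k z_k^* = I_n$ gives $z z^* = I_n$, whence $z$ is invertible with inverse $z^*$ and $z \in \U n$.

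Finally, since $\SU n = \U n \cap \SLC n$ is defined by the two conditions $z z^* = I_n$ and $\det z = 1$ simultaneously, and both are preserved under limits by the previous step, any entrywise limit of a sequence in $\SU n$ again satisfies both relations and so lies in $\SU n$. This settles the verification for all three groups.

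I do not expect a genuine obstacle here, as the entire argument rests on the continuity of the polynomial map $z \mapsto \det z$ and of $z \mapsto z z^*$. The only point requiring a moment's care is checking that each limit matrix is actually invertible; but this is immediate, since the surviving relation supplies an explicit inverse ($z^*$ in the unitary case) or forces a nonvanishing determinant.
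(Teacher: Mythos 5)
Your proposal is correct and follows exactly the route the paper intends: the paper's proof of Proposition \ref{unmatrixlie} simply defers to the real case (Proposition \ref{propmatrixliegroups}), where the closedness condition of Definition \ref{matrixliegroup} is verified by passing the defining relations $z z^*=I_n$ and $\det z=1$ to the entrywise limit via continuity. Your additional remark that the limit matrix is automatically invertible, so the second alternative in the definition is never needed, is a correct and slightly more explicit observation than the paper makes.
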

\begin{proof}
  The proof is similar to the real case, see Proposition \ref{propmatrixliegroups}.  
\end{proof}

%As we have suggested previously, it can also be shown explicitly that the matrix Lie groups are Lie groups. We will demonstrate this is the following example
%\begin{example}
 %   It is clear that $\U{n}$ is a group with the operation of standard matrix multiplication. It is left to show that $\U{n}$ is a smooth manifold. 
    
%    The map $$\det: \cn^{n\times n}\rightarrow\cn,\quad z\mapsto \det z,$$ mapping each matrix to its determinant is a continuous map. The one-point set $\{0\}$ is closed in $\cn,$ thus the preimage of $\{0\}$ under $\det$ is closed in in $\cn^{n\times n}.$ Hence its complement $\GLC{n}$ must be open. Since $\GLC{n}$ is an open subset of an Euclidean space, it must be a smooth manifold. 
%\end{example}
Some of the matrix Lie groups are compact, as the next proposition shows.
\begin{proposition}{\rm \cite{Hall}}
The Lie groups $\O{n},$ $\SO{n},$$\U{n},$ and $\SU{n}$ are compact.
\end{proposition}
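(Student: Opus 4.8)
The plan is to show compactness of each of these groups by establishing that they are closed and bounded subsets of the ambient matrix space, and then invoking the Heine--Borel theorem. We identify $M_n(\cn)$ with $\cn^{n^2}\cong\rn^{2n^2}$, equipped with the standard metric from Definition \ref{standardmetric}, so that compactness is equivalent to being closed and bounded in this Euclidean space. Since each of the four groups is a matrix Lie group in the sense of Definition \ref{matrixliegroup} (by Propositions \ref{propmatrixliegroups} and \ref{unmatrixlie}), and each is a subgroup of the compact defining condition rather than merely a closed subgroup of $\GLC n$, the essential content is to verify boundedness and genuine closedness (i.e. that limit points are invertible and satisfy the defining relations).

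First I would treat boundedness. For $\O n$ and $\SO n$, the defining relation $x x^t = I_n$ forces every row of $x$ to be a unit vector, so each entry satisfies $|x_{ij}|\le 1$ and $\|x\|^2 = \trace(x^t x) = n$; hence these groups lie in a sphere of radius $\sqrt{n}$ and are bounded. The same argument applies to $\U n$ and $\SU n$ using $z z^* = I_n$, which gives $\trace(z^* z)=n$, so $\|z\|^2=n$ and the group is again bounded. Next I would treat closedness: I would argue that each defining condition is preserved under entrywise limits, as the maps $x\mapsto x x^t - I_n$, $z \mapsto z z^* - I_n$ and $x\mapsto \det x$ are continuous. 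Thus if $\{x_k\}$ lies in the group and $x_k \to x$, the limit $x$ still satisfies the defining equation; in particular $x x^t = I_n$ (resp. $z z^* = I_n$) already guarantees $\det x = \pm 1 \neq 0$ (resp. $|\det z| = 1$), so $x$ is invertible and therefore lies in the group itself. This upgrades the matrix Lie group property (where the limit could merely fail to be invertible) to genuine topological closedness in $M_n(\cn)$.

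Having established that each group is closed and bounded in $\rn^{2n^2}$, the Heine--Borel theorem yields compactness, completing the proof. The main obstacle, though a minor one, is organisational rather than conceptual: one must be careful to phrase closedness as closedness in the full matrix space $M_n(\cn)$ (not merely in $\GLC n$), since compactness is a statement about the ambient Euclidean topology, and the matrix-Lie-group condition of Definition \ref{matrixliegroup} only asserts that entrywise limits are either in the group or non-invertible. The key step that removes this obstacle is the observation that the defining orthogonality/unitarity relations automatically force invertibility of any limit, so the non-invertible alternative never occurs for these four groups. I would state the unitary and special unitary cases briefly, noting that they are entirely analogous to the orthogonal ones with the transpose replaced by the conjugate transpose.
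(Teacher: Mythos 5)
Your proposal is correct and follows essentially the same route as the paper: boundedness via the Frobenius norm $\trace(\Bar z^t z)=n$, closedness as the preimage of $I_n$ under a continuous map (plus continuity of $\det$ for the special subgroups), and then Heine--Borel. The paper only writes out the $\U n$ case explicitly and declares the others similar, so your extra care about the determinant condition and about why limits remain invertible is a harmless elaboration of the same argument.
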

\begin{proof}
    We will prove the statement in the complex setting. The corresponding proof in the real case is similar.

To see that $\U{n}$ is closed, we consider the continous map $$\phi: M_n(\cn)\rightarrow M_n(\cn), \quad z\mapsto z\Bar{z}^t$$ and simply note that $\U{n}$ is the inverse image of the unit matrix $I_n.$

    The Lie group $\U{n}$ is bounded, since for every $z\in\U{n}$ $$|z|^2=\langle z,z\rangle_F=\trace(\Bar{z}^t\cdot z)=\trace(I_n)=n.$$ Here, we denote the standard Frobenius inner product by $\langle\cdot,\cdot\rangle_F.$ Alternatively, it is also clear that if $z\in\U{n},$ then for each entry $|z_{ij}|\leq1,$ since the columns of $z$ are unit vectors in $\cn^n$. 

By the Heine-Borel Theorem, compactness follows. 
\end{proof}
We now dive a description of the Lie algebras of the previously discussed matrix Lie groups.

\begin{proposition}{\rm \cite{Hall}}
    The Lie algebra $\glc{n}$ of $\GLC{n}$ is given by $M_n(\cn),$ which is the space of all $n\times n$ matrices with complex entries. The Lie algebra $\glr{n}$ of $\GLR{n}$ is given by $M_n(\rn),$ the space of all $n\times n$ matrices with real entries. 
\end{proposition}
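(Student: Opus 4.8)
The plan is to work directly from the definition of the Lie algebra of a matrix Lie group given in Definition~\ref{liealg}: the Lie algebra $\g$ consists of all matrices $X$ with $\exp(sX)\in G$ for every $s\in\rn$. The entire argument rests on one elementary fact about the exponential. For any matrix $X$, the matrices $X$ and $-X$ commute, so $\exp(X)\exp(-X)=\exp(0)=I_n$, which shows that $\exp(X)$ is always invertible, with inverse $\exp(-X)$. In particular $\det\exp(X)\neq0$ for every $X$.

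First I would treat the complex case. Given any $X\in M_n(\cn)$ and any $s\in\rn$, the matrix $sX$ again lies in $M_n(\cn)$, and by the fact above $\exp(sX)$ is invertible; hence $\exp(sX)\in\GLC n$, so $X\in\glc n$. This gives $M_n(\cn)\subseteq\glc n$. The reverse inclusion is immediate, since by definition $\glc n$ is already a set of $n\times n$ complex matrices. Therefore $\glc n=M_n(\cn)$.

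Next I would treat the real case. If $X\in M_n(\rn)$, then every power $X^k$ has real entries, so each partial sum of the series $\exp(sX)=\sum_{k\geq0}(sX)^k/k!$ is real for real $s$, and the limit $\exp(sX)$ is therefore a real matrix; combined with invertibility this gives $\exp(sX)\in\GLR n$, so $X\in\glr n$ and hence $M_n(\rn)\subseteq\glr n$. For the converse inclusion, suppose $X\in\glr n$, so that $\exp(sX)$ has real entries for all real $s$. I would recover $X$ by differentiation: since $\tfrac{d}{ds}\exp(sX)=X\exp(sX)$, evaluating at $s=0$ yields $X$. As $s\mapsto\exp(sX)$ is an entrywise real-valued curve, its derivative at $s=0$ has real entries, whence $X\in M_n(\rn)$ and $\glr n=M_n(\rn)$.

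The main (and really only) obstacle is this last step, the inclusion $\glr n\subseteq M_n(\rn)$, since the other three inclusions follow at once from invertibility of the exponential together with the observation that the exponential of a real matrix is real. The differentiation argument dispatches it cleanly, provided one is willing to differentiate the matrix exponential entrywise; a slightly more pedestrian alternative would be to read off $X$ from the first-order term of the power series, but the derivative at $s=0$ is the most direct route.
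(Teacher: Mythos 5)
Your proof is correct and follows essentially the same route as the paper: invertibility of $\exp(sX)$ gives $M_n(\cn)\subseteq\glc n$, reality of $\exp(sX)$ for real $X$ gives one inclusion in the real case, and the converse real inclusion is the only point needing care. The paper merely asserts that ``if $e^{sX}$ is real, then $X$ must also be real,'' whereas you justify it by differentiating the real-valued curve $s\mapsto\exp(sX)$ at $s=0$; this is the standard argument and fills in the one step the paper leaves implicit.
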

\begin{proof}
Definition \ref{liealg} states that the Lie algebra $\g$ of a matrix Lie group $G$ consists of all matrices $X$ such that $e^{sX}\in G$ for all $s\in\rn.$ 
    Let $X\in M_n(\cn).$ Then $e^{sX}$ is invertible, thus $X\in\glc{n}.$ If $X$ has only real entries, then clearly $e^{sX}$ is also real and $X\in\glr{n}.$ On the other hand if $e^{sX}$ is real, then $X$ must also be real.
\end{proof}  

\begin{proposition}{\rm \cite{Gud-Rie}}\label{comprulesexp}
    For $X\in M_n(\cn),$ the exponential map satisfies the following.
    \begin{enumerate}
        \item $e^{X^t}=(e^X)^t,$
        \item $e^{\Bar{X}}=\overline{e^X},$
        \item $\det(e^X)=e^{\trace(X)}.$
    \end{enumerate}
\end{proposition}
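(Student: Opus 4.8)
The plan is to treat the three identities separately, deriving the first two directly from the power series and the third by reducing to upper-triangular form.

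For parts (i) and (ii), I would work straight from the definition $e^X=\sum_{k=0}^\infty X^k/k!$. The transpose map $A\mapsto A^t$ is $\R$-linear and continuous on $M_n(\cn)$ and satisfies $(AB)^t=B^tA^t$, so that $(X^k)^t=(X^t)^k$. Since the exponential series converges in $M_n(\cn)$, I can interchange the transpose with the (continuous, linear) infinite sum to obtain $(e^X)^t=\sum_k (X^k)^t/k!=\sum_k (X^t)^k/k!=e^{X^t}$. The argument for (ii) is identical, using that entrywise complex conjugation $A\mapsto\bar A$ is continuous, additive, and multiplicative ($\overline{AB}=\bar A\,\bar B$, since conjugation is a ring homomorphism of $\cn$), whence $\overline{X^k}=\bar X^k$ and $\overline{e^X}=e^{\bar X}$.

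For part (iii), the key structural facts are that both $\det$ and $\trace$ are invariant under conjugation, and that the exponential respects conjugation: from $(PXP^{-1})^k=PX^kP^{-1}$ and continuity one gets $e^{PXP^{-1}}=Pe^XP^{-1}$. Over $\cn$ every matrix $X$ can be conjugated to an upper-triangular matrix $T=PXP^{-1}$, with the eigenvalues $\lambda_1,\dots,\lambda_n$ of $X$ on the diagonal. I would then compute both sides on $T$: since the product of upper-triangular matrices is upper-triangular and multiplies the diagonal entries, each power $T^k$ is upper-triangular with diagonal $(\lambda_1^k,\dots,\lambda_n^k)$, and passing to the limit shows $e^T$ is upper-triangular with diagonal $(e^{\lambda_1},\dots,e^{\lambda_n})$. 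Hence
$$\det(e^X)=\det(e^T)=\prod_{i=1}^n e^{\lambda_i}=e^{\sum_i \lambda_i}=e^{\trace(T)}=e^{\trace(X)}.$$

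The only real obstacle is justifying that the limit of the upper-triangular partial sums reads off the diagonal correctly, i.e. that the $(i,i)$ entry of $e^T$ is $\sum_k \lambda_i^k/k!=e^{\lambda_i}$. This is immediate once I note that the diagonal entries of each partial sum $\sum_{k=0}^N T^k/k!$ depend only on the diagonal of $T$, so they converge to $e^{\lambda_i}$, while entrywise limits of upper-triangular matrices remain upper-triangular. An alternative route avoiding triangularization would be to set $f(s)=\det(e^{sX})$ and use Jacobi's formula together with $\frac{d}{ds}e^{sX}=Xe^{sX}$ to show $f'(s)=\trace(X)\,f(s)$ with $f(0)=1$; but this needs the derivative-of-determinant identity, so I would prefer the triangularization argument as more self-contained.
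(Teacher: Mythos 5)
The paper states this proposition without giving any proof of its own, simply citing the lecture notes \cite{Gud-Rie}, so there is no in-paper argument to compare against. Your proposal is correct and complete: parts (i) and (ii) follow exactly as you say from continuity and multiplicativity of transpose and conjugation applied termwise to the convergent series, and your triangularization argument for (iii) (conjugation invariance of $\det$, $\trace$ and $\exp$, plus reading off the diagonal of $e^T$) is the standard self-contained route, with the Jacobi-formula ODE argument a valid alternative.
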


\begin{proposition}{\rm \cite{Hall}}
    The Lie algebra $\slr{n}$ of the special linear group $\SLR{n}$ is given by $$\slr n =\{ X\in\glr{n} \ \rvert \ \trace(X)=0\}.$$
    The Lie algebra $\slc{n}$ of the special linear group $\SLC{n}$ is given by $$\slc n =\{ X\in\glc{n} \ \rvert \ \trace(X)=0\}.$$
\end{proposition}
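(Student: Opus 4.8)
The plan is to apply Definition \ref{liealg} directly, reducing membership in the Lie algebra to a condition on the determinant of the exponential, which Proposition \ref{comprulesexp} then converts into a condition on the trace. I would treat the complex case $\slc{n}$ in full and observe that the real case $\slr{n}$ follows by the identical argument restricted to real matrices.

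By Definition \ref{liealg}, a matrix $X\in\glc{n}$ lies in $\slc{n}$ if and only if $e^{sX}\in\SLC{n}$ for every $s\in\rn$, that is, $\det(e^{sX})=1$ for all $s\in\rn$. By part (iii) of Proposition \ref{comprulesexp}, $\det(e^{sX})=e^{\trace(sX)}=e^{s\,\trace(X)}$, so the defining condition becomes
$$e^{s\,\trace(X)}=1 \quad \text{for all } s\in\rn.$$

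First I would dispatch the easy inclusion: if $\trace(X)=0$, then $e^{s\,\trace(X)}=e^{0}=1$ for every $s$, so $X\in\slc{n}$. For the converse, suppose $e^{s\,\trace(X)}=1$ for all $s\in\rn$. The cleanest way to conclude $\trace(X)=0$ is to differentiate this identity in $s$ and evaluate at $s=0$: the left-hand side has derivative $\trace(X)\,e^{s\,\trace(X)}$, which equals $\trace(X)$ at $s=0$, whereas the right-hand side is constant with derivative $0$. Hence $\trace(X)=0$, and the two inclusions together give $\slc{n}=\{X\in\glc{n}\mid\trace(X)=0\}$.

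The real case is identical: if $X\in\glr{n}$ then $e^{sX}$ is again real (for instance by part (ii) of Proposition \ref{comprulesexp}, since $\bar{X}=X$), so $X\in\slr{n}$ if and only if $\det(e^{sX})=1$ for all $s$, and the same trace computation applies. The only point requiring a little care is the converse direction, where one must rule out a nonzero (possibly complex) value of $\trace(X)$ even though $e^{s\,\trace(X)}$ might conceivably return to $1$; the differentiation argument handles this uniformly, whereas an argument via injectivity of the exponential would have to separate the real and imaginary parts of $\trace(X)$ in the complex case. This is the only genuine subtlety, and it is a mild one.
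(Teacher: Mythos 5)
Your proposal is correct and follows essentially the same route as the paper: both reduce membership to $\det(e^{sX})=e^{s\trace(X)}=1$ for all $s$ via Proposition \ref{comprulesexp} and then extract $\trace(X)=0$ by differentiating at $s=0$, with the converse inclusion being immediate. Your additional remark on why differentiation (rather than injectivity of $\exp$) cleanly handles a possibly complex trace is a nice touch but does not change the substance.
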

\begin{proof}
We prove the claim for the complex special linear group, as the proof is the real setting is done analogously. 
    Assume $X\in\slc{n}.$ Then $e^{sX}\in\SLC{n}$ for all $s\in\rn.$ By Proposition \ref{comprulesexp}, $$\det(e^{sX})=e^{s\trace(X)}=1.$$ It follows that $$\trace(X)=\frac{d}{ds}(e^{s\trace(X)})\rvert_{s=0}=\frac{d}{ds}(1)\rvert_{s=0}=0.$$ 
    
    Conversely, if $X\in\glc{n}$ satisfies $\trace X=0,$ then clearly for all $s\in\rn$ it holds that $$\det(e^{sX})=e^{s\trace(X)}=e^0=1$$ and thus $e^{sX}\in\slc{n}.$ 
\end{proof}

\begin{proposition}\label{liealgebraonun}{\rm \cite{Hall}}
    The Lie algebra $\o{(n)}$ of the orthogonal group $\O{n}$ is given by $$\o{(n)}=\{X\in\glr{n} \ \rvert \ X^t+X=0\}.$$
    The Lie algebra $\u{n}$ of the unitary group $\U{n}$ is given by $$\u{n}=\{X\in\glc{n} \ \rvert \ \Bar{X}^t+X=0\}.$$
\end{proposition}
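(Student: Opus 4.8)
The plan is to work directly from Definition~\ref{liealg}, which characterises the Lie algebra $\g$ of a matrix Lie group $G$ as the set of matrices $X$ with $\exp(sX)\in G$ for every $s\in\rn$. I would prove each of the two claimed identities by establishing both inclusions, and since the orthogonal and unitary cases are formally identical up to replacing the transpose $X\mapsto X^t$ by the conjugate transpose $X\mapsto\bar X^t$, I would carry them out in parallel. The two facts from Proposition~\ref{comprulesexp} that I would lean on throughout are $e^{X^t}=(e^X)^t$ and $e^{\bar X}=\overline{e^X}$, which together give $(e^X)^*=e^{X^*}$ where $X^*=\bar X^t$.

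For the inclusion $\{X : X^t+X=0\}\subseteq\o n$ (and likewise for $\u n$), I would start from a matrix $X$ satisfying $X^t=-X$ (resp.\ $X^*=-X$) and simply verify the defining relation of the group. Using the exponential rules, $(e^{sX})^t=e^{sX^t}=e^{-sX}$; since $sX$ and $-sX$ commute, the standard identity $e^{A}e^{B}=e^{A+B}$ for commuting $A,B$ gives $e^{sX}(e^{sX})^t=e^{sX}e^{-sX}=I_n$, so $e^{sX}\in\O n$ for all $s$, hence $X\in\o n$. For $\U n$ the same computation with $(e^{sX})^*=e^{sX^*}=e^{-sX}$ applies. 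This direction is essentially a one-line check once the exponential rules are in hand.

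The reverse inclusion is where the real content lies. Given $X\in\o n$, so that $e^{sX}(e^{sX})^t=I_n$ for all $s\in\rn$, I would first observe that $X$ is necessarily real: differentiating $s\mapsto e^{sX}$ at $s=0$ recovers $X$, and this derivative is a limit of real matrices, so $X\in\glr n$. Then I would consider the matrix-valued function $f(s)=e^{sX}(e^{sX})^t=e^{sX}e^{sX^t}$, which is constant equal to $I_n$, and differentiate at $s=0$ via the product rule together with $\tfrac{d}{ds}e^{sX}\big|_{s=0}=X$ and $\tfrac{d}{ds}e^{sX^t}\big|_{s=0}=X^t$. This yields $0=f'(0)=X+X^t$, which is exactly the claimed condition. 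The unitary case is identical after replacing $X^t$ by $X^*=\bar X^t$ and invoking $(e^{sX})^*=e^{sX^*}$.

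The main technical obstacle is justifying the differentiation step: I must know that the matrix exponential series may be differentiated term by term, that $\tfrac{d}{ds}e^{sX}\big|_{s=0}=X$, and that the product rule applies to the product of two such matrix-valued functions. This rests on the absolute and locally uniform convergence of the exponential series, which I would quote as standard. Everything else — the commuting-exponential identity and the behaviour of transpose and conjugation under $\exp$ — is furnished directly by Proposition~\ref{comprulesexp}, so once the differentiation is set up cleanly the two identities follow immediately.
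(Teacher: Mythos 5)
Your proposal is correct and follows essentially the same route as the paper: verify the forward inclusion by computing $e^{sX^t}e^{sX}=e^{s(X^t+X)}=I_n$ using the exponential rules of Proposition~\ref{comprulesexp}, and obtain the reverse inclusion by differentiating $(e^{sX})^t e^{sX}=I_n$ at $s=0$. Your extra remarks (checking that $X$ is real in the orthogonal case, and noting explicitly that $sX$ and $-sX$ commute) only make explicit what the paper leaves implicit.
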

\begin{proof}
We will prove the statement for the orthogonal group. Assume $X\in \o{(n)}.$ Then for all $s\in\rn,$ we have that $e^{sX}\in\O{n}$ and thus $$(e^{sX})^t\cdot e^{sX}=I_n.$$ Differentiating on both sides yields $$0=\frac{d}{ds}((e^{sX})^t\cdot e^{sX})\rvert_{s=0}=X^t+X.$$ Here we again used Proposition \ref{comprulesexp}.

    Conversely, if $X\in\glr{n}$ satisfies $X^t+X=0,$ then $$(e^{sX})^t\cdot e^{sX}=e^{sX^t}\cdot e^{sX}=e^{s(X^t+X)}=e^0=I_n$$ for all $s\in\rn.$ It follows that $X\in\o{(n)}.$ The proof for $\u{n}$ is done analogously. 
\end{proof}
The last two propositions we draw the following conclusion.
\begin{corollary}{\rm \cite{Hall}}
    The Lie algebra $\su{n}$ of $\SU{n}$ consists of all skew-herminitian $n\times n$ matrices, and the Lie algebra $\so{n}$ of $\SO{n}$ is the same as that of $\O{n}.$
    \end{corollary}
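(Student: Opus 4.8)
The plan is to exploit the fact that both groups are defined as intersections, namely $\SU n = \U n \cap \SLC n$ and $\SO n = \O n \cap \SLR n$, together with the general principle that the Lie algebra of an intersection of matrix Lie groups is the intersection of their Lie algebras. This principle is immediate from Definition \ref{liealg}: for a matrix $X$ one has $e^{sX} \in G \cap H$ for all $s \in \rn$ if and only if $e^{sX} \in G$ for all $s$ and $e^{sX} \in H$ for all $s$, so $X$ belongs to the Lie algebra of $G \cap H$ precisely when it belongs to both $\g_G$ and $\g_H$. Hence $\su n = \u n \cap \slc n$ and $\so n = \o{(n)} \cap \slr n$, and it remains only to identify these intersections explicitly.

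For $\su n$, Proposition \ref{liealgebraonun} gives $\u n = \{X \in \glc n : \Bar{X}^t + X = 0\}$, the skew-hermitian matrices, while the earlier characterization of the special linear algebra gives $\slc n = \{X \in \glc n : \trace X = 0\}$. Intersecting, $\su n$ consists exactly of the traceless skew-hermitian matrices. I would flag here the one genuine subtlety: a skew-hermitian matrix has purely imaginary diagonal entries, so its trace is purely imaginary but need not vanish (for instance $i I_n$ is skew-hermitian with trace $n i$). Thus $\trace X = 0$ is a nontrivial additional constraint, and the statement is best read as asserting that $\su n$ is the algebra of traceless skew-hermitian matrices.

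For $\so n$, the same reasoning yields $\so n = \o{(n)} \cap \slr n$, where by Proposition \ref{liealgebraonun} the algebra $\o{(n)}$ consists of the skew-symmetric matrices with $X^t = -X$. The key observation, and the point that makes the second half of the corollary come out cleanly, is that skew-symmetry forces every diagonal entry to satisfy $X_{ii} = -X_{ii}$, hence $X_{ii} = 0$; therefore $\trace X = 0$ automatically and $\o{(n)} \subseteq \slr n$. The intersection then collapses, giving $\so n = \o{(n)}$, so the Lie algebra of $\SO n$ coincides with that of $\O n$, as claimed.

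The computations are all routine; the only place that demands care is the contrast between the two cases. In the real, skew-symmetric setting the diagonal entries are genuinely zero, which forces tracelessness for free and explains why $\so n = \o{(n)}$ with no extra condition; in the complex, skew-hermitian setting the diagonal is merely purely imaginary, so tracelessness must be imposed separately. I expect no serious obstacle beyond recording the intersection principle and making this diagonal observation precise.
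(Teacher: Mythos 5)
Your proposal is correct and follows essentially the same route as the paper, which states this as an immediate consequence of the two preceding propositions: one intersects $\u{n}$ with $\slc{n}$ and $\o{(n)}$ with $\slr{n}$, using that the Lie algebra of an intersection of matrix Lie groups is the intersection of the Lie algebras. Your observation that the trace condition is \emph{not} automatic in the complex case is a genuine and worthwhile correction: as stated, the corollary omits ``traceless,'' and indeed $iI_n$ is skew-hermitian but does not lie in $\su{n}$ (consistent with the paper's own later description $\su{n}=\so{n}\oplus i\m$ with $\m$ traceless), whereas in the real skew-symmetric case the vanishing diagonal makes the trace condition vacuous, exactly as you explain.
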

We now wish to find bases or generators for the Lie algebras given above. For this, we define our “building blocks" of matrices. This is the standard notation used in the research papers cited in this thesis, for example \cite{Gud-Sak-1} and \cite{Gud-Mun-1}.
\begin{definition}
    For $1\leq r,s\leq n,$ let $E_{rs}$ be the $n\times n$ matrix with entries $$(E_{rs})_{i,j}=\delta_{ir}\delta_{js}.$$ For $1\leq r<s\leq n,$ we define $$X_{rs}=\frac{1}{\sqrt{2}}\cdot(E_{rs}+E_{sr}),\quad Y_{rs}=\frac{1}{\sqrt{2}}\cdot(E_{rs}-E_{sr}),\quad D_{rs}=\frac{1}{\sqrt{2}}\cdot(E_{rr}-E_{ss}).$$
\end{definition}

\begin{remark}{\rm \cite{Gud-Rie}}
    Recall that a metric $g$ on a Lie group $G$ is said to be left-invariant if for all $p\in G$ the left-translation $L_p$ is an isometry. 
    In this case, $g$ is entirely determined by the scalar product $g_e:T_eG\times T_eG\rightarrow\rn$ at the origin $e.$ Indeed,
    if $X,Y\in\g$ are left-invariant vector fields on $G,$
    $$g_p(X_p,Y_p)=g_p((\diff L_p)_e(X_e),(\diff L_e)_p(Y_e))=g_e(X_e,Y_e).$$
\end{remark}
\begin{remark}
    We have seen in Proposition \ref{liealgisototangentate} that the Lie algebra $\g$ of a Lie group $G$ is isomorphic to $T_eG.$ Thus, it suffices to compute the bracket at the neutral element by standard matrix multiplication. As shown in \cite{Gud-Rie} the corresponding bracket operation $$[\cdot,\cdot]:T_eG\times T_eG\rightarrow T_eG$$ at the neutral element $e$
    is given by $$[A_e,B_e]=A_e\cdot B_e-B_e\cdot A_e.$$
\end{remark}

\begin{remark}
    The metric $g$ as given in Definition \ref{standardmetric} defines a left-invariant metric on the groups $\O{n},$ $ \SO{n},$ $ \U{n},$ $ \SU{n},$ turning them into {\it Riemannian Lie groups}.
\end{remark}

\begin{proposition}{\rm \cite{Gud-Rie}}\label{basisson}
    The Lie algebra $\so n$ of $\SO n $ is given by the  real skew-symmetric $n\times n$ matrices, 
$$\so n=\{ X\in\R^{n\times n} \ \rvert \ X+X^t=0\}.$$
The set $$\{Y_{rs} \ \rvert \ 1\leq r<s\leq n \}$$ is an orthonormal basis of the Lie algebra $\so n.$
\end{proposition}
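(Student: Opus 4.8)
The plan is to prove the two claims in Proposition \ref{basisson} separately: first the characterisation of $\so n$ as the space of skew-symmetric real matrices, and then the fact that $\{Y_{rs} \mid 1 \leq r < s \leq n\}$ forms an orthonormal basis with respect to the standard metric. The first claim is essentially already established in the excerpt: by Proposition \ref{liealgebraonun} we have $\o{(n)} = \{X \in \glr{n} \mid X^t + X = 0\}$, and by the Corollary stated just after it, the Lie algebra $\so n$ coincides with $\o{(n)}$. So I would simply invoke these two results to conclude that $\so n$ is exactly the set of real skew-symmetric matrices, recording the defining condition $X + X^t = 0$.

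The substance of the proof lies in the second claim. First I would argue that $\{Y_{rs} \mid 1 \leq r < s \leq n\}$ spans $\so n$. A generic skew-symmetric matrix $X$ has entries satisfying $X_{ij} = -X_{ji}$ and zero diagonal, so it is determined by its strictly upper-triangular entries $X_{rs}$ for $r < s$. Since $Y_{rs} = \tfrac{1}{\sqrt 2}(E_{rs} - E_{sr})$ has a $\tfrac{1}{\sqrt 2}$ in the $(r,s)$ slot and a $-\tfrac{1}{\sqrt 2}$ in the $(s,r)$ slot, I would write the explicit expansion $X = \sqrt 2 \sum_{r<s} X_{rs}\, Y_{rs}$ to exhibit any skew-symmetric $X$ as a real linear combination of the $Y_{rs}$, establishing spanning. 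Linear independence then follows immediately because the $Y_{rs}$ have disjoint supports among the strictly upper-triangular positions, so no nontrivial combination can vanish; alternatively, linear independence will drop out of the orthonormality computation below.

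Next I would verify orthonormality with respect to the standard metric $g(Z,W) = \Re\trace(\bar Z^t W)$ of Definition \ref{standardmetric}, restricted to these real matrices where it reduces to $g(Z,W) = \trace(Z^t W)$. The key computation is $g(Y_{rs}, Y_{pq}) = \trace(Y_{rs}^t Y_{pq})$. Using $Y_{rs}^t = -Y_{rs}$ and the multiplication rule $E_{ab}E_{cd} = \delta_{bc} E_{ad}$, I would expand the product $Y_{rs}^t Y_{pq}$ and take the trace, checking that it equals $1$ when $(r,s) = (p,q)$ and $0$ otherwise. This is a short index manipulation; the factor $\tfrac{1}{\sqrt 2}$ in the definition is precisely what normalises each $Y_{rs}$ to unit length. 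Together, spanning plus orthonormality give that the set is an orthonormal basis.

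I do not expect a serious obstacle here, as every ingredient is already in place in the excerpt. The main point requiring care is bookkeeping in the trace computation for $g(Y_{rs}, Y_{pq})$: one must track the four cross terms arising from expanding $(E_{rs} - E_{sr})(E_{pq} - E_{qp})$ and confirm that the off-diagonal cases vanish and the normalisation is exactly right. A clean way to sidestep casework is to observe that the $E_{ab}$ are orthonormal under $\langle Z, W\rangle = \trace(\bar Z^t W)$, so the $Y_{rs}$ are (up to the $\tfrac{1}{\sqrt 2}$ factor) differences of orthonormal basis vectors with disjoint indices, from which both independence and the inner-product values follow with minimal computation.
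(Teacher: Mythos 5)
Your proof is correct. The paper itself states this proposition without proof (it is cited from \cite{Gud-Rie}), so there is nothing to compare against; your argument is the standard one and all the ingredients you invoke (Proposition \ref{liealgebraonun}, its corollary, Definition \ref{standardmetric}) are indeed available in the text. The key computation checks out: $\trace\bigl(Y_{rs}^t Y_{pq}\bigr)=-\tfrac12\trace\bigl((E_{rs}-E_{sr})(E_{pq}-E_{qp})\bigr)=\delta_{rp}\delta_{sq}$ for $r<s$, $p<q$, and the expansion $X=\sqrt{2}\sum_{r<s}X_{rs}Y_{rs}$ gives spanning.
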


\begin{proposition}{\rm \cite{Gud-Rie}}\label{basissun}
       The Lie algebra $\su n$ of $\SU{n}$ is given by $$\su{n}=\so{n}\oplus i\m,$$
where $$\m=\{X\in\rn^{n\times n} \ \rvert \ X=X^t \  \textrm{and} \ \trace(X)=0\}.$$
Further, $\m$ is generated by $$\mathcal{S}_\m=\{D_{rs},X_{rs}\  | \ 1\leq r<s\leq n\}.$$
\end{proposition}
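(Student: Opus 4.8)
The plan is to pass to real and imaginary parts and translate the two defining conditions of $\su n$ --- skew-Hermitian and traceless --- into conditions on those parts. First I would record the precise description of $\su n$. Since $\SU n = \U n \cap \SLC n$, an element $X$ of $\su n$ must satisfy both the defining condition of $\u n$ from Proposition \ref{liealgebraonun}, namely $\bar X^t + X = 0$, and the trace condition coming from $\det(e^{sX}) = e^{s\trace X} = 1$ in Proposition \ref{comprulesexp}, namely $\trace X = 0$. Thus
$$\su n = \{X \in \glc n \mid \bar X^t + X = 0 \ \textrm{and} \ \trace X = 0\},$$
the trace condition being exactly what distinguishes $\su n$ from $\u n$.

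Next I would write an arbitrary $X \in \glc n$ as $X = A + iB$ with $A, B \in \rn^{n\times n}$ real. Since $\bar X^t = A^t - iB^t$, the skew-Hermitian condition $\bar X^t = -X$ splits into its real and imaginary parts as $A^t = -A$ and $B^t = B$; that is, $A$ is real skew-symmetric and $B$ is real symmetric. By Proposition \ref{basisson} this means $A \in \so n$. As $A$ is skew-symmetric we automatically have $\trace A = 0$, so the remaining condition $\trace X = i\,\trace B = 0$ is equivalent to $\trace B = 0$, i.e. $B \in \m$. Conversely, any $A + iB$ with $A \in \so n$ and $B \in \m$ is skew-Hermitian and traceless, hence lies in $\su n$. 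This establishes $\su n = \so n + i\m$, and the sum is direct because an element of $\so n \cap i\m$ would be simultaneously real and purely imaginary, forcing it to vanish.

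For the generating set, I would decompose a real symmetric traceless matrix $B \in \m$ into its off-diagonal and diagonal parts. The off-diagonal part is a real linear combination of the matrices $E_{rs} + E_{sr}$ with $1 \le r < s \le n$, hence of the $X_{rs}$. The diagonal part is a traceless diagonal matrix, and every such matrix is a combination of the differences $E_{rr} - E_{ss}$, hence of the $D_{rs}$. Therefore $\mathcal{S}_\m$ spans $\m$. I would remark that $\mathcal{S}_\m$ is only a generating set and not a basis, since the $D_{rs}$ satisfy relations such as $D_{rt} = D_{rs} + D_{st}$.

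The computations are elementary and I do not anticipate a genuine obstacle. The only points demanding care are keeping track of the trace condition so as not to conflate $\su n$ with $\u n$, and being precise that $\mathcal{S}_\m$ \emph{generates} $\m$ as a vector space rather than forming a basis for it.
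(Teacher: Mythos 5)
Your proof is correct and follows the same natural route the paper relies on: the paper cites the decomposition from its reference and only remarks that the generating statement follows from symmetry and tracelessness, which is exactly your off-diagonal/diagonal splitting into the $X_{rs}$ and $D_{rs}$. Your explicit real/imaginary decomposition $X=A+iB$, the observation that $\trace A=0$ is automatic for skew-symmetric $A$, and the note that $\mathcal{S}_\m$ generates but is not a basis are all accurate and fill in details the paper leaves implicit.
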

 Here, the second statement is a direct consequence of the fact that all elements in $\m$ are symmetric and traceless.

Lastly, we will introduce the quaternionic unitary group, as well as its Lie algebra.
\begin{definition}
Let $\GLH n$ be the group of invertible $n\times n$ quaternionic matrices. Then the quaternionic unitary group $\Sp{n}$ is given by
$$\Sp{n}=\{q\in\GLH n \ \rvert \ q^*=q^{-1}\}.$$
The standard complex representation of $\Sp{n}$ on $\cn^{2n}$ is given by 
$$q=z+jw\mapsto\begin{pmatrix}
    z&w\\
    -\Bar{w}&\Bar{z}
\end{pmatrix}.$$
\end{definition}

\begin{proposition}{\rm \cite{Gud-Sak-1}}\label{liealgebraspn}
      The Lie algebra $\sp{n}$ of $\Sp{n}$ satisfies $$\sp{n}=\left\{ \begin{pmatrix}
        Z&W\\
        -\Bar{W}&\Bar{Z}
    \end{pmatrix}\in\cn^{2n\times2n}\ \rvert \ Z^*+Z=0\ \textrm{and} \ W^t-W=0\right\} .$$
An orthonormal basis is given by \begin{eqnarray*}
    \mathcal{B}_{\sp{n}}&=&\left\{Y_{rs}^{a}=\frac{1}{\sqrt{2}}\begin{pmatrix}
        Y_{rs}&0\\
        0&Y_{rs}
    \end{pmatrix}, \ X_{rs}^{a}=\frac{1}{\sqrt{2}}\begin{pmatrix}
        iX_{rs}&0\\
        0&-iX_{rs}
    \end{pmatrix},\right. \\
    && X_{rs}^{b}=\frac{1}{\sqrt{2}}\begin{pmatrix}
        0&iX_{rs}\\
        iX_{rs}&0
    \end{pmatrix},\ X_{rs}^{c}=\frac{1}{\sqrt{2}}\begin{pmatrix}
        0&X_{rs}\\
        -X_{rs}&0
    \end{pmatrix},\\
    &&D_{t}^{a}=\frac{1}{\sqrt{2}}\begin{pmatrix}
        iD_{t}&0\\
        0&-iD_t
    \end{pmatrix}, \ D_{t}^{b}=\frac{1}{\sqrt{2}}\begin{pmatrix}
        0&iD_{t}\\
        iD_t&0
    \end{pmatrix},\\
    &&\left. D_{t}^{c}=\frac{1}{\sqrt{2}}\begin{pmatrix}
        0&D_{t}\\
        -D_t&0
    \end{pmatrix} \ \rvert \ 1\leq r<s\leq n, \ 1\leq t\leq n\right\}.
\end{eqnarray*}

\end{proposition}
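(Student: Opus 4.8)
The plan is to establish the two assertions in turn: first the description of $\sp{n}$ as a space of block matrices, then the claim that $\mathcal{B}_{\sp{n}}$ is an orthonormal basis.

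For the first part, I would work inside $\GLC{2n}$ through the standard complex representation $\rho$. Under $\rho$, the group $\Sp{n}$ is identified with the invertible matrices of the block form $\begin{pmatrix} z & w \\ -\bar{w} & \bar{z} \end{pmatrix}$ that additionally satisfy $A^*A = I_{2n}$: the quaternionic relation $q^* = q^{-1}$ translates into unitarity of the representing matrix, so $\rho(\Sp{n}) = \rho(\GLH{n}) \cap \U{2n}$. By Definition \ref{liealg} the Lie algebra consists of those $X$ with $\exp(sX)$ in this group for all $s \in \rn$. Differentiating a curve of block-form matrices through $I_{2n}$ shows that every such $X$ has the shape $\begin{pmatrix} Z & W \\ -\bar{W} & \bar{Z} \end{pmatrix}$, and conversely the real-linear space of these matrices is exactly the Lie algebra of $\rho(\GLH{n})$; intersecting with $\u{2n}$ gives $\sp{n}$ as this space together with the skew-Hermitian condition $X^* + X = 0$ from Proposition \ref{liealgebraonun}. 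Computing $X^*$ blockwise yields $\begin{pmatrix} Z^* & -W^t \\ \bar{W}^t & Z^t \end{pmatrix}$, so $X^* + X = 0$ is equivalent to $Z^* + Z = 0$ on the diagonal blocks and $W - W^t = 0$ on the off-diagonal blocks, which is precisely the stated description.

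For the basis I would check membership, orthonormality, and a dimension count. Every element of $\mathcal{B}_{\sp{n}}$ is visibly of the block form $\begin{pmatrix} Z & W \\ -\bar{W} & \bar{Z} \end{pmatrix}$, and I would verify the two constraints case by case, using that $Y_{rs}$ is real skew-symmetric (so the $Z$-blocks it produces are skew-Hermitian), while $X_{rs}$ and the diagonal matrices $D_t$ are real symmetric (so $iX_{rs}$, $iD_t$ are skew-Hermitian, giving $Z^* + Z = 0$, and $X_{rs}$, $D_t$ are symmetric, giving $W = W^t$). For orthonormality I would use $g(A,B) = \Re\trace(\bar{A}^t B) = \Re\trace(A^* B)$; the factors $1/\sqrt{2}$ are chosen exactly so that each matrix has Frobenius norm one, and two distinct basis elements are orthogonal because they either occupy disjoint entry positions in the $2n \times 2n$ array, or, when their supports coincide (as for $Y_{rs}^{a}$ versus $X_{rs}^{a}$), differ by a factor of $i$, making $\trace(A^* B)$ purely imaginary with vanishing real part.

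Finally I would count: there are $4\binom{n}{2}$ matrices of the types $Y_{rs}^{a}, X_{rs}^{a}, X_{rs}^{b}, X_{rs}^{c}$ and $3n$ of the types $D_t^{a}, D_t^{b}, D_t^{c}$, so $2n(n-1) + 3n = n(2n+1)$ in total. Since $\dim_{\rn}\sp{n} = \dim\{Z : Z^* + Z = 0\} + \dim\{W : W = W^t\} = n^2 + n(n+1) = n(2n+1)$, an orthonormal (hence linearly independent) set of this size must be a basis. The main obstacle I anticipate is bookkeeping: carrying the conjugations correctly through the blockwise computation of $X^*$, and organising the orthonormality check so that the many pairs are handled uniformly; the observations about disjoint supports and the real/imaginary dichotomy are what keep this step manageable rather than a pair-by-pair grind.
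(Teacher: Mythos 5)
Your proposal is correct. The paper itself gives no proof of this proposition --- it is quoted from Gudmundsson and Sakovich \cite{Gud-Sak-1} --- so there is no argument to compare against; the route you take (identify $\Sp{n}$ with $\rho(\GLH{n})\cap\U{2n}$, differentiate to get the block shape, impose $X^*+X=0$ blockwise, then verify membership, orthonormality and the dimension count $n(2n+1)$) is the standard one and all the individual steps check out, including the computation $X^*=\begin{pmatrix} Z^* & -W^t\\ \bar W^t & Z^t\end{pmatrix}$ and the count $4\binom{n}{2}+3n=n(2n+1)$. One small point worth making explicit: the paper only defines $D_{rs}$ with two indices, so for your orthonormality argument (disjoint supports) and your count of $3n$ diagonal-type elements to go through, the symbols $D_t$ in the statement must be read as $E_{tt}$; with the two-index $D_{rs}$ the diagonal elements would fail to be mutually orthogonal and the count would not match $\dim\sp{n}$.
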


\section{Representations}
Many of the Lie groups we have seen so far are subgroups of the real and complex general linear groups $\GLR n$ and $\GLC n$. A natural question, as suggested by Lee in \cite{Lee}, would be if all Lie groups are of this form. This motivates the study of representations, which are Lie group homomorphisms from Lie groups into the real or complex general linear group.

In this section we will define representations of Lie groups and Lie algebras. We will also define the so-called adjoint representation.
\begin{definition}{\rm \cite{Hall}, \cite{Lee}}\label{representation}
    Let the field $F$ the $\rn$ or $\cn$. For a finite-dimensional real or complex vector space $V$ over $F$ we denote $\GL(V)$ the group of invertible linear transformations of $V.$ Let $G$ be a Lie group. A {\it finite-dimensional representation } of $G$ on $V$ is a Lie group homomorphism $$\rho:G\rightarrow\GL(V),\quad g\mapsto\rho_g$$ for some $V.$
\end{definition}

\begin{example}{\rm \cite{Gud-Rie}}
    Consider the representation $$\rho:\cn^*\rightarrow\Aut(\rn^2)$$ given by $$z=(a+ib)\mapsto\rho_z=\begin{pmatrix}
        a&-b\\
        b&a
    \end{pmatrix}.$$
    Due to basic properties of matrix multiplication, this map $\rho_z$ is linear for each complex-valued $z.$ A simple computation shows that $\rho$ is a group homomorphism:
    \begin{eqnarray*}
        \rho((a+ib)\cdot(c+id))&=&\rho(ac-bd+i(ad+bc))\\
        &=&\begin{pmatrix}
            ac-bd&-(ad+bc)\\
            ad+bc&ac-bd
        \end{pmatrix}\\
        &=&\begin{pmatrix}
            a&-b\\
        b&a
        \end{pmatrix}\cdot \begin{pmatrix}
            c&-d\\
        d&c
        \end{pmatrix}\\
        &=&\rho_{a+ib}\circ\rho_{c+id}\\
        &=&\rho(a+ib)\circ\rho(c+id).
    \end{eqnarray*}
    Since $z$ is assumed to be non-zero, it follows that $\rho_z$ is bijective.
\end{example}

\begin{definition}{\rm \cite{Lee}}
    A representation is said to be {\it faithful} if it is injective.
\end{definition}
It follows that a Lie group $G$ is a subgroup of $\GLR n$ or $\GLC n$ if there exists a faithful representation of $G.$

\begin{definition}{\rm \cite{Hall}}
    Let $\rho$ be a finite dimensional real or complex representation of a Lie group $G,$ acting on a vector space $V.$ A subspace $W$ of $V$ is called {\it invariant} if $$\rho_g(w)\in W$$ for all $g\in G, w\in W.$ An invariant subspace is called {\it nontrivial} if $W\neq\{0\},$ and $ W\neq V.$ A representation with no nontrivial invariant subspaces is called {\it irreducible}.
\end{definition}
Recall the inner automorphism, which we defined in Proposition \ref{innerauto}. We will now define the adjoint representation of a Lie group. It maps elements of a Lie group to the automorphisms of its Lie algebra.
\begin{definition}{\rm \cite{Arv}}
Let $I_g:G\rightarrow G$ be the inner automorphism. The {\it adjoint representation} of a Lie group $G$ is the homomorphism $$\Ad:G\rightarrow\Aut(\g)$$ given by $$\Ad(g)=(\diff I_g)_e.$$
\end{definition}

\begin{proposition}{\rm \cite{Arv}}\label{adjointrepmatrix}
    If $G$ is a matrix Lie group, then for all $g\in G$ and all $X\in\g,$ $$\Ad(g)(X)=gXg^{-1}.$$
\end{proposition}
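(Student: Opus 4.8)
The plan is to unwind the definition $\Ad(g) = (\diff I_g)_e$ and to evaluate this differential concretely using the matrix exponential. First I would invoke Proposition \ref{liealgisototangentate} to identify the Lie algebra $\g$ with the tangent space $T_eG$; concretely, a matrix $X \in \g$ is realised as a tangent vector at $e$ by the smooth curve $s \mapsto \exp(sX)$, which lies in $G$ for all $s \in \rn$ by Definition \ref{liealg} and satisfies both $\exp(0\cdot X) = I_n$ and $\frac{d}{ds}\exp(sX)\big|_{s=0} = X$, the latter being immediate from the power series defining $\exp$.

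Next I would feed this curve through the inner automorphism $I_g$. Since $\Ad(g)$ is by definition the differential of $I_g$ at the neutral element, I compute $\Ad(g)(X) = \frac{d}{ds}\big|_{s=0} I_g(\exp(sX)) = \frac{d}{ds}\big|_{s=0} g\,\exp(sX)\,g^{-1}$. The key algebraic step is the conjugation identity $g\,\exp(sX)\,g^{-1} = \exp\big(s\,gXg^{-1}\big)$, which follows termwise from the power series since $(gXg^{-1})^k = gX^kg^{-1}$. Differentiating at $s=0$ then yields $\Ad(g)(X) = gXg^{-1}$, as claimed. The same identity simultaneously confirms that $gXg^{-1}$ again lies in $\g$, so the formula is internally consistent.

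The point requiring care, which I expect to be the main obstacle, is the justification that the abstract differential $(\diff I_g)_e$ may legitimately be evaluated by this matrix computation. This rests on Theorem \ref{matrixliegroupeqliegroup}, which guarantees that $G$ is an embedded submanifold of $M_n(\cn)$, so that $T_eG$ is a genuine linear subspace of $M_n(\cn)$ and tangent vectors are computed as honest matrix derivatives of curves. An equivalent and perhaps cleaner formulation, which I would mention as an alternative, is to observe that $I_g$ is the restriction to $G$ of the ambient conjugation map $C_g : M_n(\cn) \to M_n(\cn)$, $A \mapsto gAg^{-1}$. This map is linear, hence equal to its own differential at every point, so $(\diff C_g)_e(X) = gXg^{-1}$; since $G$ is embedded and $C_g$ preserves $G$, restricting $(\diff C_g)_e$ to $T_eG \cong \g$ recovers $(\diff I_g)_e$ and gives $\Ad(g)(X) = gXg^{-1}$.
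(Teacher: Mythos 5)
Your proposal is correct and follows essentially the same route as the paper: the paper's proof is exactly the computation $\Ad(g)(X)=\tfrac{d}{ds}\bigl(I_g(\exp(sX))\bigr)\rvert_{s=0}=\tfrac{d}{ds}\bigl(g\exp(sX)g^{-1}\bigr)\rvert_{s=0}=gXg^{-1}$, which you reproduce. Your additional justifications — the termwise conjugation identity for $\exp$ and the appeal to Theorem \ref{matrixliegroupeqliegroup} to legitimise computing the differential via matrix derivatives of curves — are sound and merely make explicit what the paper leaves implicit.
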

\begin{proof}
    This fact follows from an easy computation. \begin{eqnarray*}
        \Ad(g)(X)&=&\frac{d}{ds}(I_g(\exp{(sX)}))\rvert_{s=0}\\
        &=&\frac{d}{ds}(g\cdot\exp{(sX)}\cdot g^{-1})\rvert_{s=0}\\
        &=&gXg^{-1}.
    \end{eqnarray*}
\end{proof}
Similarly to representations of Lie groups, we will now define representations of Lie algebras.
\begin{definition}
    Let $F$ be a field an $V$ a finite dimensional vector space over $F$. By $\End(V)$ we denote the $F$-linear endomorphisms of $V.$
\end{definition}

\begin{definition}{\rm \cite{Hall}}
    Let $\g$ be a real or complex Lie algebra and $V$ be a vector space over $\cn$. A finite dimensional complex {\it representation} of $\g$ is a Lie algebra homomorphism $$\rho:\g\rightarrow \End(V), \ g\mapsto\rho_g.$$ 
\end{definition}

\begin{definition}{\rm \cite{Lee}}
    For a Lie algebra $\g$, the {\it adjoint representation }$$\ad:\g\rightarrow\End(\g)$$ is given by $$X\mapsto\ad_X=(\diff\Ad)_e(X).$$
\end{definition}
For a matrix Lie group $G$ with Lie algebra $\g$, the adjoint representations of $G$ and $\g$ respectively satisfy the following property. 
\begin{proposition}{\rm \cite{Hall}}
Let $G$ be a matrix Lie group and $\g$ be its Lie algebra. Let $\Ad:G\rightarrow\GL(\g)$
 be the adjoint representation of $G,$ and $\ad:\g\rightarrow\End(\g)$ the adjoint representation of $\g.$ Then for all $X,Y\in\g$ the adjoint representation of $\g$ satisfies $$\ad_X(Y)=[X,Y].$$
\end{proposition}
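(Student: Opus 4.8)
The plan is to compute the differential $(\diff\Ad)_e$ directly by realising it as a velocity along a one-parameter curve through the identity, and then to substitute the explicit matrix formula for $\Ad$ furnished by Proposition \ref{adjointrepmatrix} and differentiate. Throughout I fix $X,Y\in\g$ and exploit the fact that, for a matrix Lie group, $\GL(\g)$ is an open subset of the vector space $\End(\g)$, so that the tangent space to $\GL(\g)$ at any point is canonically $\End(\g)$ itself.

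First I would choose the curve $t\mapsto\exp(tX)$ in $G$, which satisfies $\exp(0)=I_n$ and has velocity $X$ at $t=0$. By the definition $\ad_X=(\diff\Ad)_e(X)$ together with the chain rule, $\ad_X$ is the derivative at $t=0$ of the curve $t\mapsto\Ad(\exp(tX))$ taking values in $\End(\g)$. Since $\ad_X$ is itself an endomorphism of $\g$, I then need to apply it to $Y$. Here I would use that the evaluation map $\End(\g)\to\g$, $A\mapsto A(Y)$, is linear, hence smooth, and therefore commutes with the $t$-differentiation; this yields the identity $\ad_X(Y)=\frac{d}{dt}\big|_{t=0}\Ad(\exp(tX))(Y)$. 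Invoking Proposition \ref{adjointrepmatrix}, which gives $\Ad(g)(Y)=gYg^{-1}$, the right-hand side becomes $\frac{d}{dt}\big|_{t=0}\exp(tX)\,Y\,\exp(-tX)$. Finally I would differentiate this matrix product by the Leibniz rule, using $\frac{d}{dt}\exp(tX)=X\exp(tX)$, and evaluate at $t=0$ to obtain $XY-YX=[X,Y]$, as claimed.

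The main obstacle is the step where differentiation is interchanged with evaluation at $Y$: one must be careful that computing the endomorphism-valued derivative $(\diff\Ad)_e(X)$ and then applying it to $Y$ agrees with differentiating the $\g$-valued curve $t\mapsto\Ad(\exp(tX))(Y)$ pointwise. This is precisely where finite-dimensionality of $\g$ and the linearity of the evaluation map are essential, and it is the point that deserves the most care in writing the proof; the concluding Leibniz-rule computation is then entirely routine.
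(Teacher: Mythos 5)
Your proposal is correct and follows essentially the same route as the paper's proof: both write $\ad_X(Y)$ as $\frac{d}{ds}\big(\exp(sX)\,Y\,\exp(-sX)\big)\rvert_{s=0}$ via Proposition \ref{adjointrepmatrix} and conclude $[X,Y]$ by the Leibniz rule. Your extra remark justifying the interchange of differentiation with evaluation at $Y$ is a point the paper leaves implicit, but it does not change the argument.
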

\begin{proof}
    In Proposition \ref{adjointrepmatrix} we have seen that for a matrix Lie group $G,$ it holds that $$\Ad(g)(X)=gXg^{-1}$$ for all $g\in G$ and $X\in\g.$ Thus,
    \begin{eqnarray*}
        \ad_X(Y)&=&\frac{d}{ds}(\Ad_{\exp(sX)}(Y))\rvert_{s=0}\\
        &=&\frac{d}{ds}(\exp(sX)\cdot Y\cdot\exp(-sX))\rvert_{s=0}\\
        &=&XY-YX\\
        &=&[X,Y].
    \end{eqnarray*}
\end{proof}

%%%%%%%%%%%%%%%%%%%%%%%%%%%%%%%%%%%%%%%%
\chapter{Riemannian Symmetric Spaces}
%\jg{intro: introduced by Cartan in 1925, Lie theory and classification, Berger and holonomy, physics, representation theory
%references: \cite{Arv}, O'neill}

The main goal of this chapter is to introduce Riemannian symmetric spaces. In particular, we will study how Lie groups act on smooth manifolds. In preparation, we clarify what we mean by a group action on an abstract set in Section \ref{groupactions}. In Section \ref{homogeneousspaces}, we will define Riemannian homogeneous spaces, and will then move on to symmetric spaces in Section \ref{symmetricspaces}. In Section \ref{duality}, we will briefly talk about the notion of duality. We cover similar contents as the book \cite{Arv} by Arvanitoyeorgos. For a more detailed discussion of quotient manifolds we highly recommend Chapter 21 of \cite{Lee} by Lee. For many of the theorems and proofs we also used the standard textbooks \cite{O'Neill} by O'Neill and \cite{Helg} by Helgason.

\section{Group Actions}\label{groupactions}
In order to define homogeneous spaces, we first need to introduce group actions. We will give a handful of examples and will study some important properties.

\begin{definition}\label{defgroupaction}{\rm \cite{Lee}}
    Let $G$ be a group and $X$ a set. A (left) {\it group action} of $G$ on $X$ is a map $$\alpha: G\times X\rightarrow X,  \quad (g,x)\mapsto g\cdot x,$$
    satisfying the following conditions:
    \begin{enumerate}
        \item unitality: for all $x\in X,$ it holds that $e\cdot x=x,$ 
        \item associativity: for all $g,h\in G, $ and $x\in X$ it holds that $ g\cdot(h\cdot x)=(gh)\cdot x. $
    \end{enumerate}
\end{definition}
If it is clear which action of a group $G$ on a set $X$ is meant, we will often use the notations $\alpha(g,x)$ and $g\cdot x$ interchangeably. We will now look at some easy examples.
\begin{example}
    Let $(G,*)$ be any group. Then $G$ acts on itself via the usual group operation $*.$ Explicitly, the action is defined as follows: $$\alpha:G\times G\rightarrow G,\quad \alpha(g,h)=g*h.$$
    Since $G$ is closed under $*,$ the map $\alpha$ is well defined. Using the other group axioms of associativity and unitality, we see that $\alpha$ is indeed an action.
\end{example}

\begin{example}\label{cosetaction}
    Let $G$ be a group and $K$ a subgroup of $G.$ Then $G$ acts on the set of left cosets $$G/K=\{gK\ \rvert \ g\in G\} $$ by left translation, i.e. $$a\cdot gK=agK.$$
    This action clearly satisfies the conditions of a group action. The action of $G$ on $G/K$ is called the {\it natural action}.
\end{example}

\begin{example}
    The group $\SO{n}$ of $n\times n$ rotation matrices acts on $\mathbb{R}^n$ by the usual multiplication of matrices and vectors. Hence the map $$\alpha:\SO{n}\times\mathbb{R}^n\rightarrow\mathbb{R}^n,\quad (X,v)\mapsto X\cdot v$$ is well defined.
    It is clear that for all $x\in\mathbb{R}^n$ $$I_n\cdot x=x.$$ Further, the standard matrix multiplication is associative, hence for all $A,B\in\SO{n}$ and all $x\in\mathbb{R}^n,$ $$A\cdot(B\cdot x)=(AB)\cdot x.$$
\end{example}
In Definition \ref{representation} we have defined representations of Lie groups. As it turns out, they can be thought of as linear actions of a Lie group on a vector space.
\begin{example}
    Let $\rho:G\rightarrow GL(V), \ g\mapsto\rho_g$ be a representation of  a Lie group $G.$ Then the group homomorphism $\rho$ induces a group action  $$\alpha:G\times V\rightarrow V:
    \alpha(g,v)=\rho_g(v).$$ Indeed, for all $v\in V,$ and all $g_1,g_2\in G$ we have that \begin{eqnarray*}
        \alpha(e,v)&=&\rho_e(v)=1_V(v)=v,\\
        \alpha(g_1g_2,v)&=&\rho_{g_1g_2}(v)=\rho_{g_1}(\rho_{g_2}(v))=\alpha(g_1,\alpha(g_2,v)).
    \end{eqnarray*}
    This shows that the conditions of unitality and associativity are satisfied. 
    Thus, we often say that a representation $\rho$ of $G$ acts on a vector space $V.$
\end{example}

We now state a few more important definitions and properties of group actions. 
\begin{definition}
Let $\alpha: G\times X\rightarrow X$ define a group action of $G$ on $X.$ For an element $x\in X,$ the {\it orbit} of $x$ is the set $$G\cdot x=\{g\cdot x\ \rvert \ g\in G\}\ \subseteq X.$$\end{definition}

\begin{definition}
    A group action $\alpha:G\times X\rightarrow X$ of $G$ on $X$ is called {\it transitive} if for every pair $x_1,x_2\in X$ there exists $g\in G$ such that $$\alpha(g,x_1)=x_2.$$ This means that $X$ has a single orbit and for any $x\in X,$ $$G\cdot x=X.$$
\end{definition}

\begin{example}
Clearly, the action defined in example \ref{cosetaction} is transitive.  
\end{example}

It is well know that if a group $G$ acts on a non-empty set $X,$ then $X$ can be expressed as the disjoint union of the orbits under the action of $G.$ We formulate this in the following Lemma \ref{orbitdecomp}.

\begin{lemma}\label{orbitdecomp}{\rm \cite{grouptheory}}
    Let $G$ be a group acting on a non-empty set $X.$ The action of $G$ on $X$ induces a partition of $X$ into its orbits.
\end{lemma}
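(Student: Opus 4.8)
The plan is to prove the classical orbit-decomposition result by exhibiting an explicit equivalence relation on $X$ whose equivalence classes are precisely the orbits, and then invoking the standard correspondence between equivalence relations and partitions. Define a relation $\sim$ on $X$ by declaring $x\sim y$ if and only if there exists $g\in G$ with $\alpha(g,x)=y$, i.e. $y\in G\cdot x$. The heart of the argument is to verify that $\sim$ is an equivalence relation, and here each of the three properties corresponds directly to one of the group axioms, exploited through the two defining conditions of a group action in Definition \ref{defgroupaction}.

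First I would check reflexivity: for any $x\in X$, the unitality condition gives $e\cdot x=x$, so $x\sim x$. Next, symmetry: if $x\sim y$, there is $g\in G$ with $g\cdot x=y$; applying $g^{-1}$ and using associativity together with unitality yields
$$g^{-1}\cdot y=g^{-1}\cdot(g\cdot x)=(g^{-1}g)\cdot x=e\cdot x=x,$$
so $y\sim x$. Finally, transitivity: if $x\sim y$ and $y\sim z$, pick $g,h\in G$ with $g\cdot x=y$ and $h\cdot y=z$; then the associativity axiom gives
$$(hg)\cdot x=h\cdot(g\cdot x)=h\cdot y=z,$$
so $x\sim z$. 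This establishes that $\sim$ is an equivalence relation on the non-empty set $X$.

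Having confirmed that $\sim$ is an equivalence relation, I would then observe that its equivalence class of $x$ is exactly the set $\{y\in X\mid x\sim y\}=\{g\cdot x\mid g\in G\}=G\cdot x$, which is precisely the orbit of $x$. It is a standard fact that the equivalence classes of any equivalence relation partition the underlying set: they are non-empty (each $x$ lies in its own class by reflexivity), their union is all of $X$, and any two are either identical or disjoint (if they share a point, transitivity forces them to coincide). Applying this general principle to $\sim$ shows that the orbits $\{G\cdot x\mid x\in X\}$ form a partition of $X$, which is exactly the assertion of the lemma.

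I do not anticipate a genuine obstacle here, since the result is foundational and the verifications are short; the only point requiring mild care is to use the group-action axioms rather than group axioms directly—in particular, symmetry of $\sim$ relies on the interplay of unitality and associativity with the existence of inverses in $G$, and it is worth stating explicitly that $g^{-1}$ exists in $G$ and that $(g^{-1}g)\cdot x$ may be simplified via the associativity condition. One could equally present the proof purely in terms of verifying the three partition properties (non-emptiness, covering, pairwise disjointness) directly for the orbits, but routing through the equivalence relation makes the logic most transparent and keeps the write-up brief.
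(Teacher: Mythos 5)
Your proposal is correct and follows essentially the same route as the paper: define $x\sim y$ iff $y=g\cdot x$ for some $g\in G$, verify reflexivity via unitality, symmetry via inverses together with associativity, and transitivity via associativity, then identify the equivalence classes with the orbits and invoke the standard correspondence between equivalence relations and partitions. (Incidentally, your write-up correctly calls the third property transitivity, where the paper's proof mislabels it as associativity.)
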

\begin{proof}
We define the relation $\sim$ on $X$ as follows. For any $x,y\in X,$ let $x\sim y$ if there exists $g\in G$ such that $g\cdot x=y.$
We will show that $\sim$ is an equivalence relation. First note that $\sim$ is reflexive, since for all $x\in X$ we have that $e\cdot x=x$ and thus $x\sim x$. Further, $\sim$ is symmetric. Indeed, if $x\sim y,$ then there exists $g\in G$ such that $g\cdot x=y.$ Thus, $$g^{-1}\cdot y=g^{-1}\cdot (g\cdot x)=(g^{-1}g)\cdot x=e\cdot x=x$$ and since $g^{-1}\in G$ it follows that $y\sim x.$ Lastly, $\sim$ is associative. Let $x\sim y$ and $y\sim z$, and $g_1,g_2\in G$ be such that $g_1\cdot x=y$ and $g_2\cdot y=z.$ Then $g_2g_1\in G$ and $$(g_2g_1)\cdot x=g_2\cdot y=z,$$ and thus $x\sim z.$

The equivalence classes corresponding to $\sim$ are the orbits of $X$ under the action of $G.$ The result now follows from the fact that the set of equivalence classes forms a partition of $X.$
  
\end{proof}

\begin{definition}
    Let $G$ be a group acting on $X$. Then for an element $x\in X,$ the {\it stabiliser} of $x$ is the subgroup $$G_x=\{ g\in G \ \rvert \ g\cdot x=x\} $$ of $G.$ It is also called the {\it isotropy subgroup}.
\end{definition}

\begin{definition}
    A group action of $G$ on a set $X$ is said to be {\it free} if for all $x\in M,$ the stabiliser group is trivial. In other words, $X$ is free if for all $x\in X,$ $g\cdot x=x$ implies that $g=e.$
\end{definition}
Lastly, we define a notion of compatibility of a map between two sets with the actions of a group. 
\begin{definition}
    Let $G$ act on the set $X$ via the action $\alpha,$ and on $Y$ via the action $\beta$ respectively. A map $f:X\rightarrow Y$ is {\it G-equivariant} if for all $g\in G$ $$f(\alpha(g,x))=\beta(g,f(x)).$$
\end{definition}

%\begin{definition}
%    Let $M$ be a smooth manifold and $G$ a Lie group. We will call $G$ a {\it Lie transformation group} if there exists a smooth action $\alpha$ of $G$ on $M.$
%\end{definition}

\section{Homogeneous Spaces}\label{homogeneousspaces}
After having studied group actions, we are able to define the notion of a homogeneous space. Theorems \ref{constructionth} and \ref{characterisationth} show that all homogeneous spaces arise in shape of coset manifolds $G/K$, where $G$ is a Lie group and $K$ a closed subgroup of $G.$ We are particularly interested in the case when $G$ is the group of isometries of a manifold. 
Further, we will define reductive homogeneous spaces.

\begin{definition}\label{homspace}
A smooth manifold on which a Lie group $G$ acts transitively is called a {\it homogeneous space}.
\end{definition}
We will show how to construct and characterise homogeneous spaces in Theorems \ref{constructionth} and \ref{characterisationth}.

\begin{definition}\label{defcosetmanif}{\rm \cite{Helg}}
    Let $G$ be a Lie group and $K$ a closed subgroup of $G.$ The {\it coset space} is the set of left cosets $$G/K=\{gK\ \rvert \ g\in G\}.$$ The {\it natural projection} of $G$ onto $G/K$ is the map $$\pi:G\rightarrow G/K,\quad g\mapsto gK.$$
The topology on $G/K$ is uniquely determined by the condition that $\pi:G\rightarrow G/K$ is continuous and open.
\end{definition}

\begin{theorem}{\rm \cite{Lee}}\label{constructionth} (Construction Theorem)
    Let $G$ be a Lie group and $K$ a closed subgroup of $G.$ The left coset space $G/K$ is a topological manifold of dimension equal to $\dim G-\dim K,$ and has a unique smooth structure such that the projection map $\pi:G\rightarrow G/K$ is a smooth submersion. The left action of $G$ on $G/K$ given by $$g_1\cdot(g_2\cdot K)=(g_1g_2)\cdot K$$ turns $G/K$ into a homogeneous space.
\end{theorem}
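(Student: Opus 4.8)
The plan is to realise $G/K$ as the orbit space of a group action to which the Quotient Manifold Theorem (\cite{Lee}, Chapter 21) applies. Since $K$ is closed in $G$, Cartan's closed subgroup theorem guarantees that $K$ is an embedded Lie subgroup, and in particular a Lie group in its own right. Consider then the right action of $K$ on $G$ given by $\theta:G\times K\rightarrow G$, $(g,k)\mapsto gk$. Its orbits are exactly the left cosets $gK$, so the orbit space is the set $G/K$ carrying the quotient topology of Definition \ref{defcosetmanif}. The Quotient Manifold Theorem asserts that if $\theta$ is smooth, free and proper, then $G/K$ admits a unique smooth structure for which $\pi$ is a smooth submersion, and that $\dim(G/K)=\dim G-\dim K$. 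Thus the bulk of the proof reduces to verifying these three properties of $\theta$.

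First I would dispose of smoothness and freeness, which are immediate. The map $\theta$ is the restriction to $G\times K$ of the multiplication map $G\times G\rightarrow G$, which is smooth by Definition \ref{defliegroup}. The action is free because $gk=g$ forces $k=e$ by cancellation in $G$, so every stabiliser is trivial.

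The crux is properness of $\theta$, and this is precisely where closedness of $K$ is used. I would show that the associated map $\Theta:G\times K\rightarrow G\times G$, $(g,k)\mapsto(gk,g)$, is proper. Let $L\subseteq G\times G$ be compact and take any sequence $(g_n,k_n)$ in $\Theta^{-1}(L)$. Then $(g_nk_n,g_n)\in L$, so after passing to a subsequence there are $a,b\in G$ with $g_nk_n\rightarrow a$ and $g_n\rightarrow b$. By continuity of multiplication and inversion, $k_n=g_n^{-1}(g_nk_n)\rightarrow b^{-1}a=:c$; since each $k_n\in K$ and $K$ is closed, the limit $c$ lies in $K$. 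As $bc=a$ we get $(g_n,k_n)\rightarrow(b,c)\in\Theta^{-1}(L)$, so $\Theta^{-1}(L)$ is sequentially compact. Because $G\times K$ is a manifold, hence metrizable, sequential compactness yields compactness, and $\theta$ is proper. This is the main obstacle: were $K$ not closed, the limit $c=b^{-1}a$ need not belong to $K$, and the quotient could fail to be a manifold at all.

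With the three hypotheses in hand, the Quotient Manifold Theorem delivers the smooth structure, the submersion property of $\pi$, and the uniqueness assertion; the dimension count follows since each orbit is diffeomorphic to $K$ and therefore has dimension $\dim K$. It remains to treat the left action $g_1\cdot(g_2K)=(g_1g_2)K$. This is well defined, for $g_2K=g_2'K$ gives $g_1g_2K=g_1g_2'K$. Its smoothness follows from the characteristic property of the submersion $\pi$: the composite $(g_1,g_2)\mapsto\pi(g_1g_2)$ from $G\times G$ to $G/K$ is smooth and constant along the $K$-orbits of the second factor, so it descends to a smooth map $G\times(G/K)\rightarrow G/K$. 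Finally, transitivity is clear, since $(g_2g_1^{-1})\cdot g_1K=g_2K$ relates any two cosets; hence $G/K$ is a homogeneous space in the sense of Definition \ref{homspace}.
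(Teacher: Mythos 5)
Your proposal is correct and is essentially the standard proof from Lee's book, which the paper cites for this theorem without reproducing a proof: the right $K$-action on $G$ is smooth, free and proper (properness being exactly where closedness of $K$ enters), so the Quotient Manifold Theorem gives the smooth structure, the submersion $\pi$, and the dimension count, after which the left action descends by the characteristic property of surjective submersions. Since the thesis itself offers no proof to compare against, there is nothing further to reconcile; your argument is complete and sound.
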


    Theorem \ref{constructionth} tells us that the coset space $G/K$ is a homogeneous space. We will now see that every homogeneous space is of the form $G/K.$

\begin{theorem}{\rm \cite{Lee}}\label{characterisationth} (Characterisation Theorem)
    Let $G$ be a Lie group and $M$ a homogeneous manifold on which $G$ acts transitively. Let $p$ be any point of $M.$ Then the stabiliser subgroup $G_p$ is a closed subgroup of $G,$ and the map $\phi:G/G_p\rightarrow M$ defined by $$\phi(gG_p)=g\cdot p$$ is an equivariant diffeomorphism.
\end{theorem}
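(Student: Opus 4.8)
The plan is to prove the Characterisation Theorem by verifying that the map $\phi:G/G_p\rightarrow M$, $\phi(gG_p)=g\cdot p$, is well-defined, bijective, smooth with smooth inverse, and $G$-equivariant. First I would check that $G_p$ is a closed subgroup: it is closed because it is the preimage of the point $p$ under the continuous orbit map $g\mapsto g\cdot p$, and it is a subgroup since if $g_1\cdot p = p$ and $g_2\cdot p=p$ then $(g_1 g_2)\cdot p = g_1\cdot(g_2\cdot p)=p$ and $g_1^{-1}\cdot p = p$ by the action axioms of Definition \ref{defgroupaction}. Being a closed subgroup of a Lie group, $G_p$ is itself a Lie subgroup, so by Theorem \ref{constructionth} the coset space $G/G_p$ is a smooth homogeneous manifold.

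Next I would establish that $\phi$ is well-defined and bijective. For well-definedness, if $g_1 G_p = g_2 G_p$ then $g_2^{-1}g_1\in G_p$, so $(g_2^{-1}g_1)\cdot p = p$, whence $g_1\cdot p = g_2\cdot p$; thus $\phi$ does not depend on the chosen coset representative. Surjectivity follows immediately from transitivity of the $G$-action on $M$: every point of $M$ lies in the single orbit $G\cdot p$, so it equals $g\cdot p=\phi(gG_p)$ for some $g$. For injectivity, suppose $\phi(g_1 G_p)=\phi(g_2 G_p)$, i.e. $g_1\cdot p = g_2\cdot p$; then $(g_2^{-1}g_1)\cdot p = p$, so $g_2^{-1}g_1\in G_p$ and hence $g_1 G_p = g_2 G_p$. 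I would also record equivariance directly from the definition, since for any $a\in G$ we have $\phi(a\cdot(gG_p))=\phi((ag)G_p)=(ag)\cdot p = a\cdot(g\cdot p)=a\cdot\phi(gG_p)$.

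The smoothness of $\phi$ I would obtain by factoring the orbit map through the projection. The orbit map $F:G\rightarrow M$, $F(g)=g\cdot p$, is smooth because it is the restriction of the smooth action $\alpha:G\times M\rightarrow M$ to the slice $G\times\{p\}$. Since $F$ is constant on the fibres of the submersion $\pi:G\rightarrow G/G_p$ (this is exactly the well-definedness computation above), the universal property of the quotient by a smooth submersion guarantees that the induced map $\phi$ with $\phi\circ\pi = F$ is smooth.

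The hard part will be showing that $\phi^{-1}$ is smooth, i.e. that $\phi$ is a genuine diffeomorphism rather than merely a smooth bijection; a smooth bijection need not have smooth inverse. The cleanest route is to invoke the global rank theorem: $\phi$ is an equivariant smooth map between homogeneous spaces on which $G$ acts transitively, and such equivariant maps have constant rank (the differential at different points is conjugated by the transitive action, hence has the same rank everywhere). A constant-rank smooth bijection is automatically a diffeomorphism. Alternatively one can compute $(\diff\phi)_{eG_p}$ directly and show it is an isomorphism using that $\ker(\diff F)_e$ is precisely the Lie algebra of $G_p$, so that $\diff\phi$ is injective at the base point; combined with the dimension count $\dim(G/G_p)=\dim G-\dim G_p=\dim M$ from Theorem \ref{constructionth}, this forces $\diff\phi$ to be an isomorphism at $eG_p$, and then equivariance propagates this to every point. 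Either way, I would emphasise that the one nontrivial analytic input is the constant-rank or infinitesimal argument upgrading the set-theoretic bijection to a diffeomorphism.
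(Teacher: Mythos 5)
Your proposal is correct and follows essentially the same route as the paper's proof: closedness of $G_p$ via the orbit map, well-definedness, bijectivity and equivariance by direct coset computations, smoothness by passing to the quotient through the submersion $\pi:G\rightarrow G/G_p$, and the final upgrade to a diffeomorphism via the equivariant (constant) rank theorem. You spell out in more detail why that last step works, which the paper simply delegates to the Equivariant Rank Theorem in \cite{Lee}, but the argument is the same.
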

\begin{proof}
    Consider the orbit map $$\theta_p:G\rightarrow G\cdot p\subseteq M,\quad g\mapsto g\cdot p.$$ By continuity, $G_p=(\theta_p)^{-1}(p)$ is closed. 
    
    We now show that $\phi$ is well defined. Assume $g_1G_p=g_2G_p$ are two coset representatives, which means that there exists $k\in G_p$ such that $g_1=g_2k.$ Thus, $$\phi(g_1G_p)=g_1\cdot p=g_2k\cdot p=g_2\cdot(k\cdot p)=g_2\cdot p=\phi(g_2G_p).$$ 
    
    Further, $\phi$ is $G$-equivariant because for any $g_1,g_2\in G$ $$\phi(g_1\cdot g_2G_p)=\phi(g_1g_2G_p)=g_1g_2\cdot p=g_1\cdot(g_2\cdot p)=g_1\cdot\phi(g_2G_p).$$
    We see that $\phi$ is smooth since it is obtained from $\theta_p$ by passing to the quotient.
    
    Lastly, $\phi$ is bijective. Due to transitivity of the action of $G$ on $M$, for any $q\in M$ there exists $g\in G$ such that $g\cdot p=q.$ Thus, $$\phi(gG_p)=g\cdot p=q.$$ On the other hand, if $$\phi(g_1G_p)=\phi(g_2G_p),$$ then $g_1\cdot p=g_2\cdot p$, or equivalently, $g_2^{-1}g_1\in G_p.$ But this means that $g_1G_p=g_2G_p,$ and we conclude that $\phi$ is injective. 
    It can now be shown that $\phi$ is a diffeomorphism, see for example the Equivariant Rank Theorem in \cite{Lee}.
\end{proof}

This leads  us to the following important result by Myers and Steenrod from 1939.
\begin{theorem}[Myers-Steenrod, 1939]{\rm \cite{MS}}\label{MyersSteenrod}
    Any closed group of isometries of a Riemannian manifold of class $C^r \ (r\geq 2)$ is a Lie group of isometries. 
\end{theorem}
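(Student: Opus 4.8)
The plan is to construct the Lie group structure on the closed isometry group $G$ out of its faithful action on the orthonormal frame bundle of $M$, the essential input being the rigidity of isometries. \textbf{First} I would establish the rigidity lemma: an isometry $\phi$ of a connected Riemannian manifold is completely determined by the pair $(\phi(p),(\diff\phi)_p)$ at any single point $p$. The key observation is that isometries intertwine the exponential maps, so that on a normal neighbourhood of $p$ one has $\phi\circ\exp_p=\exp_{\phi(p)}\circ\,(\diff\phi)_p$. Hence $\phi$ is prescribed near $p$ by its $1$-jet there, and a standard open-and-closed argument, using connectedness of $M$, extends the agreement of two such isometries from a neighbourhood to the whole of $M$.

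\textbf{Next} I would exploit rigidity to realise $G$ inside a finite-dimensional manifold. The isometry group acts on the orthonormal frame bundle $\mathcal{F}(M)$ by $\phi\cdot u=\diff\phi(u)$, and rigidity says precisely that this action is \emph{free}: an isometry fixing one frame satisfies $\phi(p)=p$ and $(\diff\phi)_p=\Id$, hence is the identity. Fixing a frame $u_0$ over $p$, the orbit map $G\to\mathcal{F}(M)$, $\phi\mapsto(\diff\phi)_p(u_0)$, is therefore injective and identifies $G$ bijectively with a subset $\Sigma\subseteq\mathcal{F}(M)$.

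\textbf{Then} I would control the topology. Equipping $G$ with the compact-open topology and recalling that isometries are distance-preserving, hence equicontinuous, the Arzel\`a--Ascoli theorem shows that $G$ is locally compact; the hypothesis that $G$ is closed guarantees that $\Sigma$ is a closed subset of $\mathcal{F}(M)$ and that the orbit map is a homeomorphism onto $\Sigma$.

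\textbf{The hard part} is the final upgrade from this metric and topological picture to a smooth one: one must show that $\Sigma$ is an embedded submanifold of $\mathcal{F}(M)$ and that, under the identification $G\cong\Sigma$, composition and inversion are smooth. Here the local compactness obtained above, together with the smooth dependence of isometries on their $1$-jets furnished by the intertwining relation with $\exp$, is used to produce local Euclidean charts on $\Sigma$ and to verify differentiability of the group operations. This is the technical heart of the theorem, a differentiability statement in the spirit of Cartan's closed-subgroup theorem, and it is where the depth of Myers--Steenrod resides; once it is in place, $G$ is a Lie group acting smoothly and isometrically on $M$.
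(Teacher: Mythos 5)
The paper does not prove this statement at all: it is quoted as a classical deep result and attributed directly to Myers and Steenrod's 1939 paper, so there is no in-paper argument to compare yours against. Judged on its own, your outline follows the standard modern skeleton (rigidity of isometries via the exponential map, the resulting free action on the orthonormal frame bundle, identification of $G$ with a closed orbit $\Sigma\subseteq\mathcal{F}(M)$, and control of the topology via equicontinuity and Arzel\`a--Ascoli), and each of these steps is correctly stated and correctly motivated.

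However, there is a genuine gap: the entire content of the theorem lies in the step you label ``the hard part,'' and your proposal names it rather than proves it. Showing that $\Sigma$ is an embedded $C^\infty$ submanifold of $\mathcal{F}(M)$ and that composition and inversion are smooth under the identification $G\cong\Sigma$ is precisely the Myers--Steenrod regularity theorem; the phrase ``smooth dependence of isometries on their $1$-jets furnished by the intertwining relation with $\exp$'' gives smoothness of each individual isometry in normal coordinates, but it does not by itself produce local Euclidean charts on $\Sigma$, nor differentiability of the group law --- for that one needs either the Montgomery--Zippin/Bochner machinery for locally compact transformation groups, or the original hands-on construction of canonical coordinates from one-parameter groups of isometries. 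Everything before that point is preparatory and comparatively routine, so as written the proposal is a correct roadmap to the theorem rather than a proof of it. A minor additional point: the rigidity lemma, and hence freeness of the action on $\mathcal{F}(M)$, requires $M$ to be connected, an hypothesis you use implicitly but the statement does not record.
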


\begin{definition}{\rm \cite{Arv},\cite{Gud-Rie}}
    Denote $I(M)$ the isometry group of a Riemannian manifold $(M,g).$ A {\it Riemannian homogeneous space} is a Riemannian manifold on which $I(M)$ acts transitively. In other words, for all $p,q\in M$ there exists $\phi_{p,q}\in I(M)$ such that $\phi_{p,q}(p)=q.$
\end{definition}
Thus, the crucial difference between {\it Riemannian} homogeneous spaces and homogeneous spaces is the compatibility of the group action and the metric. Informally, Riemannian homogeneous spaces are often said to “look the same everywhere". This refers to both the smooth structure and the Riemannian metric.

As a first example, we see that Riemannian Lie groups are Riemannians homogeneous spaces. 
\begin{proposition}{\rm \cite{Gud-Rie}}
    A Lie group $G$ equipped with a left-invariant metric $g$ is a Riemannian homogeneous space.
\end{proposition}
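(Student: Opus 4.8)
The goal is to show that a Lie group $G$ with a left-invariant metric $g$ is a Riemannian homogeneous space, i.e. that the isometry group $I(M)$ acts transitively on $G$. The plan is to exhibit, for any pair of points $p, q \in G$, an explicit isometry of $(G,g)$ carrying $p$ to $q$, and the natural candidate is a left translation.

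First I would recall that, by the very definition of a left-invariant metric, each left translation $L_a : G \to G$ is an isometry of $(G,g)$: for all $a \in G$ the map $L_a$ satisfies $g_p((\diff L_a)_p(X), (\diff L_a)_p(Y)) = g_{ap}(X,Y)$ in the appropriate sense, which is precisely the statement that $L_a$ preserves the metric. Since $L_a$ is also a diffeomorphism (its smooth inverse being $L_{a^{-1}}$, as was already observed in the proof of Proposition \ref{innerauto}), each $L_a$ belongs to the isometry group $I(G)$.

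Next I would verify transitivity. Given $p, q \in G$, set $a = q p^{-1} \in G$. Then
\begin{equation*}
L_a(p) = a \cdot p = (q p^{-1}) p = q,
\end{equation*}
so the isometry $L_a$ maps $p$ to $q$. Since $p$ and $q$ were arbitrary, this shows that $I(G)$ acts transitively on $G$, which is exactly the defining condition of a Riemannian homogeneous space.

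I do not expect any serious obstacle here; the argument is essentially a direct unwinding of the definitions. The only point requiring mild care is ensuring that $L_a$ is genuinely an \emph{isometry} and not merely a diffeomorphism preserving the metric at the identity. This is handled by the left-invariance hypothesis itself, together with the remark that a left-invariant metric is completely determined by its value $g_e$ at the origin; left translation then transports $g_e$ consistently across all of $G$, so $L_a^* g = g$ everywhere. With that in place, the conclusion follows immediately.
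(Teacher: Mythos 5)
Your argument is correct and follows exactly the same route as the paper: both use the left translation $L_{qp^{-1}}$, which sends $p$ to $q$, together with the observation that left-invariance of $g$ makes every left translation an isometry. The additional remarks about $L_a$ being a diffeomorphism and the metric being determined by $g_e$ are fine but not needed beyond what the definition of left-invariance already gives.
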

\begin{proof}
    For every $p,q\in G$ the left translation $L_{qp^{-1}}$ satisfies $L_{qp^{-1}}(p)=q.$ Since $g$ is left-invariant, every left translation is an isometry. This shows that $I(G)$ acts transitively on $G.$
\end{proof}

   From Theorems \ref{characterisationth} and \ref{MyersSteenrod} it follows that a Riemannian homogeneous space $M$ is isomorphic to $G/K,$ where $G=I(M),$ and $K=G_p$ is the stabiliser subgroup of any $p\in M.$ It can be shown, for example in \cite{Helg} by Helgason, that $G_p$ is compact for any choice of $p.$

\begin{example}\label{sphereiso}{\rm \cite{Arv}}
The isometry group of $S^n\subset\rn^{n+1}$ is the group of orthogonal matrices $\O{n+1},$ where $\O{n+1}$ acts on $S^n$ by standard matrix multiplication.

The action is well defined. Note that $p\in S^n$ if and only if $(p,p)=p^tp=1.$ Assume that $p\in S^n$ and $A\in\O{n+1}.$ Then $A\cdot p\in S^n,$ since $$(Ap)^t\cdot(Ap)=p^tA^tAp=pp^t=1.$$
Unitality and associativity follow immediately from the properties of matrix multiplication.

Similarly, it can be shown that the action is an isometry. To see this, let $\langle\cdot,\cdot\rangle $ denote the standard metric on $\rn^{n+1}.$ Then for $A\in\O{n+1},$ and $X,Y\in C^{\infty}(S^n),$ $$\langle A\cdot X,A\cdot Y\rangle =X^tA^tAY=X^tY=\langle X,Y\rangle .$$

The action is transitive. Indeed, let $p_N=(1,0,\dots,0)^t$ and $p=(p_1,\dots,p_{n+1})^t\in S^n.$ Then with the Gram-Schmidt process we can find vectors $v_1,\dots v_n$ such that $\{p,v_1,\dots v_n\}$ forms an orthonormal basis of $\rn^{n+1}.$ Then $$A=(p,v_1,\dots v_n)\in\O{n+1}.$$ Clearly, $A\cdot p_N=p.$ For any $q\in S^n,$ there exists $B_q\in\O{n}$ such that $B_q\cdot p_N=q,$ as demonstrated earlier. Thus, $AB_q^{-1}\cdot q=p.$

The isotropy subgroup of $p_N$ is the subgroup of $\O{n+1}$ if the form 
$$\begin{pmatrix}
        1&0\\
        0&M
    \end{pmatrix}, \ M\in\O{n}.$$ Consequently, $$S^n\cong\O{n+1}/\O{n}.$$
\end{example}
    
\begin{definition}
    Let $G$ be a Lie group. The {\it identity component} $G_o$ of $G$ is the largest connected subset of $G$ containing the identity element $e$ of $G.$ 
    %It can be shown that $G_o$ is a normal subgroup of $G.$
\end{definition}

\begin{theorem}{\rm \cite{Helg}}\label{identitycomptransitive}
    Let $G$ be a Lie group acting transitively on a connected smooth manifold $M.$ Then the identity component $G_o$ of $G$ also acts transitively on $M.$
\end{theorem}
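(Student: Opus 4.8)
The plan is to show that every $G_o$-orbit in $M$ is open, and then to invoke the connectedness of $M$ together with the orbit partition to conclude that there is only a single orbit. By Lemma \ref{orbitdecomp}, the action of $G_o$ on $M$ partitions $M$ into disjoint orbits. Once we know each such orbit is open, we fix one orbit $G_o\cdot p$ and write $M$ as the disjoint union of the open set $U=G_o\cdot p$ and the open set $V$ obtained as the union of all remaining orbits. Since $M$ is connected and $U\neq\emptyset$, we must have $V=\emptyset$, so $M=G_o\cdot p$; as $p$ was arbitrary, this says precisely that $G_o$ acts transitively.

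The key step is therefore to prove that each orbit $G_o\cdot p$ is open. First I would recall that $G_o$ is an open subset of $G$: a Lie group is a smooth manifold and hence locally connected, so its connected components, in particular the one containing $e$, are open. Next, consider the orbit map of the full group,
$$\theta_p:G\rightarrow M,\quad g\mapsto g\cdot p.$$
Because $G$ acts transitively, the Characterisation Theorem \ref{characterisationth} supplies an equivariant diffeomorphism $\phi:G/G_p\rightarrow M$ with $\phi(gG_p)=g\cdot p$, so that $\theta_p=\phi\circ\pi$, where $\pi:G\rightarrow G/G_p$ is the natural projection. By the Construction Theorem \ref{constructionth}, $\pi$ is a smooth submersion, and composing with the diffeomorphism $\phi$ shows that $\theta_p$ is a submersion as well. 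Submersions are open maps, so $\theta_p$ carries open subsets of $G$ to open subsets of $M$; in particular $G_o\cdot p=\theta_p(G_o)$ is open.

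The main obstacle is making rigorous the two supporting facts used above, namely that $G_o$ is genuinely open in $G$ (which rests on local connectedness of manifolds) and that a submersion is an open map. Both are standard, so the substantive content lies in recognising that the openness of $G_o$, combined with the openness of the orbit map furnished by Theorem \ref{characterisationth}, forces every $G_o$-orbit to be open. After that observation, the connectedness argument is immediate.
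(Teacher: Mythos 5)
Your proposal is correct and follows essentially the same route as the paper: decompose $M$ into $G_o$-orbits via Lemma \ref{orbitdecomp}, observe that each orbit is open, and conclude by connectedness. The only difference is that the paper simply asserts the openness of the orbits, whereas you justify it carefully (openness of $G_o$ in $G$ by local connectedness, plus openness of the orbit map obtained by factoring it through the submersion $\pi:G\rightarrow G/G_p$ and the diffeomorphism from Theorem \ref{characterisationth}), which is a welcome filling-in of a detail the paper leaves implicit.
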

\begin{proof}
    Since $G_o$ is a subgroup of $G,$ we may consider the action of $G_o$ on $M$ induced by the action of $G$ on $M.$
    By Lemma \ref{orbitdecomp}, we may decompose $M$ into the disjoint union $$M\cong\coprod_{p\in X_0} G_o\cdot p,$$ where $X_0$ is a complete list of orbit representatives of $M$ under the action of $G_o.$ Thus, any two orbits $G_o\cdot p_1, G_o\cdot p_2$ are either disjoint or equal. However, each orbit is also open in the connected manifold $M,$ hence all orbits must coincide. We conclude that $$M\cong G_o\cdot p$$ for any $p\in M.$  
\end{proof}
Theorem \ref{identitycomptransitive} has the following consequence.
\begin{corollary}\label{identitycompisometries}
We denote the largest connected subgroup of $I(M)$ containing the identity element by $I_o(M).$ If $M$ is connected, then $I_o(M)$ also acts transitively on $M.$
\end{corollary}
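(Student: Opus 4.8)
The plan is to derive Corollary \ref{identitycompisometries} as a direct consequence of Theorem \ref{identitycomptransitive} by verifying that the group $I_o(M)$ named in the statement is exactly the identity component of the Lie group $I(M)$, and then applying the theorem with $G=I(M)$ and $G_o=I_o(M)$. First I would recall that by the Myers--Steenrod Theorem \ref{MyersSteenrod}, the isometry group $I(M)$ of the Riemannian manifold $M$ is a Lie group of isometries. This is the essential input that allows us to invoke Theorem \ref{identitycomptransitive}, which is stated for an abstract Lie group acting transitively on a connected manifold.

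Next I would observe that, by definition, $I_o(M)$ is the largest connected subgroup of $I(M)$ containing the identity element $e$, which is precisely the identity component $G_o$ of the Lie group $G=I(M)$ in the sense of the earlier definition. Since $M$ is assumed connected, and since a Riemannian homogeneous space is one on which $I(M)$ acts transitively, the hypotheses of Theorem \ref{identitycomptransitive} are met: $G=I(M)$ is a Lie group acting transitively on the connected smooth manifold $M$. The conclusion of that theorem then says that the identity component $G_o=I_o(M)$ also acts transitively on $M$, which is exactly the assertion of the corollary.

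The only subtlety worth flagging is that the corollary as stated presupposes that $I(M)$ acts transitively, i.e. that $M$ is a Riemannian homogeneous space; I would make this hypothesis explicit so that Theorem \ref{identitycomptransitive} applies verbatim. With that in place, no further argument is required beyond identifying the objects. I expect the main (and only) conceptual step to be the recognition that Myers--Steenrod supplies the Lie group structure on $I(M)$, so that the abstract result of Theorem \ref{identitycomptransitive} can be specialised to the isometry group; everything else is a matter of matching definitions.
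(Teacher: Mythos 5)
Your proposal is correct and takes essentially the same route as the paper, which states the corollary as an immediate consequence of Theorem \ref{identitycomptransitive} applied to $G=I(M)$ with $G_o=I_o(M)$, the Lie group structure on $I(M)$ being supplied by the Myers--Steenrod Theorem \ref{MyersSteenrod}. Your remark that the transitivity of $I(M)$ on $M$ (i.e.\ that $M$ is a Riemannian homogeneous space) is an implicit hypothesis is a fair observation that the paper leaves unstated.
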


\begin{example}\label{spheresoso}
It is well known that $\SO{n+1}$ is the connected component of $\O{n+1}.$
 By Theorem \ref{identitycomptransitive}, $\SO{n+1}$ acts transitively on $S^n.$   For the north pole $$p_N=(1,0,\dots,0)^t\in S^n,$$ an element in the stabiliser subgroup is of the form $$\begin{pmatrix}
        1&0\\
        0&M
    \end{pmatrix}, $$\ where $M\in\SO{n}.$ Hence we may identify the stabiliser subgroup with $\SO{n}.$ Thus, $$S^n\cong\SO{n+1}/\SO{n}.$$  
\end{example}
We will now define reductive homogeneous spaces. For this special class of homogeneous spaces, the tangent space of $G/K$ at the origin $o$ can be conveniently identified with a subspace of $\g.$
\begin{definition}{\rm \cite{Arv}}\label{reductive}
    Denote the Lie algebras of $G,K$ by $\g,\k$ respectively. A homogeneous space $G/K$ is called {\it reductive} if there exists a subspace $\m$ of $\g$ such that $$\g=\k\oplus\m$$ and $\Ad(k)\m\subset\m$ for all $k\in K,$ that is, $\m$ is $\Ad(K)$-invariant.
\end{definition}

\begin{remark}{\rm \cite{Arv}}
    If $\Ad(k)\m\subset\m$ for all $k\in K,$ then $[\k,\m]\subset\m.$ The converse holds if $K$ is connected.
\end{remark}

\begin{observation}\label{tangentspacem}{\rm \cite{Arv}}
    Let $M=G/K$ be a homogeneous space. Consider the differential $\diff\pi$ of the standard projection $\pi:G\rightarrow G/K$ given by $$\pi(g)\rightarrow gK.$$ We denote $o=\pi(e)=eK.$ Then the differential $\diff\pi_e:\g\rightarrow T_o(G/K)$ acts on $X\in\g$ as follows. 
    $$\diff\pi_e(X)=\frac{d}{ds}(\pi\circ\exp(sX))\rvert_{s=0}.$$
    Thus, $\ker \diff\pi_e=\k.$ Since $\diff\pi$ is surjective,it follows that $$\g/\k\cong T_o(G/K).$$ 
    If $M$ is reductive, it now immediately follows that $$\m\cong T_o(G/K).$$
\end{observation}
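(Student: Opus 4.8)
The plan is to establish the stated formula for $\diff\pi_e$, then compute its kernel, and finally read off the isomorphisms via the rank--nullity relation. First I would verify the formula for $\diff\pi_e(X)$. Identifying $\g$ with $T_eG$ as in Proposition \ref{liealgisototangentate}, the curve $s\mapsto\exp(sX)$ passes through $e$ at $s=0$ with velocity $X$. Hence, by the chain rule, the differential of $\pi$ applied to this velocity vector is exactly $\frac{d}{ds}(\pi\circ\exp(sX))|_{s=0}$, which is the claimed formula.

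Next I would compute the kernel. For the inclusion $\k\subseteq\ker\diff\pi_e$: if $X\in\k$, then by Definition \ref{liealg} we have $\exp(sX)\in K$ for all $s\in\rn$, so $\pi(\exp(sX))=\exp(sX)K=K=o$ is constant in $s$, and therefore its derivative at $s=0$ vanishes. For the reverse inclusion I would use a dimension count rather than a direct computation. The Construction Theorem \ref{constructionth} tells us both that $\pi$ is a smooth submersion, so that $\diff\pi_e$ is surjective, and that $\dim(G/K)=\dim G-\dim K$. Combining surjectivity with the rank--nullity relation gives
$$\dim\ker\diff\pi_e=\dim\g-\dim T_o(G/K)=\dim G-(\dim G-\dim K)=\dim\k.$$
Since $\k\subseteq\ker\diff\pi_e$ and the two spaces have equal dimension, they must coincide.

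With $\diff\pi_e$ surjective and $\ker\diff\pi_e=\k$, the first isomorphism theorem for linear maps immediately yields $\g/\k\cong T_o(G/K)$. Finally, if $M$ is reductive so that $\g=\k\oplus\m$, the restriction $\diff\pi_e|_\m$ is injective, because $\m\cap\k=\{0\}$, and its image is a subspace of $T_o(G/K)$ of dimension $\dim\m=\dim G-\dim K=\dim T_o(G/K)$; hence $\diff\pi_e|_\m$ is an isomorphism $\m\cong T_o(G/K)$.

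I expect the main obstacle to be the reverse kernel inclusion $\ker\diff\pi_e\subseteq\k$, which is not a direct calculation but rests on two external inputs: that $\pi$ is a submersion, and the dimension formula $\dim(G/K)=\dim G-\dim K$. Both are furnished by the Construction Theorem \ref{constructionth}, so once these are invoked the argument collapses to the clean dimension count above, and the two isomorphisms follow formally.
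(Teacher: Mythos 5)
Your proposal is correct and follows essentially the same route as the paper's own (very terse) justification: identify $\diff\pi_e(X)$ with the velocity of $s\mapsto\pi(\exp(sX))$, observe that $\k\subseteq\ker\diff\pi_e$ since $\exp(sX)\in K$ makes the projected curve constant, and conclude via surjectivity of $\diff\pi_e$. Your explicit rank--nullity count for the reverse inclusion $\ker\diff\pi_e\subseteq\k$ simply fills in a step the paper leaves implicit, so there is nothing to fault here.
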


\begin{definition}{\rm \cite{Arv}}
    Let $M=G/K$ be a homogeneous space. Let $p\in M$ and $K=G_p.$ For every $a\in G,$ consider the diffeomorphism $$\tau_a:G/K\rightarrow G/K,\ gK\mapsto agK.$$ Then $$\rho:K\rightarrow GL(T_{o}M),\ k\mapsto(\diff\tau_k)_{o}$$ is called the {\it isotropy representation} of $K$ at $o=eK$
\end{definition}

\begin{remark}
    The isotropy representation is related to the adjoint representation in the following way. Let $G$ act on itself by conjugation, i.e. the group action $\alpha$ of $G$ is given by $$\alpha:G\times G, \ (g,h)\mapsto I_g(h),$$ where $I_g$ denotes the inner automorphism (see Definition \ref{innerauto}). Then the isotropy representation $\rho$ of $G_e$ at $e$ coincides with the adjoint representation $\Ad\rvert_K$ restricted to $K.$ For more details on this we refer to \cite{Arv}.
\end{remark}

\section{Symmetric Spaces}\label{symmetricspaces}
In the following, we discuss a particular class of homogeneous spaces, namely the symmetric spaces. We will prove that they are indeed homogeneous and reductive. This allows us to draw conclusions concerning the Lie algebra of a symmetric space. 
\begin{definition}{\rm \cite{Arv}}
    A connected Riemannian manifold $M$ is called a {\it symmetric space} if for each $p\in M$ there exists a unique isometry $s_p:M\rightarrow M$ such that $$s_p(p)=p,\ \text{and} \ (\diff s_p)_p=-\Id_p.$$
\end{definition}

\begin{example}{\rm \cite{Arv}}
    $\rn^n$ is symmetric. For $p\in\rn^n,$ the desired symmetry is given by $s_p(x)=2p-x.$
\end{example}
In the following lemma, we study how symmetries act on geodesics.

\begin{lemma}{\rm \cite{Arv}}\label{reversinggeodesics}
    Let $(M,g)$ be a symmetric space and for $p\in M$, let $s_p$ be the symmetry of $M$ at $p.$ Then $s_p$ reverses geodesics passing through $p$, i.e. for a geodesic $\gamma:(-\epsilon,\epsilon)\rightarrow U\subset M$ with $\gamma(0)=p,$ we have that $$s_p(\gamma(t))=\gamma(-t).$$
\end{lemma}
\begin{proof}
    Since isometries preserve geodesics, the image $\beta(t)=s_p(\gamma(t))$ of $\gamma$ under $s_p$ is a geodesic on $M.$ Further, we observe that $$\beta(0)=s_p(\gamma(0))=s_p(p)=p,\quad \Dot{\beta}(0)=-\Id(\Dot{\gamma}(0))=-\Dot{\gamma}(0).$$ However, the geodesic $\alpha(t)=\gamma(-t)$ also satisfies $$\alpha(0)=p,\quad \Dot{\alpha}(0)=-\Dot{\gamma}(0).$$ Due to the uniqueness of geodesics, $s_p(\gamma(t))=\gamma(-t)$ for all $t$ in the interval $(-\epsilon,\epsilon).$
\end{proof}
Lemma \ref{reversinggeodesics} proves to be useful for the following two results.

\begin{proposition}{\rm \cite{Helg}}
    A symmetric Riemannian manifold $(M,g)$ is geodesically complete.
\end{proposition}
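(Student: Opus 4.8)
The plan is to show that a symmetric Riemannian manifold $(M,g)$ is geodesically complete by demonstrating that every geodesic can be extended to all of $\rn$. The key tool is Lemma \ref{reversinggeodesics}, which tells us that the symmetry $s_p$ at a point $p$ reverses geodesics through $p$, together with the observation that each symmetry $s_p$ is a globally defined isometry of $M$.

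First I would let $\gamma:(-\epsilon,\epsilon)\rightarrow M$ be a geodesic, and suppose for contradiction (or simply for the sake of extension) that it is defined on a maximal open interval $(a,b)$. The strategy is to use symmetries to push the geodesic past any purported endpoint. Concretely, pick a point $q=\gamma(t_0)$ with $t_0$ close to the right endpoint $b$, so that $t_0$ lies within half the length of the interval of definition on either side. The symmetry $s_q$ at $q$ is an isometry and hence maps the geodesic $\gamma$ to another geodesic; by Lemma \ref{reversinggeodesics} it reverses $\gamma$ about $q$, reflecting the already-defined portion of $\gamma$ to the other side of $q$. Since $s_q$ is defined on all of $M$, the reflected geodesic is defined wherever the original was, and this furnishes an extension of $\gamma$ to an interval strictly larger than $(a,b)$, contradicting maximality.

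More cleanly, I would argue as follows. Let $\gamma$ be a geodesic defined at least on some interval around $0$, and let $c$ be any time in the interior of its domain. Set $q=\gamma(c)$. The isometry $s_q$ sends geodesics to geodesics and, being defined on all of $M$, the image $s_q\circ\gamma$ is a geodesic defined on the reflected interval. By Lemma \ref{reversinggeodesics} applied with base point $q$, we have $s_q(\gamma(c+t))=\gamma(c-t)$ for small $t$, so $s_q\circ\gamma$ agrees with a reparametrised piece of $\gamma$ where both are defined; but $s_q\circ\gamma$ extends past the original domain. Iterating this reflection argument, one can extend $\gamma$ by a fixed amount at each step, and hence to the whole real line. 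Therefore every geodesic extends to $\rn$, which is exactly geodesic completeness.

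The main obstacle I anticipate is making the extension argument rigorous rather than merely suggestive: one must verify that the reflected geodesic $s_q\circ\gamma$ genuinely \emph{extends} the original geodesic (agreeing on the overlap so that the two pieces glue into a single smooth geodesic) and that the amount of extension gained at each reflection does not shrink to zero, which would prevent reaching arbitrarily large parameter values. The cleanest way to handle the gluing is to invoke uniqueness of geodesics with given initial position and velocity, ensuring the reflected piece matches $\gamma$ on the overlap; and to handle the quantitative point, one reflects about a point whose parameter is a fixed fraction into the domain, so that each reflection roughly doubles the length of the interval of definition. I would be careful to state explicitly that each $s_p$ is an isometry of all of $M$ (which is part of the definition of a symmetric space) since it is precisely the global nature of the symmetries that distinguishes this from the general situation where only local geodesic reflections are available.
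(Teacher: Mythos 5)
Your proposal is correct and follows essentially the same route as the paper: the paper assumes a maximal geodesic $\gamma:[0,\alpha)\rightarrow M$ with $\alpha<\infty$, picks $\beta\in(\tfrac{\alpha}{2},\alpha)$, and defines the extension by $\Tilde{\gamma}(t)=s_{\gamma(\beta)}(\gamma(2\beta-t))$ for $t\in(\beta,2\beta)$, matching velocities at $\beta$ and invoking uniqueness of geodesics exactly as you describe. The two points you flag as needing care (gluing via uniqueness of the initial value problem, and choosing the reflection point far enough along so that $2\beta>\alpha$) are precisely the two steps the paper's proof carries out.
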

\begin{proof}
    We wish to show that for every $p\in M$ and every $v\in T_pM$, there exists a geodesic $\gamma:\rn\rightarrow M$ defined on the whole of $\rn$ such that $\gamma(0)=p$ and $\Dot{\gamma}(0)=v.$ Assume towards a contradiction that there exist $p\in M$ and $ v\in T_pM$ such that the geodesic $\gamma:[0,\alpha)\rightarrow M,$ where $\alpha<\infty,$ which satisfies $\gamma(0)=p$ and $\Dot{\gamma}(0)=v$, cannot be extended. Choose $\beta\in(\frac{\alpha}{2},\alpha),$ and let $\Tilde{\gamma}:[0,2\beta)\rightarrow M$ be the curve $$
        \Tilde{\gamma}(t)=\begin{cases}
             \gamma(t) & {\rm if}\   t\in[0,\beta],\\
             s_{\gamma(\beta)}(\gamma(2\beta-t)) &{\rm if }\  t\in(\beta, 2\beta).
        \end{cases}$$
   This means that on the second part of the interval, we reflect $\gamma$ by $s_{\gamma(\beta)},$ as we have discussed in Lemma \ref{reversinggeodesics}. We observe that $\Tilde{\gamma}$ is continuous at $\beta$ and $\Tilde{\gamma}(\beta)=\gamma(\beta).$ Further, it holds that $$\Dot{\Tilde{\gamma}}(\beta)=\diff s_{\gamma(\beta)}(-\Dot{\gamma}(\beta))=\Dot{\gamma}(\beta).$$ By the uniqueness of geodesics, $\Tilde{\gamma}$ is an extension of $\gamma.$
   
   Since $2\beta>\alpha,$ this contradicts the maximality of $\gamma.$ We conclude that $M$ is geodesically complete.
\end{proof}

\begin{theorem}{\rm \cite{Helg}}\label{symmetricimplieshomogeneous}
    A symmetric Riemannian manifold (M,g) is homogeneous.
\end{theorem}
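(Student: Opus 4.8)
The plan is to show that the isometry group of a symmetric space $(M,g)$ acts transitively, so that $M$ is homogeneous in the sense of the preceding definitions. The key insight is that the point symmetries $s_p$ together with their compositions furnish enough isometries to connect any two points, and the bridge between nearby points is provided by the geodesic-reversing property established in Lemma \ref{reversinggeodesics}. First I would reduce the global statement to a local one: since $(M,g)$ is geodesically complete (by the proposition just proved) and connected, the Hopf--Rinow theorem guarantees that any two points $p,q\in M$ can be joined by a minimizing geodesic $\gamma:[0,L]\to M$ with $\gamma(0)=p$ and $\gamma(L)=q$.

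The central construction is the \emph{transvection} (sliding isometry). Given the geodesic $\gamma$ from $p$ to $q$, let $m=\gamma(L/2)$ be its midpoint, and consider the composition $T=s_m\circ s_p$. Each $s_p,s_m$ is an isometry by definition, so $T$ is an isometry. Using Lemma \ref{reversinggeodesics}, I would compute the effect of $T$ on the geodesic $\gamma$. Since $s_p$ reverses $\gamma$ about $p$, it sends $\gamma(t)$ to $\gamma(-t)$; reparametrising so that $\gamma$ is defined on a symmetric interval about each relevant point, one checks that $s_m$ then carries the reflected geodesic forward, and the net effect is that $T$ translates along $\gamma$ by the parameter distance $L$. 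In particular $T(p)=q$, which exhibits an isometry carrying $p$ to $q$.

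To make the midpoint computation clean, I would recenter the parametrisation: reversing about $p$ sends $\gamma(0)=p$ to itself and flips the velocity, while reversing about $m=\gamma(L/2)$ sends $\gamma(L/2+s)$ to $\gamma(L/2-s)$ for small $s$. Tracking $p=\gamma(0)$ through $s_p$ (which fixes it) and then through $s_m$ (which sends $\gamma(0)=\gamma(L/2-L/2)$ to $\gamma(L/2+L/2)=\gamma(L)=q$) gives $T(p)=q$ directly. Because $p,q$ were arbitrary, $I(M)$ acts transitively on $M$, and hence $M$ is a Riemannian homogeneous space.

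The main obstacle I anticipate is the careful bookkeeping of parametrisations in the transvection step: Lemma \ref{reversinggeodesics} is stated for a geodesic through a single point on a symmetric interval $(-\epsilon,\epsilon)$, so to apply it at both $p$ and the midpoint $m$ I must extend the geodesic (legitimate by completeness) and keep consistent track of how each reflection acts on the parameter. A secondary technical point is ensuring the two points actually lie on a common geodesic; this is exactly where completeness and connectedness, via Hopf--Rinow, are essential, and I would state that dependence explicitly rather than treating the existence of the joining geodesic as automatic.
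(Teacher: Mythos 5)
Your proposal is correct and rests on the same key idea as the paper's proof: the symmetry at the midpoint of a geodesic from $p$ to $q$ swaps its endpoints (via Lemma \ref{reversinggeodesics}), so the isometry group acts transitively. The only difference is in how you join two arbitrary points --- you invoke completeness and Hopf--Rinow to obtain a single minimizing geodesic (making one midpoint symmetry, or your transvection $s_m\circ s_p$, suffice), whereas the paper uses connectedness to produce a piecewise geodesic and composes finitely many midpoint symmetries; both routes are valid, and yours trades the broken-geodesic bookkeeping for an explicit appeal to Hopf--Rinow.
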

\begin{proof}
    We will show that the group of isometries $I(M)$ acts transitively on $M.$ Let $p,q\in M.$ 
    Assume there exists a geodesic $\gamma:[0,1]\rightarrow M$ such that $\gamma(0)=p,$ and $\gamma(1)=q.$ Then the isometry $s_{\gamma(\frac{1}{2})}\in I(M)$ reverses the geodesic, as it was shown in Lemma \ref{reversinggeodesics}. In particular, this means that $$s_{\gamma(\frac{1}{2})}(p)=q,\ {\rm and } \ s_{\gamma(\frac{1}{2})}(q)=p.$$
    In general, since $M$ is path connected, there exists a piecewise smooth geodesic joining any points $p$ and $q$ of $M.$ Then the finite composition of isometries as described above yields the desired element of $I(M).$
\end{proof}
Theorem \ref{thGK} shows how a symmetry $s_p$ of $M$ induces further structure on a symmetric space $M.$
\begin{theorem}{\rm \cite{Helg},\cite{O'Neill},\cite{Esch}}\label{thGK}
Let $M$ be a symmetric space. Then $M$ is of the form $M=G/K,$ where 
\begin{enumerate}
    \item $G$ is a connected Lie group,
    \item there exists an involution $\sigma:G\rightarrow G$ such that $G_\sigma=\{g\in G \ \rvert \ \sigma(g)=g\}$ and $G_\sigma^{o}$ is the identity component of $G_\sigma$,
    \item $K$ is a closed subgroup of $G$ satisfying $$G_\sigma^{o}\subset K\subset G_\sigma.$$
\end{enumerate}
\end{theorem}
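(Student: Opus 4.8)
The plan is to realise $M$ as a quotient of its own isometry group and then manufacture the involution $\sigma$ out of the geodesic symmetry at a base point. By Theorem \ref{symmetricimplieshomogeneous} the space $M$ is homogeneous, and by the Myers--Steenrod Theorem \ref{MyersSteenrod} its isometry group $I(M)$ is a Lie group. Since $M$ is connected, Corollary \ref{identitycompisometries} guarantees that the identity component $G:=I_o(M)$ already acts transitively on $M$; this $G$ is a connected Lie group, which settles~(i). Fixing a base point $p\in M$ and setting $K:=G_p$, the Characterisation Theorem \ref{characterisationth} then yields that $K$ is a closed subgroup of $G$ and that $M\cong G/K$ as homogeneous spaces.

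Next I would construct $\sigma$. The geodesic symmetry $s_p$ is an isometry fixing $p$ with $(\diff s_p)_p=-\Id_p$; because $s_p^2$ also fixes $p$ and has differential $\Id_p$ there, the principle that an isometry of a connected Riemannian manifold is determined by its value and differential at a single point forces $s_p^2=\Id_M$, so $s_p$ is an involution of $M$. Conjugation $g\mapsto s_p\,g\,s_p$ is then an involutive automorphism of $I(M)$, and since $G=I_o(M)$ is a normal subgroup it restricts to an involution $\sigma:G\to G$, establishing~(ii).

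It remains to pin $K$ between $G_\sigma^o$ and $G_\sigma$. For the inclusion $K\subset G_\sigma$, I would take $g\in G_p$ and note that both $g$ and $\sigma(g)=s_p\,g\,s_p$ fix $p$, while by the chain rule $(\diff\sigma(g))_p=(\diff s_p)_p(\diff g)_p(\diff s_p)_p=(-\Id)(\diff g)_p(-\Id)=(\diff g)_p$. The same uniqueness principle then gives $\sigma(g)=g$, so $g\in G_\sigma$. For $G_\sigma^o\subset K$, I would first observe that any $g\in G_\sigma$ commutes with $s_p$, whence $s_p(g\cdot p)=g\cdot(s_p\cdot p)=g\cdot p$, so $g\cdot p$ is a fixed point of $s_p$. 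Because $(\diff s_p)_p=-\Id_p$ has no nonzero fixed vectors, reading $s_p$ off in geodesic normal coordinates (where it acts as $v\mapsto-v$) shows that $p$ is an isolated point of $\mathrm{Fix}(s_p)$. The continuous map $g\mapsto g\cdot p$ sends the connected set $G_\sigma^o$ into $\mathrm{Fix}(s_p)$ and carries $e$ to $p$, so its image is a connected subset of $\mathrm{Fix}(s_p)$ containing the isolated point $p$ and hence equals $\{p\}$. Thus $g\cdot p=p$ for all $g\in G_\sigma^o$, giving $G_\sigma^o\subset G_p=K$.

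The step I expect to be the main obstacle is this last connectedness argument: one must be certain that $p$ is isolated in the \emph{global} fixed-point set $\mathrm{Fix}(s_p)$, not merely locally, so that the connected component of $p$ in $\mathrm{Fix}(s_p)$ is exactly $\{p\}$. This rests entirely on the local normal-coordinate description of $s_p$, and the supporting uniqueness lemma for isometries is invoked repeatedly, so I would state both explicitly before carrying out the two inclusions.
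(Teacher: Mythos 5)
Your proposal is correct and follows essentially the same route as the paper: realise $M\cong G/K$ with $G=I_o(M)$ and $K=G_p$ via Myers--Steenrod and the Characterisation Theorem, define $\sigma(g)=s_p\,g\,s_p$, prove $K\subset G_\sigma$ by comparing values and differentials at $p$, and prove $G_\sigma^o\subset K$ from the fact that $p$ is isolated in the fixed-point set of $s_p$. The only (harmless) divergence is in the last step, where you conclude via connectedness of the orbit of $G_\sigma^o$ rather than, as the paper does, by showing that $K$ contains a neighbourhood of $e$ in $G_\sigma^o$; you also supply two details the paper leaves implicit, namely that $s_p^2=\Id_M$ and that conjugation by $s_p$ preserves $G=I_o(M)$ by normality of the identity component.
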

\begin{proof}
    Assume $M$ is a symmetric space. We have seen in Theorem \ref{symmetricimplieshomogeneous} that $I(M)$ acts transitively on $M.$ Let $G=I_o(M)$ be the identity component of the isometry group of $M.$ Due to Corollary \ref{identitycompisometries}, $G$ acts transitively on $M.$ Choose $p\in M$ and let $K=G_p$ be the isotropy subgroup of $p.$ From the Characterisation Theorem \ref{characterisationth} it follows that $M$ is diffeomorphic to $G/K.$ It is left to show that there exists an involution $\sigma$ of $G$ such that $G_\sigma^{o}\subset K\subset G_\sigma.$    
    
    By definition, there exists a unique isometry $s_p$ of $M$ that fixes $p$ and satisfies $(\diff s_p)_p=-\Id_p.$ We now construct $\sigma$ as follows: $$\sigma:G\rightarrow G, \quad \sigma(g)=s_p\cdot g\cdot s_p.$$
    Clearly $\sigma^2=\Id_G,$ hence $\sigma$ is an involution.  If $k\in K,$ then $$\sigma(k)(p)=(s_p\cdot k\cdot s_p)(p)=p,$$ and $$(\diff\sigma(k))_p=(\diff s_p)_p\circ(\diff k)_p\circ(\diff s_p)_p=-\Id_p\circ(\diff k)_p\circ -\Id_p=(\diff k)_p.$$ This shows that $K\subset G_\sigma.$
    
    For the other inclusion, first note that $p$ is an isolated point of $$\{x\in M \ \rvert \ s_p(x)=x\}.$$ This is due to the fact that $ds_p$ maps every non-zero vector $v\in T_pM$ to $-v,$ as we have seen in Lemma \ref{reversinggeodesics}. This means that there exists an open neighbourhood $V$ of $p$ such that $p$ is the only point in $V$ which is fixed by $s_p.$ Let $U\subset G_\sigma$ be the neighbourhood of the identity element $e$ of $G$ that preserves $V,$ i.e. $$U=\{g\in G_\sigma \ \rvert \ g(p)\in V\}.$$ Then for every $g\in U,$ $$s_p\cdot g (p)=s_p\cdot g\cdot s_p(p)=\sigma(g)(p)=g(p).$$ In other words, $s_p$ fixes $g(p)\in V,$ but due to our assumptions on $V,$ we conclude that $g(p)=p.$ This shows that $K$ contains a neighbourhood of $e\in G_\sigma^{o}.$ It follows that $G_\sigma^{o}\subset K.$
\end{proof}

\begin{definition}
    We call $(G,K)$ a {\it symmetric pair} if there exists an involution $\sigma$ of $G$ such that $G_\sigma^{o}\subset K\subset G_\sigma.$
\end{definition}
Lemma \ref{lemmasympair} gives us more insight of the Lie algebra of a symmetric space. In particular, it shows that a symmetric space is reductive
\begin{lemma}{\rm \cite{O'Neill}}\label{lemmasympair}
    Let $G,K$ be a symmetric pair. Then: \begin{enumerate}
        \item We have the direct decomposition $$\g=\k\oplus\m,$$ where $\k,\m$ are the eigenspaces of $\diff\sigma$ corresponding to the eigenvalues $1$ and $-1,$ i.e. $$\k=\{X\in\g \ \rvert \ \diff\sigma(X)=X\},$$ $$\m=\{X\in\g \ \rvert \ \diff\sigma(X)=-X\}.$$
        \item $\k$ is the Lie algebra of $K.$
        \item $\m$ is $\Ad_K$ invariant.
    \end{enumerate}
\end{lemma}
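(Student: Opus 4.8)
The plan is to prove the three assertions in sequence, using the involution $\sigma$ of the symmetric pair $(G,K)$ and its differential $\diff\sigma:\g\to\g$ at the identity. The starting observation is that since $\sigma$ is an involution, i.e. $\sigma^2=\Id_G$, the chain rule gives $(\diff\sigma)^2=\diff(\sigma^2)=\diff(\Id_G)=\Id_\g$. Hence $\diff\sigma$ is a linear involution of the vector space $\g$, so its only possible eigenvalues are $+1$ and $-1$.

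For assertion (i), I would use the standard splitting of a vector space under an involution. Define the eigenspaces
\begin{eqnarray*}
\k&=&\{X\in\g \ \rvert \ \diff\sigma(X)=X\},\\
\m&=&\{X\in\g \ \rvert \ \diff\sigma(X)=-X\}.
\end{eqnarray*}
For any $X\in\g$ I write $X=\tfrac12(X+\diff\sigma(X))+\tfrac12(X-\diff\sigma(X))$ and check, using $(\diff\sigma)^2=\Id_\g$, that the first summand lies in $\k$ and the second in $\m$; this gives $\g=\k+\m$. The sum is direct because $X\in\k\cap\m$ forces $X=\diff\sigma(X)=-X$, hence $X=0$. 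This establishes the direct decomposition $\g=\k\oplus\m$.

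For assertion (ii), I would argue that the Lie algebra of $K$ coincides with $\k$. Since $G_\sigma^{o}\subset K\subset G_\sigma$ and the Lie algebra is determined by the identity component, it suffices to identify the Lie algebra of $G_\sigma$ with $\k$. One inclusion is immediate: if $X$ lies in the Lie algebra of $G_\sigma$, then $\exp(sX)\in G_\sigma$ for all $s\in\rn$, so $\sigma(\exp(sX))=\exp(sX)$; differentiating at $s=0$ and using that $\sigma$ is a homomorphism (so $\sigma(\exp(sX))=\exp(s\,\diff\sigma(X))$) yields $\diff\sigma(X)=X$, i.e. $X\in\k$. Conversely, if $X\in\k$ then $\sigma(\exp(sX))=\exp(s\,\diff\sigma(X))=\exp(sX)$, so $\exp(sX)\in G_\sigma$ and $X$ lies in the Lie algebra of $G_\sigma$. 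Thus the Lie algebra of $K$ is exactly $\k$.

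For assertion (iii), I would show $\m$ is $\Ad_K$-invariant, which is the main point connecting this lemma to Definition \ref{reductive}. The key identity is that $\sigma$ intertwines conjugation: for $k\in K\subset G_\sigma$ we have $\sigma(k)=k$, and since $\sigma$ is an automorphism, $\diff\sigma\circ\Ad(k)=\Ad(\sigma(k))\circ\diff\sigma=\Ad(k)\circ\diff\sigma$ on $\g$. Hence $\diff\sigma$ commutes with $\Ad(k)$. Now for $X\in\m$ I compute $\diff\sigma(\Ad(k)X)=\Ad(k)(\diff\sigma(X))=\Ad(k)(-X)=-\Ad(k)X$, so $\Ad(k)X\in\m$. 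This gives $\Ad(k)\m\subset\m$ for all $k\in K$, completing the proof and showing that a symmetric space is reductive. The main subtlety to be careful with is the naturality identity $\diff\sigma\circ\Ad(k)=\Ad(\sigma(k))\circ\diff\sigma$, which relies on $\sigma$ being a Lie group automorphism and on the relation $\sigma(k)=k$ for $k\in G_\sigma$; everything else is routine linear algebra once this is in hand.
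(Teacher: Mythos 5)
Your proposal is correct and follows essentially the same route as the paper's proof: the same $\tfrac12(X\pm\diff\sigma(X))$ splitting for (i), the same identification of $\k$ with the Lie algebra of $K$ via one-parameter subgroups and the inclusions $G_\sigma^{o}\subset K\subset G_\sigma$ for (ii), and the same intertwining identity $\diff\sigma\circ\Ad(k)=\Ad(k)\circ\diff\sigma$ (equivalently $\sigma\circ I_k=I_k\circ\sigma$) for (iii). No gaps.
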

\begin{proof}
    Since $\sigma$ is an involution, so is $\diff\sigma.$ For every $X\in\g$, we may now set $$X_\k=\frac{1}{2}\cdot(X+\diff\sigma(X))\in\k \quad \textrm{and}\quad X_\m=\frac{1}{2}\cdot(X-\diff\sigma(X))\in\m.$$ Since $X=X_\k+X_\m,$ it follows that $$\g=\k+\m.$$ Clearly $\k\cap\m=0.$ Thus, $\g=\k\oplus\m$ as a direct sum.

    For all $X\in\k$ we have that $\diff\sigma(X)=X$ because $K\subset G_\sigma.$
    Conversely suppose that $X\in\g$ satisfies $\diff\sigma(X)=X.$ Let $\gamma$ be the corresponding one-parameter subgroup. Since $\sigma\circ\gamma$ and $\gamma$ have the same initial velocity, it follows that $\sigma\circ\gamma=\gamma.$ We conclude that $\gamma\in G_\sigma^{o}\subset K,$ and thus $X\in\k.$

    For the last statement, let $X\in\m, k\in K.$ Note that $$\sigma\circ I_k(g)=\sigma(kgk^{-1})=k\sigma(g)k^{-1}=I_k\circ\sigma(g)$$ for all $g\in G.$ But now $$\diff\sigma(\Ad_k(X))=\diff(\sigma\circ I_k)(X)=\diff(I_k\circ\sigma)(X)=\Ad_k(-X)=-\Ad_k(X).$$ This shows that $\Ad_k(X)\in\m.$
\end{proof}

There exists a converse statement of Theorem \ref{thGK}.
\begin{theorem}{\rm \cite{O'Neill}}\label{GKsymspace}
    Let $(G,K)$ be a symmetric pair such that $G$ is connected and $K$ is closed. Let $\sigma$ be an involutive automorphism of $G$ such that $$G_\sigma^o\subset K\subset G_\sigma.$$ Then every $G$-invariant metric on $M=G/K$ turns $M$ into a Riemannian symmetric space such that $s\circ\pi=\pi\circ\sigma,$ where $s$ is the symmetry of $M$ at $o=eK,$ and $\pi:G\rightarrow M$ the standard projection.
\end{theorem}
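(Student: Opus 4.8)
The plan is to obtain the required symmetries by pushing the involution $\sigma$ down to $M=G/K$. Since $K\subset G_\sigma$, every $k\in K$ satisfies $\sigma(k)=k$, so $\sigma(gk)K=\sigma(g)\sigma(k)K=\sigma(g)K$, and the map
$$s\colon G/K\to G/K,\qquad gK\mapsto\sigma(g)K$$
is well defined. By construction $s\circ\pi=\pi\circ\sigma$, which is already the asserted compatibility. It fixes $o=eK$, and since $\sigma$ is an involution, so is $s$.

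Next I would identify $(\diff s)_o$. By Lemma \ref{lemmasympair} we have the reductive decomposition $\g=\k\oplus\m$, with $\diff\sigma=+\Id$ on $\k$ and $\diff\sigma=-\Id$ on $\m$, and by Observation \ref{tangentspacem} the differential $\diff\pi_e$ restricts to a linear isomorphism $\m\to T_oM$. Differentiating $s\circ\pi=\pi\circ\sigma$ at $e$ gives $(\diff s)_o\circ\diff\pi_e=\diff\pi_e\circ\diff\sigma$; restricting to $\m$ and using $\diff\sigma|_\m=-\Id$ yields $(\diff s)_o=-\Id$ on $T_oM$.

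The crux, and the step I expect to be the main obstacle, is showing that $s$ is a \emph{global} isometry: a priori $\sigma$ is only a group automorphism, and the metric is merely assumed $G$-invariant. Here I would exploit the equivariance relation $s\circ\tau_a=\tau_{\sigma(a)}\circ s$, where $\tau_a\colon gK\mapsto agK$; it follows from $s(\tau_a(gK))=\sigma(ag)K=\sigma(a)\sigma(g)K=\tau_{\sigma(a)}(s(gK))$. Writing an arbitrary point as $q=\tau_a(o)$ and rearranging to $s=\tau_{\sigma(a)}\circ s\circ\tau_a^{-1}$, I differentiate at $q$. Since the metric is $G$-invariant every $\tau_b$ is an isometry, and $(\diff s)_o=-\Id$ is orthogonal for any inner product, so $(\diff s)_q$ is a composition of linear isometries and hence itself a linear isometry. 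Therefore $s$ preserves the metric.

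Finally I would produce a symmetry at an arbitrary point $p=a\cdot o$ by conjugation, setting $s_p=\tau_a\circ s\circ\tau_a^{-1}$, a composition of isometries. One checks $s_p(p)=\tau_a(s(o))=p$, that $(\diff s_p)_p=(\diff\tau_a)_o\circ(-\Id)\circ(\diff\tau_a^{-1})_p=-\Id_{T_pM}$, and that $s_p^2=\Id$. Uniqueness of such an isometry follows from the standard rigidity theorem: an isometry of the connected Riemannian manifold $M$ (connected because $G$ is and $\pi$ is continuous and surjective) is determined by its value and differential at a single point. This shows $M$ is a Riemannian symmetric space with symmetry $s$ at $o$ satisfying $s\circ\pi=\pi\circ\sigma$.
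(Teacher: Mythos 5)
Your proposal is correct and follows essentially the same route as the paper's proof: defining $s$ by $s\circ\pi=\pi\circ\sigma$, computing $(\diff s)_o=-\Id$ via the $\pm1$-eigenspace decomposition of $\diff\sigma$, using the equivariance $s\circ\tau_a=\tau_{\sigma(a)}\circ s$ to show $s$ is an isometry for any $G$-invariant metric, and conjugating by translations to obtain symmetries at arbitrary points. The only addition is your explicit appeal to the rigidity theorem for uniqueness, which the paper leaves implicit.
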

\begin{proof}
    We define the function $s:M\rightarrow M$ by $s\circ\pi=\pi\circ\sigma.$ Note that $s$ is well-defined, since $\pi(g_1)=\pi(g_2)$ implies that $g_1K=g_2K.$ Further, $\sigma$ fixes $K,$ and thus $\sigma(g_1)K=\sigma(g_2)K$ and thus, $\pi\circ\sigma(g_1)=\pi\circ\sigma(g_2).$

    Note that $s$ is an involutive diffeomorphism, which follows from $\sigma$ being an involution.

    We have that $s(o)=o.$ For $x\in T_0M$ there exists $X\in\g$ such that $\diff\sigma(X)=-X$ and $\diff\pi(X)=x,$ as established in Lemma \ref{lemmasympair}. Now $$\diff s(x)=(\diff s\circ \diff\pi)(X)=(\diff\pi\circ \diff\sigma)(X)=\diff\pi(-X)=-x.$$ Hence $s$ is a symmetry at $o.$
    
In the following, we will make use of the fact  that $s\circ\tau_g=\tau_{\sigma(g)}\circ s.$ Indeed, for $a\in G$ it holds that
\begin{eqnarray*}
(s\circ\tau_g)(\pi(a))&=&(s\circ\tau_g)(aK)\\
&=&(s\circ\pi)(ga)\\
&=&(\pi\circ\sigma)(ga)\\
&=&\pi(\sigma(g)\sigma(a))\\
&=&(\tau_{\sigma(g)}\circ\pi)(\sigma(a))\\
&=&(\tau_{\sigma(g)}\circ s)(\pi(a)).
\end{eqnarray*}
    Finally, $s$ is an isometry with respect to any $G$-invariant metric $g$ on $M.$ To see this, let $y\in T_g(M), y_0=\diff\tau_{g^{-1}}(y)\in T_0M.$ 
Now, \begin{eqnarray*}
    g(\diff s(y),\diff s(y))&=&g(\diff s\circ \diff \tau_g(y_0),\diff s \circ\diff \tau_g(y_0))\\
    &=&g(\diff\tau_{\sigma(g)}\circ\diff s(y_0),\diff\tau_{\sigma(g)}\circ\diff s(y_0))\\
    &=&g(\diff s(y_0),\diff s(y_0))\\
    &=&g(-y_0,-y_0)\\
    &=&g(y,y).
\end{eqnarray*}
    Hence we have found a global symmetry $s$ at $o.$ To conclude the proof, we note that at every point $p=\tau(o)\in M$ of a homogeneous space, the desired symmetry at $p$ is given by $\tau s\tau^{-1}.$
\end{proof}

\section{The Duality}\label{duality}
In this section we will talk about the notion of duality between compact and non-compact symmetric spaces. We will also give an overview over the simply connected, irreducible symmetric spaces of compact type.

We first introduce the so-called Killing form of a Lie algebra.
\begin{definition}{\rm \cite{Helg}}
    Let $\g$ be a complex Lie algebra. The {\it Killing form} $B$ of $\g$ is the symmetric bilinear form $$B:\g\times\g\rightarrow\rn, \quad B(X,Y)=\trace(\ad X\circ\ad Y).$$
\end{definition}
We recall the following properties bilinear forms can take. 
\begin{remark}
     A bilinear form $f$ on a finite dimensional vector space $V$ is \begin{enumerate}
        \item non-degenerate if $f(x,y)=0$ for all $y\in V$ implies that $x=0,$
        \item positive definite if $f(v,v)\geq0$ for all $v\in V,$ with equality only if $v=0,$
        \item negative definite if $f(v,v)\leq0$ for all $v\in V,$ with equality only if $v=0.$
    \end{enumerate}
\end{remark}
This allows us to make the following definitions.
\begin{definition}{\rm \cite{Arv}}
    A Lie algebra $\g$ is called {\it semisimple} if its Killing form is non-degenerate. %It is called simple if it is non-abelian and its only ideals are $\{0\}$ and $\g.$
    We call a Lie group $G$ {\it semisimple} if its Lie algebra is semisimple.
\end{definition}

We have seen in Lemma \ref{lemmasympair} that a symmetric space is reductive. Recall that reductive spaces have a direct decomposition $\g=\k\oplus\m,$ which we have elaborated in Definition \ref{reductive}.
\begin{definition}{\rm \cite{Arv}}
A symmetric space is said to be of {\it compact type} if the Killing form $B$ of $\g$ is negative definite, and of {\it non-compact type} if $B$ is negative definite on $\k$ and positive definite on $\m.$
\end{definition}
Before we define duality, we clarify what is meant by a normal homogeneous space.
\begin{remark}{\rm \cite{Arv}}
    Let $G/K$ be a homogeneous space and $\langle\cdot,\cdot\rangle $ be an $\Ad$-invariant scalar product on The Lie algebra $\g$ of $G.$ Let $\g=\k\oplus\m$ be a reductive decomposition, where $\m$ is the orthogonal complement of $\k$ in $\g$ with respect to the scalar product $\langle\cdot,\cdot\rangle .$ Then the restriction of $\langle\cdot,\cdot\rangle $ to $\m$ induces a $G$-invariant metric on $G/K,$ which is called a {\it normal homogeneous Riemannian metric.}
\end{remark}

\begin{definition}{\rm \cite{Arv}}
    Two normal symmetric spaces $M=G/K$ and $M^*=G^*/K^*$ are said to be {\it dual} if the following statements hold:\begin{enumerate}
        \item There exists an isomorphism of Lie algebras $\phi:\k\rightarrow\k^*$ such that $$B^*(\phi(V),\phi(W))=-B(V,W)$$ for all $V,W\in\k.$
        \item There exists a linear isometry $\Tilde{\phi}:\m\rightarrow\m^*$ such that $$[\Tilde{\phi}(X),\Tilde{\phi}(Y)]=-\Tilde{\phi}([X,Y])$$ for all $X,Y\in\m.$
    \end{enumerate}
\end{definition}

We conclude the chapter with Cartan's famous result from 1926. He gave a complete classification of the Riemannian symmetric spaces.
\begin{theorem}{\rm \cite{Arv}}
The simply connected, isotropy irreducible symmetric spaces of compact type are the following.
\begin{enumerate}
    \item \textbf{Compact simply connected groups}: $\SU{n},$ $\Spin{n},$ $\Sp{n},$ $E_6,$ $E_7,$ $E_8,$ $F_4,$ $G_2.$
    \item \textbf{Classical spaces:} $\SO{m+n}/\SO{m}\times\SO{n},$ $\SU{m+n}/\SU{m}\times\SU{n},$ $\Sp{m+n}/\Sp{m}\times\Sp{n},$ $\SU{n}/\SO{n},$ $\SU{2n}/\Sp{n},$ $\SO{2n}/\U{n},$ $\Sp{n}/\U{n}.$
    \item \textbf{Exceptional spaces:} $E_6/\SU{6}\times\SU{2},$ $E_6/\SO{10}\times\SO{2},$ $E_6/F_4,$ $E_6/\Sp{4},$ $E_7/\SU{8},$ $E_7/\SO{12}\times\SU{2},$ $E_7/E_{6}\times\SO{2},$ $E_8/\SO{16},$ $E_8/E_7\times\SU{2},$ $F_4/\Sp{3}\times\Sp{1},$ $G_2/\SO{4}.$
\end{enumerate}
    
\end{theorem}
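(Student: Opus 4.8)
The plan is to pass from the geometry to the Lie-algebra data and then invoke the Cartan--Killing classification. By Lemma \ref{lemmasympair} and Theorem \ref{GKsymspace}, a simply connected symmetric space of compact type is determined by its \emph{orthogonal symmetric Lie algebra}, the pair $(\g,\diff\sigma)$ consisting of a compact semisimple Lie algebra $\g$ (compact type forces the Killing form to be negative definite) together with an involutive automorphism $\diff\sigma$, with eigenspace decomposition $\g=\k\oplus\m$ and isotropy representation $\Ad_K$ acting on $\m$. First I would reduce the classification to its irreducible building blocks: a de Rham--type splitting shows that every such pair is an orthogonal direct sum of irreducible ones with no flat factor, and simple connectedness lets us reconstruct $M$ uniquely from this infinitesimal data. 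The hypothesis that the isotropy representation $\Ad_K|_\m$ be irreducible then forces exactly one of two algebraic situations.

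Second, I would establish the standard dichotomy for irreducible orthogonal symmetric Lie algebras of compact type. Either (Type II) $\g=\h\oplus\h$ for a compact simple $\h$, with $\diff\sigma$ the flip $(X,Y)\mapsto(Y,X)$; here $\k=\Delta\h$ is the diagonal, $\m\cong\h$ carries the adjoint action, and the resulting space is the compact simple Lie group with algebra $\h$, acted on through left and right translations. Or (Type I) $\g$ is itself compact simple and $\diff\sigma$ a nontrivial involution, so $M=G/K$ with $K$ between $G_\sigma^o$ and $G_\sigma$. Thus the whole problem collapses to two tasks over the compact simple Lie algebras: list the algebras, and for Type I list their involutions up to conjugacy.

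Third, I would invoke the Cartan--Killing classification of complex simple Lie algebras --- the four families $A_n,B_n,C_n,D_n$ and the five exceptionals $E_6,E_7,E_8,F_4,G_2$ --- obtained from the classification of reduced irreducible root systems via Dynkin diagrams; passing to compact real forms yields the Type II list $\SU{n},\Spin{n},\Sp{n},E_6,E_7,E_8,F_4,G_2$. For Type I I would classify the conjugacy classes of involutive automorphisms of each compact simple algebra, separating inner from outer according to the induced action on the Dynkin diagram: inner involutions are conjugate to $\Ad(\exp(\pi i H))$ for suitable $H$ and are enumerated by Kac's theorem via the marks on the affine Dynkin diagram, while outer involutions are governed by the diagram symmetries. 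Matching each involution with its fixed-point subalgebra $\k$ produces the Grassmannians $\SO{m+n}/\SO{m}\times\SO{n}$, $\SU{m+n}/\SU{m}\times\SU{n}$, $\Sp{m+n}/\Sp{m}\times\Sp{n}$, together with $\SU{n}/\SO{n}$, $\SU{2n}/\Sp{n}$, $\SO{2n}/\U{n}$, $\Sp{n}/\U{n}$, and the exceptional entries.

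The main obstacle is plainly the Type I step: the complete and non-redundant classification of involutive automorphisms of the compact simple Lie algebras. This requires the full root-space machinery --- a maximal torus and Cartan subalgebra, the root system and Weyl group, and the reduction of an arbitrary involution to a normal form relative to a $\sigma$-stable Cartan subalgebra --- plus a careful argument that the list is exhaustive and that distinct entries are genuinely non-isomorphic. I would use Kac's coordinatisation of finite-order automorphisms by the affine diagram to organise the enumeration and avoid double-counting, and I would verify irreducibility of each resulting isotropy representation to confirm membership in the stated list.
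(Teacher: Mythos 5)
The paper does not prove this statement at all: it is quoted as Cartan's 1926 classification, cited from \cite{Arv}, and used as background. So there is no proof in the paper to compare against; any assessment must be of your outline on its own terms.

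Your strategy is the standard and correct one: pass to the orthogonal symmetric Lie algebra $(\g,\diff\sigma)$, split off irreducible factors, establish the Type I / Type II dichotomy (with Type II giving the compact simple groups under the flip involution on $\h\oplus\h$, and Type I giving $G/K$ for $G$ compact simple and $\sigma$ a nontrivial involution), then feed in the Cartan--Killing classification of simple Lie algebras and the classification of their involutions. The structure is sound and each reduction you describe is a genuine theorem with a known proof. However, as written this is a plan rather than a proof: the entire mathematical content of the theorem sits in the step you correctly identify as the main obstacle --- the complete, non-redundant enumeration of conjugacy classes of involutive automorphisms of each compact simple Lie algebra, the identification of each fixed-point subalgebra $\k$, and the verification that the isotropy representation is irreducible in each case. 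Paragraphs three and four describe what must be done (Kac's affine-diagram coordinatisation, inner versus outer involutions, checking exhaustiveness) without carrying any of it out, and no case of the table is actually derived. One smaller point worth flagging: the simply connected compact group with Lie algebra $\su{n+1}$, etc., must be matched carefully to the entries in the list (e.g.\ $\Spin{n}$ rather than $\SO{n}$), and the low-rank coincidences and index restrictions ($n\geq 2$ for several families, exceptional isomorphisms such as $\so4\cong\so3\oplus\so3$ breaking simplicity) need to be handled to avoid both omissions and duplications; your outline does not mention these. In short: correct skeleton, consistent with how the literature proves the result, but the proof itself is deferred to exactly the step that constitutes the theorem.
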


%%%%%%%%%%%%%%%%%%%%%%%%%%%%%%%%%%%%%%%%%%%%%%%%%%%%%%%%%%%

\chapter{Minimal Submanifolds via Complex-valued Eigenfunctions}

Section \ref{eigenfunctions} serves as an introduction to eigenfunctions. They are functions which are eigen with respect to the tension field and conformality operator. There exist eigenfunctions on all compact simply connected Lie groups and classical symmetric spaces, as shown in Table \ref{table}. A lot of those contributions are due to Gudmundsson, Siffert and Sobak (see \cite{Gud-Sif-Sob-2}) on the symmetric spaces, and Gudmundsson and Ghandour on the real, complex and quaternionic Grassmannians (see \cite{Gha-Gud-4} and \cite{Gha-Gud-5}). In their paper \cite{Gud-Sak-1}, Gudmundsson and Sakovich have shown that eigenfunctions can be used to produce harmonic morphisms. Eigenfunctions can also be applied to construct proper $p$-harmonic functions, as shown by Gudmundsson and Sobak in \cite{Gud-Sob-1}.

In Section \ref{newmethod}, we state Theorem \ref{GM}, which is a fundamental tool in the following chapters. This new result by Gudmundsson and Munn from their work \cite{Gud-Mun-1} allows us to find minimal submanifolds with the help of complex-valued eigenfunctions. 

\section{Eigenfunctions} \label{eigenfunctions}
We will now define eigenfunctions. In Definitions \ref{LB} and \ref{conformalityop} we have already defined the Laplace-Beltrami and conformality operators. We will show in Example \ref{efvshm} and Theorem \ref{eigenfamharmonicmorph} how eigenfunctions are related to harmonic morphisms. We will then give some important results, which will help us construct eigenfunctions on quotient spaces. 
\begin{definition}
    Let $(M,g)$ be a Riemannian manifold. A function $\phi:M\rightarrow\mathbb{C}$ is a $(\lambda,\mu)$-{\it eigenfunction} if there exist $\lambda,\mu\in\mathbb{C}$ such that $$\tau(\phi)=\lambda\cdot\phi\ \textrm{and} \ \kappa(\phi,\phi)=\mu\cdot\phi^2.$$
\end{definition}
This definition was first formulated by Gudmundsson and Sakovich in \cite{Gud-Sak-1} in 2007.

As we will see, we have already encountered a particular class of eigenfunctions in a previous chapter. 
\begin{example}\label{efvshm}
    Clearly, a harmonic morphism (see Definition \ref{harmonicmorphism}) $\phi:M\rightarrow\mathbb{C}$ is an eigenfunction with $(\lambda,\mu)=(0,0).$ This follows from the famous result by Fuglede and Ishihara, which was stated here as Theorem \ref{fuglede}. The details of this are discussed by Ghandour and Gudmundsson in their paper \cite{Gha-Gud-1}.
\end{example}
Gudmundsson and Sobak showed in \cite{Gud-Sob-1}, that eigenfunctions can be used to construct proper $p$-harmonic functions.
\begin{theorem}{\rm \cite{Gud-Sob-1}}
    Let $\phi:(M,g)\rightarrow\cn$ be a complex-valued $(\lambda,\mu)$-eigenfunction on a Riemannian manifold. Then for a natural number $p\geq1$ and $(c_1,c_2)\in\cn^2,$ any non-vanishing function $$\Phi_p:W=\{x\in M \ \rvert \ \phi(x)\notin(-\infty,0]\}\rightarrow\cn$$ satisfying
    $$\Phi_p(x)=\begin{cases}
        c_1\log(\phi(x))^{p-1}, & \textrm{if}\ \mu=0,\lambda\neq0\\
        c_1\log(\phi(x))^{2p-1}+c_2\log(\phi(x))^{2p-2},&\textrm{if}\ \mu\neq0,\lambda=\mu\\
        c_1\phi(x)^{1-\frac{\lambda}{\mu}}\log(\phi(x))^{p-1}+c_2\log(\phi(x))^{p-1},   & \textrm{if} \ \mu\neq0,\lambda\neq\mu\\
    \end{cases}$$
    is proper $p$-harmonic on its open domain $W$ in $M.$
\end{theorem}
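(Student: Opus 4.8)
The plan is to reduce everything to understanding how the operator $\tau$ acts on two elementary building blocks, $\log\phi$ and the power $\phi^{1-\lambda/\mu}$, and then to iterate. The basic tool is the pair of composition laws for $\tau$ and $\kappa$ under a holomorphic change of target variable: for holomorphic $F,G$ one has $\tau(F\circ\phi)=F'(\phi)\,\tau(\phi)+F''(\phi)\,\kappa(\phi,\phi)$ and $\kappa(F\circ\phi,G\circ\phi)=F'(\phi)\,G'(\phi)\,\kappa(\phi,\phi)$; these are the standard formulas in this circle of ideas, and for the functions we need ($\log$, fractional powers, and their products) they are consistent with the product rules of Proposition \ref{taukappa}. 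Applying them to $f:=\log\phi$ and invoking the eigenfunction relations $\tau(\phi)=\lambda\phi$, $\kappa(\phi,\phi)=\mu\phi^{2}$ gives $\tau(f)=\tfrac{1}{\phi}\lambda\phi-\tfrac{1}{\phi^{2}}\mu\phi^{2}=\lambda-\mu$ and $\kappa(f,f)=\tfrac{1}{\phi^{2}}\mu\phi^{2}=\mu$; both are constants, which is the feature that makes the whole scheme work.

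First I would dispose of the two pure-logarithm cases. From $\tau(f)=\lambda-\mu$, $\kappa(f,f)=\mu$ and the composition law one gets the recursion $\tau(f^{k})=k(\lambda-\mu)f^{k-1}+k(k-1)\mu\,f^{k-2}$. When $\mu=0,\lambda\neq0$ this collapses to $\tau(f^{k})=k\lambda f^{k-1}$, so $\tau$ simply lowers the power by one; iterating $p-1$ times sends $c_{1}(\log\phi)^{p-1}$ to the nonzero constant $c_{1}(p-1)!\,\lambda^{p-1}$, and one further application yields $0$. When $\mu\neq0,\lambda=\mu$ the top term vanishes and $\tau(f^{k})=\mu\,k(k-1)f^{k-2}$, so $\tau$ lowers the power by two; hence $\tau^{p}$ annihilates both $f^{2p-1}$ and $f^{2p-2}$, while $\tau^{p-1}$ leaves $c_{1}\mu^{p-1}(2p-1)!\,f+c_{2}\mu^{p-1}(2p-2)!$, which is not identically zero. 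In each case this is exactly proper $p$-harmonicity.

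The substantive case is $\mu\neq0,\lambda\neq\mu$. Writing $\alpha:=1-\lambda/\mu$, the composition law gives $\tau(\phi^{\alpha})=\alpha\phi^{\alpha}\bigl(\lambda+(\alpha-1)\mu\bigr)$, and the choice of $\alpha$ forces $(\alpha-1)\mu=-\lambda$, so that $\tau(\phi^{\alpha})=0$: the power $\phi^{1-\lambda/\mu}$ is harmonic. I would then compute, via the product rule of Proposition \ref{taukappa} together with $\kappa(\phi^{\alpha},f^{k})=\alpha k\mu\,f^{k-1}\phi^{\alpha}$, that $\tau$ preserves each of the two families $\{f^{k}\}$ and $\{\phi^{\alpha}f^{k}\}$ separately, with
$$\tau(f^{k})=k(\lambda-\mu)f^{k-1}+k(k-1)\mu f^{k-2},\qquad \tau(\phi^{\alpha}f^{k})=\phi^{\alpha}\bigl(-k(\lambda-\mu)f^{k-1}+k(k-1)\mu f^{k-2}\bigr).$$
Since $\lambda\neq\mu$, in both recursions the leading term lowers the $f$-degree by exactly one, so $\tau^{p}$ kills $f^{p-1}$ and $\phi^{\alpha}f^{p-1}$ while $\tau^{p-1}$ sends them to a nonzero constant and a nonzero multiple of $\phi^{\alpha}$ respectively. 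Adding the two pieces, $\tau^{p}(\Phi_{p})=0$ and $\tau^{p-1}(\Phi_{p})$ is a nonzero-coefficient combination of $\phi^{\alpha}$ and a constant, hence not identically zero because $\phi$ is non-constant; this is proper $p$-harmonicity.

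The properness conclusion in every case rests on the hypothesis that $\Phi_{p}$ is non-vanishing, which guarantees the defining branches of $\log\phi$ and $\phi^{\alpha}$ are well defined on $W$ (the set avoiding the cut $(-\infty,0]$) and forces $(c_{1},c_{2})\neq(0,0)$, so the surviving $\tau^{p-1}$-iterate does not collapse. The main obstacle I anticipate is not any single computation but the careful justification of the holomorphic composition laws for $\tau$ and $\kappa$ (the engine behind every step) and the bookkeeping of the two coupled recursions in the third case --- in particular verifying the exact cancellation that yields $\tau(\phi^{1-\lambda/\mu})=0$ and the sign flip that decouples the $\{f^{k}\}$ and $\{\phi^{\alpha}f^{k}\}$ towers, since a single arithmetic slip there would destroy the annihilation pattern.
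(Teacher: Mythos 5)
Your argument is correct: the holomorphic composition laws give $\tau(\log\phi)=\lambda-\mu$, $\kappa(\log\phi,\log\phi)=\mu$ and $\tau\bigl(\phi^{1-\lambda/\mu}\bigr)=0$, and the three recursions (including the sign flip that decouples the $\phi^{\alpha}f^{k}$ tower from the $f^{k}$ tower, and the fact that non-vanishing of $\Phi_p$ forces $(c_1,c_2)\neq(0,0)$ so the $\tau^{p-1}$-iterate survives) all check out. The thesis states this theorem as a cited result without reproducing a proof, and your derivation is essentially the argument of the original source \cite{Gud-Sob-1}.
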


We will now introduce the notion of eigenfamilies. 
\begin{definition}
    Let $(M,g)$ be a Riemannian manifold. A set $\mathcal{E}$ of complex-valued functions from $M$  is a $(\lambda,\mu)$-{\it eigenfamily} (or just eigenfamily) if there exist $\lambda,\mu\in\mathbb{C}$ such that for all $\phi,\psi\in\mathcal{E}$ $$\tau(\phi)=\lambda\cdot\phi,\ \textrm{and} \ \kappa(\phi,\psi)=\mu\cdot\phi\cdot\psi.$$
\end{definition}
Eigenfamilies are powerful tools to construct harmonic morphisms, as Gudmundsson and Sakovich showed in their paper \cite{Gud-Sak-1}.
\begin{theorem}{\rm \cite{Gud-Sak-1}}\label{eigenfamharmonicmorph}
    Let $(M,g)$ be a semi-Riemannian manifold and $$\mathcal{E}=\{\phi_1,\dots,\phi_n\}$$ be a finite eigenfamily of complex-valued functions on $M.$ If $P,Q:\cn^n\rightarrow\cn$ are linearly independent homogeneous polynomials of the same positive degree, then the quotient $$\frac{P(\phi_1,\dots,\phi_n)}{Q(\phi_1,\dots,\phi_n)}$$ is a non-constant harmonic morphism on the open and dense subset $$\{p\in M \ \rvert \ Q(\phi_1(p),\dots,\phi_n(p))\neq0\}.$$
\end{theorem}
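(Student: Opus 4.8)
The plan is to reduce the statement to the case of a quotient of two members of a \emph{single} eigenfamily. First I would establish the key lemma that, for a fixed degree $d$, the homogeneous polynomials of degree $d$ in the functions of $\mathcal{E}$ again form an eigenfamily. Concretely, for monomials $\Phi=\phi_{i_1}\cdots\phi_{i_d}$ and $\Psi=\phi_{j_1}\cdots\phi_{j_d}$ I would show by induction on $d$ that $\tau(\Phi)=\hat\lambda\,\Phi$ and $\kappa(\Phi,\Psi)=\hat\mu\,\Phi\Psi$ with the \emph{same} constants $\hat\lambda,\hat\mu$ for every such pair. The induction rests on the product rules of Proposition \ref{taukappa}: since $\kappa$ obeys the Leibniz rule in each slot, one first gets $\kappa(\phi_{i_1}\cdots\phi_{i_k},\phi_j)=k\mu\,(\phi_{i_1}\cdots\phi_{i_k})\phi_j$, and feeding this into $\tau(\phi_{i_1}\cdots\phi_{i_{k+1}})=(\phi_{i_1}\cdots\phi_{i_k})\tau(\phi_{i_{k+1}})+2\kappa(\phi_{i_1}\cdots\phi_{i_k},\phi_{i_{k+1}})+\phi_{i_{k+1}}\tau(\phi_{i_1}\cdots\phi_{i_k})$ yields the recursion $\hat\lambda_{k+1}=\hat\lambda_k+\lambda+2k\mu$, whence $\hat\lambda=d\lambda+d(d-1)\mu$, while a double application of the Leibniz rule gives $\hat\mu=d^2\mu$. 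Because $\tau$ is linear and $\kappa$ bilinear, these identities pass from monomials to arbitrary homogeneous polynomials of degree $d$, so $P=P(\phi_1,\dots,\phi_n)$ and $Q=Q(\phi_1,\dots,\phi_n)$ satisfy $\tau(P)=\hat\lambda P$, $\tau(Q)=\hat\lambda Q$ and $\kappa(P,P)=\hat\mu P^2$, $\kappa(P,Q)=\hat\mu PQ$, $\kappa(Q,Q)=\hat\mu Q^2$.

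With this in hand, on the open set $\{Q\neq0\}$ I would compute the conformality and tension field of $\Phi=P/Q$ directly. Using $\nabla(P/Q)=Q^{-2}(Q\nabla P-P\nabla Q)$ and the bilinearity of $\kappa$,
\[
\kappa\!\left(\frac{P}{Q},\frac{P}{Q}\right)=\frac{1}{Q^{4}}\Big(Q^{2}\kappa(P,P)-2PQ\,\kappa(P,Q)+P^{2}\kappa(Q,Q)\Big)=\frac{\hat\mu}{Q^{4}}\big(P^{2}Q^{2}-2P^{2}Q^{2}+P^{2}Q^{2}\big)=0.
\]
For the tension field I would first record $\tau(Q^{-1})=(2\hat\mu-\hat\lambda)Q^{-1}$ and $\kappa(P,Q^{-1})=-\hat\mu P/Q$, obtained from the Leibniz rules for $\Div$ and $\nabla$ together with the eigenfamily identities, and then apply the product rule for $\tau$ to $P\cdot Q^{-1}$; the three contributions cancel and give $\tau(P/Q)=0$. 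Hence $P/Q$ is both harmonic and horizontally weakly conformal, so by the Fuglede--Ishihara characterisation (Theorem \ref{fuglede}) it is a harmonic morphism into $\cn$.

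It remains to treat openness, density and non-constancy. Openness of $\{Q\neq0\}$ is immediate from continuity, and density follows once $Q(\phi_1,\dots,\phi_n)\not\equiv0$, since in all the cases at hand the eigenfunctions are real-analytic and the zero set of a non-trivial real-analytic function is nowhere dense. The point I expect to be the genuine obstacle is \emph{non-constancy}: the harmonic-morphism computation above never uses the linear independence of $P$ and $Q$, and indeed $P/Q$ can be constant even for independent $P,Q$ when the $\phi_i$ are functionally dependent (e.g. $P=z_1^2$, $Q=z_2^2$ with $\phi_1$ proportional to $\phi_2$). Thus the argument for non-constancy must exploit that $P-cQ$ is a non-zero homogeneous polynomial for every $c\in\cn$ and that the chosen eigenfunctions are algebraically independent as functions on $M$, so that $P-cQ$ does not vanish identically; this functional independence of the underlying eigenfamily is the hypothesis doing the real work, and it is exactly what is verified case by case in the constructions of \cite{Gud-Sak-1}.
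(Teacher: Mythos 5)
The thesis does not prove this statement itself; it is quoted verbatim from \cite{Gud-Sak-1}, so there is no in-paper proof to compare against. Your argument is exactly the standard one from that source: the product rules of Proposition \ref{taukappa} give, by induction, that degree-$d$ monomials in the $\phi_i$ form a $(\hat\lambda,\hat\mu)$-eigenfamily with $\hat\lambda=d\lambda+d(d-1)\mu$ and $\hat\mu=d^2\mu$, bilinearity extends this to $P$ and $Q$, and the two cancellation computations $\kappa(P/Q,P/Q)=0$ and $\tau(P/Q)=0$ (via $\tau(Q^{-1})=(2\hat\mu-\hat\lambda)Q^{-1}$ and $\kappa(P,Q^{-1})=-\hat\mu P/Q$) combined with Theorem \ref{fuglede} are all correct as written. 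Your closing caveat is also well taken and worth keeping: linear independence of $P$ and $Q$ as polynomials does not by itself rule out an algebraic relation $P(\phi_1,\dots,\phi_n)\equiv cQ(\phi_1,\dots,\phi_n)$ among the functions (nor does it guarantee $Q(\phi_1,\dots,\phi_n)\not\equiv 0$, which is what density rests on), so the non-constancy and density claims genuinely depend on properties of the specific eigenfamilies constructed in \cite{Gud-Sak-1} rather than on the abstract hypotheses alone. Identifying that as the one point the computation cannot deliver is the right reading of the theorem.
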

Our aim is now to show how to construct eigenfunctions on Riemannian symmetric spaces. We first need the well-known composition rule for the tension field $\tau.$

\begin{proposition}{\rm \cite{Bai-Woo-book}}\label{taucompositionrule}
    (Composition Law) The tension field of the composition of two maps $\phi:M\rightarrow N$ and $\psi:N\rightarrow P$ is given by 
    $$\tau(\psi\circ\phi)=\diff\psi(\tau(\phi))+\trace\nabla \diff\psi(\diff\phi,\diff\phi).$$
\end{proposition}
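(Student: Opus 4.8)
The plan is to recognise the tension field as the trace of the generalised Hessian (second fundamental form) of a map, to establish a pointwise chain rule for that Hessian, and then to obtain the stated formula by tracing. Throughout I would write $\nabla\diff\phi(X,Y)=\nabla^{\phi}_X(\diff\phi(Y))-\diff\phi(\nabla^M_X Y)$ for the second fundamental form of $\phi$, where $\nabla^{\phi}$ is the connection induced on the pullback bundle $\phi^{*}TN$ and $\nabla^M$ is the Levi-Civita connection of $M$; with this notation $\tau(\phi)=\trace\nabla\diff\phi=\sum_i\nabla\diff\phi(e_i,e_i)$ for any local orthonormal frame $\{e_i\}$ on $M$. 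By the ordinary chain rule $\diff(\psi\circ\phi)=\diff\psi\circ\diff\phi$, so the entire task reduces to differentiating this composite covariantly.

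First I would prove the key lemma: for every section $V$ of the pullback bundle $\phi^{*}TN$ and every $X\in C^\infty(TM)$,
\begin{equation*}
\nabla^{\psi\circ\phi}_X\bigl(\diff\psi(V)\bigr)=(\nabla\diff\psi)\bigl(\diff\phi(X),V\bigr)+\diff\psi\bigl(\nabla^{\phi}_X V\bigr).
\end{equation*}
Granting this, I would set $V=\diff\phi(Y)$ for $Y\in C^\infty(TM)$. Since $\diff(\psi\circ\phi)(Y)=\diff\psi(\diff\phi(Y))$, the definition of the second fundamental form of $\psi\circ\phi$ yields
\begin{align*}
\nabla\diff(\psi\circ\phi)(X,Y)
&=\nabla^{\psi\circ\phi}_X\bigl(\diff\psi(\diff\phi(Y))\bigr)-\diff\psi\bigl(\diff\phi(\nabla^M_X Y)\bigr)\\
&=(\nabla\diff\psi)\bigl(\diff\phi(X),\diff\phi(Y)\bigr)+\diff\psi\bigl(\nabla^{\phi}_X(\diff\phi(Y))-\diff\phi(\nabla^M_X Y)\bigr)\\
&=(\nabla\diff\psi)\bigl(\diff\phi(X),\diff\phi(Y)\bigr)+\diff\psi\bigl(\nabla\diff\phi(X,Y)\bigr),
\end{align*}
which is the sought chain rule for the Hessian.

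The main obstacle is the key lemma, which is really a statement about the compatibility of pullback connections under the identification $(\psi\circ\phi)^{*}TP\cong\phi^{*}(\psi^{*}TP)$. I would verify it in local coordinates: expressing $\diff\psi(V)$ in a coordinate frame of $P$ pulled back along $\psi\circ\phi$, the operator $\nabla^{\psi\circ\phi}_X$ produces an ordinary derivative term together with Christoffel symbols of $P$ evaluated along $\psi\circ\phi$ and contracted with $\diff(\psi\circ\phi)(X)=\diff\psi(\diff\phi(X))$. Separating the portion that reassembles $\diff\psi(\nabla^{\phi}_X V)$ from the portion that reassembles the Hessian $\nabla\diff\psi$ at the arguments $\diff\phi(X)$ and $V$ produces exactly the two terms. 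No analytic difficulty arises, since everything is tensorial and may be checked at a single point; the only genuine subtlety is the bookkeeping of keeping straight which connection (on $M$, $N$, $P$, or on the respective pullback bundles) acts at each stage.

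Finally, choosing a local orthonormal frame $\{e_i\}$ on $M$ and tracing the Hessian chain rule over $X=Y=e_i$ gives
\begin{equation*}
\tau(\psi\circ\phi)=\sum_i\nabla\diff(\psi\circ\phi)(e_i,e_i)=\diff\psi\Bigl(\sum_i\nabla\diff\phi(e_i,e_i)\Bigr)+\sum_i(\nabla\diff\psi)\bigl(\diff\phi(e_i),\diff\phi(e_i)\bigr),
\end{equation*}
and the two sums are precisely $\diff\psi(\tau(\phi))$ and $\trace\nabla\diff\psi(\diff\phi,\diff\phi)$, which completes the proof.
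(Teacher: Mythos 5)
Your argument is correct and complete in outline: the reduction to the chain rule for the second fundamental form, the key lemma on compatibility of the pullback connections under $(\psi\circ\phi)^{*}TP\cong\phi^{*}(\psi^{*}TP)$, and the final trace over an orthonormal frame are exactly the standard route, and your local-coordinate verification of the lemma does go through (the two Christoffel-symbol terms of $N$ cancel, leaving the Hessian of $\psi$ evaluated on $\diff\phi(X)$ and $V$). The paper itself gives no proof here — it cites the result from Baird and Wood — and your proposal is essentially the proof found in that reference, so there is nothing to correct.
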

Leveraging the composition law, Gudmundsson and Ghandour established the following result.
\begin{proposition}{\rm \cite{Gha-Gud-4}}\label{dilationtau}
    Let $\pi:(\hat{M},\hat{g})\rightarrow(M,g)$ be a submersive harmonic morphism between Riemannian manifolds. Further let $\phi:(M,g)\rightarrow\cn$ be a smooth function and $\hat{\phi}:(\hat{M},\hat{g})\rightarrow\cn$ be the composition $\hat{\phi}=\phi\circ\pi.$ If the dilation $\Lambda:\hat{M}\rightarrow\rn$ of $\pi$ is constant-valued, then the tension fields on $M$ and $\hat{M}$ satisfy
    $$\tau(\phi)\circ\pi=\Lambda^{-2}\tau (\hat{\phi}).$$
\end{proposition}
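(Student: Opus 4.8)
The plan is to reduce the statement to the composition law for the tension field (Proposition~\ref{taucompositionrule}) applied to the factorisation $\hat{\phi}=\phi\circ\pi$, and then to use the two defining features of a harmonic morphism---harmonicity and horizontal conformality---to evaluate the resulting trace. First I would apply Proposition~\ref{taucompositionrule} with inner map $\pi\colon\hat{M}\to M$ and outer map $\phi\colon M\to\cn$, obtaining
$$\tau(\hat{\phi})=\diff\phi(\tau(\pi))+\trace\nabla\diff\phi(\diff\pi,\diff\pi),$$
where the trace is taken with respect to $\hat{g}$. By the Fuglede--Ishihara characterisation (Theorem~\ref{fuglede}), the harmonic morphism $\pi$ is in particular a harmonic map, so $\tau(\pi)=0$ and the first term drops out. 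It remains only to compute $\trace\nabla\diff\phi(\diff\pi,\diff\pi)$.

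The key step is to choose the orthonormal frame of $\hat{M}$ adapted to the orthogonal splitting $T\hat{M}=\mathcal{H}\oplus\mathcal{V}$ into horizontal and vertical distributions of $\pi$. On the vertical part $\mathcal{V}=\ker\diff\pi$ the differential vanishes, so those summands contribute nothing. On a horizontal orthonormal frame $\{\hat{X}_1,\dots,\hat{X}_n\}$ with $n=\dim M$, horizontal conformality (Definition~\ref{horizontallyconformal}) gives $g(\diff\pi(\hat{X}_i),\diff\pi(\hat{X}_j))=\Lambda^2\,\delta_{ij}$, so that the vectors $E_i:=\Lambda^{-1}\diff\pi(\hat{X}_i)$ form a $g$-orthonormal frame of $T_{\pi(p)}M$; here submersivity of $\pi$ is used to ensure that $\diff\pi$ maps $\mathcal{H}$ isomorphically onto $TM$. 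Substituting $\diff\pi(\hat{X}_i)=\Lambda E_i$ and using bilinearity of $\nabla\diff\phi$ then yields
$$\trace\nabla\diff\phi(\diff\pi,\diff\pi)=\Lambda^2\sum_{i=1}^{n}\nabla\diff\phi(E_i,E_i)=\Lambda^2\,(\tau(\phi)\circ\pi),$$
since by Definition~\ref{LB} the trace of $\nabla\diff\phi$ over any $g$-orthonormal frame of $T_{\pi(p)}M$ is precisely $\tau(\phi)$ evaluated at $\pi(p)$. Combining this with the vanishing first term gives $\tau(\hat{\phi})=\Lambda^2\,(\tau(\phi)\circ\pi)$, which rearranges to the claimed identity $\tau(\phi)\circ\pi=\Lambda^{-2}\,\tau(\hat{\phi})$.

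The computation is routine once the adapted frame is in place, so the main care is bookkeeping rather than a genuine obstacle. Two points deserve attention. First, one must fix the convention for the dilation: Definition~\ref{horizontallyconformal} writes the conformality factor as $\Lambda$, whereas the present statement treats $\Lambda$ as the genuine dilation and the conformality factor as $\Lambda^2$, which is exactly what produces the exponent $-2$. Second, one should note that the trace of $\nabla\diff\phi$ is frame-independent because $\nabla\diff\phi$ is symmetric for the torsion-free Levi-Civita connection. I would also remark that the pointwise identity actually holds for any horizontally conformal harmonic morphism, whether or not $\Lambda$ is constant; constancy of $\Lambda$ is precisely the hypothesis that makes $\Lambda^{-2}$ a genuine constant, and is what is needed downstream to push an eigenfunction equation $\tau(\hat{\phi})=\lambda\,\hat{\phi}$ down to a constant-coefficient equation $\tau(\phi)=\Lambda^{-2}\lambda\,\phi$ on $M$.
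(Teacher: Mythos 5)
Your proof is correct, and it is essentially the standard argument from the cited source \cite{Gha-Gud-4}: the paper itself states Proposition \ref{dilationtau} without proof, deferring to that reference, which argues exactly as you do via the composition law, harmonicity of $\pi$, and a horizontally adapted orthonormal frame. You also correctly flag the only real pitfall, namely that Definition \ref{horizontallyconformal} calls the conformality factor $\Lambda$ while the proposition uses the dilation convention $h(\diff\pi(X),\diff\pi(Y))=\Lambda^2 g(X,Y)$, which is what produces the exponent $-2$.
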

The following lemma allows us to construct eigenfunctions on quotient spaces. This has already been established in the paper \cite{Gud-Sif-Sob-2} by Gudmundsson, Siffert and Sobak.
\begin{proposition}{\rm \cite{Gud-Mon-Rat-1},\cite{Gud-Sif-Sob-2}}\label{efquotient}
    Let $\pi:(\hat M,\hat g)\to (M,g)$ be a harmonic Riemannian submersion between Riemannian manifolds. Further let $\phi:(M,g)\to\C$ be a smooth function and $\hat\phi:(\hat M,\hat g)\to\C$ be the composition $\hat\phi=\phi\circ\pi$. Then the corresponding tension fields $\tau$ and conformality operators $\kappa$  satisfy
$$\tau(\hat\phi)=\tau(\phi)\circ\pi\ \ \text{and}\ \
\kappa(\hat\phi,\hat\psi) = \kappa(\phi,\psi)\circ\pi.$$
\end{proposition}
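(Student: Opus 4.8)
The plan is to treat the two identities separately, leaning on the results already established in this chapter. First I observe that a harmonic Riemannian submersion is precisely a submersive harmonic morphism whose dilation is the constant $\Lambda\equiv 1$: by definition of a Riemannian submersion, $\diff\pi$ restricts to a linear isometry on each horizontal space $\mathcal{H}_{\hat p}$, so $\pi$ is horizontally (weakly) conformal with dilation $1$, and since $\pi$ is assumed harmonic, Theorem \ref{fuglede} of Fuglede and Ishihara guarantees that $\pi$ is a harmonic morphism. With this in hand, the tension-field identity is immediate from Proposition \ref{dilationtau}: substituting $\Lambda\equiv 1$ into $\tau(\phi)\circ\pi=\Lambda^{-2}\tau(\hat\phi)$ yields $\tau(\hat\phi)=\tau(\phi)\circ\pi$.

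For the conformality operator, where implicitly $\psi:(M,g)\to\C$ is a second smooth function with $\hat\psi=\psi\circ\pi$, the key step is to identify the gradient of the pullback, namely to show that $\nabla\hat\phi$ is the horizontal lift of $\nabla\phi$. I would argue first that $\nabla\hat\phi$ is horizontal: for any vertical vector $V\in\mathcal{V}_{\hat p}=\ker\diff\pi_{\hat p}$ we have $\hat g(\nabla\hat\phi,V)=V(\hat\phi)=\diff\phi(\diff\pi(V))=0$. Next, for the horizontal lift $Y^{\mathcal{H}}$ of an arbitrary vector field $Y$ on $M$, the chain rule gives
$$\hat g(\nabla\hat\phi,Y^{\mathcal{H}})=Y^{\mathcal{H}}(\hat\phi)=\diff\phi(\diff\pi(Y^{\mathcal{H}}))=\diff\phi(Y)\circ\pi=g(\nabla\phi,Y)\circ\pi,$$
which, because $\diff\pi$ is an isometry on horizontal spaces, equals $\hat g((\nabla\phi)^{\mathcal{H}},Y^{\mathcal{H}})$. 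Since the horizontal lifts span $\mathcal{H}_{\hat p}$ and both $\nabla\hat\phi$ and $(\nabla\phi)^{\mathcal{H}}$ are horizontal, this forces $\nabla\hat\phi=(\nabla\phi)^{\mathcal{H}}$, and by the same reasoning $\nabla\hat\psi=(\nabla\psi)^{\mathcal{H}}$.

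Combining these and using once more that $\diff\pi$ preserves the inner product on horizontal spaces then gives
$$\kappa(\hat\phi,\hat\psi)=\hat g(\nabla\hat\phi,\nabla\hat\psi)=\hat g\big((\nabla\phi)^{\mathcal{H}},(\nabla\psi)^{\mathcal{H}}\big)=g(\nabla\phi,\nabla\psi)\circ\pi=\kappa(\phi,\psi)\circ\pi,$$
as desired. Since the gradient and the metric are extended $\C$-linearly and $\C$-bilinearly to complex-valued functions, these computations are valid verbatim for complex $\phi,\psi$, or one may simply split into real and imaginary parts. I expect the only real obstacle to be the gradient-lift identity $\nabla\hat\phi=(\nabla\phi)^{\mathcal{H}}$: everything downstream is a clean consequence of the isometry property of a Riemannian submersion, so the care must go into correctly handling the horizontal/vertical splitting and confirming that the gradient of a fibre-constant function is exactly the horizontal lift of the base gradient.
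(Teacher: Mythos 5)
Your proposal is correct and follows the same route as the paper: the tension-field identity is obtained exactly as in the paper's proof, by specialising Proposition \ref{dilationtau} to constant dilation $\Lambda\equiv 1$. For the conformality operator the paper simply defers to the cited reference, whereas you supply the standard argument in full — showing $\nabla\hat\phi$ is the horizontal lift of $\nabla\phi$ and invoking the isometry of $\diff\pi$ on horizontal spaces — and that argument is sound, including the remark on complex-linear extension.
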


\begin{proof} To compute the tension field we use Proposition \ref{dilationtau} and the fact that $\pi$ is an harmonic morphism with constant dilation $\lambda\equiv1.$ The arguments needed here can be found in \cite{Gud-Sif-Sob-2}.
\end{proof}
The diagram below illustrates the situation considered in Proposition \ref{efquotient}
\begin{center}\begin{tikzpicture}
  \matrix (m)
    [
      matrix of math nodes,
      row sep    = 3em,
      column sep = 4em
    ]
    {
      G/K              &  \\
      G &      \cn       \\
    };
  \path
    (m-2-1) edge [->>] node [left] {$\pi$} (m-1-1)
    (m-1-1)
      edge [->] node [above] {${\phi}$} (m-2-2)
    (m-2-1) edge [->] node [below] {$\hat{\phi}$} (m-2-2);
\end{tikzpicture}\end{center} 

Proposition \ref{efquotient} has the following consequence. 
\begin{corollary}\label{inducedefs}
Let $\pi:(\hat M,\hat g)\to (M,g)$ be a harmonic Riemannian submersion.  For a complex-valued function smooth function $\phi:(M,g)\to\C$ let $\hat \phi:(\hat M,\hat g)\to\C$ be the composition $\hat \phi=\phi\circ\pi$. Then the following statements are equivalent.
\begin{enumerate}
\item[(i)] 
$\phi:M\to\C$ is a $(\lambda,\mu)$-eigenfunction on $M$.
\item[(ii)] 
$\hat\phi:\hat M\to\C$ is a $(\lambda,\mu)$-eigenfunction on $\hat M$.
\end{enumerate}
\end{corollary}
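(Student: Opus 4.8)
The plan is to show that both defining conditions of a $(\lambda,\mu)$-eigenfunction transfer between $\phi$ and $\hat\phi=\phi\circ\pi$ by direct substitution into Proposition \ref{efquotient}. The entire proof rests on the two identities $\tau(\hat\phi)=\tau(\phi)\circ\pi$ and $\kappa(\hat\phi,\hat\psi)=\kappa(\phi,\psi)\circ\pi$ already established there; once those are in hand, the equivalence is a matter of bookkeeping with the surjectivity of $\pi$. The only genuine subtlety is the direction (ii)$\Rightarrow$(i), where I must cancel the submersion $\pi$ on both sides of an equation, and for this I will use that $\pi$ is surjective (being a submersion onto $M$), so that equality of two functions after precomposition with $\pi$ forces their equality on $M$.

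First I would treat the implication (i)$\Rightarrow$(ii). Assume $\phi$ is a $(\lambda,\mu)$-eigenfunction, so $\tau(\phi)=\lambda\cdot\phi$ and $\kappa(\phi,\phi)=\mu\cdot\phi^2$ on $M$. Applying the first identity of Proposition \ref{efquotient} and then substituting the eigenfunction relation gives
\[
\tau(\hat\phi)=\tau(\phi)\circ\pi=(\lambda\cdot\phi)\circ\pi=\lambda\cdot(\phi\circ\pi)=\lambda\cdot\hat\phi.
\]
For the conformality condition I would take $\psi=\phi$ in the second identity (so $\hat\psi=\hat\phi$) and argue analogously:
\[
\kappa(\hat\phi,\hat\phi)=\kappa(\phi,\phi)\circ\pi=(\mu\cdot\phi^2)\circ\pi=\mu\cdot(\phi\circ\pi)^2=\mu\cdot\hat\phi^2.
\]
Thus $\hat\phi$ is a $(\lambda,\mu)$-eigenfunction on $\hat M$.

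For the converse (ii)$\Rightarrow$(i), I would start from $\tau(\hat\phi)=\lambda\cdot\hat\phi$ and $\kappa(\hat\phi,\hat\phi)=\mu\cdot\hat\phi^2$. Using Proposition \ref{efquotient} in the form $\tau(\phi)\circ\pi=\tau(\hat\phi)=\lambda\cdot\hat\phi=(\lambda\cdot\phi)\circ\pi$, I obtain that $\tau(\phi)$ and $\lambda\cdot\phi$ agree after composition with $\pi$; since $\pi$ is a surjective submersion, this yields $\tau(\phi)=\lambda\cdot\phi$ on all of $M$. The same cancellation applied to $\kappa(\phi,\phi)\circ\pi=(\mu\cdot\phi^2)\circ\pi$ gives $\kappa(\phi,\phi)=\mu\cdot\phi^2$, completing the equivalence. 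The main obstacle, such as it is, lies here: one must be careful to invoke surjectivity of $\pi$ explicitly to justify dropping the precomposition, rather than silently treating the two identities as reversible.
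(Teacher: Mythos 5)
Your proof is correct and follows exactly the route the paper intends: the paper states Corollary \ref{inducedefs} as an immediate consequence of Proposition \ref{efquotient} without writing out the details, and your argument (substitution for (i)$\Rightarrow$(ii), cancellation of the surjective $\pi$ for (ii)$\Rightarrow$(i)) is precisely that omitted verification. Your explicit appeal to surjectivity of $\pi$ in the converse direction is the right point to flag and is satisfied here since $\pi$ is the quotient projection.
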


We will now discuss eigenfunctions in the context of the duality between compact and non-compact spaces. 
\begin{proposition}{\rm \cite{Gud-Sif-Sob-2}}
    If $\phi:G/K\rightarrow\cn$ is an eigenfunction on the non-compact symmetric space $G/K,$ such that
    $$\tau(\phi)=\lambda\cdot\phi, \ \textrm{and} \ \kappa(\phi,\phi)=\mu\cdot\phi^2,$$ then its dual function $\phi^*:U/K\rightarrow\cn$ on the compact $U/K$ satisfies $$\tau(\phi^*)=-\lambda\cdot\phi^*, \ \textrm{and} \ \kappa(\phi^*,\phi^*)=-\mu\cdot{\phi^*}^2.$$
\end{proposition}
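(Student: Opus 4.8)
The plan is to reduce everything to the origin $o$ and to exploit that, on a symmetric space, the geodesics through $o$ are precisely the curves $t\mapsto\exp(tZ)\cdot o$ with $Z\in\m$. First I would fix an orthonormal basis $\{Z_1,\dots,Z_m\}$ of $\m\cong T_o(G/K)$ and record the pointwise formulas
$$\tau(\phi)(o)=\sum_{k=1}^m\frac{d^2}{dt^2}\Big|_{t=0}\phi(\exp(tZ_k)\cdot o),\qquad \kappa(\phi,\phi)(o)=\sum_{k=1}^m\Big(\frac{d}{dt}\Big|_{t=0}\phi(\exp(tZ_k)\cdot o)\Big)^2.$$
These hold because each $\gamma_k(t)=\exp(tZ_k)\cdot o$ is a geodesic, so $\nabla_{\dot\gamma_k}\dot\gamma_k=0$ and the second derivative of $\phi\circ\gamma_k$ equals the Hessian $(\nabla^2\phi)(Z_k,Z_k)$; summing over $k$ gives the trace, i.e. $\tau$, while the first derivatives assemble the gradient and hence $\kappa$. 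Working at $o$ avoids all first-order connection terms.

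Next I would bring in the duality data from the definition of dual spaces: the linear isometry $\tilde\phi\colon\m\to\m^*$ with $[\tilde\phi X,\tilde\phi Y]=-\tilde\phi[X,Y]$, realised concretely as $Z\mapsto iZ$ inside the common complexification $\m^{\cn}$. Since $\tilde\phi$ is an isometry, $\{iZ_1,\dots,iZ_m\}$ is an orthonormal basis of $\m^*\cong T_{o^*}(U/K)$, so the very same two formulas compute $\tau(\phi^*)(o^*)$ and $\kappa(\phi^*,\phi^*)(o^*)$ with $Z_k$ replaced by $iZ_k$ and $\phi$ by $\phi^*$.

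The crux is the relation between $\phi$ and its dual $\phi^*$. I would use that $\phi^*$ is, by the standard construction of the dual function, the restriction to $U/K$ of the common holomorphic extension of $\phi$ over $G^{\cn}/K^{\cn}$, so that $\phi^*(o^*)=\phi(o)$ and, along the matched geodesics $\gamma_k(t)=\exp(tZ_k)\cdot o$ and $\gamma_k^*(t)=\exp(t\,iZ_k)\cdot o^*$, one has $\phi^*(\gamma_k^*(t))=f_k(it)$, where $f_k(s)=\phi(\exp(sZ_k)\cdot o)$ is the holomorphically continued restriction. Differentiating, $\frac{d}{dt}|_{0}\phi^*(\gamma_k^*(t))=i\,f_k'(0)$ and $\frac{d^2}{dt^2}|_{0}\phi^*(\gamma_k^*(t))=-f_k''(0)$. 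Feeding this into the two formulas yields
$$\tau(\phi^*)(o^*)=-\sum_k f_k''(0)=-\tau(\phi)(o),\qquad \kappa(\phi^*,\phi^*)(o^*)=\sum_k\big(i\,f_k'(0)\big)^2=-\kappa(\phi,\phi)(o).$$
Invoking the eigenfunction equations $\tau(\phi)=\lambda\phi$, $\kappa(\phi,\phi)=\mu\phi^2$ together with $\phi^*(o^*)=\phi(o)$ then gives $\tau(\phi^*)(o^*)=-\lambda\,\phi^*(o^*)$ and $\kappa(\phi^*,\phi^*)(o^*)=-\mu\,\phi^*(o^*)^2$.

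Finally I would propagate these identities from the origin to every point of $U/K$ by homogeneity: since $U$ acts transitively by isometries and the duality correspondence between functions is equivariant with respect to the $G$- and $U$-actions, the relations established at $o^*$ transport to each point, upgrading the pointwise identities to the global statements $\tau(\phi^*)=-\lambda\phi^*$ and $\kappa(\phi^*,\phi^*)=-\mu(\phi^*)^2$. The main obstacle is making the middle step rigorous — precisely defining $\phi^*$ through the shared complexification and justifying that directional derivatives along the $i\m$-geodesics acquire the factors $i$ and $i^2=-1$; once that analytic continuation is in place, the sign bookkeeping (one factor $-1$ for $\tau$ from a second derivative, and $(i)^2=-1$ for $\kappa$ from two first derivatives) is routine and, reassuringly, produces the same overall sign for both operators.
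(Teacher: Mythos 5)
The paper states this proposition without proof, citing \cite{Gud-Sif-Sob-2}, so there is no in-text argument to compare against. Your proposal follows the standard route of Gudmundsson--Svensson and Gudmundsson--Siffert--Sobak: extend $\phi$ holomorphically to (part of) $G^{\cn}/K^{\cn}$, restrict to the two real forms, and collect one factor of $i$ per derivative along the $i\m$-directions. The pointwise formulas for $\tau$ and $\kappa$ at the origin and the sign bookkeeping there are correct.

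The genuine weak point is your final step. Homogeneity does not transport the identity from $o^*$ to a general point $p^*=u\cdot o^*$ of $U/K$: the translation by $u\in U$ is an isometry of $U/K$ and extends to a biholomorphism of the complexification, but it does \emph{not} preserve the other real form $G/K$, so $(\Phi\circ L_u)|_{G/K}$ is not a translate of $\phi$ and is not known to satisfy the eigenfunction equations; you therefore cannot simply rerun the origin computation at $p^*$. What actually closes the argument is the identity theorem for holomorphic functions: the complexified expressions $\tau^{\cn}(\Phi)-\lambda\,\Phi$ and $\kappa^{\cn}(\Phi,\Phi)-\mu\,\Phi^2$ are holomorphic on the domain of the extension and vanish on $G/K$, a maximal totally real submanifold, hence vanish identically; restricting to $U/K$ (where, by your computation, $\tau^{\cn}$ and $\kappa^{\cn}$ restrict to $-\tau_{U/K}$ and $-\kappa_{U/K}$) gives the claim at every point at once. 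So your calculation at $o^*$ is exactly the verification of the sign in this restriction --- you only need to replace ``propagate by homogeneity'' with ``propagate by analytic continuation''. You should also flag that the holomorphic extension of a real-analytic eigenfunction is a priori only defined on a neighbourhood of $G/K$ and need not reach $U/K$; the proposition implicitly assumes $\phi^*$ is defined on all of $U/K$, which does hold for the concrete polynomial eigenfunctions used in this thesis.
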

Recently, Riedler and Siffert showed in their paper \cite{Rie-Sif} that the eigenvalues $\lambda,\mu$ of an eigenfunction on a compact manifold satisfy the following condition.

\begin{theorem}{\rm \cite{Rie-Sif}}
Let $(M,g)$ be compact and $f:M\rightarrow\cn$ a $(\lambda,\mu)$-eigenfunction. Then $\lambda\leq\mu\leq0.$
\end{theorem}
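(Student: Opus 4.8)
The plan is to exploit the compactness of $M$ through integration by parts, which turns $\tau$ into a self-adjoint operator. The starting point is a self-adjointness lemma: if $\psi\colon M\to\cn$ is smooth and not identically zero and satisfies $\tau(\psi)=c\cdot\psi$ for some constant $c\in\cn$, then $c$ is real with $c\le 0$. Indeed, on a closed manifold $\int_M\Div X\,dV=0$, so taking $X=\bar\psi\,\nabla\psi$ and using $\Div(\bar\psi\,\nabla\psi)=\kappa(\bar\psi,\psi)+\bar\psi\,\tau(\psi)$ gives
$$c\int_M|\psi|^2\,dV=\int_M\bar\psi\,\tau(\psi)\,dV=-\int_M\kappa(\bar\psi,\psi)\,dV=-\int_M|\nabla\psi|^2\,dV\le 0,$$
where, writing $\psi=u+iv$, one has $\kappa(\bar\psi,\psi)=g(\nabla\bar\psi,\nabla\psi)=|\nabla u|^2+|\nabla v|^2\ge 0$ (real). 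Since $\int_M|\psi|^2\,dV>0$, the scalar $c$ is real and non-positive. Applying this to $\psi=\phi$ shows at once that $\lambda$ is real with $\lambda\le 0$, and records the identity $\lambda\int_M|\phi|^2\,dV=-\int_M|\nabla\phi|^2\,dV$, which I will reuse.

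Next I would pin down $\mu$ by feeding the powers $\phi^k$ into the same lemma. Using the product rules of Proposition \ref{taukappa} together with $\kappa(\phi,\phi)=\mu\cdot\phi^2$, the chain rule $\nabla\phi^a=a\,\phi^{a-1}\nabla\phi$ gives $\kappa(\phi^a,\phi^b)=ab\,\mu\,\phi^{a+b}$, and a short induction yields
$$\tau(\phi^k)=\bigl(k\lambda+k(k-1)\mu\bigr)\cdot\phi^k,$$
so each $\phi^k$ is again a $\tau$-eigenfunction. The self-adjointness lemma forces $k\lambda+k(k-1)\mu$ to be real and $\le 0$ for every $k\ge 1$. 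Taking $k=2$ and using that $\lambda$ is already real shows $\mu$ is real; dividing the inequality by $k$ gives $\lambda+(k-1)\mu\le 0$, and letting $k\to\infty$ forces $\mu\le 0$.

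Finally, to obtain the comparison $\lambda\le\mu$ I would invoke a pointwise Cauchy--Schwarz inequality for the complexified gradient. By the remark following Definition \ref{conformalityop} one has $\kappa(\phi,\phi)=|\nabla u|^2-|\nabla v|^2+2i\,g(\nabla u,\nabla v)$ and $\kappa(\phi,\bar\phi)=|\nabla u|^2+|\nabla v|^2$, and Cauchy--Schwarz applied to $\nabla u,\nabla v$ gives the pointwise bound $|\kappa(\phi,\phi)|\le\kappa(\phi,\bar\phi)=|\nabla\phi|^2$. Since $\kappa(\phi,\phi)=\mu\cdot\phi^2$, this reads $|\mu|\,|\phi|^2\le|\nabla\phi|^2$; integrating and using the identity from the first paragraph gives $|\mu|\int_M|\phi|^2\,dV\le\int_M|\nabla\phi|^2\,dV=-\lambda\int_M|\phi|^2\,dV$, so $|\mu|\le|\lambda|$. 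As $\lambda\le 0$ and $\mu\le 0$ are now both real, this is precisely $-\mu\le-\lambda$, i.e. $\lambda\le\mu$, completing the chain $\lambda\le\mu\le 0$.

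The step I expect to be the real crux is establishing that $\mu$ is real and non-positive: a priori $\mu\in\cn$, and neither its reality nor its sign is visible from a single integration against $\phi$, since the natural quantity $\int_M\phi^2\,dV$ typically vanishes. The device of applying the self-adjointness lemma to the entire family $\{\phi^k\}_{k\ge 1}$ — extracting reality from $k=2$ and the sign $\mu\le 0$ from the growth in $k$ — is what makes the argument go through; once $\mu$ is known to be real and $\le 0$, the comparison $\lambda\le\mu$ follows cheaply from the Cauchy--Schwarz inequality.
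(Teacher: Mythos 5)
Your proof is correct. Note that the thesis itself does not prove this statement — it is quoted from Riedler--Siffert \cite{Rie-Sif} without proof — so there is no in-paper argument to compare against; judged on its own, your argument is complete. The three pillars all check out: (i) the self-adjointness lemma is sound, since $\kappa(\bar\psi,\psi)=|\nabla u|^2+|\nabla v|^2\ge 0$ and the divergence theorem on a closed manifold kills $\int_M\Div(\bar\psi\,\nabla\psi)\,dV$; (ii) the induction $\tau(\phi^k)=\bigl(k\lambda+k(k-1)\mu\bigr)\phi^k$ is correct (using that $\kappa$ is a derivation in each slot), and feeding the whole family $\{\phi^k\}$ into the lemma is exactly the right device to extract reality of $\mu$ from $k=2$ and the sign of $\mu$ from the linear growth in $k$; (iii) the pointwise bound $|\kappa(\phi,\phi)|\le\kappa(\phi,\bar\phi)$ follows from Cauchy--Schwarz applied to $\nabla u,\nabla v$, and combined with the energy identity $\int_M|\nabla\phi|^2\,dV=-\lambda\int_M|\phi|^2\,dV$ it yields $|\mu|\le|\lambda|$, hence $\lambda\le\mu$. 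The only hypotheses you use beyond the statement are that $f$ is not identically zero and that $M$ has no boundary; both are implicit in the intended setting (a nonzero eigenfunction on a closed manifold) and worth stating explicitly, but they are not gaps in the argument.
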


Table \ref{table}, which is taken from \cite{Gud-Mun-1}, gives an overview of some currently known eigenfunctions. Here $U/K$ refers to the domain of the eigenfunction.
\renewcommand{\arraystretch}{2}
\begin{table}[h]\label{table}
	\makebox[\textwidth][c]{
		\begin{tabular}{cccc}
			\midrule
			\midrule
$U/K$	& $\lambda$ & $\mu$ & Eigenfunctions \\
\midrule
\midrule
$\SO n$ & $-\,\frac{(n-1)}2$ & $-\,\frac 12$ & 
see \cite{Gud-Sak-1}\\
\midrule
$\SU n$ & $-\,\frac{n^2-1}n$ & $-\,\frac{n-1}n$ & 
see \cite{Gud-Sob-1} \\
\midrule
$\Sp n$ & $-\,\frac{2n+1}2$ & $-\,\frac 12$ & 
see \cite{Gud-Mon-Rat-1} \\
\midrule
$\SU n/\SO n$ & $-\,\frac{2(n^2+n-2)}{n}$& $-\,\frac{4(n-1)}{n}$ & 
see \cite{Gud-Sif-Sob-2} \\
\midrule
$\Sp n/\U n$ & $-\,2(n+1)$ & $-\,2$ & 
see \cite{Gud-Sif-Sob-2} \\
\midrule
$\SO{2n}/\U n$ & $-\,2(n-1)$ & $-1$ & 
see \cite{Gud-Sif-Sob-2} \\
\midrule
$\SU{2n}/\Sp n$ & $-\,\frac{2(2n^2-n-1)}{n}$ & $-\,\frac{2(n-1)}{n}$ & 
see \cite{Gud-Sif-Sob-2} \\
\midrule
$\SO{m+n}/\SO m\times\SO n$ & $-(m+n)$ & $-2$ & 
see \cite{Gha-Gud-4} \\
\midrule
$\U{m+n}/\U m\times\U n$ & $-2(m+n)$ & $-2$ & 
see \cite{Gha-Gud-5} \\
\midrule
$\Sp{m+n}/\Sp m\times\Sp n$ & $-2(m+n)$ & $-1$ & 
see \cite{Gha-Gud-5} \\
\midrule
\midrule
\end{tabular}	
}
\bigskip
\caption{Eigenfunctions on the classical compact  irreducible Riemannian symmetric spaces.}
%\label{table-eigenfunctions}	
\end{table}
\renewcommand{\arraystretch}{1}

\newpage

\section{A New Method for Finding Minimal Submanifolds}\label{newmethod}
In the following, we explain the new method introduced by Gudmundsson and Munn in \cite{Gud-Mun-1}, which allows us to construct compact minimal submanifolds with the help of eigenfunctions. We first state two important results by Eells and Sampson (\cite{Eel-Sam}) and Baird and Eells (\cite{Bai-Eel}) respectively. They yield an important connection between minimal submanifolds and weakly conformal and harmonic maps. Finally, we will state Theorem \ref{GM} by Gudmundsson and Munn. The goal of this thesis is to apply it to all known eigenfunctions which satisfy its conditions.

We need the following definitions.
\begin{definition}{\rm \cite{Bai-Woo-book}}
    Let $(M^m,g)$ and $(N^n,h)$ be Riemannian manifolds with $m\geq n.$ Let $\phi:M^m\rightarrow N^n$ be a differentiable map. $\phi$ is said to be a submersion if for every $p\in M,$ the differential $\diff\phi_p:T_pM\rightarrow T_pN$ is surjective.
\end{definition}

\begin{definition}{\rm \cite{Bai-Woo-book}}
Let $\phi:(M^m,g)\rightarrow(N^n,h)$ be a smooth map between Riemannian manifolds. A point $x\in M$ is said to be {\it critical} if $\rank \diff\phi_x<\min\{m,n\}.$ The image of a critical point is called a critical value. 
\end{definition}

\begin{definition}{\rm \cite{Bai-Woo-book}}
    Let $(M,g)$ be a Riemannian manifold and $\phi:M\rightarrow\cn$ be a complex-valued function on $M.$ We say that $z\in\phi(M)$ is a {\it regular} value if $\nabla\phi\neq0$ along $\phi^{-1}(\{z\}).$
\end{definition}
In Chapter 1, we have already mentioned the following result by Eells and Sampson from 1964 in their paper \cite{Eel-Sam}. Here we are citing it from Chapter 3 of the book \cite{Bai-Woo-book}.

\begin{theorem}{\rm \cite{Bai-Woo-book}}
    A weakly conformal map from a Riemannian manifold of dimension two is harmonic if and only if its image is minimal at regular points.
\end{theorem}

In 1981, Baird and Eells showed in their work \cite{Bai-Eel} that harmonic morphisms are useful instruments in the study of minimal submanifolds. 

\begin{theorem}{\rm \cite{Bai-Eel}}
Let $\phi:(M,g)\rightarrow\cn$ be a complex-valued harmonic morphism from a Riemannian manifold. Then every regular fibre of $\phi$ is a minimal submanifold of $(M,g)$ of codimension two.
\end{theorem}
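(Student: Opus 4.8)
The plan is to invoke the Fuglede–Ishihara characterisation (Theorem \ref{fuglede}) to replace the hypothesis ``harmonic morphism'' by the two concrete conditions equivalent to it, and then to show directly that the mean curvature vector of a regular fibre vanishes. Writing $\phi=u+iv$ with $u,v\colon M\to\rn$, the morphism condition splits into \emph{harmonicity}, $\tau(u)=\tau(v)=0$, and \emph{horizontal weak conformality}, $\kappa(\phi,\phi)=0$, whose real and imaginary parts read
$$|\nabla u|^2=|\nabla v|^2=:\Lambda\qquad\text{and}\qquad g(\nabla u,\nabla v)=0$$
on all of $M$. At a regular value $z_0=a+ib$ we have $\Lambda\neq0$ along $F:=\phi^{-1}(\{z_0\})=\{u=a\}\cap\{v=b\}$, so $F$ is an embedded submanifold of codimension two whose normal bundle is spanned by the two orthogonal fields $\nabla u$ and $\nabla v$.

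Next I would express the second fundamental form through the defining functions. For vector fields $X,Y$ tangent to $F$ and any smooth $f$ that is constant on $F$, differentiating $g(\nabla f,Y)=0$ along $X$ yields the standard identity
$$g(B(X,Y),\nabla f)=-\,\mathrm{Hess}\,f(X,Y),\qquad \mathrm{Hess}\,f(X,Y):=g(\nabla_X\nabla f,Y).$$
Tracing over an orthonormal frame $\{e_1,\dots,e_{m-2}\}$ of $TF$ and completing it to a frame of $TM$ by the unit normals $n_1=\nabla u/\sqrt{\Lambda}$ and $n_2=\nabla v/\sqrt{\Lambda}$, the full $g$-trace of $\mathrm{Hess}\,f$ is $\tau(f)$, so
$$g(\trace B,\nabla f)=-\sum_{k}\mathrm{Hess}\,f(e_k,e_k)=-\tau(f)+\mathrm{Hess}\,f(n_1,n_1)+\mathrm{Hess}\,f(n_2,n_2).$$
Applying this to $f=u$ and $f=v$ and using $\tau(u)=\tau(v)=0$ reduces the whole problem to showing that, in each case, the two ``normal'' Hessian terms cancel.

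The crux — and the step I expect to be the main obstacle — is this cancellation, which I would extract by differentiating the conformality relations (valid on all of $M$, which is precisely what lets me differentiate them freely). From $X(\kappa(u,u))=2\,\mathrm{Hess}\,u(X,\nabla u)=X(\Lambda)$, the analogous identity for $v$, and from $\mathrm{Hess}\,u(X,\nabla v)+\mathrm{Hess}\,v(X,\nabla u)=X(\kappa(u,v))=0$, I would evaluate at $X=\nabla u$ and $X=\nabla v$ and use the symmetry of the Hessian. This gives $\mathrm{Hess}\,u(\nabla u,\nabla u)=\tfrac12\nabla u(\Lambda)$ and $\mathrm{Hess}\,u(\nabla v,\nabla v)=-\tfrac12\nabla u(\Lambda)$, whose sum is zero; the same computation yields $\mathrm{Hess}\,v(\nabla u,\nabla u)+\mathrm{Hess}\,v(\nabla v,\nabla v)=0$. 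Hence $g(\trace B,\nabla u)=g(\trace B,\nabla v)=0$, and since $\nabla u,\nabla v$ span the normal bundle we conclude $\trace B=0$, i.e. $F$ is a minimal submanifold of codimension two.
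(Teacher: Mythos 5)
Your argument is correct and complete. Note, however, that the paper does not prove this statement at all: it is quoted verbatim from Baird and Eells with a citation to \cite{Bai-Eel}, so there is no in-paper proof to compare against. Your proof is a valid, self-contained derivation: the reduction via Fuglede--Ishihara to $\tau(u)=\tau(v)=0$, $|\nabla u|^2=|\nabla v|^2=\Lambda$ and $g(\nabla u,\nabla v)=0$ is exactly the splitting the paper records in its remark on horizontal conformality; the identity $g(B(X,Y),\nabla f)=-\mathrm{Hess}\,f(X,Y)$ and the trace decomposition $\tau(f)=\trace_{TF}\mathrm{Hess}\,f+\mathrm{Hess}\,f(n_1,n_1)+\mathrm{Hess}\,f(n_2,n_2)$ are standard and correctly applied; and the cancellation $\mathrm{Hess}\,u(\nabla u,\nabla u)+\mathrm{Hess}\,u(\nabla v,\nabla v)=0$ follows exactly as you say from differentiating the conformality relations (which, crucially, hold on all of $M$ and not merely along the fibre) and using the symmetry of the Hessian. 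The only stylistic remark is that the original Baird--Eells proof proceeds via the divergence-free stress-energy tensor of a harmonic map (a conservation law) rather than by a direct Hessian computation; your route is more elementary and arguably more transparent for this specific codimension-two situation, at the cost of not generalising as cleanly to higher-dimensional targets.
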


The next result, by Gudmundsson and Munn from 2023, now brings eigenfunctions into play. 

\begin{theorem}{\rm \cite{Gud-Mun-1}}\label{GM}
    Let $\phi:(M,g)\rightarrow\mathbb{C}$ be a complex-valued eigenfunction on a Riemannian manifold, such that $0\in\phi(M)$ is a regular value for $\phi.$ Then the fibre $\phi^{-1}(\{0\})$ is a minimal submanifold of $M$ of codimension two.
\end{theorem}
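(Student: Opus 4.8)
The plan is to compute the mean curvature vector of the fibre $F=\phi^{-1}(\{0\})$ directly and show it vanishes, exploiting the two eigenfunction equations at exactly the place where they become trivial, namely along $F$ where $\phi$ itself is zero. Conceptually this extends the Baird--Eells result quoted above (the harmonic morphism case $\lambda=\mu=0$): what actually matters for minimality of the zero fibre is only the behaviour of $\tau(\phi)$ and $\kappa(\phi,\phi)$ \emph{on} $F$, and there both $\lambda\cdot\phi$ and $\mu\cdot\phi^2$ vanish.

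First I would fix the geometry of $F$. Writing $\phi=u+iv$, the conformality equation yields, on all of $M$, the two real identities $|\nabla u|^2-|\nabla v|^2=\mu(u^2-v^2)$ and $g(\nabla u,\nabla v)=\mu uv$. Restricting to $F$, where $u=v=0$, these force $|\nabla u|=|\nabla v|$ and $g(\nabla u,\nabla v)=0$. Since $0$ is a regular value, $\nabla\phi\neq 0$ along $F$, so neither gradient can vanish; hence $\nabla u$ and $\nabla v$ are nonzero, orthogonal and of common length $a>0$. In particular $du$ and $dv$ are linearly independent along $F$, so $F$ is a smooth submanifold of codimension two, and $\{e_1,e_2\}=\{\nabla u/a,\,\nabla v/a\}$ is an orthonormal frame for its normal bundle.

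Next I would express the mean curvature $H=\trace B$. For tangent vector fields $X,Y$ on $F$ and a defining function $u$ (constant on $F$), the standard level-set computation gives $g(B(X,Y),\nabla u)=-\mathrm{Hess}\,u(X,Y)$, where $\mathrm{Hess}\,u(X,Y)=g(\nabla_X\nabla u,Y)$, using that $X\,g(Y,\nabla u)=0$ for $X,Y$ tangent to $F$. Summing over an orthonormal tangent frame and completing it by $e_1,e_2$ to a frame of $TM$, the full trace of the Hessian is $\tau(u)$, whence
\[
g(H,\nabla u)=-\tau(u)+\mathrm{Hess}\,u(e_1,e_1)+\mathrm{Hess}\,u(e_2,e_2),
\]
and likewise for $v$. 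Packaging the real and imaginary parts and using complex bilinearity of the metric together with $\nabla u\otimes\nabla u+\nabla v\otimes\nabla v=\tfrac12(\nabla\phi\otimes\overline{\nabla\phi}+\overline{\nabla\phi}\otimes\nabla\phi)$, these combine into the single identity
\[
g(H,\nabla\phi)=-\tau(\phi)+\tfrac{1}{a^2}\,\mathrm{Hess}\,\phi(\nabla\phi,\overline{\nabla\phi}),
\]
where $\mathrm{Hess}\,\phi=\mathrm{Hess}\,u+i\,\mathrm{Hess}\,v$. Since $\{\nabla u,\nabla v\}$ spans the normal bundle, $H=0$ is equivalent to $g(H,\nabla\phi)=0$.

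The heart of the argument, and the step I expect to be the main obstacle, is killing the surviving Hessian term. The tension field equation disposes of the first term immediately: $\tau(\phi)=\lambda\cdot\phi=0$ on $F$. For the second term I would differentiate the conformality equation $g(\nabla\phi,\nabla\phi)=\mu\cdot\phi^2$ covariantly in an arbitrary direction $W$; as the metric is parallel this gives $2\,g(\nabla_W\nabla\phi,\nabla\phi)=2\mu\cdot\phi\cdot W(\phi)$, whose right-hand side vanishes on $F$. Hence $\mathrm{Hess}\,\phi(W,\nabla\phi)=0$ along $F$ for every $W$, and the choice $W=\overline{\nabla\phi}$ gives $\mathrm{Hess}\,\phi(\nabla\phi,\overline{\nabla\phi})=0$ by symmetry of the Hessian. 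Therefore $g(H,\nabla\phi)=0$, so $H=0$ and $F$ is minimal. The delicate points to get right are the level-set formula for $B$, the bookkeeping that turns the two real normal contributions into the single complex expression above, and verifying that the regularity hypothesis is genuinely what guarantees $a\neq 0$, so that the normalisation is legitimate.
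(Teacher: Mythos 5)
Your argument is correct. The thesis does not actually prove Theorem \ref{GM} — it is quoted from \cite{Gud-Mun-1} — but your computation is essentially the proof given in that source: along the zero fibre the eigenfunction equations reduce to exactly the harmonicity and horizontal conformality that drive the Baird--Eells argument ($\tau(\phi)=\lambda\phi=0$ and $\kappa(\phi,\phi)=\mu\phi^2=0$ there, with regularity forcing $\nabla u,\nabla v$ to be a nonzero orthogonal normal frame), and differentiating $\kappa(\phi,\phi)=\mu\phi^2$ is precisely what kills the remaining normal Hessian contributions to $\trace B$.
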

This provides us with a new method to find minimal submanifolds. In the following chapters, we will look at the classical compact symmetric spaces and check if the known eigenfunctions are regular over $0\in\cn.$ If so, then Theorem \ref{GM} allows us to construct families of compact minimal submanifolds.

A theorem from Riedler and Siffert's paper \cite{Rie-Sif} supplies us with a straightforward way of checking whether an eigenfunction attains the required value $0\in\cn$.
\begin{theorem}{\rm \cite{Rie-Sif}}
    Let $(M,g)$ be a compact and connected Riemannian manifold and let $\phi:M\rightarrow\cn$ be a $(\lambda,\mu)$-eigenfunction not identically zero. Then the following are equivalent.
    \begin{enumerate}
        \item $\lambda=\mu$.
        \item $\rvert\phi\rvert^2$ is constant.
        \item $\phi(x)\neq0$ for all $x\in M.$
    \end{enumerate}
\end{theorem}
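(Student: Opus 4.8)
The plan is to reduce everything to a single pointwise identity for the real function $F=|\phi|^2=\phi\bar\phi$ and then to exploit that $M$ is compact and boundaryless, so that $\int_M\tau(f)\,dV=0$ for every smooth $f$ (the integral of a divergence vanishes on a closed manifold). Throughout I would use that $\lambda$ and $\mu$ are real: this is guaranteed by the preceding theorem of Riedler and Siffert \cite{Rie-Sif}, which gives $\lambda\le\mu\le 0$. Reality is what lets the argument conclude $\lambda=\mu$ rather than merely an equality of real parts.

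First I would compute $\tau(F)$ and $|\nabla F|^2$ using the product rules of Proposition \ref{taukappa}. Writing $\phi=u+iv$, one checks that the auxiliary quantity $\kappa(\phi,\bar\phi)=g(\nabla\phi,\nabla\bar\phi)=|\nabla u|^2+|\nabla v|^2\ge 0$ is real and non-negative. Applying the product rule for $\tau$ with $\psi=\bar\phi$, and using $\tau(\bar\phi)=\overline{\tau(\phi)}=\lambda\bar\phi$, gives $\tau(F)=2\lambda F+2\kappa(\phi,\bar\phi)$. Expanding $\nabla F=\bar\phi\,\nabla\phi+\phi\,\nabla\bar\phi$ and inserting $\kappa(\phi,\phi)=\mu\phi^2$ and $\kappa(\bar\phi,\bar\phi)=\mu\bar\phi^2$ yields $|\nabla F|^2=2\mu F^2+2F\,\kappa(\phi,\bar\phi)$. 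Eliminating the common term $\kappa(\phi,\bar\phi)$ between these two formulas produces the key identity
\[
|\nabla F|^2=F\,\tau(F)+2(\mu-\lambda)\,F^2,
\]
which I will refer to as $(\star)$.

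With $(\star)$ in hand the three statements can be closed up in a cycle. For (i)$\Rightarrow$(ii), if $\lambda=\mu$ then $(\star)$ reads $|\nabla F|^2=F\,\tau(F)$; integrating and invoking Green's identity $\int_M F\,\tau(F)\,dV=-\int_M|\nabla F|^2\,dV$ forces $\int_M|\nabla F|^2\,dV=0$, so $\nabla F\equiv 0$ and $F$ is constant since $M$ is connected. The implication (ii)$\Rightarrow$(iii) is immediate: if $F=|\phi|^2$ is constant and $\phi\not\equiv 0$, then $F\equiv c>0$, so $\phi$ is nowhere zero. For (iii)$\Rightarrow$(i), non-vanishing of $\phi$ makes $\log F$ a smooth function, and the computation $\tau(\log F)=\tau(F)/F-|\nabla F|^2/F^2$ together with $(\star)$ collapses to $\tau(\log F)=2(\lambda-\mu)$, a constant; integrating over the closed manifold gives $2(\lambda-\mu)\cdot\mathrm{Vol}(M)=0$, hence $\lambda=\mu$.

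The routine parts are the two integrations by parts and the bookkeeping in the product-rule expansions. The one genuine step is the derivation of $(\star)$ — in particular recognising that $\kappa(\phi,\bar\phi)$ is the single auxiliary quantity appearing in both $\tau(F)$ and $|\nabla F|^2$, so that it can be eliminated — together with the accompanying observation that $\tau(\log|\phi|^2)$ reduces to the constant $2(\lambda-\mu)$. The only external input is the reality of $\lambda$ and $\mu$; without the preceding $\lambda\le\mu\le 0$ theorem the same computation would yield only $\Re\lambda=\Re\mu$, so that citation is essential rather than cosmetic.
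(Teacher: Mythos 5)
The thesis does not prove this statement itself --- it is quoted verbatim from Riedler--Siffert \cite{Rie-Sif} --- so there is no internal proof to compare against; judged on its own, your argument is correct and complete. The identity $(\star)$ checks out (the elimination of $\kappa(\phi,\bar\phi)$ between $\tau(F)=2\lambda F+2\kappa(\phi,\bar\phi)$ and $|\nabla F|^2=2\mu F^2+2F\kappa(\phi,\bar\phi)$ is exactly right), the cycle (i)$\Rightarrow$(ii)$\Rightarrow$(iii)$\Rightarrow$(i) closes, and both integrations by parts are legitimate on a closed manifold. You are also right, and right to flag, that the reality of $\lambda$ and $\mu$ is the one external input: without the $\lambda\leq\mu\leq 0$ theorem the computation only yields $\Re\lambda=\Re\mu$ in step (iii)$\Rightarrow$(i), so the citation is load-bearing (though note that reality of $\lambda$ alone follows from the elementary identity $\lambda\int_M|\phi|^2=-\int_M\kappa(\phi,\bar\phi)$, so it is really only the reality of $\mu$ that must be imported).
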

This immediately yields a highly useful corollary.
\begin{corollary}\label{phi0}
    If $\phi:M\rightarrow\cn$ is a complex-valued $(\lambda,\mu)$-eigenfunction on a compact and connected Riemannian manifold $(M,g)$ such that $\lambda\neq\mu,$ then there exists $x\in M$ such that $\phi(x)=0$.
\end{corollary}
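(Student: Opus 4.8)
The plan is to derive this immediately from the equivalence of conditions (1) and (3) in the preceding theorem of Riedler and Siffert, reading that equivalence as a contrapositive. That theorem asserts, for a $(\lambda,\mu)$-eigenfunction which is not identically zero, that $\lambda=\mu$ holds if and only if $\phi(x)\neq0$ for every $x\in M$. Negating both sides of this biconditional yields exactly the statement we want: $\lambda\neq\mu$ holds if and only if $\phi$ vanishes at some point of $M$.

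First I would dispose of the degenerate case in which $\phi\equiv0$. Here every point $x\in M$ trivially satisfies $\phi(x)=0$, so the conclusion is immediate and the hypothesis $\lambda\neq\mu$ plays no role. Separating off this case is necessary because the cited theorem is stated only for eigenfunctions that are not identically zero.

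Assuming now that $\phi$ is not identically zero, I would apply the Riedler--Siffert theorem directly. Since we are given $\lambda\neq\mu$, condition (1) fails; by the equivalence (1) $\Leftrightarrow$ (3), condition (3) must then also fail. The failure of (3) means precisely that there exists some $x\in M$ with $\phi(x)=0$, which is the desired conclusion.

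There is no genuine obstacle here, only a small point of bookkeeping: the hypothesis in the cited theorem that $\phi$ be not identically zero must be accounted for by treating the case $\phi\equiv0$ separately before invoking the equivalence. Beyond this, the result is a one-line contrapositive of the theorem of Riedler and Siffert.
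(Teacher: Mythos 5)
Your proposal is correct and is exactly the argument the paper intends: the corollary is stated as an immediate contrapositive of the equivalence (i)$\Leftrightarrow$(iii) in the Riedler--Siffert theorem, and your explicit handling of the degenerate case $\phi\equiv0$ (where the conclusion holds trivially and the theorem's nonvanishing hypothesis would otherwise not be met) is a small but welcome point of care that the paper leaves implicit.
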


Lastly, we state another useful lemma by Milnor (1968).
\begin{lemma}\label{milnor}{\bf \cite{Milnor}}
    Let $V$ be a real or complex algebraic set defined by a single polynomial equation $f(x)=0,$ where $f$ is irreducible. In the real case make the additional hypothesis that $V$ contains a regular point of $f.$ Then every polynomial which vanishes on $V$ is a multiple of $f.$
\end{lemma}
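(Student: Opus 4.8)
The plan is to separate the two cases in the statement and reduce the real one to the complex one. In the complex setting the assertion is essentially Hilbert's Nullstellensatz for a hypersurface, so I would dispose of it first; the real case is where the regularity hypothesis does its work, and I would handle it by an analytic-continuation argument that transfers the vanishing of the test polynomial from the real points of $V$ to the entire complex zero set of $f$.

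First, the complex case. Let $g\in\cn[x_1,\dots,x_n]$ vanish on $V=V_\cn(f)$. Since $\cn[x_1,\dots,x_n]$ is a unique factorisation domain and $f$ is irreducible, the principal ideal $(f)$ is prime, hence radical, so $(f)=\sqrt{(f)}$. By the Nullstellensatz the ideal of all polynomials vanishing on $V$ equals $\sqrt{(f)}=(f)$, and therefore $f\mid g$. This is the whole argument in the complex case.

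Next, the real case, which I would reduce to the one just settled. Let $p\in V_\rn(f)$ be the assumed regular point, so $\nabla f(p)\neq 0$. I would first note that this regularity forces $f$ to remain irreducible over $\cn$: a real irreducible polynomial that splits over $\cn$ can only split as a product $h\bar h$ of non-associate conjugate factors, in which case $f=|h|^2$ on $\rn^n$, so every real zero of $f$ is a common zero of $h$ and $\bar h$ and hence satisfies $\nabla f=0$; this contradicts the existence of a regular real zero. Thus $V_\cn(f)$ is irreducible. Now suppose $g\in\rn[x_1,\dots,x_n]$ vanishes on $V_\rn(f)$. Choosing coordinates with $\partial f/\partial x_n(p)\neq 0$, the implicit function theorem, in both its real and its holomorphic form, yields a function $\Phi$ that is holomorphic on a polydisc about $p'=(p_1,\dots,p_{n-1})$, real-valued on the real slice, and satisfies $f(x',\Phi(x'))\equiv 0$. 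The composition $x'\mapsto g(x',\Phi(x'))$ is then holomorphic and vanishes for all real $x'$ near $p'$, since those points lie in $V_\rn(f)\subseteq V_\rn(g)$. Because a holomorphic function vanishing on an open piece of the totally real slice $\rn^{n-1}$ vanishes identically, $g$ vanishes on an open subset of $V_\cn(f)$, and by irreducibility on all of $V_\cn(f)$. The complex case then gives $f\mid g$.

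The main obstacle is the real case, and within it the step promoting $g|_{V_\rn(f)}=0$ to $g|_{V_\cn(f)}=0$. This is precisely where the regularity hypothesis and complex analysis enter together: regularity both excludes the degenerate conjugate-factor splitting and, through the implicit function theorem, supplies a genuine $(n-1)$-dimensional real manifold on which to run the identity theorem. The technical points that need care are verifying that the holomorphic extension $\Phi$ is real on the real slice and that the real slice is a uniqueness set for holomorphic functions; once these are in place, the reduction to the complex Nullstellensatz is immediate.
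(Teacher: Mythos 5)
The paper does not actually prove this lemma: it is quoted from Milnor's \emph{Singular Points of Complex Hypersurfaces} and used as a black box, so there is no in-paper argument to compare yours against. Your proof is correct and is essentially the standard one. The complex case is exactly the hypersurface Nullstellensatz, and in the real case the regular point does precisely the two jobs you identify: it excludes the splitting $f=c\,h\bar h$ over $\cn$ (without which the lemma is false, e.g.\ $f=x^2+y^2$, $g=x$, $V=\{0\}$), and via the holomorphic implicit function theorem it supplies an $(n-1)$-dimensional complex graph inside $V_\cn(f)$ on which $g$ is forced to vanish by the identity theorem applied on the totally real slice. The one step you should write out rather than wave at is the promotion from ``$g$ vanishes on a Euclidean-open piece of $V_\cn(f)$'' to ``$g$ vanishes on all of $V_\cn(f)$'': this holds because a nonempty Euclidean-open subset of an irreducible affine variety is Zariski dense, or equivalently because the graph of $\Phi$ has complex dimension $n-1$ and therefore cannot be contained in $V_\cn(f)\cap V_\cn(g)$ if that intersection were a proper subvariety of the irreducible hypersurface $V_\cn(f)$. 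With that sentence added, the argument is complete and self-contained.
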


%%%%%%%%%%%%%%%%%%%%%%%%%%%%%%%%%%%%%%%%%%%%

\chapter{The Special Orthogonal Group $\SO{n}$}

We now start with the more practical part of the thesis. 
In Section \ref{eigenfunctionsson} we define eigenfunctions on the special orthogonal group $\SO{n}.$ In Theorem \ref{eigenfamilySOn} we will introduce a family of eigenfunctions on $\SO{n},$ which stems from Gudmundsson and Sakovich's paper \cite{Gud-Sak-1}. 
We take a closer look at a particular function in Example \ref{complexxij}.

In Section \ref{cmsson}, we apply Theorem \ref{GM} to the above mentioned eigenfunctions. We first give an easy example, which generalises an example 
given by Gudmundsson and Munn \cite{Gud-Mun-1}. In Theorem \ref{familySOn}, we show that all the functions defined in Theorem \ref{eigenfamilySOn} can be used to construct {\it compact} minimal submanifolds.

\section{Eigenfunctions on $\SO{n}$}\label{eigenfunctionsson}
We now look at eigenfunctions on $\SO{n}.$ Lemma \ref{xijeigenfunctions} and Theorem \ref{eigenfamilySOn} both stem from Gudmundsson and Sakovich's paper \cite{Gud-Sak-1}. Example \ref{complexxij} gives us a first example of an eigenfunction and illustrates that they are not necessarily complicated.

For the construction of eigenfunctions on $\SO{n}$ we need the following Lemma, which shows how the Laplace-Beltrami and conformality operators act on the coordinate functions. 
\begin{lemma}{\rm \cite{Gud-Sak-1}}\label{xijeigenfunctions}
    For $1\leq j,\alpha\leq n$, let $x_{j\alpha}:\SO n\rightarrow\rn$ be the real-valued coordinate function given by 
    $$x_{j\alpha}:x\mapsto e_j\cdot x\cdot e_\alpha^t,$$ where $\{e_1,\dots,e_n\}$ is the canonical basis for $\rn^n.$ Then the following relations hold.
    $$\tau(x_{j\alpha})=-\frac{n-1}{2}\cdot x_{j\alpha},$$
    $$\kappa(x_{j\alpha},x_{k\beta})=-\frac{1}{2}(x_{j\beta}x_{k\alpha}-\delta_{\alpha\beta}\cdot\delta_{jk}).$$
\end{lemma}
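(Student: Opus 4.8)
The plan is to work with the left-invariant orthonormal frame $\{Y_{rs} \mid 1\le r<s\le n\}$ of $\so n$ from Proposition \ref{basisson}, and to exploit that the standard metric of Definition \ref{standardmetric} restricts to a \emph{bi-invariant} metric on $\SO n$: for skew-symmetric $X,Y$ one has $g(X,Y)=-\trace(XY)$, which is $\Ad$-invariant. For a bi-invariant metric the Levi-Civita connection satisfies $\nabla_X Y=\tfrac12[X,Y]$ on left-invariant fields, so that $\nabla_{Y_{rs}}Y_{rs}=\tfrac12[Y_{rs},Y_{rs}]=0$, and the tension field collapses to $\tau(\phi)=\sum_{r<s}Y_{rs}(Y_{rs}(\phi))$, while $\kappa(\phi,\psi)=\sum_{r<s}Y_{rs}(\phi)\,Y_{rs}(\psi)$ holds by the very definition of $\kappa$ in an orthonormal frame.

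First I would record the first-order derivatives. A left-invariant field acts by $Y_{rs}(\phi)(p)=\tfrac{d}{dt}\big|_{t=0}\phi(p\exp(tY_{rs}))$, so for the coordinate function $Y_{rs}(x_{j\alpha})(p)=(p\,Y_{rs})_{j\alpha}$; inserting $Y_{rs}=\tfrac1{\sqrt2}(E_{rs}-E_{sr})$ gives
$$Y_{rs}(x_{j\alpha})=\tfrac1{\sqrt2}\bigl(x_{jr}\,\delta_{\alpha s}-x_{js}\,\delta_{\alpha r}\bigr),$$
a relation which I would use for all index pairs under the conventions $Y_{sr}=-Y_{rs}$ and $Y_{rr}=0$. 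For $\tau$ I apply this twice: a short computation (using $\delta_{rs}=0$ since $r<s$) yields $Y_{rs}(Y_{rs}(x_{j\alpha}))=-\tfrac12(\delta_{\alpha r}+\delta_{\alpha s})\,x_{j\alpha}$, and summing over $r<s$ reduces to the combinatorial fact that a fixed index $\alpha$ lies in exactly $n-1$ of the pairs $\{r,s\}$, producing the eigenvalue $-\tfrac{n-1}2$.

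For $\kappa$ I would symmetrise the constrained sum, writing $\sum_{r<s}=\tfrac12\sum_{r,s}$ (legitimate since the summand $Y_{rs}(x_{j\alpha})Y_{rs}(x_{k\beta})$ is invariant under $r\leftrightarrow s$ and vanishes when $r=s$), then expand the product of the two first-derivative expressions and collapse the Kronecker deltas. Four terms appear: two of them carry the factor $\sum_r x_{jr}x_{kr}=(xx^t)_{jk}$, which is exactly where the defining relation $xx^t=I_n$ of $\SO n$ enters to give $\delta_{jk}$, while the other two collapse directly to $-x_{j\beta}x_{k\alpha}$. Assembling the factors of $\tfrac12$ then yields $\kappa(x_{j\alpha},x_{k\beta})=-\tfrac12(x_{j\beta}x_{k\alpha}-\delta_{\alpha\beta}\delta_{jk})$.

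The main obstacle here is bookkeeping rather than conceptual difficulty: one must track the Kronecker deltas carefully through the double sum without losing the constraint $r<s$, and the symmetrisation trick is the cleanest way to manage this. The single essential geometric input is the orthogonality relation $xx^t=I_n$, without which the $\delta_{jk}$ terms would not close up; everything else is the differentiation of linear coordinate functions on a matrix group, together with the vanishing of $\nabla_{Y_{rs}}Y_{rs}$ guaranteed by bi-invariance.
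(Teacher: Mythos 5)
Your proposal is correct: the first-derivative formula, the vanishing of the connection terms by bi-invariance, the count that a fixed index $\alpha$ lies in exactly $n-1$ pairs $\{r,s\}$, and the use of $xx^t=I_n$ to close up the $\delta_{jk}\delta_{\alpha\beta}$ term all check out. The thesis itself states this lemma without proof, citing \cite{Gud-Sak-1}, and your argument is essentially the standard computation carried out there (and reused in Example \ref{xijgeneral}), so there is nothing to add.
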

From this we can derive the following eigenfamily. 
\begin{theorem}{\rm \cite{Gud-Sak-1}}\label{eigenfamilySOn}
    Let $V$ be a maximal isotropic subspace of $\cn^n$ and $p\in\cn^n$ be a non-zero element. Then the set $$\mathcal{E}_V(p)=\left\{\phi_a:\SO{n}\rightarrow\cn \ \rvert \ \phi_a(x)=\trace(p^tax^t), \ a\in V\right\}$$ of complex-valued functions is an eigenfamily on $\SO n$.
\end{theorem}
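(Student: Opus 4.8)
The plan is to reduce everything to Lemma \ref{xijeigenfunctions} by exploiting the linearity of $\tau$, the bilinearity of $\kappa$, and the isotropy of $V$. First I would expand each member of the family in the real coordinate functions: writing $p=(p_1,\dots,p_n)$ and $a=(a_1,\dots,a_n)$, the function $\phi_a(x)=\trace(p^t a\,x^t)$ is a complex linear combination $\phi_a=\sum_{j,\alpha} a_j p_\alpha\, x_{j\alpha}$ of the coordinate functions $x_{j\alpha}$ (the precise assignment of $a$ and $p$ to the row/column indices is immaterial for what follows). Since $\tau$ and $\kappa$ extend $\mathbb C$-linearly and $\mathbb C$-bilinearly to complex-valued functions, the formulas of Lemma \ref{xijeigenfunctions} apply verbatim to these combinations, and the whole problem becomes one of bookkeeping once that geometric input is in hand.

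The tension-field condition is immediate. Using linearity of $\tau$ together with $\tau(x_{j\alpha})=-\tfrac{n-1}2 x_{j\alpha}$,
\[
\tau(\phi_a)=\sum_{j,\alpha} a_j p_\alpha\,\tau(x_{j\alpha})=-\frac{n-1}{2}\sum_{j,\alpha} a_j p_\alpha\, x_{j\alpha}=-\frac{n-1}{2}\,\phi_a,
\]
so every member is a $\tau$-eigenfunction with the common eigenvalue $\lambda=-\tfrac{n-1}2$, independently of $a$.

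The conformality condition is the heart of the matter. For $a,b\in V$ I would compute, using bilinearity of $\kappa$ and the second formula of Lemma \ref{xijeigenfunctions},
\[
\kappa(\phi_a,\phi_b)=-\frac12\sum_{j,\alpha,k,\beta} a_j p_\alpha b_k p_\beta\bigl(x_{j\beta}x_{k\alpha}-\delta_{\alpha\beta}\delta_{jk}\bigr).
\]
The decisive observation is that the quartic term factors: regrouping the indices gives
\[
\sum_{j,\alpha,k,\beta} a_j p_\alpha b_k p_\beta\, x_{j\beta}x_{k\alpha}=\Bigl(\sum_{j,\beta} a_j p_\beta x_{j\beta}\Bigr)\Bigl(\sum_{k,\alpha} b_k p_\alpha x_{k\alpha}\Bigr)=\phi_a\,\phi_b,
\]
while the Kronecker term collapses to $\bigl(\sum_j a_j b_j\bigr)\bigl(\sum_\alpha p_\alpha^2\bigr)$. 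Here the hypothesis on $V$ enters: because $V$ is isotropic for the standard symmetric bilinear form on $\mathbb C^n$, one has $\sum_j a_j b_j=0$ for all $a,b\in V$, so this term vanishes identically. Hence $\kappa(\phi_a,\phi_b)=-\tfrac12\,\phi_a\phi_b$, yielding the common eigenvalue $\mu=-\tfrac12$ and showing that $\mathcal E_V(p)$ is a $\bigl(-\tfrac{n-1}2,-\tfrac12\bigr)$-eigenfamily, in agreement with the entry for $\SO n$ in Table \ref{table}.

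I expect the main obstacle to be the bookkeeping in this last computation: correctly matching the row and column indices so that the quartic term genuinely factors as $\phi_a\phi_b$, and identifying precisely which index pairing produces the factor $\sum_j a_j b_j$ that isotropy annihilates. By contrast, the role of $p$ being merely nonzero (rather than isotropic) is only to ensure that the $\phi_a$ are not identically zero, and maximality of $V$ is not needed for the eigenfamily property itself — isotropy alone suffices, with maximality serving only to make the family as large as possible.
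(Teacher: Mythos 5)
Your proof is correct and follows exactly the route the paper indicates: the paper cites this result from \cite{Gud-Sak-1} without reproving it, but its Lemma \ref{xijeigenfunctions} together with the special-case computation in Example \ref{complexxij} is precisely your argument, namely expanding $\phi_a$ in the coordinate functions, factoring the quartic term of $\kappa$ into $\phi_a\phi_b$, and using isotropy of $V$ (via polarization, $(a,b)=0$ for $a,b\in V$) to kill the Kronecker term $(a,b)(p,p)$. Your side remarks — that the row/column assignment of $a$ and $p$ is immaterial, and that only isotropy (not maximality) of $V$ is needed — are both accurate.
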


In the following example we see Theorem \ref{eigenfamilySOn} in action.

\begin{example}\label{complexxij}
    From Theorem \ref{eigenfamilySOn}, we obtain eigenfunctions on $\SO{n}$ of the form 
$$x_{j\alpha}+ix_{j\beta}:\SO n\rightarrow\cn.$$ Indeed, let $p=e_j,$ and $a=e_\alpha+i\cdot e_\beta.$ Here, $e_j$ denotes the standard unit vector of $\cn^n$ with zeros everywhere except in the $j$-th entry. Then $p,a\in\cn^n$ clearly satisfy the conditions of Theorem \ref{eigenfamilySOn}, and hence the function $\trace(p^tax^t)$ is an eigenfunction on $\SO{n}.$ Further, \begin{eqnarray*}
    \trace(p^tax^t)&=&\sum_{s=1}^n\sum_{t=1}^n p_sa_tx_{st}\\
    &=&x_{j\alpha}+ix_{j\beta}.
\end{eqnarray*}

In general, it can also be shown directly through Lemma \ref{xijeigenfunctions} that if the indices $j,\alpha,k,\beta$ satisfy either  $j\neq k$ and $\alpha=\beta,$ or $j=k$ and $\alpha\neq \beta,$ then  $$x_{j\alpha}+ix_{k\beta}:\SO n\rightarrow\cn$$ is an eigenfunction on $\SO{n}.$ Due to linearity, $x_{j\alpha}+ix_{k\beta}$ is eigen with respect to the Laplace-Beltrami operator $\tau.$ Furthermore,
\begin{eqnarray*}
\kappa(x_{j\alpha}+ix_{k\beta},x_{j\alpha}+ix_{k\beta})&=&\kappa(x_{j\alpha},x_{j\alpha})+2i\cdot\kappa(x_{j\alpha},x_{k\beta})-\kappa(x_{k\beta},x_{k\beta})\\
    &=&-\frac{1}{2}(x_{j\alpha}^2-1)-i\cdot(x_{j\beta}x_{k\alpha}-\delta_{\alpha\beta}\cdot\delta_{jk})+\frac{1}{2}(x_{k\beta}^2-1)\\
    &=&-\frac{1}{2}\cdot(x_{j\alpha}^2+2i\cdot x_{j\beta}x_{k\alpha}-x_{k\beta}^2)\\
    &=&-\frac{1}{2}\cdot(x_{j\alpha}^2+2i\cdot x_{j\alpha}x_{k\beta}-x_{k\beta}^2)\\
    &=&-\frac{1}{2}\cdot(x_{j\alpha}+ix_{k\beta})^2.
\end{eqnarray*}
   In this computation we used our assumptions on the indices $j,\alpha,k,\beta.$
\end{example}

\section{Compact Minimal Submanifolds of $\SO{n}$}\label{cmsson}
We now wish to apply Theorem \ref{GM} to the eigenfunctions on $\SO{n}.$ 
Example \ref{xijgeneral} generalises Example 7.3 from Gudmundsson and Munn's paper  \cite{Gud-Mun-1}. Here we look at the function given in Example \ref{complexxij}. We then give a general statement in Theorem \ref{familySOn}, where we show that we can construct a family of compact minimal submanifolds of $\SO{n}$ with the eigenfunctions from Theorem \ref{eigenfamilySOn}.
\begin{example}\label{xijgeneral}
     Let $n>2$. If for the indices $j,\alpha,k,\beta$ it either holds that $j\neq k$ and $\alpha=\beta,$ or $j=k$ and $\alpha\neq\beta,$ then $x_{j\alpha}+ix_{k\beta}$ is an eigenfunction on $\SO{n},$ as we have shown in Example \ref{complexxij}.
     
     We will now show that the gradient of $$x_{j\alpha}+ix_{k\beta}:\SO n\rightarrow\cn$$ does not vanish. Indeed, for all $1\leq r<s\leq n, $
    $$Y_{rs}(x_{j\alpha})=\frac{1}{\sqrt{2}}\cdot(-x_{js}\cdot\delta_{\alpha r}+x_{jr}\cdot\delta_{\alpha s}).$$
    It follows that for $1\leq r<\alpha<s\leq n,$ 
    $$Y_{\alpha s}(x_{j\alpha})=-\frac{x_{js}}{\sqrt{2}}, \quad Y_{r\alpha}(x_{j\alpha})=\frac{x_{jr}}{\sqrt{2}}.$$
    Similarly, for $1\leq r<\beta<s\leq n,$
    $$Y_{\beta s}(x_{k\beta})=-\frac{x_{ks}}{\sqrt{2}}, \quad Y_{r\beta}(x_{k\alpha})=\frac{x_{kr}}{\sqrt{2}}.$$
    We conclude that if for all $1\leq r<s\leq n$ it holds that $$Y_{rs}(x_{j\alpha}+ix_{k\beta})=0,$$ then the $j$-th row is zero everywhere except possibly $x_{j\alpha},$ and the $k$-th row is zero everywhere except possibly $x_{k\beta}.$ In the case that $j=k$ and $\alpha\neq\beta$ we see that the $j$-th row vanishes entirely. If $j\neq k$ and $\alpha=\beta,$ then the $j$-th and $k$-th rows are multiples of one another. In either case we arrive at a contradiction, since the matrix $x\in\SO n$ is assumed to be of full rank. Hence it cannot be the case that $\nabla(x_{j\alpha}+ix_{k\beta})=\nabla x_{j\alpha}+i\nabla x_{k\beta}=0.$ 
    Theorem \ref{GM} now shows that the fibre $(x_{j\alpha}+ix_{k\beta})^{-1}(\{0\}),$ which satisfies $x_{j\alpha}=x_{k\beta}=0,$ is a minimal submanifold of $\SO n.$ 
\end{example}
We conclude this chapter with an application of Theorem \ref{GM}. Namely we show that the eigenfunctions as given in Theorem \ref{eigenfamilySOn} are regular over $0\in\cn$ if $p$ is not isotropic. As a consequence, they can be used to construct minimal submanifolds of $\SO{n}.$

\begin{theorem}\label{familySOn}
    Let $n>2.$
    Let $a\in\cn^n\backslash\{0\}$ be an element satisfying $(a,a)=0$ and let $p\in\cn^n\backslash\{0\}$ such that $(p,p)\neq0.$ Then the complex-valued eigenfunction $$\phi_{a,p}:\SO{n}\rightarrow\cn,\quad \phi_{a,p}(x)=\trace(p^tax^t)$$ satisfies the following:
    \begin{enumerate}
        \item $\phi_{a,p}$ is a submersion,
        \item $\phi_{a,p}^{-1}(\{0\})$ is a minimal submanifold of $\SO{n}$ of codimension two.
    \end{enumerate}
\end{theorem}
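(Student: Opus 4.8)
The plan is to deduce statement (2) from Gudmundsson and Munn's Theorem \ref{GM} once we know that $0$ is a regular value of $\phi_{a,p}$, and to obtain the submersion statement (1) by combining the non-vanishing of $\nabla\phi_{a,p}$ with the conformality relation of the eigenfunction. Two preliminary observations organise everything. First, expanding the trace as in Example \ref{xijgeneral} gives the compact form $\phi_{a,p}(x)=\sum_{j,\alpha}p_j a_\alpha x_{j\alpha}=(a,x^t p)$, where $(\cdot,\cdot)$ denotes the complex-bilinear extension of the scalar product; so the fibre $\phi_{a,p}^{-1}(\{0\})$ is exactly $\{x\in\SO n : (a,x^t p)=0\}$. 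Second, $\phi_{a,p}$ is a $(\lambda,\mu)$-eigenfunction with $\lambda=-\tfrac{n-1}{2}$ and $\mu=-\tfrac12$ (from Lemma \ref{xijeigenfunctions} by linearity, and from the computation in Example \ref{complexxij} using $(a,a)=0$); since $n>2$ we have $\lambda\neq\mu$, so Corollary \ref{phi0} guarantees the fibre is non-empty.

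The heart of the argument is to show that $\nabla\phi_{a,p}$ never vanishes. Writing $q=x^t p$ and differentiating along the orthonormal basis $\{Y_{rs}\}$ of $\so n$ with the formula $Y_{rs}(x_{j\alpha})=\tfrac1{\sqrt2}(x_{jr}\delta_{\alpha s}-x_{js}\delta_{\alpha r})$ recorded in Example \ref{xijgeneral}, I would obtain
$$\nabla\phi_{a,p}=\frac1{\sqrt2}\sum_{r<s}\bigl(a_s q_r-a_r q_s\bigr)\,Y_{rs}.$$
Hence $\nabla\phi_{a,p}=0$ at $x$ precisely when the skew array $a_sq_r-a_rq_s$ vanishes identically, i.e. when $a$ and $q=x^t p$ are linearly dependent over $\cn$. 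This is where the hypotheses enter: every $x\in\SO n$ preserves the complex-bilinear form, so $(q,q)=(x^t p,x^t p)=(p,p)\neq0$, whereas $(a,a)=0$. If $q=\zeta a$ for some $\zeta\in\cn$, then $(q,q)=\zeta^2(a,a)=0$, a contradiction; thus $a$ and $q$ are never proportional and $\nabla\phi_{a,p}\neq0$ everywhere. In particular $0$ is a regular value, and Theorem \ref{GM} immediately yields statement (2): $\phi_{a,p}^{-1}(\{0\})$ is a minimal submanifold of $\SO n$ of codimension two.

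For statement (1) I would use the eigenfunction relation $\kappa(\phi_{a,p},\phi_{a,p})=-\tfrac12\,\phi_{a,p}^2$. Decomposing $\phi_{a,p}=u+iv$, along the fibre this reads $|\nabla u|^2-|\nabla v|^2+2i\,g(\nabla u,\nabla v)=0$, so $\nabla u$ and $\nabla v$ are orthogonal and of equal length there; together with $\nabla\phi_{a,p}=\nabla u+i\nabla v\neq0$ this forces both to be non-zero, hence linearly independent, so $\diff(\phi_{a,p})_x$ has real rank two and $\phi_{a,p}$ is a submersion at every point of its zero fibre.

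The main obstacle is the non-vanishing of the gradient, the step that converts the numerical hypotheses $(a,a)=0$ and $(p,p)\neq0$ into geometry; the bilinear-form identity $\sum_{r<s}(a_sq_r-a_rq_s)^2=(a,a)(q,q)-(a,q)^2$ makes the isotropic/non-isotropic dichotomy transparent and is the cleanest route. One point worth flagging is that $\nabla\phi_{a,p}\neq0$ by itself does not make $\phi_{a,p}$ a submersion away from the zero fibre, since the differential can drop to rank one where $\phi_{a,p}\neq0$; it is precisely the conformality relation, which forces $g(\nabla\phi_{a,p},\nabla\phi_{a,p})=0$ on $\{\phi_{a,p}=0\}$, that upgrades the non-vanishing gradient to full rank there. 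This is the structural reason the eigenfunction hypothesis is needed and not merely convenient.
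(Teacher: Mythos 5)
Your proof is correct, and the central step --- showing that $\nabla\phi_{a,p}$ never vanishes --- is handled by a genuinely different, and in fact cleaner, mechanism than the one in the thesis. Both arguments start from the same derivative formula $Y_{rs}(\phi_{a,p})=\tfrac{1}{\sqrt2}(a_sq_r-a_rq_s)$ with $q=x^tp$, and both finish identically: the eigenvalues $\lambda=-\tfrac{n-1}{2}$ and $\mu=-\tfrac12$ are distinct for $n>2$, Corollary \ref{phi0} puts $0$ in the image, and Theorem \ref{GM} gives minimality. The difference lies in how the simultaneous vanishing of the minors $a_sq_r-a_rq_s$ is refuted. The paper's proof is an index-chasing argument: it invokes Lemma \ref{lemmaiso} to find two nonzero entries of $a$, locates a column with $(p,x_l)\neq0$, derives the proportionality $(p,x_j)/(p,x_t)=a_j/a_t$, and then expands $0=(a,a)$ into $\sum_j(p,x_j)^2$, which it identifies with $(p,p)$ using the orthonormality of the rows of $x$. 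You compress all of this into two structural facts: vanishing of all $2\times2$ minors forces $q$ to be a complex multiple of $a$ (since $a\neq0$), and $\SO{n}$ preserves the complex bilinear form, so $(q,q)=(p,p)\neq0$ while every multiple of $a$ is isotropic. This avoids the case analysis on indices entirely and makes transparent exactly where the hypotheses $(a,a)=0$ and $(p,p)\neq0$ enter. Your closing observation is also well taken: the paper silently identifies ``non-vanishing gradient'' with ``submersion,'' whereas real rank two genuinely requires the conformality relation $\kappa(\phi_{a,p},\phi_{a,p})=-\tfrac12\phi_{a,p}^2$ to force $\nabla u$ and $\nabla v$ to be orthogonal of equal nonzero length on the zero fibre --- which is precisely where Theorem \ref{GM} needs it.
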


\begin{proof}
     It follows from Theorem \ref{eigenfamilySOn} that $\phi_{a,p}$ is an eigenfunction on $\SO{n}.$ For simplicity,  for all $1\leq t\leq n,$ we will denote the $t$-th column of $x$ by $x_t.$
    A simple calculation shows that for $Y_{rs}\in\so{n},$ $$Y_{rs}(\phi_{a,p})=-a_r\sum_{j=1}^n p_jx_{js}+a_s\sum_{j=1}^n p_jx_{jr}=-a_r(p,x_s)+a_s(p,x_r).$$ 
    Assume towards a contradiction that there exists a matrix $x\in\SO{n}$ such that the gradient at $x$ satisfies $\nabla\phi_{a,p}=0.$
    Then for all $1\leq r<s\leq n,$ it holds that $Y_{rs}(\phi_{a,p})=0,$ i.e. 
 \begin{equation}\label{arpxscn}
     a_r(p,x_s)=a_s(p,x_r).
 \end{equation}
    Since $a\in\cn^n\backslash\{0\}$ satisfies $(a,a)=0,$ there exist at least two distinct indices $1\leq t<m\leq n$ such that $a_t,a_m\neq0.$ To see this, we refer to Lemma \ref{lemmaiso}.
    
    Note that the columns of $x$ form a basis of the vector space $\cn^n$ over the field $\cn$ and thus there exists an index $1\leq l\leq n$ such that $(p,x_l)\neq 0.$ Without loss of generality, we can assume $l\neq t.$
    
    Entering the indices $l$ and $t$ into Equation \ref{arpxscn}, we obtain
    $$a_t(p,x_l)=a_l(p,x_t).$$ The left hand side of the equation is non-zero, hence  $(p,x_t)\neq0.$ Thus, for every $j\neq t,$ it follows from Equation \ref{arpxscn} that $$\frac{(p,x_j)}{(p,x_t)}=\frac{a_j}{a_t}.$$
    From this, we obtain the following result.
    \begin{eqnarray*}
        0&=&(a,a)\\
        &=&\sum_{j=1}^n a_j^2\\
        &=&a_t^2+\sum_{j\neq t}a_t^2\cdot\frac{(p,x_j)^2}{(p,x_t)^2}\\
        &=&\frac{a_t^2}{(p,x_t)^2}\cdot\sum_{j=1}^n (p,x_j)^2.\\
    \end{eqnarray*}
    Since we assume that $a_t\neq0,$ we conclude that $\sum_{j=1}^n (p,x_j)^2=0.$
    Recall that due to the assumption that $x\in\SO{n},$ it holds that any two rows of $x$ are orthogonal, i.e. $$\sum_{j=1}^n x_{sj}x_{tj}=\delta_{st}.$$
    It follows that \begin{eqnarray*}
        0&=&\sum_{j=1}^n (p,x_j)^2\\
        &=&\sum_{j=1}^n \left(\sum_{s=1}^n p_sx_{sj}\cdot\sum_{t=1}^n p_tx_{tj}\right)\\
        &=&\sum_{s,t=1}^n\left( p_sp_t\sum_{j=1}^n x_{sj}x_{tj}\right)\\
        &=&\sum_{s,t=1}^n p_sp_t\cdot\delta_{st}\\
        &=&\sum_{s=1}^n p_s^2.
    \end{eqnarray*}
    This contradicts our assumption that $(p,p)\neq0.$

Thus, we conclude that the gradient $\nabla\phi_{a,p}$ is non-vanishing.  An easy computation using Lemma \ref{xijeigenfunctions} shows that the eigenvalues of the functions $$\phi_{a,p}=\trace(p^tax^t)$$ satisfy $$\lambda=-\frac{n-1}{2}\quad \textrm{and} \quad \mu=-\frac{1}{2}.$$ If $n>2$, then clearly $\lambda\neq\mu$ and by Corollary \ref{phi0}, $0$ lies in the image of $\phi_{a,p}$.
 In particular, $0$ is a regular value.
The last statement now follows from Theorem \ref{GM}.
\end{proof}

\begin{example}
    In Theorem \ref{familySOn} it is important that we assume $(p,p)\neq0.$ Consider the setting $n=3,$ $a=p=(1,i,0).$ Then the matrix $$x=\begin{pmatrix}
        \cos\theta&\sin\theta&0\\
        -\sin\theta&\cos\theta&0\\
        0&0&1
    \end{pmatrix}\in\SO{3}$$ clearly satisfies $\phi_{a,p}(x)=0,$ and further $\nabla(\phi_{a,p})=0$ at $x.$ Hence $\phi_{a,p}$ is not regular over 0. 
\end{example}

In Example \ref{complexxij} we have remarked that the eigenfunctions $x_{j\alpha}+ix_{k\beta}$ belong to the eigenfamily $\mathcal{E}_V(p)$ which we have defined in Theorem \ref{eigenfamilySOn}. This means in particular that Theorem \ref{familySOn} shows directly that the functions $x_{j\alpha}+ix_{k\beta}$ are regular over $0,$ which we have shown explicitly in Example \ref{xijgeneral} through a computation.
\begin{remark}
    It is clear that if $a=C\cdot b$ and $p=D\cdot q$ for some constants $C,D\in\cn,$ then $\phi_{a,p}(x)=\trace(p^tax^t)$ and $\phi_{b,q}$ induce the same minimal submanifolds. Indeed, assume that $x\in\phi_{a,p}^{-1}(\{0\}).$ Then $$0=\trace(p^tax^t)=\trace(CD\cdot q^tbx^t)=CD\cdot\trace(q^tbx^t)$$ and thus $x\in\phi_{b,q}^{-1}(\{0\}).$ 
\end{remark}
The claim formulated in the above remark should be intuitively clear. We can however also make a converse statement using Milnor's Lemma \ref{milnor}.
\begin{proposition}
    Let $n>2,$ and let $a,p,b,q\in\cn^n\backslash\{0\}$ satisfy $(a,a)=(b,b)=0$ and $(p,p)\neq0, (q,q)\neq0.$ Let $\phi_{a,p}=\trace(p^tax^t).$
    Then $$\phi_{a,p}^{-1}(\{0\})=\phi_{b,q}^{-1}(\{0\})$$ if and only if $[a]=[b]$ and $[p]=[q]$ in $\cn P^{n-1}$.
\end{proposition}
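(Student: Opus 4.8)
The plan is to prove the two implications separately, the forward one being routine and the converse resting on Milnor's Lemma \ref{milnor}. Throughout I will use the explicit form $\phi_{a,p}(x)=\trace(p^tax^t)=\sum_{i,j}p_ia_jx_{ij}$, so that $\phi_{a,p}$ is a $\cn$-valued \emph{linear} function of the entries of $x$ whose coefficient matrix is the rank-one matrix $p\,a^t$. For the implication $(\Leftarrow)$, if $[a]=[b]$ and $[p]=[q]$, say $a=Cb$ and $p=Dq$, then $\phi_{a,p}=CD\cdot\phi_{b,q}$, so the two functions have the same zero set; this is precisely the computation of the remark preceding the statement.

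For the converse I would first reduce the hypothesis $\phi_{a,p}^{-1}(\{0\})=\phi_{b,q}^{-1}(\{0\})$ to the proportionality of the two functions on $\SO n$, i.e. to $\phi_{b,q}=\lambda\cdot\phi_{a,p}$ for some nonzero $\lambda\in\cn$. To bring Lemma \ref{milnor} to bear I would pass to an affine model of the group: either complexify to the irreducible variety $\SO n(\cn)$, whose coordinate ring is an integral domain, or parametrise a dense open subset of $\SO n$ rationally (for instance by the Cayley transform $S\mapsto(I-S)(I+S)^{-1}$ of a skew-symmetric $S$), which presents the functions as polynomials on an affine space. By Theorem \ref{familySOn} the function $\phi_{a,p}$ is a submersion, so its zero locus is a smooth reduced hypersurface; assuming this locus is cut out by an \emph{irreducible} polynomial, Lemma \ref{milnor} shows that any polynomial vanishing on it — in particular $\phi_{b,q}$ — is a multiple of $\phi_{a,p}$. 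Exchanging the roles of the two functions forces the multiplier to be a nonzero constant, giving $\phi_{b,q}=\lambda\cdot\phi_{a,p}$ on $\SO n$.

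From proportionality the conclusion follows by linear algebra. Since $n>2$, the standard representation of $\SO n$ on $\rn^n$ is absolutely irreducible, so $\rn^n\otimes\rn^n$ is irreducible as an $\SO n\times\SO n$-module; as the linear span of $\SO n$ is a nonzero submodule for left and right multiplication, it must be all of $M_n(\rn)$, hence $\SO n$ spans $M_n(\cn)$ over $\cn$. This is where the hypothesis $n>2$ enters, the statement genuinely failing for $\SO 2$. Consequently two linear functionals agreeing on $\SO n$ have equal coefficient matrices, so $\phi_{b,q}=\lambda\cdot\phi_{a,p}$ yields $q\,b^t=\lambda\cdot p\,a^t$ with $\lambda\neq0$. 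Comparing the one-dimensional column spaces of these equal rank-one matrices gives $\cn q=\cn p$, hence $[p]=[q]$, and comparing their row spaces gives $\cn b=\cn a$, hence $[a]=[b]$.

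The main obstacle is the first step of the converse: rigorously justifying that $\phi_{a,p}$ defines an irreducible reduced hypersurface inside the group, so that Lemma \ref{milnor} applies, and handling the passage between the compact real form $\SO n$ and its complexification — one must check that $\phi_{b,q}$, known a priori to vanish only on the real zero set of $\phi_{a,p}$, in fact vanishes on the whole complex hypersurface, i.e. that the real zero set is Zariski dense in it. The submersion property furnished by Theorem \ref{familySOn} supplies exactly the regular point demanded by the real version of Lemma \ref{milnor}, which I expect to be the key to controlling this real-to-complex comparison; verifying irreducibility of the coefficient polynomial produced by the rational parametrisation is the remaining technical point.
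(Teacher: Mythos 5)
Your overall strategy coincides with the paper's: reduce the equality of zero sets to proportionality of $\phi_{a,p}$ and $\phi_{b,q}$ via Milnor's Lemma \ref{milnor}, then finish by linear algebra. Your endgame is in fact more complete than the paper's, which stops at "multiples of one another": the observation that $\SO{n}$ spans $M_n(\cn)$ for $n>2$, so that proportionality of the restricted functionals forces equality of the rank-one coefficient matrices and hence $[a]=[b]$, $[p]=[q]$, is exactly the argument the paper leaves unsaid, and it is correct. However, the step you flag as "the main obstacle" is not a technical loose end — it fails, and with it the converse implication of the proposition as stated. The set $\phi_{a,p}^{-1}(\{0\})\subset\SO{n}$ is the common zero set of the two real-valued functions $\Re\phi_{a,p}$ and $\Im\phi_{a,p}$, a real codimension-two subset, not a hypersurface cut out by a single polynomial; Lemma \ref{milnor} governs the latter situation only, and no complexification or rational parametrisation can recover the divisibility claim, because the conclusion one wants from it is false. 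Concretely, for every real matrix $x$ one has $\phi_{\bar a,\bar p}(x)=\overline{\phi_{a,p}(x)}$, so the pair $(\bar a,\bar p)$ satisfies all the hypotheses and has exactly the same zero set in $\SO{n}$ as $(a,p)$, yet $[\bar a]\neq[a]$: an isotropic $a=u+iv$ has $u,v$ linearly independent by Lemma \ref{lemmaiso}, so $a$ and $\bar a$ are not proportional. For $n=3$, $a=(1,i,0)$, $p=(0,0,1)$ this is the pair $x_{31}+ix_{32}$ and $x_{31}-ix_{32}$, which cut out the same subset of $\SO{3}$ without being complex multiples of one another.

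The same defect is present in the paper's own one-line proof, which invokes Lemma \ref{milnor} as though the zero set of the complex-valued $\phi_{a,p}$ were the zero locus of a single polynomial on an affine space. What your linear-algebra finish can legitimately deliver is a corrected statement: if one shows that every complex-linear functional vanishing on $\phi_{a,p}^{-1}(\{0\})$ is of the form $\alpha\,\phi_{a,p}+\beta\,\phi_{\bar a,\bar p}$ (which is what the codimension-two count suggests, since the real span of the functionals $\Re\phi_{a,p}$ and $\Im\phi_{a,p}$ is two-dimensional), then the spanning argument gives $q\,b^t=\alpha\, p\,a^t+\beta\,\bar p\,\bar a^t$, and the rank-one condition on the left-hand side forces $\alpha=0$ or $\beta=0$ because $a$ and $\bar a$ are independent. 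The correct conclusion is therefore that either $[a]=[b]$ and $[p]=[q]$, or $[b]=[\bar a]$ and $[q]=[\bar p]$; you should either prove the proposition in this amended form or add a hypothesis excluding the conjugate pair.
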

\begin{proof}
    Since the polynomial $\phi_{a,p}(x)$ with complex coefficients is of degree one, it is irreducible. We have shown in the proof of Theorem \ref{familySOn} that $\phi_{a,p}(x)$ is regular everywhere, and consequently $\phi_{a,p}^{-1}(\{0\})$ contains a regular point. By Lemma \ref{milnor}, it now follows that $\phi_{a,p}$ and $\phi_{b,q}$ are multiples of one another if and only if $\phi_{a,p}^{-1}(\{0\})=\phi_{b,q}^{-1}(\{0\})$.
\end{proof}

\chapter{The Special Unitary Group $\SU{n}$}
In this chapter we aim to find compact minimal submanifolds of the special unitary group $\SU{n}.$ In Section \ref{efsun} we define a family of eigenfunctions on $\SU{n},$ which were found by Gudmundsson and Sobak in their paper \cite{Gud-Sob-1}. Thereafter, we study compact minimal submanifolds of $\SU{n}$ in Section \ref{cmssun}.
Theorem \ref{thminsubsun} shows that those eigenfunctions are submersive and can hence be used to construct minimal submanifolds. We will then provide two more examples of {\it compact} minimal submanifolds of the special unitary group $\SU{n}$.

\section{Eigenfunctions on $\SU{n}$}\label{efsun}
We first look at the coordinate functions in Lemma \ref{coordsun}. Theorem \ref{thsu} then provides us with a family of eigenfunctions on $\SU{n}.$

\begin{lemma}{\rm \cite{Gud-Sif-Sob-2}}\label{coordsun}
For $1\leq j,\alpha\leq n$, let $z_{j\alpha}:\SU n\rightarrow\cn$ be the complex-valued coordinate function given by 
    $$z_{j\alpha}:x\mapsto e_j\cdot z\cdot e_\alpha^t,$$ where $\{e_1,\dots,e_n\}$ is the canonical basis for $\cn^n.$ Then the following relations hold.
    \begin{eqnarray*}
        \tau(z_{j\alpha})&=&-\frac{n^2-1}{n}\cdot z_{j\alpha},\\
        \kappa(z_{j\alpha},z_{k\beta})&=&-z_{j\beta}z_{k\alpha}+\frac{1}{n}\cdot z_{j\alpha}z_{k\beta}.
    \end{eqnarray*}
\end{lemma}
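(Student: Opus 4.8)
The plan is to work entirely in the left-invariant framework on the compact Lie group $\SU{n}$. Since the standard metric of Definition \ref{standardmetric}, restricted to $\su{n}$, is $\Ad$-invariant and hence bi-invariant, the Levi-Civita connection satisfies $\nabla_X X = \tfrac12[X,X] = 0$ for every left-invariant vector field $X$. Consequently, if $\mathcal{B}$ is an orthonormal basis of $\su{n}$ regarded as left-invariant vector fields, the operators of Definitions \ref{LB} and \ref{conformalityop} collapse to $\tau(\phi) = \sum_{X\in\mathcal{B}} X(X(\phi))$ and $\kappa(\phi,\psi) = \sum_{X\in\mathcal{B}} X(\phi)\,X(\psi)$. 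Everything then rests on one elementary identity: for $X\in\su{n}$ the induced left-invariant field acts on a coordinate function by $X(z_{j\alpha})(z) = \frac{d}{ds}(z\exp(sX))_{j\alpha}\rvert_{s=0} = (zX)_{j\alpha} = \sum_k z_{jk}X_{k\alpha}$.

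First I would treat the tension field. Iterating the identity above, with the constant entries $X_{k\alpha}$ pulled out, gives $X(X(z_{j\alpha})) = (zX^2)_{j\alpha}$, so that $\tau(z_{j\alpha}) = (z\,C)_{j\alpha}$ with $C = \sum_{X\in\mathcal{B}} X^2$. The matrix $C$ is $\Ad(\SU{n})$-invariant, hence commutes with the irreducible standard representation, so by Schur's lemma $C = c\,I_n$ for a scalar $c$; taking traces and using $\trace(X^2) = -1$ for each orthonormal skew-Hermitian $X$ together with $\dim\su{n} = n^2-1$ yields $cn = -(n^2-1)$, that is $c = -\tfrac{n^2-1}{n}$ and $\tau(z_{j\alpha}) = -\tfrac{n^2-1}{n}\,z_{j\alpha}$. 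Alternatively one can bypass Schur's lemma and sum $X^2$ directly over the three families making up $\mathcal{B}$, namely $\{Y_{rs}\}$, $\{iX_{rs}\}$, and an orthonormal basis $\{iH_l\}_{l=1}^{n-1}$ of the imaginary traceless diagonal part; each family contributes a multiple of $I_n$ and they sum to $-\tfrac{n^2-1}{n}I_n$.

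Next I would compute the conformality operator. Here $\kappa(z_{j\alpha},z_{k\beta}) = \sum_{X}(zX)_{j\alpha}(zX)_{k\beta} = \sum_{p,q} z_{jp}z_{kq}\,T_{p\alpha,q\beta}$, where $T_{p\alpha,q\beta} = \sum_{X\in\mathcal{B}} X_{p\alpha}X_{q\beta}$ is the second-moment tensor of the basis. Writing each $X\in\mathcal{B}$ as $iT$ with $T$ running over an orthonormal basis of the traceless Hermitian matrices, the completeness relation for that space reads $\sum_T T_{p\alpha}T_{q\beta} = \delta_{p\beta}\delta_{q\alpha} - \tfrac1n\delta_{p\alpha}\delta_{q\beta}$, whence $T_{p\alpha,q\beta} = -\delta_{p\beta}\delta_{q\alpha} + \tfrac1n\delta_{p\alpha}\delta_{q\beta}$. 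Substituting this and collapsing the Kronecker deltas leaves exactly $\kappa(z_{j\alpha},z_{k\beta}) = -z_{j\beta}z_{k\alpha} + \tfrac1n z_{j\alpha}z_{k\beta}$, as claimed.

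The main obstacle is the second-moment tensor $T_{p\alpha,q\beta}$, and specifically its Cartan contribution. The elements $D_{rs}$ of Proposition \ref{basissun} generate the diagonal part but are neither linearly independent nor mutually orthonormal, so one must replace them by a genuine orthonormal basis $\{iH_l\}$ of the imaginary traceless diagonal matrices; the sum over this basis is cleanest when viewed as the orthogonal projection onto the hyperplane $\{\sum_t a_t = 0\}$ in $\rn^n$, and this projection is precisely where the $-\tfrac1n$ correction originates. Establishing (or carefully citing) the completeness relation with this traceless correction is the one genuinely delicate point; once it is secured, both formulas of the lemma follow by routine index bookkeeping.
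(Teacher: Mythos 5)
Your proof is correct. Note that the paper itself gives no proof of this lemma --- it is imported verbatim from \cite{Gud-Sif-Sob-2} --- so there is nothing internal to compare against; in the source literature the computation is carried out by explicit term-by-term summation over the families $\{Y_{rs}\}$, $\{iX_{rs}\}$ and a diagonal part, whereas you package the whole thing into the second-moment tensor $\sum_{X\in\mathcal B}X_{p\alpha}X_{q\beta}$ and evaluate it via the completeness relation for traceless Hermitian matrices. That is a genuinely cleaner route: it is basis-independent (so the worry you raise about the $D_{rs}$ of Proposition \ref{basissun} being non-orthonormal evaporates, since $\sum_T T\otimes T$ is invariant under orthogonal change of basis of the diagonal traceless subspace), it makes the origin of the $\tfrac1n$-correction transparent as the projection away from $iI_n$, and it delivers $\tau$ for free since $\sum_X X^2$ is just a contraction of the same tensor --- your Schur's lemma argument and the direct contraction $\sum_\alpha\bigl(-\delta_{p\beta}\delta_{\alpha\alpha}+\tfrac1n\delta_{p\alpha}\delta_{\alpha\beta}\bigr)=-\tfrac{n^2-1}{n}\delta_{p\beta}$ agree. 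All the supporting facts check out: the metric of Definition \ref{standardmetric} restricted to $\su{n}$ is $\Ad$-invariant, hence $\nabla_XX=0$ and the frame formulas for $\tau$ and $\kappa$ collapse as you state; $g(iS,iT)=\trace(ST)$ identifies orthonormality in $\su{n}$ with orthonormality of traceless Hermitian matrices; and the completeness relation $\sum_T T_{p\alpha}T_{q\beta}=\delta_{p\beta}\delta_{q\alpha}-\tfrac1n\delta_{p\alpha}\delta_{q\beta}$ is the standard Fierz identity, verifiable directly on the basis $\{E_{tt},\tfrac1{\sqrt2}(E_{rs}+E_{sr}),\tfrac{i}{\sqrt2}(E_{rs}-E_{sr})\}$ with the trace component removed. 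No gaps.
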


\begin{theorem}{\rm \cite{Gud-Sob-1}}\label{thsu}
    For $a,v\in\cn^n\backslash\{0\},$ let the complex-valued function $\phi_{a,v}:\SU{n}\rightarrow\cn$ be given by $$\phi_{a,v}(z)=\trace(a^tvz^t)=\sum_{j,\alpha=1}^n a_jv_\alpha z_{j\alpha}.$$ Then $\phi_{a,v}$ is an eigenfunction on $\SU{n}$ and for the tension field $\tau$ and conformality operator $\kappa$ on $\SU{n}$ we obtain
$$\tau(\phi_{a,v})=-\frac{n^2-1}{n}\cdot\phi_{a,v} \ \ \textrm{and} \ \ 
        \kappa(\phi_{a,v},\phi_{a,v})=-\frac{n-1}{n}\cdot\phi_{a,v}^2.
$$

\end{theorem}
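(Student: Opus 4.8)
The plan is to reduce everything to Lemma \ref{coordsun}, exploiting that
$$\phi_{a,v}=\sum_{j,\alpha=1}^n a_jv_\alpha z_{j\alpha}$$
is simply a constant-coefficient linear combination of the coordinate functions $z_{j\alpha}:\SU{n}\to\cn$. All the analytic content is already packaged in that lemma; what remains is linear algebra.

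First I would handle the tension field. Since $\tau$ is $\cn$-linear, I would pull the constants $a_jv_\alpha$ outside and apply the first identity of Lemma \ref{coordsun} to each summand:
$$\tau(\phi_{a,v})=\sum_{j,\alpha=1}^n a_jv_\alpha\,\tau(z_{j\alpha})=-\frac{n^2-1}{n}\sum_{j,\alpha=1}^n a_jv_\alpha z_{j\alpha}=-\frac{n^2-1}{n}\cdot\phi_{a,v}.$$
This is immediate and requires no further input beyond linearity.

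Next I would treat the conformality operator, using its bilinearity from Definition \ref{conformalityop}. Expanding $\kappa(\phi_{a,v},\phi_{a,v})$ produces the quadruple sum $\sum_{j,\alpha,k,\beta} a_jv_\alpha a_kv_\beta\,\kappa(z_{j\alpha},z_{k\beta})$, into which I substitute the second identity of Lemma \ref{coordsun}, namely $\kappa(z_{j\alpha},z_{k\beta})=-z_{j\beta}z_{k\alpha}+\tfrac1n z_{j\alpha}z_{k\beta}$. The sum then splits into two pieces. The piece coming from $\tfrac1n z_{j\alpha}z_{k\beta}$ factors transparently as $\tfrac1n\bigl(\sum_{j,\alpha}a_jv_\alpha z_{j\alpha}\bigr)\bigl(\sum_{k,\beta}a_kv_\beta z_{k\beta}\bigr)=\tfrac1n\phi_{a,v}^2$. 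For the piece coming from $-z_{j\beta}z_{k\alpha}$ I would regroup the (commuting) scalar factors as $(a_jv_\beta z_{j\beta})(a_kv_\alpha z_{k\alpha})$, so that summing over $j,\beta$ and over $k,\alpha$ separately again yields two independent copies of $\phi_{a,v}$, giving $-\phi_{a,v}^2$. Adding the two contributions produces $\bigl(-1+\tfrac1n\bigr)\phi_{a,v}^2=-\tfrac{n-1}{n}\phi_{a,v}^2$, as claimed, and the two displayed eigenvalue relations together establish that $\phi_{a,v}$ is an eigenfunction.

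The only genuine obstacle is the index bookkeeping in the cross term: one must verify that after the effective swap of $\alpha$ and $\beta$ the coefficients $a_jv_\alpha a_kv_\beta$ still pair correctly with $z_{j\beta}z_{k\alpha}$ to reassemble exactly two factors of $\phi_{a,v}$. This step is routine but is precisely where a stray sign or a mismatched index would most easily slip in; everything else follows mechanically from the $\cn$-linearity of $\tau$, the bilinearity of $\kappa$, and Lemma \ref{coordsun}.
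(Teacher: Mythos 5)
Your proposal is correct: the two identities follow exactly as you describe from the linearity of $\tau$, the bilinearity of $\kappa$, and Lemma \ref{coordsun}, and the index regrouping in the cross term does reassemble two full copies of $\phi_{a,v}$ without any sign issue. The paper itself states Theorem \ref{thsu} without proof, citing \cite{Gud-Sob-1}, but the derivation you give is precisely the standard one that Lemma \ref{coordsun} is set up to support, so there is nothing to add.
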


\section{Compact Minimal Submanifolds of $\SU{n}$}\label{cmssun}
In this section, we will apply Theorem \ref{GM} to the eigenfunctions we have stated in Theorem \ref{thsu}. As we will show, they can be used to construct compact minimal submanifolds of $\SU{n}.$ We then state two more examples of compact minimal submanifolds of the special unitary group. In Example \ref{clifford} we look at the well-known Clifford torus in $S^3\cong\SU{2},$ and Example \ref{Gudsvevil} describes a minimal submanifold of $\SU{n}$ that Gudmundsson, Svensson and Ville constructed in their paper \cite{Gud-Sve-Vil-1}.
\begin{theorem}\label{thminsubsun}
Let $n\geq2$. Let $a,v\in\cn^n\backslash\{0\}.$ Then the complex-valued eigenfunction
$$\phi_{a,v}(z)=\trace(a^tvz^t)$$ satisfies the following: \begin{enumerate}
        \item $\phi_{a,v}$ is a submersion,
        \item $\phi_{a,v}^{-1}(\{0\})$ is a compact minimal submanifold of $\SU{n}$ of codimension two.
    \end{enumerate} 
\end{theorem}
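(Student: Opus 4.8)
The plan is to mirror the proof of Theorem \ref{familySOn}, though the key gradient computation turns out to be cleaner in this case. The eigenfunction property and the eigenvalues $\lambda=-\tfrac{n^2-1}{n}$, $\mu=-\tfrac{n-1}{n}$ are already supplied by Theorem \ref{thsu}, so the two conclusions reduce to verifying that $0$ is a regular value for $\phi_{a,v}$ and then invoking Theorem \ref{GM} together with the compactness of $\SU n$.

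First I would compute the action of a left-invariant field $X\in\su n$ on $\phi_{a,v}$. Differentiating $\phi_{a,v}(z\exp(sX))$ at $s=0$ and using $\phi_{a,v}(z)=\sum_{j,\alpha}a_jv_\alpha z_{j\alpha}$ gives $X(\phi_{a,v})(z)=w^t X v$, where $w=z^t a$. Since $z\in\SU n$ is invertible and $a\neq0$, we have $w\neq0$. The crucial step is to suppose $\nabla\phi_{a,v}$ vanishes at some $z$, i.e.\ $w^t X v=0$ for every $X\in\su n$. The functional $\ell(X)=w^t X v=\trace(X\,vw^t)$ is $\cn$-linear on $\glc n$, and $\su n$ is a real form of $\slc n$, so that $\slc n=\su n\oplus i\,\su n$. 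Hence vanishing of $\ell$ on $\su n$ forces, by $\cn$-linearity, vanishing on all of $\slc n$. Since the trace form $\trace(XB)$ is nondegenerate with $(\slc n)^\perp=\cn\cdot I_n$, this yields $vw^t=c\cdot I_n$ for some $c\in\cn$. But $vw^t$ has rank one, whereas $c\,I_n$ has rank $0$ or $n\geq2$, a contradiction. Thus $\nabla\phi_{a,v}$ never vanishes on $\SU n$.

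With the nonvanishing gradient in hand, I would note that $n\geq2$ gives $\lambda\neq\mu$, so Corollary \ref{phi0} guarantees $0\in\phi_{a,v}(\SU n)$; combined with $\nabla\phi_{a,v}\neq0$ this makes $0$ a regular value. On the fibre $\phi_{a,v}=0$ the eigenfunction relation $\kappa(\phi,\phi)=\mu\phi^2$ forces $g(\nabla\phi,\nabla\phi)=0$, so $\nabla u\perp\nabla v$ with $|\nabla u|=|\nabla v|\neq0$; hence $\diff\phi_{a,v}$ has real rank two along the fibre, which establishes the submersion claim (1). Theorem \ref{GM} then yields that $\phi_{a,v}^{-1}(\{0\})$ is a minimal submanifold of codimension two. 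Compactness is immediate, since $\phi_{a,v}$ is continuous and $\SU n$ is compact, so the closed (and, by the above, nonempty) set $\phi_{a,v}^{-1}(\{0\})$ is compact, giving (2).

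The main obstacle is the gradient nonvanishing step. The slick point is to recognise that testing $X\mapsto w^t X v$ against the \emph{real} Lie algebra $\su n$ is equivalent, by $\cn$-linearity, to testing against its complexification $\slc n$; this collapses the condition to $vw^t\in\cn\cdot I_n$, which is then defeated by a rank count. This avoids the index-chasing and the explicit orthogonality relations needed in the $\SO n$ case.
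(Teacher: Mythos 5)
Your proof is correct, and the central step is handled by a genuinely different argument than the one in the paper. The paper proves the nonvanishing of the gradient by brute force: it evaluates $Y_{rs}(\phi_{a,v})$, $iX_{rs}(\phi_{a,v})$ and $iD_{rs}(\phi_{a,v})$ against the explicit basis of $\su n$, and an index chase shows that the simultaneous vanishing of all of these forces $(a,z_t)=0$ for every column $z_t$, contradicting the invertibility of $z$. You instead observe that $X\mapsto X(\phi_{a,v})(z)=\trace(X\,vw^t)$ with $w=z^ta\neq 0$ is $\cn$-linear, that $\su n$ is a real form of $\slc n$ so vanishing on $\su n$ propagates to all of $\slc n$, and that nondegeneracy of the trace pairing with $(\slc n)^\perp=\cn\cdot I_n$ then forces the rank-one matrix $vw^t$ to be a scalar multiple of $I_n$ --- impossible for $n\geq 2$. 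This is more conceptual, avoids the basis computation entirely, and makes transparent why the hypothesis $n\geq 2$ enters. The remainder (using $\lambda\neq\mu$ and Corollary \ref{phi0} to place $0$ in the image, then Theorem \ref{GM} and compactness of $\SU n$) coincides with the paper; your additional observation that $\kappa(\phi,\phi)=0$ on the fibre forces $\nabla u\perp\nabla v$ with equal nonzero lengths, hence real rank two, is in fact slightly more careful than the paper's bare assertion that a nonvanishing gradient makes $\phi_{a,v}$ submersive.
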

\begin{proof}
First note that by Theorem \ref{thsu}, $\phi_{a,v}$ is an eigenfunction on $\SU{n}.$
    For $1\leq r<s\leq n,$ the following holds.  \begin{eqnarray}
        Y_{rs}(\phi_{a,v})&=&\frac{1}{\sqrt{2}}\sum_{j=1}^n(-a_jv_rz_{js}+a_jv_sz_{jr})=\frac{1}{\sqrt{2}}(-v_r(a,z_s)+v_s(a,z_r)),\ \ \label{yrssun}\\
        iX_{rs}(\phi_{a,v})&=&\frac{i}{\sqrt{2}}\sum_{j=1}^n(a_jv_rz_{js}+a_jv_sz_{jr})=\frac{i}{\sqrt{2}}(v_r(a,z_s)+v_s(a,z_r)),\label{xrssun}\\
        iD_{rs}(\phi_{a,v})&=&\frac{i}{\sqrt{2}}\cdot\sum_{j=1}^n (a_jv_rz_{jr}-a_jv_sz_{js})=\frac{i}{\sqrt{2}}(v_r(a,z_r)-v_s(a,z_s)).\ \ \ \ \label{drssun}
    \end{eqnarray}
    Assume towards a contradiction that $$Y_{rs}(\phi_{a,v})=0,\  iX_{rs}(\phi_{a,v})=0 \ {\rm  and} \ iD_{rs}(\phi_{a,v})=0$$ for all $1\leq r<s\leq n.$ Then by Equations \ref{yrssun} and \ref{xrssun}, for all such $1\leq r<s\leq n$ we have that

    \begin{eqnarray*}
        0&=&v_r\cdot\sum_{j=1}^n(a_jz_{js})=v_r\cdot(a,z_s),\\
        0&=&v_s\cdot\sum_{j=1}^n(a_jz_{jr})=v_s\cdot(a,z_r).
    \end{eqnarray*}
    Since $v\neq0,$ $v$ has at least one non-zero entry, say $v_t\neq0.$
    We then see that for all indices $k\neq t,$ it holds that $0=(a,z_k).$ 
    Thus by Equation \ref{drssun}, for any index $k\neq t$ we have that $$v_t(a,z_t)=v_k(a,z_k)=0.$$ Since $v_t\neq0$ by assumption, we conclude that $(a,z_t)=0$ as well. 
    
   This means that the rows of $z$ are linearly dependent, contradicting the assumption that $z$ is an element of $\SU{n}$. Hence $\phi_{a,v}$ is submersive everywhere. Theorem \ref{thsu} shows that for $n>1$ the eigenvalues $\lambda$ and $\mu$ of $\phi_{a,v}$ are distinct. Corollary \ref{phi0} now shows that $0$ lies in the image of $\phi_{a,v}$.
   In particular, $\phi_{a,v}$ is regular over $0\in\cn.$ 
The last statement now follows from Theorem \ref{GM}.
\end{proof}

\begin{proposition}
    Let $n>1$ and for $a,v\in\cn^n\backslash\{0\},$ define the complex valued function $\phi_{a,v}$ on $\SU n$ by $\phi_{a,v}(z)=\trace(a^tvz^t)$. Then $$\phi_{a,v}^{-1}(\{0\})=\phi_{b,w}^{-1}(\{0\})$$ if and only if $[a]=[b]$ and $[v]=[w]$ in $\cn P^{n-1}.$
\end{proposition}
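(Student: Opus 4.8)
The plan is to show that both sides of the equivalence are equivalent to the single statement that the rank-one coefficient matrices $av^t$ and $bw^t$ are proportional. The direction ($\Leftarrow$) is immediate: if $[a]=[b]$ and $[v]=[w]$ then $b=Ca$ and $w=Dv$ for some $C,D\in\cn\setminus\{0\}$, so $\phi_{b,w}=CD\,\phi_{a,v}$ and the fibres coincide, exactly as in the $\SO n$ case. For ($\Rightarrow$) the goal is to deduce from $\phi_{a,v}^{-1}(\{0\})=\phi_{b,w}^{-1}(\{0\})$ that $\phi_{a,v}$ and $\phi_{b,w}$ are scalar multiples of one another. Once this is known, I would write $\phi_{a,v}(z)=\trace((av^t)^t z)$ and $\phi_{b,w}(z)=\trace((bw^t)^t z)$ and use that the $\cn$-linear span of $\SU n$ is all of $M_n(\cn)$ (every unitary is a scalar multiple of an element of $\SU n$, and the unitaries span $M_n(\cn)$); this upgrades the equality $\phi_{b,w}=c\,\phi_{a,v}$ of functions on $\SU n$ to the matrix identity $bw^t=c\,av^t$. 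Comparing the column spaces $\span\{b\}=\span\{a\}$ and the row spaces $\span\{w\}=\span\{v\}$ of these two nonzero rank-one matrices then gives $[a]=[b]$ and $[v]=[w]$ in $\cn P^{n-1}$.

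My first line of attack for the proportionality mirrors the $\SO n$ argument. The polynomial $\phi_{a,v}$ has degree one, hence is irreducible, and by Theorem \ref{thminsubsun} it is a submersion, so its gradient never vanishes and the fibre $\phi_{a,v}^{-1}(\{0\})$ consists of regular points. Milnor's Lemma \ref{milnor} would then say that every polynomial vanishing on this fibre is a multiple of $\phi_{a,v}$; applied to $\phi_{b,w}$, which vanishes on the common fibre, and comparing degrees, this would yield $\phi_{b,w}=c\,\phi_{a,v}$.

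The hard part is that this use of Milnor's Lemma is not literally legitimate, and here the real content of the proof lies. The lemma concerns the full complex zero locus of $\phi_{a,v}$ in $M_n(\cn)$, whereas our fibre sits inside the real manifold $\SU n$: it has real dimension $n^2-3$, strictly below the complex dimension $n^2-2$ of the hypersurface $\{z\in\SLC n:\phi_{a,v}(z)=0\}$, so it is not Zariski dense there and $\phi_{b,w}$ need not vanish on the whole complex hypersurface. The hypothesis on $n$ is genuinely needed: on $\SU 2$ one has $z_{22}=\overline{z_{11}}$, so $z_{11}$ and $z_{22}$ have the same zero set although $[e_1]\neq[e_2]$, and the argument must therefore use $n\ge 3$. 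To make the proof rigorous I would avoid Milnor and argue directly. Using the left--right action $z\mapsto gzh$ with $g,h\in\U n$ and $\det(gh)=1$, which is a diffeomorphism of $\SU n$ satisfying $\phi_{a,v}(gzh)=\phi_{g^ta,\,hv}(z)$, I would normalise so that $a=v=e_1$, i.e. $\phi_{a,v}=z_{11}$; the claim then becomes that if $\phi_{b,w}$ vanishes on $\{z\in\SU n:z_{11}=0\}$ then $b\parallel e_1$ and $w\parallel e_1$. I expect this reduced claim to be the crux, and would prove its contrapositive: if $b\not\parallel e_1$ or $w\not\parallel e_1$, construct $z\in\SU n$ with $z_{11}=0$ but $b^tzw\neq0$ by prescribing $z$ on the plane $\span\{e_1,w\}$ so that $(ze_1)_1=0$ while $b^t(zw)\neq0$. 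The Gram-matrix constraint required to extend such a prescription to a unitary matrix leaves enough freedom precisely when $n\ge3$, which is exactly where the hypothesis on $n$ enters.
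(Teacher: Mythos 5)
Your proposal does considerably more than the paper, whose entire proof of this proposition is the single sentence ``This is a direct consequence of Lemma \ref{milnor}.'' Your reservation about that appeal to Milnor is well founded: the lemma governs polynomials vanishing on the \emph{full} zero locus of the irreducible polynomial $\phi_{a,v}$ in the ambient affine space, whereas the fibre here is only the slice of that hyperplane by the real submanifold $\SU{n}$, and extra polynomials genuinely vanish there. Indeed, for $z\in\SU{n}$ one has $\overline{z_{11}}=(z^{-1})_{11}=C_{11}(z)$, the $(1,1)$ cofactor, so the degree-$(n-1)$ polynomial $C_{11}$ vanishes exactly on the fibre of $z_{11}$ without being a multiple of it. Your $n=2$ counterexample is correct and shows that the proposition \emph{as stated} (with hypothesis $n>1$) is false: on $\SU{2}$ one has $z_{22}=\overline{z_{11}}$, so $\phi_{e_1,e_1}=z_{11}$ and $\phi_{e_2,e_2}=z_{22}$ have identical zero fibres although $[e_1]\neq[e_2]$ in $\cn P^{1}$. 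The hypothesis should read $n>2$, as in the $\SO{n}$ analogue.

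Your replacement argument is sound and genuinely different from the paper's. The easy direction, the reduction via the $\U{n}\times\U{n}$ action to $a=v=e_1$, the passage from $\phi_{b,w}=c\,\phi_{a,v}$ on $\SU{n}$ to the matrix identity $bw^t=c\,av^t$ via $\span_{\cn}\SU{n}=M_n(\cn)$, and the comparison of row and column spaces of rank-one matrices are all correct. What remains to be written out is the contrapositive construction: for every $(b,w)$ with $bw^t$ not proportional to $e_1e_1^t$ you must exhibit $z\in\SU{n}$ with $z_{11}=0$ and $b^tzw\neq 0$, which needs a short case split (e.g.\ $w\parallel e_1$ but $b\not\parallel e_1$; $w_1=0$; $w_1\neq0$ and $w-w_1e_1\neq0$), in each case prescribing $z$ on $\span\{e_1,w\}$ and extending to $\SU{n}$, with $n\geq3$ used to absorb the determinant into an unused basis vector. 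A slightly cleaner finish to the same idea: show that for $n\geq3$ the $\cn$-span of $\{z\in\SU{n}:z_{11}=0\}$ is the full space $\{M\in M_n(\cn):M_{11}=0\}$, which forces every linear functional vanishing on the fibre to be a multiple of $z_{11}$; for $n=2$ that span drops to $\{M:M_{11}=M_{22}=0\}$, which is exactly where your counterexample lives.
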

\begin{proof}
    This is a direct consequence of Lemma \ref{milnor}.
\end{proof}

For the interested reader, we provide two more examples of compact minimal submanifolds of $\SU{n}.$
\begin{example}\label{clifford}
    Let now $n=2.$ It is well known that $S^3\cong\SU{2}$ via the map $$(z,w)\mapsto\begin{pmatrix}
        z&w\\
        -\Bar{w}&\Bar{z}
    \end{pmatrix}.
    $$
    A well-known minimal submanifold of $S^3$ is the Clifford torus $$T=\left(\frac{1}{\sqrt{2}}e^{i\alpha},\frac{1}{\sqrt{2}}e^{i\beta}\right).$$
    In 1970, Lawson  famously conjectured in his paper \cite{Lawson} that this is the only compact minimal submanifold of genus $1$ embedded in $S^3$. This result was proven by Brendle in 2013 in \cite{Brendle}.
    There however exist infinitely many immersed tori in $S^3,$ which were constructed by Hsiang and Lawson in \cite{Hsi-Law}.
    As it was shown by the author in \cite{JG}, the torus $T$ remains minimal if we equip $S^3$ with a generalised Berger metric. 
\end{example}

\begin{example}{\rm \cite{Gud-Sve-Vil-1}}\label{Gudsvevil}
    Let $H\in\cn^{n\times n}$ have $n$ different eigenvalues. Then the compact subset $$M=\left\{(z_1,\dots,z_n)\in\SU{n}\ \rvert \ z_1^tH\Bar{z}_2=0\right\}$$ is a minimal submanifold of the special unitary group of codimension two. Here, $z_1,\dots,z_n$ denote the columns of $z.$
\end{example}

\chapter{The Quaternionic Unitary Group $\Sp{n}$ }
We now aim to construct compact minimal submanifolds of the quaternionic unitary group $\Sp{n}.$ In Section \ref{efspnsection}, we will first state eigenfunctions on $\Sp{n},$ which were provided by Gudmundsson and Sakovich in their paper \cite{Gud-Sak-1}. Th main result of Section \ref{cmsspn} is
Theorem \ref{thminsubmspn}, which shows that these functions can be used to construct {\it compact} minimal submanifolds of $\Sp{n}.$ 

\section{Eigenfunctions on $\Sp{n}$}\label{efspnsection}
We first look at the coordinate functions on $\Sp{n}.$ The following Lemma \ref{coordspn} was first given by Gudmundsson and Sakovich in \cite{Gud-Sak-1} and then improved by Gudmundsson, Montaldo and Ratto in \cite{Gud-Mon-Rat-1}. Theorem \ref{efspn} then helps us construct eigenfunctions on $\Sp{n}.$

\begin{lemma}{\rm \cite{Gud-Sak-1}, \cite{Gud-Mon-Rat-1}}\label{coordspn}
For $1\leq j,\alpha\leq n,$ let $z_{j\alpha},w_{j\alpha}:\Sp{n}\rightarrow\cn$ be the complex valued coordinate functions given by $$z_{j\alpha}:g\mapsto e_j\cdot g\cdot e_\alpha^t, \ w_{j\alpha}:g\mapsto e_j\cdot g\cdot e_{n+\alpha}^t.$$ Then the following relations hold.
\begin{eqnarray*}
    \tau(z_{j\alpha})&=&-\frac{2n+1}{2}\cdot z_{j\alpha}, \ \tau(w_{j\alpha})=-\frac{2n+1}{2}\cdot w_{j\alpha},\\
    \kappa(z_{j\alpha},z_{k\beta})&=&-\frac{1}{2}\cdot z_{j\beta}z_{k\alpha}, \ \kappa(w_{j\alpha},w_{k\beta})=-\frac{1}{2}\cdot w_{j\beta}w_{k\alpha},\\
    \kappa(z_{j\alpha},w_{k\beta})&=&-\frac{1}{2}\cdot(w_{j\beta}z_{k\alpha}).
\end{eqnarray*}
    
\end{lemma}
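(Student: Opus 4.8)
The plan is to exploit that the standard metric of Definition \ref{standardmetric} restricts to a bi-invariant (indeed $\Ad$-invariant) metric on $\Sp{n}$, so the orthonormal basis $\mathcal{B}_{\sp{n}}$ of Proposition \ref{liealgebraspn}, regarded as left-invariant vector fields, can be fed directly into the operators. For a left-invariant field $A$ one has $A(\phi)(g)=\frac{d}{ds}\big|_{s=0}\phi(g\exp(sA))$, and since the metric is $\Ad$-invariant the Levi-Civita connection satisfies $\nabla_A A=\frac12[A,A]=0$. Consequently
\[
\tau(\phi)=\sum_{A\in\mathcal{B}_{\sp{n}}}A(A(\phi)),\qquad
\kappa(\phi,\psi)=\sum_{A\in\mathcal{B}_{\sp{n}}}A(\phi)\,A(\psi),
\]
the second identity being immediate from $\nabla\phi=\sum_A A(\phi)\,A$ and orthonormality. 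For the coordinate functions this reduces everything to matrix algebra: differentiating $z_{j\alpha}(g)=(g)_{j\alpha}$ gives $A(z_{j\alpha})(g)=(gA)_{j\alpha}$ and $A(A(z_{j\alpha}))(g)=(gA^{2})_{j\alpha}$, and likewise for $w_{j\alpha}$ by reading off the columns $n+1,\dots,2n$.

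First I would settle the tension field. By the above, $\tau(z_{j\alpha})=(g\,\Omega)_{j\alpha}$ where $\Omega=\sum_{A\in\mathcal{B}_{\sp{n}}}A^{2}$ is the Casimir element. Since $\Ad(\Sp{n})$ permutes orthonormal bases of $\sp{n}$, the element $\Omega$ is $\Ad$-invariant and hence commutes with the image of the complex-irreducible standard representation of $\Sp{n}$ on $\cn^{2n}$; by Schur's lemma $\Omega=c\,I_{2n}$. The constant follows from the trace: each basis element is skew-Hermitian with $|A|^{2}=1$, so $\tr(A^{2})=-\tr(A^{*}A)=-1$, whence $2n\,c=\tr\Omega=-\dim\sp{n}=-n(2n+1)$ and $c=-\frac{2n+1}{2}$. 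This gives $\tau(z_{j\alpha})=-\frac{2n+1}{2}z_{j\alpha}$ and the identical statement for $w_{j\alpha}$.

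The conformality identities are the heart of the matter. Writing $(gA)_{j\alpha}=\sum_p g_{jp}A_{p\alpha}$, each $\kappa$ becomes a contraction of the rank-four tensor $T_{pr,qs}=\sum_{A\in\mathcal{B}_{\sp{n}}}A_{pr}A_{qs}$, namely $\kappa(z_{j\alpha},z_{k\beta})=\sum_{p,q}g_{jp}g_{kq}\,T_{p\alpha,\,q\beta}$, with the column index shifted into the block $n+1,\dots,2n$ for each occurrence of a $w$. I would compute $T$ as the completeness relation of $\sp{n}$ by summing the squares and cross-products of the seven explicit families $Y^{a}_{rs},X^{a}_{rs},X^{b}_{rs},X^{c}_{rs},D^{a}_t,D^{b}_t,D^{c}_t$ of Proposition \ref{liealgebraspn}. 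I expect the shape
\[
T_{pr,qs}=-\tfrac12\,\delta_{ps}\delta_{qr}+\bigl(\text{a term built from the symplectic form } J\bigr),
\qquad J=\left(\begin{smallmatrix}0&I_n\\-I_n&0\end{smallmatrix}\right),
\]
the $J$-term being forced by the defining relation $A^{t}=-JAJ^{-1}$ of $\sp{n}$. In the final contraction the $\delta$-term gives $-\frac12 g_{js}g_{kr}$, which for the three pairings reads off exactly as $-\frac12 z_{j\beta}z_{k\alpha}$, $-\frac12 w_{j\beta}w_{k\alpha}$, and $-\frac12 w_{j\beta}z_{k\alpha}$ once the column ranges are inserted; the $J$-term contributes a multiple of $(gJg^{t})_{jk}=J_{jk}$, using the symplectic relation $gJg^{t}=J$ valid on $\Sp{n}$, and this vanishes because the row indices $j,k$ of every coordinate function lie in $\{1,\dots,n\}$, precisely the block on which $J$ is zero. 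That is what leaves the clean products claimed in the lemma.

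The main obstacle is the honest evaluation of $T$: assembling the completeness relation correctly across all seven basis families and both $n\times n$ blocks is heavily index-laden, and one must identify the $J$-term with the right coefficient. A structural shortcut I would run in parallel repackages the contraction as $\kappa(z_{j\alpha},z_{k\beta})=(g\,M_{\alpha\beta}\,g^{t})_{jk}$ with $M_{\alpha\beta}=\sum_A A\,e_\alpha e_\beta^{t}A^{t}$; the $\Ad$-equivariance of the map $E\mapsto\sum_A AEA^{t}$ constrains $M_{\alpha\beta}$ to a short combination of $e_\beta e_\alpha^{t}$ and a $J$-term whose two coefficients are pinned down by trace evaluations, after which the vanishing of $J_{jk}$ closes the argument exactly as above.
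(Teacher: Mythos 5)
Your proposal is correct, but it cannot be compared with a proof in the paper because there is none: the lemma is quoted from \cite{Gud-Sak-1} and \cite{Gud-Mon-Rat-1}, where the relations are obtained by differentiating the coordinate functions along the explicit orthonormal basis $\mathcal B_{\sp n}$ of Proposition \ref{liealgebraspn} and summing over all seven families by hand. Your route is genuinely different and all of its structural claims check out. For $\tau$, the element $\Omega=\sum_A A^2$ is basis-independent, hence $\Ad$-invariant; the standard representation of $\Sp n$ on $\cn^{2n}$ is complex-irreducible, so Schur applies, and $\trace(A^2)=-|A|^2=-1$ gives $c=-\dim\sp n/2n=-(2n+1)/2$ — the same one-line argument reproduces every $\tau$-eigenvalue in Table \ref{table}. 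For $\kappa$, the completeness relation you anticipate is exactly right: since the paper's metric gives $\trace(A_iA_j)=-\delta_{ij}$, the tensor $T$ is minus the split Casimir of $\spc{2n}\subset\glc{2n}$ for the trace form, i.e. $T_{pr,qs}=-\tfrac12\bigl(\delta_{ps}\delta_{qr}+J_{pq}J_{sr}\bigr)$, and after contraction the $J$-term carries the factor $(gJg^t)_{jk}=J_{jk}$, which vanishes because the row indices $j,k$ of $z_{j\alpha},w_{j\alpha}$ lie in $\{1,\dots,n\}$. The only point you should nail down explicitly is that the representation $z+jw\mapsto\left(\begin{smallmatrix}z&w\\-\bar w&\bar z\end{smallmatrix}\right)$ used in this chapter really satisfies $gJg^t=J$; it does, since $\bar g=JgJ^{-1}$ together with $g^*=g^{-1}$ forces it, but the paper only verifies the analogous identity for the slightly different embedding of Proposition \ref{spninsu2n}. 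What the cited brute-force computation buys is that it is elementary, needs no irreducibility or projection-operator input, and yields in the same pass the companion identities $\kappa(z_{j\alpha},\bar z_{k\beta})$, $\kappa(z_{j\alpha},\bar w_{k\beta})$, etc.\ that reappear in Lemma \ref{quatgrasscoord}; what your argument buys is a uniform, essentially index-free explanation of all three conformality identities and of why no correction terms survive.
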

Lemma \ref{coordspn} now allows us to construct eigenfunctions on $\Sp{n}.$ The following eigenfunctions were constructed by Gudmundsson and Sakovich in their paper \cite{Gud-Sak-1}.
\begin{proposition}{\rm \cite{Gud-Sak-1}}\label{efspn}
    Let $a,u,v\in\cn^n.$ Then the function $\phi:\Sp{n}\rightarrow\cn$ given by $$\phi(z+jw)=\trace(a^tvz^t+a^tuw^t)$$ satisfies 
$$
        \tau(\phi)=-\frac{2n+1}{2}\cdot\phi \ \ \textrm{and} \ \ 
        \kappa(\phi,\phi)=-\frac{1}{2}\cdot\phi^2.
$$
\end{proposition}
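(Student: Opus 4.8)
The plan is to reduce the whole computation to the coordinate functions $z_{j\alpha}$ and $w_{j\alpha}$ and then apply Lemma \ref{coordspn}, using the linearity of $\tau$ and the bilinearity and symmetry of $\kappa$. First I would expand the trace in coordinates exactly as in the $\SU{n}$ case of Theorem \ref{thsu}, obtaining
$$\phi(z+jw)=\sum_{j,\alpha=1}^n a_jv_\alpha z_{j\alpha}+\sum_{j,\alpha=1}^n a_ju_\alpha w_{j\alpha}.$$
It is then convenient to abbreviate $Z=\sum_{j,\alpha}a_jv_\alpha z_{j\alpha}$ and $W=\sum_{j,\alpha}a_ju_\alpha w_{j\alpha}$, so that $\phi=Z+W$.

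The tension field is immediate. Because $\tau$ is linear and Lemma \ref{coordspn} scales every coordinate function by the same factor, $\tau(z_{j\alpha})=-\tfrac{2n+1}2 z_{j\alpha}$ and $\tau(w_{j\alpha})=-\tfrac{2n+1}2 w_{j\alpha}$, I would simply pull the constant out of both sums to get
$$\tau(\phi)=-\frac{2n+1}2\Big(\sum_{j,\alpha}a_jv_\alpha z_{j\alpha}+\sum_{j,\alpha}a_ju_\alpha w_{j\alpha}\Big)=-\frac{2n+1}2\cdot\phi.$$

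For the conformality operator I would use bilinearity and the symmetry $\kappa(Z,W)=\kappa(W,Z)$ to write $\kappa(\phi,\phi)=\kappa(Z,Z)+2\,\kappa(Z,W)+\kappa(W,W)$ and treat the three terms separately. Expanding $\kappa(Z,Z)$ and inserting $\kappa(z_{j\alpha},z_{k\beta})=-\tfrac12 z_{j\beta}z_{k\alpha}$ from Lemma \ref{coordspn} gives
$$\kappa(Z,Z)=-\frac12\sum_{j,\alpha,k,\beta}a_jv_\alpha a_kv_\beta\, z_{j\beta}z_{k\alpha}.$$
The key observation is that this quadruple sum factors: regrouping the factors as $(a_jv_\beta z_{j\beta})(a_kv_\alpha z_{k\alpha})$ and summing each index pair independently reconstructs two copies of $Z$, so $\kappa(Z,Z)=-\tfrac12 Z^2$. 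The identical factorisation applied with $\kappa(w_{j\alpha},w_{k\beta})=-\tfrac12 w_{j\beta}w_{k\alpha}$ yields $\kappa(W,W)=-\tfrac12 W^2$, and the mixed relation $\kappa(z_{j\alpha},w_{k\beta})=-\tfrac12 w_{j\beta}z_{k\alpha}$ yields $\kappa(Z,W)=-\tfrac12 WZ$. Substituting these back gives
$$\kappa(\phi,\phi)=-\frac12\big(Z^2+2ZW+W^2\big)=-\frac12(Z+W)^2=-\frac12\cdot\phi^2,$$
which is the claimed identity.

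The only genuine care required is in the index bookkeeping for the factorisation step: one must check that relabelling the dummy indices in each of the three quadruple sums truly splits it into a product of two copies of $Z$ or $W$, and that the cross term carries the factor $2$ coming from the symmetry of $\kappa$. I expect this to be the main obstacle, though it is entirely routine; once the three factorisations are verified, the completion of the square delivers the result at once.
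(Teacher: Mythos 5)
Your argument is correct: the expansion of the trace into the coordinate functions, the linearity of $\tau$, and the bilinear expansion $\kappa(\phi,\phi)=\kappa(Z,Z)+2\kappa(Z,W)+\kappa(W,W)$ combined with the factorisation of each quadruple sum via Lemma \ref{coordspn} all check out, and the completion of the square gives exactly the stated eigenvalues. The paper itself states Proposition \ref{efspn} without proof, citing \cite{Gud-Sak-1}; your computation is precisely the standard verification one would carry out from Lemma \ref{coordspn}, so there is nothing to add.
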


\section{Compact Minimal Submanifolds of $\Sp{n}$}\label{cmsspn}
We now wish to employ Theorem \ref{GM}. We start with an easy case in Example \ref{spnexample}, and then show with Theorem \ref{thminsubmspn} that in general, the eigenfunctions given in Theorem \ref{efspn} are regular over $0$ and hence can be used to construct compact minimal submanifolds of $\Sp{n}.$ In the following, we use Proposition \ref{liealgebraspn}, which provides us with a basis for the Lie algebra $\sp{n}$ of $\Sp{n}.$ 

\begin{example}\label{spnexample}
    Let $v=(1,0,\dots,0), u=0$ and $a\in\cn^n\backslash\{0\}.$ Then according to Proposition \ref{efspn},$$\phi_v:\Sp{n}\rightarrow\cn,$$ given by $$\phi_v(z+jw)=\trace(a^tvz^t)$$ is an eigenfunction. We will show that it is regular over $0\in\cn.$
   
For our choices of $a,u$ and $v,$ a simple calculation shows that $\phi_a $ now simplifies to $$\phi_a (z+jw)=\sum_{j=1}^na_jz_{j1}=(a,z_1).$$
     From \cite{Gud-Mun-1} we have the following identities for all $1<s\leq n$:
    \begin{eqnarray*}
        X_{1s}^{a}(\phi_a )&=&\frac{i}{\sqrt{2}}\cdot\sum_{j=1}^n a_jz_{js}=\frac{i}{\sqrt{2}}\cdot(a,z_s),\\
        X_{1s}^{b}(\phi_a )&=&\frac{i}{\sqrt{2}}\cdot\sum_{j=1}^n a_jw_{js}=\frac{i}{\sqrt{2}}\cdot(a,w_s),\\
        D_{1}^{a}(\phi_a )&=&\frac{i}{\sqrt{2}}\cdot\sum_{j=1}^n a_jz_{j1}=\frac{i}{\sqrt{2}}\cdot(a,z_1).\\
        D_{1}^{c}(\phi_a )&=&\frac{-1}{\sqrt{2}}\cdot\sum_{j=1}^n a_jw_{j1}=\frac{-1}{\sqrt{2}}\cdot(a,w_1).\\
    \end{eqnarray*}
     Assume now that $\phi_a (z+jw)=0,$ and that for every $X\in\mathcal{B}_{\sp{n}}, \ X(\phi_a )=0.$ Then by the above, for all $1\leq t\leq n,$
     $$0=\sum_{j=1}^n a_jz_{jt} \quad  \textrm{ and} \quad 0=\sum_{j=1}^n a_jw_{jt}.$$ In particular this means that the rows of $z$ and $w$ respectively are linearly dependent. This contradicts the assumption that $q=z+jw\in\Sp{n}.$ Hence, $\phi_a$ must be regular over $0.$ Due to Theorem \ref{GM}, $\phi_v ^{–1}(\{0\})$ is a minimal submanifold of $\Sp{n}.$ \end{example}
\begin{theorem}\label{thminsubmspn}
    Let $a\in\cn^n\backslash\{0\},$ and $u,v\in\cn^n$ such that at least one of the vectors $u,v$ is non-zero. Then, the complex-valued eigenfunction 
    $\phi_{a,v,u}:\Sp{n}\rightarrow\cn$ given by $$\phi_{a,v,u}(z+jw)=\trace(a^tvz^t+a^tuw^t)$$ satisfies the following: \begin{enumerate}
        \item $\phi_{a,v,u}$ is a submersion,
        \item $\phi_{a,v,u}^{-1}(\{0\})$ is a compact minimal submanifold of $\Sp{n}$ of codimension two.
    \end{enumerate} 
\end{theorem}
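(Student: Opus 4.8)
The plan is to follow the template of Example \ref{spnexample} and of Theorem \ref{thminsubsun}, the only genuinely new difficulty being the bookkeeping required to disentangle the contributions of $z$ and $w$ when both $u$ and $v$ may be non-zero. By Proposition \ref{efspn} the map $\phi_{a,v,u}$ is already an eigenfunction with $\lambda=-\tfrac{2n+1}{2}$ and $\mu=-\tfrac12$, so the entire argument reduces to showing that $\nabla\phi_{a,v,u}$ never vanishes; statements (i) and (ii) will then follow formally. Writing $q=z+jw$ in its complex representation $g=\left(\begin{smallmatrix} z & w \\ -\bar w & \bar z\end{smallmatrix}\right)$ and abbreviating $\zeta_s=(a,z_s)$ and $\omega_s=(a,w_s)$ for the columns $z_s,w_s$ of $z,w$, I would first record the action of each element of the orthonormal basis $\mathcal{B}_{\sp{n}}$ of Proposition \ref{liealgebraspn} on $\phi_{a,v,u}$, exactly as in Example \ref{spnexample}. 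Since a left-invariant field $X$ acts by $X(z_{j\alpha})=(gX)_{j\alpha}$ and $X(w_{j\alpha})=(gX)_{j,n+\alpha}$, each derivative is (up to a non-zero scalar) a bilinear expression in the $\zeta_s,\omega_s$ and the entries of $u,v$.

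Next I would assume, towards a contradiction, that $X(\phi_{a,v,u})=0$ for every $X\in\mathcal{B}_{\sp{n}}$, and take suitable linear combinations of the resulting equations to isolate clean relations. Concretely, the equations from $X_{rs}^b$ and $X_{rs}^c$ are two independent combinations of the quantities $\omega_r v_s+\omega_s v_r$ and $\zeta_r u_s+\zeta_s u_r$, so they force both of these to vanish for $r\neq s$; the $D_t^b,D_t^c$ equations supply the diagonal cases, while the $X_{rs}^a$ and $D_t^a$ equations yield the cross relation $\zeta_r v_s+\zeta_s v_r=\omega_r u_s+\omega_s u_r$. The engine of the argument is the elementary fact that a symmetric relation $\xi_r c_s+\xi_s c_r=0$ for all $r,s$ with $c\neq0$ forces $\xi=0$ (the case $r=s=k$ with $c_k\neq0$ gives $\xi_k=0$, and then the off-diagonal case gives $\xi_s=0$ for every $s$). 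Applying this: if $u\neq0$ the relation $\zeta_r u_s+\zeta_s u_r=0$ gives $\zeta\equiv0$, and if $v\neq0$ the relation $\omega_r v_s+\omega_s v_r=0$ gives $\omega\equiv0$; in the degenerate case $v\neq0,\,u=0$ the cross relation collapses to $\zeta_r v_s+\zeta_s v_r=0$ and forces $\zeta\equiv0$ as well, and symmetrically when $u\neq0,\,v=0$. Since by hypothesis at least one of $u,v$ is non-zero, in every case I obtain $\zeta_s=\omega_s=0$ for all $s$.

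The conclusion is then immediate: $\zeta_s=\omega_s=0$ for all $s$ says $a^t[\,z\mid w\,]=0$, i.e. the non-zero vector $a$ annihilates the top $n$ rows of $g$, so those rows are linearly dependent. This contradicts $g\in\Sp{n}\subset\U{2n}$ being invertible. Hence $\nabla\phi_{a,v,u}$ is nowhere zero, which proves (i). Because $n\geq1$ forces $\lambda=-\tfrac{2n+1}{2}\neq-\tfrac12=\mu$, Corollary \ref{phi0} shows that $0\in\phi_{a,v,u}(\Sp{n})$, so $0$ is a regular value; Theorem \ref{GM} then identifies $\phi_{a,v,u}^{-1}(\{0\})$ as a minimal submanifold of codimension two, and it is compact because $\Sp{n}$ is compact and the fibre is closed. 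The main obstacle, as anticipated, is purely organisational: choosing the right combinations of the seven basis families so that the $z$- and $w$-parts separate, and handling the degenerate cases $u=0$ or $v=0$ uniformly through the single scalar lemma above.
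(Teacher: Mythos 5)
Your proposal is correct and follows essentially the same route as the paper: compute the action of the basis $\mathcal{B}_{\sp{n}}$ on $\phi_{a,v,u}$, combine the $X_{rs}^{b},X_{rs}^{c}$ (and $D_t^{b},D_t^{c}$) equations to kill the mixed terms, use the $X_{rs}^{a},D_t^{a}$ relations for the remaining case, and conclude $(a,z_t)=(a,w_t)=0$ for all $t$, contradicting invertibility of $q$. Your packaging of the elimination step as a single scalar lemma ("$\xi_r c_s+\xi_s c_r=0$ for all $r,s$ with $c\neq0$ forces $\xi=0$") is a tidier organisation of the same computation the paper carries out by fixing an index $k$ with $v_k\neq0$, and your handling of the degenerate cases $u=0$ or $v=0$ matches the paper's "the case $u\neq0$ is done in the same way."
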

\begin{proof}
     Gudmundsson and Munn showed in Section 9 of \cite{Gud-Mun-1} that the elements of the basis $\mathcal{B}_\sp{n}$ as specified in Proposition \ref{liealgebraspn} act on the coordinate function $z_{j\alpha}:\Sp{n}\rightarrow\cn$ as follows.
    \begin{eqnarray*}
        X_{rs}^{a}(z_{j\alpha})&=&\frac{i}{\sqrt{2}}(z_{jr}\delta_{s\alpha}+z_{js}\delta_{r\alpha}),\\
          X_{rs}^{b}(z_{j\alpha})&=&\frac{i}{\sqrt{2}}(w_{jr}\delta_{s\alpha}+w_{js}\delta_{r\alpha}),\\
        D_t^{a}(z_{j\alpha})&=&\frac{i}{\sqrt{2}}z_{jt}\delta_{\alpha t},\\
        D_t^{c}(z_{j\alpha})&=&\frac{-1}{\sqrt{2}}w_{jt}\delta_{\alpha t}.\\
    \end{eqnarray*}
    Further, we computed that
    \begin{eqnarray*}
          X_{rs}^{c}(z_{j\alpha})&=&\frac{-1}{\sqrt{2}}(w_{jr}\delta_{s\alpha}+w_{js}\delta_{r\alpha}),\\
        D_t^{b}(z_{j\alpha})&=&\frac{i}{\sqrt{2}}w_{jt}\delta_{\alpha t}.
    \end{eqnarray*}
    We found how the elements of the basis $\mathcal{B}_\sp{n}$ act on the coordinate functions $w_{j\alpha}:\Sp{n}\rightarrow\cn.$ 
    \begin{eqnarray*}
        X_{rs}^{a}(w_{j\alpha})&=&\frac{-i}{\sqrt{2}}(w_{jr}\delta_{s\alpha}+w_{js}\delta_{r\alpha}),\\
        X_{rs}^{b}(w_{j\alpha})&=&\frac{i}{\sqrt{2}}(z_{jr}\delta_{s\alpha}+z_{js}\delta_{r\alpha}),\\
        X_{rs}^{c}(w_{j\alpha})&=&\frac{1}{\sqrt{2}}(z_{jr}\delta_{s\alpha}+z_{js}\delta_{r\alpha}),\\
        D_t^{a}(w_{j\alpha})&=&\frac{-i}{\sqrt{2}}w_{jt}\delta_{\alpha t},\\
        D_t^{b}(w_{j\alpha})&=&\frac{i}{\sqrt{2}}z_{jt}\delta_{\alpha t},\\
        D_t^{c}(w_{j\alpha})&=&\frac{1}{\sqrt{2}}z_{jt}\delta_{\alpha t}.\\
    \end{eqnarray*}
    The eigenfunction $\phi_{a,v,u}:\Sp{n}\rightarrow\cn$ given by $$\phi_{a,v,u}(z+jw)=\trace(a^tvz^t+a^tuw^t),$$ can be rewritten as follows.
    $$\phi_{a,v,u}(z+jw)=\sum_{j,\alpha=1}^{n}a_j(v_\alpha z_{j\alpha}+u_\alpha w_{j\alpha}).$$ Then for all $1\leq r<s\leq n$ and $1\leq t\leq n,$ the basis elements of the Lie algebra $\sp{n}$ act on $\phi_{a,v,u}$ as follows. 
    \begin{eqnarray}
        X_{rs}^{a}(\phi_{a,v,u})&=&\frac{i}{\sqrt{2}}\sum_j a_j(v_sz_{jr}+v_rz_{js}-u_sw_{jr}-u_rw_{js}),\label{eqxrsaspn}\\
        X_{rs}^{b}(\phi_{a,v,u})&=&\frac{i}{\sqrt{2}}\sum_j a_j(v_sw_{jr}+v_rw_{js}+u_sz_{jr}+u_rz_{js}),\label{eqxrsbspn}\\
        X_{rs}^{c}(\phi_{a,v,u})&=&\frac{1}{\sqrt{2}}\sum_j a_j(-v_sw_{jr}-v_rw_{js}+u_sz_{jr}+u_rz_{js}),\label{eqxrscspn}\\
        D_{t}^{a}(\phi_{a,v,u})&=&\frac{i}{\sqrt{2}}\sum_j a_j(v_tz_{jt}-u_tw_{jt}),\label{eqdta}\\
        D_{t}^{b}(\phi_{a,v,u})&=&\frac{i}{\sqrt{2}}\sum_j a_j(v_tw_{jt}+u_tz_{jt}),\label{eqdtb}\\
        D_{t}^{c}(\phi_{a,v,u})&=&\frac{1}{\sqrt{2}}\sum_j a_j(-v_tw_{jt}+u_tz_{jt}).\label{eqdtc}
    \end{eqnarray}
    Let us assume that the complex-valued vectors $a,v$ are non-zero. The case that $u\neq0$ is done in the same way. In particular, there exists $1\leq k\leq n$ such that the $k$-th entry of v is non-zero, i.e. $v_k\neq 0.$ 
    Assume towards a contradiction that for every $X\in\mathcal{B}_\sp{n},$ it holds that $X(\phi_{a,v,u})=0.$ Then Equations \ref{eqdtb} and \ref{eqdtc} imply that for every $1\leq t\leq n$ we have that \begin{eqnarray*}
        0&=&v_t(a,w_t),\\
      %  0&=&u_t(a,z_t).
    \end{eqnarray*}
    Since $v_k\neq0,$ we deduce that $(a,w_k)=0.$ Equation \ref{eqdta} now implies that $$v_k(a,z_k)=u_k(a,w_k)=0.$$ This shows that $(a,z_k)=0.$
    Combining Equations \ref{eqxrsbspn} and \ref{eqxrscspn}, we see that for every $t\neq k,$ the following holds.
    \begin{eqnarray*}
        0&=&v_k(a,w_t).\\
    \end{eqnarray*}
    Hence, for every $1\leq t\leq n,$ we have that $(a,w_t)=0.$
    Equation \ref{eqxrsaspn} now implies that for $t\neq k$ $$v_k(a,z_t)=u_k(a,w_t)+u_t(a,w_k)=0,$$ and thus for every $1\leq t\leq n$ we see that $(a,z_t)=0.$
    Since $a\neq0,$ we arrive at a contradiction. Hence, $\nabla\phi_{a,v,u}$ is non-vanishing and in particular $0$ is a regular value. Corollary \ref{phi0} confirms that $0$ indeed lies in the image of $\phi_{a,v,u}$ for any positive integer $n.$ The last statement now follows from Theorem \ref{GM}. 
\end{proof}

%%%%%%%%%%%%%%%%%%%%%%%%%%%%%%%%%%%%%%%%%%

\chapter{The Symmetric Space $\SU{n}/\SO{n}$} 
We will now consider the symmetric space $\SU{n}/\SO{n}$.  In Section \ref{efsusosection}, we consider eigenfunctions from Gudmundsson, Siffert and Sobak's paper \cite{Gud-Sif-Sob-2}. In Section \ref{cmssuso}, we apply 
Theorem \ref{GM} and show that these eigenfunctions can be used to construct a family of compact minimal submanifolds of $\SU{n}/\SO{n}.$ We will first look at a simplified case in Example \ref{ex1suso}, whereas Theorem \ref{thsuso} proves the general case. 

\section{Eigenfunctions on $\SU{n}/\SO{n}$}\label{efsusosection}
We will first show that $\SU{n}/\SO{n}$ is a symmetric space, which helps us to decompose the Lie algebra $\su{n}$ of $\SU{n}$ in a suitable way.  
Thereafter, we look at eigenfunctions on the symmetric space $\SU{n}/\SO{n}$, which are provided in
Proposition \ref{efsuso}.

\begin{example}{\rm \cite{Lind}}
    Since the special orthogonal group $\SO{n}$ of real matrices is a subgroup of the special unitary group $\SU{n}$ of complex matrices,  the quotient space $\SU{n}/\SO{n}$ is defined in a natural way.
    Further, the mapping $\sigma:\SU{n}\rightarrow\SU{n}$ defined by $$\sigma(z)=\Bar{z}$$ is an involutive automorphism of $\SU{n}.$ 
    Indeed, for all $z,w\in\SU{n},$ we have that
    \begin{enumerate}
        \item $\sigma(zw)=\overline{zw}=\Bar{z}\Bar{w}=\sigma(z)\sigma(w),$
        \item $\sigma$ is bijective,
        \item $\sigma^2(z)=z.$
    \end{enumerate}
    Clearly $\SO{n}$ is fixed by $\sigma.$ 
    It follows from
    Theorem \ref{GKsymspace} that $\SU{n}/\SO{n}$ is indeed a symmetric space.
\end{example}

\begin{remark}
    Recall from Proposition \ref{basissun} that the Lie algebra $\su{n}$ of $\SU{n}$ satisfies
$\su{n}=\so{n}\oplus i\m,$ where $$\m=\left\{X\in\mathbb{R}^{n\times n} \ \rvert \ X=X^t \  \textrm{and} \trace(X)=0\right\}.$$ The subspace $i\m$ is spanned by the elements of $\mathcal{S}_{\m}=\left\{iX_{rs}, \ iD_{rs} \ \rvert \ 1\leq r<s\leq n\right\}.$
It is easy to see that $\so{n}$ and $i\m$ are the subspaces of $\su{n}$ corresponding to the eigenvalues $\pm1$ of $\diff\sigma.$

In light of Oberservation \ref{tangentspacem}, we see that $i\m=T_o(\SU{n}/\SO{n}).$ A similar argument will be used in the following chapters.
\end{remark}
The following remark will help us constructing $\SO{n}$-invariant functions on $\SU{n}.$
\begin{remark}
Define the function $\Phi:\SU{n}\rightarrow\SU{n}$ by $$\Phi(z)=zz^t.$$
    For $x\in\SO{n},$ we have by definition that $\Phi(x)=xx^t=I.$ Consequently, $\Phi$ is $\SO{n}$-invariant, i.e. for all $z\in\SU{n}$ and  
$x\in\SO{n},$ $$\Phi(zx)=zxx^tz^t=zz^t=\Phi(z).$$
\end{remark}

\begin{proposition}{\rm \cite{Gud-Sif-Sob-2}}\label{efsuso}
    Let the complex symmetric matrix $A$ be given by $A=aa^t$ for some non-zero element $a\in\mathbb{C}^n.$ Consider the function $\hat{\phi}_a:\SU{n}\rightarrow\mathbb{C}$ with
    $$\hat{\phi}_a(z)=\trace(A\Phi(z))=\trace(z^tAz).$$
    Then $\hat{\phi}_a$ is a $\SO{n}$-invariant eigenfunction on $\SU{n},$ and thus induces an eigenfunction $\phi_a$ on $\SU{n}/\SO{n}.$
    The eigenvalues $\lambda$ and $\mu$ satisfy $$\lambda=-\frac{2(n^2+n-2)}{n}\quad \textrm{and}\quad \mu=-\frac{4(n-1)}{n}.$$
\end{proposition}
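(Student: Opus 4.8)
The plan is to verify both the invariance and the eigenfunction property upstairs on $\SU{n}$ and then transport them down to the quotient. The $\SO{n}$-invariance of $\hat\phi_a$ is immediate from the preceding remark: since $\Phi(zx)=\Phi(z)$ for every $x\in\SO{n}$, we get $\hat\phi_a(zx)=\trace(A\Phi(zx))=\trace(A\Phi(z))=\hat\phi_a(z)$. Being constant on the fibres of the natural projection $\pi:\SU{n}\to\SU{n}/\SO{n}$, the function $\hat\phi_a$ descends to a well-defined $\phi_a$ on $\SU{n}/\SO{n}$ with $\hat\phi_a=\phi_a\circ\pi$. Since $\pi$ is a harmonic Riemannian submersion (the metric on $\SU{n}/\SO{n}$ being the normal one, for which the fibres are totally geodesic and hence minimal), Corollary \ref{inducedefs} guarantees that $\phi_a$ is a $(\lambda,\mu)$-eigenfunction on the quotient precisely when $\hat\phi_a$ is one on $\SU{n}$, with the same eigenvalues. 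It therefore suffices to compute $\tau(\hat\phi_a)$ and $\kappa(\hat\phi_a,\hat\phi_a)$ on $\SU{n}$.

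The key step is to write $\hat\phi_a$ in coordinates. Expanding $\trace(z^tAz)$ with $A=aa^t$ and writing $\psi_\alpha=\sum_{i}a_iz_{i\alpha}$ for the $\alpha$-th entry of $a^tz$, one finds $\hat\phi_a=\sum_{\alpha=1}^n\psi_\alpha^2$. Each $\psi_\alpha$ is a linear combination of the coordinate functions $z_{i\alpha}$, so Lemma \ref{coordsun}, together with the linearity of $\tau$ and the bilinearity of $\kappa$, yields after the inner sums collapse back into the $\psi$'s
$$\tau(\psi_\alpha)=-\frac{n^2-1}{n}\cdot\psi_\alpha,\qquad \kappa(\psi_\alpha,\psi_\beta)=-\frac{n-1}{n}\cdot\psi_\alpha\psi_\beta.$$
In other words the $\psi_\alpha$ form an eigenfamily; this is the heart of the matter, and the clean factorisation of $\kappa(\psi_\alpha,\psi_\beta)$ is exactly what the term $-z_{j\beta}z_{k\alpha}$ in Lemma \ref{coordsun} produces.

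From here the eigenvalues fall out of the product rules of Proposition \ref{taukappa}. Applying the rule for $\tau$ to each square gives $\tau(\psi_\alpha^2)=2\psi_\alpha\tau(\psi_\alpha)+2\kappa(\psi_\alpha,\psi_\alpha)$, which simplifies, using $2(n^2-1)+2(n-1)=2(n^2+n-2)$, to $-\tfrac{2(n^2+n-2)}{n}\psi_\alpha^2$; summing over $\alpha$ yields $\tau(\hat\phi_a)=-\tfrac{2(n^2+n-2)}{n}\hat\phi_a$, so $\lambda=-\tfrac{2(n^2+n-2)}{n}$. For the conformality operator, the derivation property of $\kappa$ in each slot gives $\kappa(\psi_\alpha^2,\psi_\beta^2)=4\psi_\alpha\psi_\beta\,\kappa(\psi_\alpha,\psi_\beta)=-\tfrac{4(n-1)}{n}\psi_\alpha^2\psi_\beta^2$; summing over $\alpha,\beta$ and recognising the double sum as $\bigl(\sum_\alpha\psi_\alpha^2\bigr)\bigl(\sum_\beta\psi_\beta^2\bigr)=\hat\phi_a^2$ produces $\kappa(\hat\phi_a,\hat\phi_a)=-\tfrac{4(n-1)}{n}\hat\phi_a^2$, so $\mu=-\tfrac{4(n-1)}{n}$, as claimed.

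I expect the main obstacle to be the bookkeeping in the conformality computation: one must check that the off-diagonal cross terms $\kappa(\psi_\alpha,\psi_\beta)$ with $\alpha\neq\beta$ behave uniformly and that the resulting double sum reassembles exactly into $\hat\phi_a^2$ rather than an uncontrolled expression. A secondary point requiring care is the descent argument—one must confirm that the induced metric makes $\pi$ a harmonic Riemannian submersion, so that Corollary \ref{inducedefs} applies and the eigenvalues are genuinely preserved when passing from $\SU{n}$ to $\SU{n}/\SO{n}$.
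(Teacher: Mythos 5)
Your proof is correct, and the computations check out: with $\psi_\alpha=\sum_i a_i z_{i\alpha}$ one indeed gets $\hat\phi_a=\sum_\alpha\psi_\alpha^2$, Lemma \ref{coordsun} contracts to $\tau(\psi_\alpha)=-\tfrac{n^2-1}{n}\psi_\alpha$ and $\kappa(\psi_\alpha,\psi_\beta)=-\tfrac{n-1}{n}\psi_\alpha\psi_\beta$, and the product rules then give exactly $\lambda=-\tfrac{2(n^2+n-2)}{n}$ and $\mu=-\tfrac{4(n-1)}{n}$. The thesis itself does not prove this proposition; it cites \cite{Gud-Sif-Sob-2} and only verifies the $\SO{n}$-invariance, doing so infinitesimally via $Z(\hat\phi_a)=\trace((Z^t+Z)\,z^tAz)=0$ for skew-symmetric $Z$, whereas you argue at the group level through $\Phi(zx)=\Phi(z)$ — both are fine and the group-level version is already implicit in the remark preceding the proposition. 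Your route of recognising the $\psi_\alpha$ as the eigenfamily of Theorem \ref{thsu} and squaring is the same mechanism used in the cited source (which works with the components of $\Phi$ directly), just organised more cleanly; the only caveat worth flagging is that the product rule for $\kappa$ as printed in Proposition \ref{taukappa} contains a typo in its second term, but since you only apply it with $\phi_1=\phi_2$ and $\psi_1=\psi_2$ all four terms coincide and nothing is affected. The descent step via Corollary \ref{inducedefs} is exactly how the paper handles every quotient case, so no further justification is needed there.
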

\begin{remark}
Let $Z$ be an element of the Lie algebra $\su{n}$ of $\SU{n}.$ Then $Z$ acts on the function $\hat{\phi}$ as follows:
\begin{eqnarray*}
    Z(\hat{\phi}_a)&=&\frac{d}{ds}(\hat{\phi}(z\cdot \exp(sZ)))\rvert_{s=0}\\
    &=& \frac{d}{ds}(\trace(\exp(sZ^t)\cdot z^t\cdot A\cdot z\cdot \exp(sZ))\rvert_{s=0}\\
    &=&\trace(Z^t\cdot z^t\cdot A\cdot z+z^t\cdot A\cdot z\cdot Z)\\
    &=&\trace(Z^t\cdot z^t\cdot A\cdot z)+\trace(z^t\cdot A\cdot z\cdot Z)\\
    &=&\trace(Z^t\cdot z^t\cdot A\cdot z)+\trace(Z\cdot z^t\cdot A\cdot z)\\
    &=&\trace(Z^t\cdot z^t\cdot A\cdot z+Z\cdot z^t\cdot A\cdot z)\\
    &=&\trace((Z^t+Z)\cdot z^t\cdot A\cdot z)\\
\end{eqnarray*}
This shows that $\hat{\phi}_a: \SU{n}\rightarrow\mathbb{C}$ is invariant under the action of the subgroup $\SO{n}$ of $\SU{n}$ with Lie algebra $$\so{n}=\left\{X\in\mathbb{R}^{n\times n} \ \rvert \ X+X^t=0\right\}.$$
Hence $\hat{\phi}_a$ induces a complex-valued eigenfunction $\phi_a:\SU{n}/\SO{n}\rightarrow\mathbb{C}$ on the quotient space $\SU{n}/\SO{n}.$
\end{remark} 

\section{Compact Minimal submanifolds of $\SU{n}/\SO{n}$}\label{cmssuso}
The goal of this section is to apply Theorem \ref{GM} to the eigenfunctions found in Proposition \ref{efsuso} in order to construct compact minimal submanifolds. Example \ref{ex1suso} should clarify how we proceed, whereas Theorem \ref{thsuso} gives a general statement.

\begin{example}\label{ex1suso}
We first consider the simplest case. Let $a=(1,0,\dots, 0).$ Then $A$ is the matrix with entries zero everywhere, except $A_{11}=1.$

Then, \begin{eqnarray}
    \hat{\phi}_a(z)=\trace(z^t Az)=(zz^t)_{11}=\sum_{j=1}^n z_{1j}^2.\label{eqsuso2}
\end{eqnarray} 
For $iX_{rs}=\frac{i}{\sqrt{2}}\cdot (E_{rs}+E_{sr})\in i\m,$ we have that
\begin{eqnarray*}
    iX_{rs}(\hat{\phi}_a)&=&\trace((iX_{rs}^t+iX_{rs})\cdot z^t\cdot A\cdot z)\\
    &=&\trace((2iX_{rs})\cdot z^t\cdot A\cdot z)\\
    &=&\trace(A\cdot z\cdot(2iX_{rs})\cdot z^t )\\
    &=&2i\cdot(z\cdot(X_{rs})\cdot z^t )_{11}\\
    &=&2\sqrt{2}i\cdot(z_{1r}z_{1s}).
\end{eqnarray*}

We now wish to show that $0$ is a regular value of ${\phi}_a$. Assume towards a contradiction that there exists $z\in\SU{n}$ satisfying $\hat{\phi}_a(z)=0$ and $\nabla(\hat{\phi}_a)=0$ at $z.$ Then in particular, for all $1\leq r<s\leq n,$ it holds that $iX_{rs}(\hat{\phi}_a)=0.$ The above computation shows that this is the case if and only if for all choices of such $r,s$ it holds that \begin{eqnarray}
z_{1r}\cdot z_{1s}=0.\label{eqsuso1}\end{eqnarray}

Since $z$ has full rank, the first row has at least one non-zero entry, say $z_{1r_0}$ for some $1\leq r_0\leq n$. Equation \ref{eqsuso1} now implies that for all $t\neq r_0,$ we have that $$z_{1t}=0.$$ In other words, $z_{1r_0}$ is the only non-zero entry of the first row. 
Plugging this into the Equation \ref{eqsuso2}, we see that $$0=\hat{\phi}_a(z)=\sum_{j=1}^n z_{1j}^2=z_{1r_0}^2.$$
It now immediately follows that $z_{1r_0}=0,$ a contradiction. We conclude that $0$ is a regular value of $\hat{\phi}_a$ and thus of $\phi_a$.
By Theorem \ref{GM}, the fibre $\phi_a^{-1}(\{0\})$ is a minimal submanifold of $\SU{n}/\SO{n}.$
\end{example}

\begin{theorem}\label{thsuso}
    For $n\geq2$, let $a\in\mathbb{C}^{n}\backslash\{0\}$ and $A=aa^t.$ Then the eigenfunction $\phi_a:\SU{n}/\SO{n}\rightarrow\mathbb{C}$ induced by the $\SO{n}$-invariant eigenfunction $\hat{\phi}_a:\SU{n}\rightarrow\cn$, which is given by
    $$\hat{\phi}_a(z)=\trace(A\Phi(z))=\trace(z^tAz),$$ satisfies the following:
    \begin{enumerate}
        \item $\phi_a$ is a submersion,
        \item the fibre $\phi_a^{-1}(\{0\})$ is a compact minimal submanifold of $\SU{n}/\SO{n}$ of codimension two.
    \end{enumerate}
\end{theorem}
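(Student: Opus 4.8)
The plan is to follow the pattern of Theorems \ref{familySOn} and \ref{thminsubsun}: since $\hat\phi_a$ is already known to be an $\SO{n}$-invariant eigenfunction by Proposition \ref{efsuso}, the essential work is to show that the descended function $\phi_a$ has non-vanishing gradient everywhere, and then to invoke Corollary \ref{phi0} together with Theorem \ref{GM}. Throughout I would use the identification $i\m\cong T_o(\SU{n}/\SO{n})$ from Observation \ref{tangentspacem}, so that it suffices to work with the generators of $i\m$; the $\SO{n}$-invariance of $\hat\phi_a$ guarantees that the derivatives along the vertical directions $\so{n}$ vanish automatically, which is exactly why the horizontal derivatives capture the gradient of $\phi_a$ on the quotient.

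First I would compute the action of the symmetric generators $iX_{rs}$ and $iD_{rs}$ of $i\m$ on $\hat\phi_a$. Using the identity $Z(\hat\phi_a)=\trace((Z^t+Z)\,z^t A z)$ from the remark preceding this section, together with the rank-one factorisation $A=aa^t$, the matrix $z^t A z$ has entries $(z^t A z)_{k\ell}=(a,z_k)(a,z_\ell)$, where $z_k$ denotes the $k$-th column of $z$ and $(a,z_k)=\sum_i a_i z_{ik}$. Writing $c_k=(a,z_k)$, a short computation gives $iX_{rs}(\hat\phi_a)=2\sqrt{2}\,i\,c_r c_s$ and $iD_{rs}(\hat\phi_a)=\sqrt{2}\,i\,(c_r^2-c_s^2)$, consistent with the special case $A_{11}=1$ treated in Example \ref{ex1suso}.

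Next I would prove that $\phi_a$ is a submersion by showing these derivatives cannot vanish simultaneously at any $z\in\SU{n}$. Assume they do. The $X$-equations give $c_r c_s=0$ for all $r\neq s$, so at most one $c_k$ is non-zero; the $D$-equations give $c_r^2=c_s^2$ for all $r,s$, and combined with the previous statement this forces every $c_k=0$ (if $c_{k_0}$ were the unique non-zero entry, choosing any other index $s$ would yield $c_{k_0}^2=c_s^2=0$). But $c_k=0$ for all $k$ means $a^t z=0$, and since $z\in\SU{n}$ is invertible this gives $a=0$, contradicting $a\neq0$. Hence the horizontal gradient never vanishes, which proves (i); note that, unlike Example \ref{ex1suso}, this argument does not require the hypothesis $\hat\phi_a(z)=0$, and it uses $n\geq2$ only to ensure that a second index is available.

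Finally, for (ii) I would record that the eigenvalues from Proposition \ref{efsuso} satisfy $\lambda-\mu=-2(n-1)\neq0$ for $n\geq2$, so Corollary \ref{phi0} guarantees $0\in\phi_a(\SU{n}/\SO{n})$; together with the non-vanishing gradient established above this shows $0$ is a regular value. Theorem \ref{GM} then yields that $\phi_a^{-1}(\{0\})$ is a minimal submanifold of codimension two, and compactness is immediate since it is a closed subset of the compact space $\SU{n}/\SO{n}$. The main obstacle is really just the clean bookkeeping in the second paragraph: recognising the rank-one form of $z^t A z$ so that every directional derivative factors through the single vector $c=a^t z$, after which the remaining linear algebra is routine.
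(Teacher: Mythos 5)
Your proposal is correct and follows essentially the same route as the paper: both compute the action of the generators $iX_{rs}$ and $iD_{rs}$ of $i\m$ on $\hat{\phi}_a$, reduce everything to the quantities $c_k=(a,z_k)$ via the rank-one form of $z^tAz$, derive a contradiction from the simultaneous vanishing of these derivatives (the paper locates an index $t$ with $c_t\neq0$ first and then contradicts it via the $D$-equations, while you conclude all $c_k=0$ and contradict $a\neq0$ via invertibility of $z$ — logically the same argument), and finish with Corollary \ref{phi0} and Theorem \ref{GM}. No gaps.
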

\begin{proof}
Let $a=(a_1,\dots,a_{n})$ be a non-zero vector in $\mathbb{C}^{n}.$ It follows from Proposition \ref{efsuso} that $\phi_a$ is an eigenfunction on 
$\SU{n}/\SO{n}.$

We wish to show that the gradient of $\hat{\phi}_a$ is non-vanishing. Assume towards a contradiction that there exist a critical point $z\in\SU{n}$. Then for all $iX\in i\m,$ we have that $iX(\hat{\phi}_a)=0$ at $z.$

Setting $iX_{rs}(\hat{\phi}_a)=0$ for all $1\leq r<s\leq n,$ we obtain the equations \begin{eqnarray}\label{Xrs}
0&=&\left(\sum_{k=1}^n a_k z_{kr}\right)\left(\sum_{k=1}^n a_k z_{ks}\right). \end{eqnarray}Hence for all $1\leq r<s\leq n,$ either $$0=\sum_{k=1}^n a_k z_{kr}\ \ \textrm{or} \ \ 0=\sum_{k=1}^n a_k z_{ks}.$$ 

Since $z\in\SU{n},$ the columns of $z$ form an orthonormal basis of $\cn^n.$ Therefore, there exists a column $z_t$ of $z$ such that $$\langle z_t,\Bar{a}\rangle=\sum_{k=1}^n a_k z_{kt}\neq0.$$

 By Equation \ref{Xrs}, for any other index $1\leq m\leq n,$ $m\neq t,$ $$0=\sum_{k=1}^n a_k z_{km}.$$ By assumption, $iD_{rs}(\hat{\phi}_a)=0$ for all $1\leq r<s\leq n.$ The latter is equivalent to $$0=\left(\sum_{k=1}^n a_k z_{kr}+\sum_{k=1}^n a_k z_{ks}\right)\left(\sum_{k=1}^n a_k z_{kr}-\sum_{k=1}^n a_k z_{ks}\right).$$ Using our previous findings, we see that
$$0=\left(\sum_{k=1}^n a_k z_{kt}\right)^2,$$ which again implies that $0=\sum_{k=1}^n a_k z_{kt},$ a contradiction.

Hence, there must exists some $X\in i\m$ such that $X(\hat{\phi}_a)\neq0.$ It follows that $\hat{\phi}_a$ is a submersion, and so is $\phi_a.$ Due to Corollary \ref{phi0}, the value $0\in\cn$ indeed lies in the image of $\phi_a$ for all $n\geq2.$
We conclude that $0$ is a regular value of $\phi_a.$
The last statement now follows from Theorem \ref{GM}.\end{proof}
\begin{remark}
    By letting the vector $a\in\cn^n$ vary, this yields a family of minimal submanifolds of $\SU{n}/\SO{n}.$
\end{remark}

%%%%%%%%%%%%%%%%%%%%%%%%%%%%%%%%%%%%
\chapter{The Symmetric Space $\Sp{n}/\U{n}$}
We start this chapter by studying the symmetric space $\Sp{n}/\U{n}$ in Section \ref{prelimspnun}.
Gudmundsson, Siffert and Sobak's paper \cite{Gud-Sif-Sob-2} provides us with eigenfunctions on $\Sp{n}/\U{n}$, which we state in Section \ref{sectionefspnun}. We then show in Section \ref{cmsspnun}, that those eigenfunctions are regular over $0\in\cn$ and hence they can be used to construct compact minimal submanifolds. 

\section{Preliminaries}\label{prelimspnun}
We will see in Proposition \ref{propuninspn}, that the unitary group $\U{n}$ lies in the quaternionic unitary group $\Sp{n}$. In Proposition \ref{liealgspnun} we will show how the Lie algebra $\sp{n}$ of $\Sp{n}$ can be decomposed.

\begin{proposition}{\rm \cite{Hall}}\label{propuninspn}
    The map $\psi:\U{n}\rightarrow\Sp{n}$ given by $$x+iy\mapsto\begin{pmatrix}
        x&y\\
        -y&x
    \end{pmatrix}$$ is an injective homomorphism of Lie groups. Consequently, we may identify $\U{n}$ with its image under $\psi.$
\end{proposition}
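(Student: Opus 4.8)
The plan is to verify four features in turn: that $\psi$ takes values in $\Sp n$, that it respects the group operations, that it is injective, and that it is smooth with smooth inverse onto its image. I will write a generic element of $\U n$ as $z = x + iy$ with $x,y\in\rn^{n\times n}$ real. Expanding the unitarity condition $z z^* = I_n$ (with $z^* = x^t - iy^t$, since $x,y$ are real) gives the two real identities
$$x x^t + y y^t = I_n \qquad\text{and}\qquad y x^t - x y^t = 0,$$
which I will keep at hand throughout, as they encode exactly what $z\in\U n$ means.

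First I would check that $\psi(z)$ really lies in $\Sp n$. By the standard complex representation, the image of $\Sp n$ in $\cn^{2n\times 2n}$ consists of the block matrices $\begin{pmatrix} Z & W \\ -\bar W & \bar Z \end{pmatrix}$ that are in addition unitary. Since $\bar x = x$ and $\bar y = y$, the matrix $\psi(z) = \begin{pmatrix} x & y \\ -\bar y & \bar x \end{pmatrix}$ already has the required block shape, with $Z = x$ and $W = y$. It then remains to confirm unitarity: as $\psi(z)$ is real, $\psi(z)^* = \psi(z)^t$, and a block multiplication of $\psi(z)\psi(z)^t$ produces the diagonal blocks $x x^t + y y^t$ and the off-diagonal blocks $\pm(y x^t - x y^t)$. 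The two identities above collapse this product to $I_{2n}$, so $\psi(z)$ is unitary of the correct block form, whence $\psi(z)\in\Sp n$.

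Next I would establish the homomorphism property by a direct computation. For $z_j = x_j + iy_j$ one has $z_1 z_2 = (x_1 x_2 - y_1 y_2) + i(x_1 y_2 + y_1 x_2)$, and multiplying the blocks $\psi(z_1)\psi(z_2)$ yields precisely the block matrix associated with this product; hence $\psi(z_1 z_2) = \psi(z_1)\psi(z_2)$. Injectivity is then immediate, since the top-left and top-right blocks of $\psi(z)$ recover $x$ and $y$, so $\psi(z_1)=\psi(z_2)$ forces $z_1 = z_2$ (equivalently $\ker\psi = \{I_n\}$). Finally $\psi$ is manifestly smooth, being $\rn$-linear in the real and imaginary parts of the entries, and its inverse on the image is likewise smooth by reading off the blocks, so $\psi$ is a diffeomorphism onto $\psi(\U n)$; this justifies identifying $\U n$ with its image.

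I expect the only genuine content to be the verification that the image lies in $\Sp n$, since it is there that the full unitarity of $z$ — both the norm identity $x x^t + y y^t = I_n$ and the symmetry identity $y x^t = x y^t$ — must be used. The homomorphism property, injectivity, and smoothness are routine. The one subtlety worth stating carefully is the block-form characterisation of $\Sp n$ inside $\U{2n}$ coming from the standard complex representation, which is exactly what licenses the step from ``unitary with the correct block shape'' to ``belongs to $\Sp n$''.
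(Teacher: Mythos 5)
Your proposal is correct and follows essentially the same route as the paper: verify the homomorphism property by block multiplication, check that the image satisfies the unitarity condition defining $\Sp{n}$, and conclude injectivity from the trivial kernel. You are somewhat more explicit than the paper in spelling out the two real identities $xx^t+yy^t=I_n$ and $yx^t=xy^t$ coming from $zz^*=I_n$ and in addressing smoothness, but the substance is identical.
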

\begin{proof}
    We start by showing that $\psi$ is a homomorphism. 
    \begin{eqnarray*}
        \psi((x_1+iy_1)\cdot(x_2+iy_2))&=&\psi(x_1x_2-y_1y_2+i(x_1y_2+y_1x_2))\\
        &=&\begin{pmatrix}
            x_1x_2-y_1y_2 & x_1y_2+y_1x_2\\
            -x_1y_2-y_1x_2 & x_1x_2-y_1y_2
        \end{pmatrix}\\
        &=&\begin{pmatrix}
            x_1&y_1\\
        -y_1&x_1
        \end{pmatrix}\cdot\begin{pmatrix}
            x_2&y_2\\
        -y_2&x_2
        \end{pmatrix}\\
        &=&\psi(x_1+iy_1)\cdot\psi(x_2+iy_2).
    \end{eqnarray*}
    Further, the image $q=\psi(x+iy)$ satisfies $q\Bar{q}^t=I_n$ since $x+iy\in\U{n}.$
    Lastly, $\psi$ is injective. Indeed, $\psi(x+iy)=I_{2n}$ if and only if $x=I_n.$ Hence, $\ker(\psi)=I_n.$
\end{proof}

\begin{remark}{\rm \cite{Lind}}
    We will now show that $\Sp{n}/\U{n}$ is a symmetric space. Indeed, we define the map $\sigma:\Sp{n}\rightarrow\Sp{n}$ by $$\sigma:\begin{pmatrix}
        z&w\\
        -\Bar{w}&\Bar{z}
    \end{pmatrix}\mapsto\begin{pmatrix}
        \Bar{z}&\Bar{w}\\
        -w&z
    \end{pmatrix}.$$ It is easily checked that $\sigma$ is an involutive automorphism. It is also clear that $\sigma$ fixes $\U{n}.$ By Theorem \ref{GKsymspace}, $\Sp{n}/\U{n}$ is a symmetric space.
\end{remark}

\begin{proposition}{\rm \cite{Gud-Sif-Sob-2}}\label{liealgspnun}
 Let $M=\Sp{n}/\U{n}.$ Consider the following decomposition 
    $$\sp{n}=\u{n}\oplus\mathfrak{m}$$ of the Lie algebra $\sp{n}$ of $\Sp{n}.$ An orthonormal basis of $\m$ is given by 
$$ \mathcal{B}_\mathfrak{m}=\left\{ X_{rs}^{a}=\frac{i}{2}\begin{pmatrix}
    X_{rs}&0\\
    0&-X_{rs}
\end{pmatrix}, D_{t}^{a}=\frac{i}{\sqrt{2}}\begin{pmatrix}
    D_t&0\\
    0&-D_t
\end{pmatrix},X_{rs}^{b}=\frac{i}{2}\begin{pmatrix}
    0&X_{rs}\\
    X_{rs}&0
\end{pmatrix},\right.$$ 
$$\left. D_{t}^{b}=\frac{i}{\sqrt{2}}\begin{pmatrix}
    0&D_t\\
    D_t&0
\end{pmatrix}  \ \rvert \ 1\leq r<s\leq n, \ 1\leq t\leq n \right\}. $$ 
\end{proposition}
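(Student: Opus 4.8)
The plan is to invoke Lemma \ref{lemmasympair} for the symmetric pair $(\Sp n,\U n)$ with the involution $\sigma$ introduced in the preceding remark, so that $\m$ is the $(-1)$-eigenspace of $\diff\sigma$, and then to verify directly that $\mathcal{B}_{\m}$ is an orthonormal subset of $\m$ of the correct cardinality. The first step is to compute $\diff\sigma$. Since $\sigma$ carries $q=\begin{pmatrix} z&w\\ -\Bar w&\Bar z\end{pmatrix}$ to $\begin{pmatrix}\Bar z&\Bar w\\ -w&z\end{pmatrix}=\Bar q$, it is nothing but entrywise complex conjugation; differentiating $\overline{\exp(sX)}=\exp(s\Bar X)$ at $s=0$ (Proposition \ref{comprulesexp}) gives $\diff\sigma(X)=\Bar X$ for $X\in\sp n$. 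By Lemma \ref{lemmasympair}, $\k=\u n$ is then the space of real matrices in $\sp n$ (in agreement with the image of $\diff\psi$ from Proposition \ref{propuninspn}), while $\m$ is the space of purely imaginary matrices in $\sp n$.

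Writing a general element of $\sp n$ (Proposition \ref{liealgebraspn}) as $\begin{pmatrix} Z&W\\ -\Bar W&\Bar Z\end{pmatrix}$ and setting $Z=iP$, $W=iQ$ with $P,Q$ real, the defining constraints $Z^*+Z=0$ and $W^t-W=0$ become exactly the conditions that $P$ and $Q$ be symmetric. Hence
$$\m=\left\{\begin{pmatrix} iP&iQ\\ iQ&-iP\end{pmatrix}\ \big|\ P,Q\in\Sym(n,\rn)\right\},$$
so $\dim_\rn\m=2\cdot\tfrac{n(n+1)}2=n^2+n$. I would then read off that each $a$-type generator $X_{rs}^a,D_t^a$ has the above block shape with $Q=0$ and $P\in\{X_{rs},D_t\}$, and each $b$-type generator $X_{rs}^b,D_t^b$ has $P=0$ and $Q\in\{X_{rs},D_t\}$; since $X_{rs}$ and $D_t$ are real symmetric, every listed matrix lies in $\m$. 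Counting gives $\binom n2+n$ generators of each type and thus $n^2+n$ in total, exactly $\dim_\rn\m$.

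The remaining and principal task is to establish orthonormality with respect to $g(U,V)=\Re\trace(\Bar U^tV)$ from Definition \ref{standardmetric}. Here I would compute the Gram matrix family by family: for two $a$-type (or two $b$-type) generators, $\Bar U^tV$ is block-diagonal and its trace reduces to $\trace(PP')$, which collapses to the standard relations $\trace(X_{rs}X_{r's'})=\delta_{rr'}\delta_{ss'}$, $\trace(D_tD_{t'})=\delta_{tt'}$ and $\trace(X_{rs}D_t)=0$ for the symmetric matrices; for a mixed $a$--$b$ pair the nonzero blocks occupy complementary (diagonal versus off-diagonal) positions, so $\Bar U^tV$ is traceless and the two generators are orthogonal. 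I expect the main obstacle to be precisely this bookkeeping: tracking the factors of $i$, the normalisations $\tfrac i2$ and $\tfrac i{\sqrt2}$, and the block placements so that each diagonal Gram entry evaluates to $1$ and each off-diagonal entry to $0$. Once orthonormality is verified, the generators are automatically linearly independent, and because their number equals $\dim_\rn\m$ they form a basis of $\m$, which completes the proof.
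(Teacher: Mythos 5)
The paper itself offers no proof of this proposition (it is quoted from \cite{Gud-Sif-Sob-2}), so your argument must stand on its own. Its structure is the right one: $\sigma$ is indeed entrywise conjugation on the image of $\Sp{n}$ in $\GLC{2n}$, so $\diff\sigma(X)=\Bar{X}$, Lemma \ref{lemmasympair} identifies $\k=\u{n}$ with the real matrices in $\sp{n}$, and
$$\m=\Big\{\begin{pmatrix} iP&iQ\\ iQ&-iP\end{pmatrix}\ \Big|\ P,Q\in\rn^{n\times n}\ \text{symmetric}\Big\}$$
is correct, as are your membership check and the count $\dim_\rn\m=n^2+n=|\mathcal{B}_\m|$. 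Up to that point the proposal is complete and sound.

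The gap is in the step you yourself single out as the principal task and then only describe: the orthonormality computation does not come out as you assert under this paper's conventions. With $X_{rs}=\tfrac{1}{\sqrt2}(E_{rs}+E_{sr})$, so that $\trace(X_{rs}^2)=1$, and with $g(Z,W)=\Re\trace(\Bar{Z}^tW)$ from Definition \ref{standardmetric}, one finds
$$g(X_{rs}^{a},X_{rs}^{a})=\tfrac14\cdot 2\cdot\trace(X_{rs}^2)=\tfrac12,\qquad g(D_t^{a},D_t^{a})=\tfrac12\cdot 2\cdot\trace(D_t^2)=1,$$
taking $D_t=E_{tt}$ (the only choice consistent with Proposition \ref{liealgebraspn}); the same values occur for the $b$-type elements. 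The two families therefore have different norms, so no overall rescaling of the metric can make the set orthonormal. A quick sanity check exposes the problem: Proposition \ref{liealgebraspn} already lists $\tfrac{1}{\sqrt2}\operatorname{diag}(iX_{rs},-iX_{rs})$ as a \emph{unit} vector of $\sp{n}$, and that vector lies in $\m$; the element called $X_{rs}^{a}$ here is $\tfrac{1}{\sqrt2}$ times it. The mismatch is inherited from the cited source, where $X_{rs}$ is taken to be $E_{rs}+E_{sr}$ without the factor $\tfrac{1}{\sqrt2}$. Your proof goes through verbatim if you either adopt that normalisation or replace the coefficients $\tfrac{i}{2}$ by $\tfrac{i}{\sqrt2}$; as written, the one assertion you did not actually verify --- that every diagonal Gram entry equals $1$ --- is the one that fails.
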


\section{Eigenfunctions on $\Sp{n}/\U{n}$}\label{sectionefspnun}
In this section, we look at eigenfunctions on $\Sp{n}/\U{n}$. They were found by Gudmundsson, Siffert and Sobak in their work \cite{Gud-Sif-Sob-2}.
\begin{remark}
    We have shown that if $z\in\U{n},$ then $\sigma(z)=z.$ Now define a map $\Phi:\Sp{n}\rightarrow\Sp{n}$ by $$\Phi(q)=qq^t.$$
    It follows that $\Phi$ is $\U{n}$-invariant, in the sense that for all $q\in\Sp{n}$ and $z\in\U{n},$  $$\Phi(qz)=qzz^tq^t=qq^t=\Phi(q).$$
\end{remark}
This justifies the next construction.
\begin{proposition}{\rm \cite{Gud-Sif-Sob-2}}\label{thspnun1}
    Let $a\in\mathbb{C}^{2n}\backslash\{0\},$ and let $A=aa^t.$ Then $$\hat{\phi}_a(q)=\trace(A\cdot\Phi(q))=\trace(q^t Aq)$$ is a $\U{n}$-invariant eigenfunction on $\Sp{n}$ and thus induces an eigenfunction $\phi_a$ on $\Sp{n}/\U{n}.$ The eigenvalues $\lambda$ and $\mu$ of $\phi_a$ satisfy $$\lambda=-2(n+1)\quad\textrm{and}\quad\mu=-2.$$
\end{proposition}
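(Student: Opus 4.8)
The plan is to establish the three assertions in order: $\U{n}$-invariance, the eigenfunction property, and the exact eigenvalues. The invariance is immediate from the remark preceding the statement: since $\Phi(qz)=\Phi(q)$ for every $z\in\U{n}$, we get $\hat{\phi}_a(qz)=\trace(A\,\Phi(qz))=\trace(A\,\Phi(q))=\hat{\phi}_a(q)$, so $\hat{\phi}_a$ is constant on the $\U{n}$-cosets and descends to a well-defined $\phi_a$ on $\Sp{n}/\U{n}$. To transport $\tau$ and $\kappa$ between the group and the quotient I would invoke Corollary \ref{inducedefs}: the natural projection $\pi:\Sp{n}\to\Sp{n}/\U{n}$ with the normal metric is a harmonic Riemannian submersion, and $\hat{\phi}_a=\phi_a\circ\pi$, so $\phi_a$ is a $(\lambda,\mu)$-eigenfunction on $\Sp{n}/\U{n}$ if and only if $\hat{\phi}_a$ is one on $\Sp{n}$. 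This reduces everything to a computation on the group itself.

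Since the standard metric on $\Sp{n}$ is $\Ad$-invariant, hence bi-invariant, the connection terms drop out and, against the orthonormal basis $\mathcal{B}_{\sp{n}}$ of Proposition \ref{liealgebraspn}, one has $\tau(\hat{\phi}_a)=\sum_{X}X\big(X(\hat{\phi}_a)\big)$ and $\kappa(\hat{\phi}_a,\hat{\phi}_a)=\sum_{X}\big(X(\hat{\phi}_a)\big)^{2}$, the sums running over the seven families $Y_{rs}^{a},X_{rs}^{a},X_{rs}^{b},X_{rs}^{c},D_t^{a},D_t^{b},D_t^{c}$. The key simplification is the first-order formula, obtained exactly as in the remark after Proposition \ref{efsuso}: writing $v:=a^{t}q$ (a row vector) and using $q^{t}Aq=q^{t}aa^{t}q=v^{t}v$ together with the scalar identity $vX^{t}v^{t}=vXv^{t}$,
$$X(\hat{\phi}_a)=\trace\big((X^{t}+X)\,q^{t}Aq\big)=2\,vXv^{t}.$$

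For the tension field I would then differentiate once more. Using $X(v)=vX$, the Leibniz rule gives $X(2vXv^{t})=2\,vX(X+X^{t})v^{t}$, so that
$$\tau(\hat{\phi}_a)=2\,v\Big(\sum_{X\in\mathcal{B}_{\sp{n}}}\big(X^{2}+XX^{t}\big)\Big)v^{t}.$$
This reduces the whole tension-field computation to evaluating the two Casimir-type matrix sums $\sum_X X^{2}$ and $\sum_X XX^{t}$ over $\mathcal{B}_{\sp{n}}$; I expect these to collapse, modulo the symplectic relations satisfied by $q$, to a scalar multiple of $v^{t}$, and checking the arithmetic should yield $\tau(\hat{\phi}_a)=-2(n+1)\,\hat{\phi}_a$, i.e. $\lambda=-2(n+1)$. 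The conformality operator is handled by the same substitution:
$$\kappa(\hat{\phi}_a,\hat{\phi}_a)=\sum_{X\in\mathcal{B}_{\sp{n}}}\big(X(\hat{\phi}_a)\big)^{2}=4\sum_{X}\big(vXv^{t}\big)^{2},$$
which must be shown to equal $-2\,\hat{\phi}_a^{2}=-2\,(vv^{t})^{2}$, giving $\mu=-2$.

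The main obstacle will be this last step: the quartic identity $\sum_{X}(vXv^{t})^{2}=-\tfrac12(vv^{t})^{2}$. Unlike the tension field, which is quadratic in $v$ and collapses to a Casimir sum, the conformality computation is quartic, so one must carefully track the contributions of all seven basis families and verify that every term not proportional to $(vv^{t})^{2}$ cancels. This is exactly where the hypotheses $A=aa^{t}$ (rank one, so $q^{t}Aq$ factors as $v^{t}v$) and $q\in\Sp{n}$ (providing the block and conjugation relations needed to recombine cross-terms) are indispensable, and it is the part of the argument most prone to bookkeeping errors. An alternative I would keep in reserve is to expand $\hat{\phi}_a$ directly in the coordinate functions $z_{j\alpha},w_{j\alpha}$ of Lemma \ref{coordspn} and apply the product rules of Proposition \ref{taukappa}, but since the lower blocks of $q$ introduce the conjugates $\bar z_{j\alpha},\bar w_{j\alpha}$, I expect the matrix-level computation above to be cleaner.
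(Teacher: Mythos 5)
Your proposal is correct, and it takes a genuinely different (and arguably cleaner) route than the source. The thesis itself does not prove the eigenvalue claim: it cites \cite{Gud-Sif-Sob-2} and only verifies $\U{n}$-invariance in the remark following the statement, via exactly the first-order formula $X(\hat{\phi}_a)=\trace\bigl(q^tAq(X^t+X)\bigr)$ that you use; the cited reference (and the analogous arguments carried out in full in this thesis, e.g.\ the proof of Proposition \ref{so2nunnew} for $\SO{2n}/\U n$) instead expands $\hat{\phi}_a$ in the components $\Phi_{j\alpha}$ of $\Phi(q)=qq^t$ and applies the product rules of Proposition \ref{taukappa} — this is the ``alternative kept in reserve'' at the end of your proposal. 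Your matrix-level route works and closes completely: writing $v=a^tq$, only the \emph{symmetric} basis elements $X_{rs}^{a},X_{rs}^{b},D_t^{a},D_t^{b}$ contribute (for skew-symmetric $X$ one has $vXv^t=0$ and $X^2+XX^t=0$), and a short computation gives $\sum_{X}(X^2+XX^t)=-(n+1)\,I_{2n}$, whence $\tau(\hat{\phi}_a)=2vCv^t=-2(n+1)\hat{\phi}_a$; for the quartic identity, the off-diagonal families contribute $-(p_rp_s-u_ru_s)^2-(p_ru_s+p_su_r)^2=-(p_r^2+u_r^2)(p_s^2+u_s^2)$ and the diagonal ones $-\tfrac12(p_t^2+u_t^2)^2$, which sum to $-\tfrac12(vv^t)^2$ and give $\mu=-2$. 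Note that both identities hold for \emph{arbitrary} $v\in\cn^{2n}$, so the ``symplectic relations satisfied by $q$'' you invoke as a possible ingredient are in fact not needed anywhere; what is essential is only the rank-one hypothesis $A=aa^t$, which lets $q^tAq$ factor as $v^tv$. The payoff of your approach is that the Casimir sums replace the lengthy component-wise bookkeeping of the reference; the payoff of the reference's approach is that it also yields $\kappa(\hat{\phi}_{j\alpha},\hat{\phi}_{k\beta})$ for distinct functions, i.e.\ full eigenfamilies rather than a single eigenfunction.
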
 

\begin{remark}
For any $X\in\sp{n},$ \begin{eqnarray*}
    X(\hat{\phi}_a)&=& \frac{d}{ds}(\trace(\exp(sX^t)q^tA\exp(sX)))|_{s=0}\\
    &=&\trace(X^tq^tAq+q^tAqX)\\
    &=&\trace(q^tAq(X^t+X)).
\end{eqnarray*}
Hence, if $X\in\u{n},$ then $X^t+X=0.$ This shows that $\hat{\phi}_a$ is constant on $\U{n}$ and indeed induces an eigenfunction on $\Sp{n}/\U{n}.$\end{remark}

\section{Compact Minimal Submanifolds of $\Sp{n}/\U{n}$}\label{cmsspnun}
In this section we construct minimal submanifolds of $\Sp{n}/\U{n}$ via the eigenfunction $\phi_a.$ Example \ref{ex1spnun} illustrates the procedure in a simple way, while in Theorem \ref{thspnun2} we obtain a family of minimal submanifolds. 
\begin{example}\label{ex1spnun}
We consider the case $a=(1,0,\dots,0)\in\cn^{2n}\backslash\{0\}.$ Then $A=aa^t$ has zero entries everywhere except for $A_{11}=1.$ 
Thus the eigenfunction $\hat{\phi}_a$ satisfies $$\hat{\phi}_a(q)=\sum_{j=1}^{2n} q_{1j}^2.$$
We wish to show that 0 is a regular value. Assume towards a contradiction that for all $X\in\mathcal{B}_\mathfrak{m},$ we have that $X(\hat{\phi}_a)=0.$ 

In particular, $D_t^{a}(\hat{\phi}_a)=0$ and $D_t^{b}(\hat{\phi}_a)=0$ for all $1\leq t\leq n$ if and only if
    \begin{eqnarray}
    0&=&q_{1t}^2-q_{1,n+t}^2, \label{eq-SpnUn-1}\\
    0&=&q_{1t}q_{1,n+t} \label{eq-SpnUn-2}
   \end{eqnarray}
for every such $t.$ It is readily seen from Equation \ref{eq-SpnUn-2} that for all $1\leq t\leq n,$ either $q_{1t}=0$ or $q_{1,n+t}=0.$ But then Equation \ref{eq-SpnUn-1} implies that in either case the other entry must vanish as well. Consequently, the entire first row is zero, which contradicts the fact that $q$ is full rank. We conclude that $0$ is a regular value of $\hat{\phi}_a$ and of $\phi_a$, and further, by Theorem \ref{GM}, $\phi_a^{-1}(\{0\})$ is a minimal submanifold.
\end{example}
\begin{theorem}\label{thspnun2}
    Let $a=(a_1,\dots,a_{2n})\in\mathbb{C}^{2n}\backslash\{0\}$ and $A=aa^t.$ Then the eigenfunction
    $\phi_a:\Sp{n}/\U{n}\rightarrow\mathbb{C}$ induced by the $\U{n}$-invariant eigenfunction $\hat{\phi}_a:\Sp{n}\rightarrow\cn,$ which is
    given by  $$\hat{\phi}_a(q)=\trace(q^t Aq),$$ satisfies the following:
    \begin{enumerate}
        \item $\phi_a$ is a submersion,
        \item $\phi_a^{-1}(\{0\})$ is a compact minimal submanifold of $\Sp{n}/\U{n}$ of codimension two.   
    \end{enumerate}
\end{theorem}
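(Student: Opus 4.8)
The plan is to follow the template already used in Example \ref{ex1spnun} and in Theorem \ref{thsuso}: first establish the submersion property (claim (1)), then deduce that $0$ is a regular value, and finally invoke Theorem \ref{GM}. By Proposition \ref{thspnun1} the function $\phi_a$ is an eigenfunction with $\lambda=-2(n+1)$ and $\mu=-2$, and since $-2(n+1)\neq-2$ for every positive integer $n$ we have $\lambda\neq\mu$; hence Corollary \ref{phi0} immediately gives that $0\in\cn$ is attained by $\phi_a$. The substance of the argument therefore reduces entirely to showing that $\nabla\phi_a$ never vanishes.

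First I would reduce the submersion statement on the quotient to a statement about the directional derivatives of $\hat\phi_a$ along $\m$. Since $\hat\phi_a$ is $\U{n}$-invariant, its gradient is horizontal, i.e. lies in $\m\cong T_o(\Sp{n}/\U{n})$; consequently $\phi_a$ fails to be submersive at $\pi(q)$ exactly when $X(\hat\phi_a)=0$ for every element $X$ of the orthonormal basis $\mathcal{B}_{\m}$ of Proposition \ref{liealgspnun}. Using the identity $X(\hat\phi_a)=\trace(q^tAq(X^t+X))$ from the remark following Proposition \ref{thspnun1}, noting that each basis element of $\m$ is a symmetric matrix (so $X^t+X=2X$), and using $A=aa^t$, I would rewrite
$$X(\hat\phi_a)=2\,\trace(q^tAqX)=2\,b^tXb,\qquad b:=q^ta\in\cn^{2n},$$
which also puts $\hat\phi_a$ itself in the clean form $\hat\phi_a(q)=b^tb=\sum_{j=1}^{2n}b_j^2$.

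Next I would evaluate this bilinear form on the two families of diagonal generators. A short computation yields, up to nonzero constants,
$$D_t^{a}(\hat\phi_a)\ \propto\ b_t^2-b_{n+t}^2,\qquad D_t^{b}(\hat\phi_a)\ \propto\ b_t\,b_{n+t},$$
for $1\leq t\leq n$, exactly generalising Equations \ref{eq-SpnUn-1} and \ref{eq-SpnUn-2}. Assuming towards a contradiction that $\nabla\hat\phi_a=0$ at some $q$, the second relation forces, for each $t$, either $b_t=0$ or $b_{n+t}=0$, and then the first relation makes the remaining entry vanish as well; hence $b_t=b_{n+t}=0$ for all $t$, that is $b=q^ta=0$. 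Since $q$ is invertible and $a\neq0$, this is impossible, so $\nabla\hat\phi_a$, and therefore $\nabla\phi_a$, is nowhere zero.

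As in Example \ref{ex1spnun}, I expect the diagonal generators alone to close the argument, so the off-diagonal generators $X_{rs}^{a},X_{rs}^{b}$ are not needed and the condition $\hat\phi_a=0$ is not even used; the submersion property in fact holds globally. To conclude, $\phi_a$ is a submersion, $0$ is attained and hence a regular value, and Theorem \ref{GM} gives that $\phi_a^{-1}(\{0\})$ is a minimal submanifold of codimension two, with compactness following since it is a closed subset of the compact space $\Sp{n}/\U{n}$. The only mildly delicate point is the first reduction, namely identifying that regularity of the induced $\phi_a$ is governed precisely by the $\m$-directional derivatives of $\hat\phi_a$; but this is the same mechanism already employed in the preceding chapters, so I anticipate no real obstacle.
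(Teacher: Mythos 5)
Your proposal is correct and follows essentially the same route as the paper: the paper also tests $\hat\phi_a$ only against the diagonal generators $D_t^{a},D_t^{b}$, derives exactly your two relations (there written as $0=(\sum_k a_kq_{kt})^2-(\sum_k a_kq_{k,n+t})^2$ and $0=(\sum_k a_kq_{kt})(\sum_k a_kq_{k,n+t})$, i.e. your $b_t^2-b_{n+t}^2$ and $b_tb_{n+t}$ with $b=q^ta$), and reaches the same contradiction with the invertibility of $q$ before applying Corollary \ref{phi0} and Theorem \ref{GM}. Your repackaging of the derivative as $2\,b^tXb$ is a tidy but inessential reformulation of the paper's coordinate computation.
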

\begin{proof}
Let $a=(a_1,\dots,a_{2n})\in\mathbb{C}^{2n}\backslash\{0\}.$ 
By Theorem \ref{thspnun1}, $\hat{\phi}_a(q)=\trace(q^t Aq)$ with $A=aa^t$, is an eigenfunction on $\Sp{n}.$

We now show that $\nabla(\hat{\phi}_a)\neq0$. Assume towards a contradiction that there exists $q\in\Sp{n},$ such that for all $X\in\mathcal{B}_\m,$ we have that $X(\hat{\phi}_a)=0.$

From setting $D_t^{a}(\hat{\phi}_a)=0$ and $D_t^{b}(\hat{\phi}_a)=0$ for all $1\leq t\leq n$ we obtain the following equations:
\begin{eqnarray*}
0&=&\left(\sum_{k=1}^{n} a_k q_{kt}\right)^2-\left(\sum_{k=1}^{n} a_k q_{k,n+t}\right)^2,\\
0&=&\left(\sum_{k=1}^{n} a_k q_{kt}\right)\cdot\left(\sum_{k=1}^{n} a_k q_{k,n+t}\right).\\
\end{eqnarray*}
From the second equation we learn that for all $1\leq t\leq n$ either $$0=\sum_{k=1}^{n} a_k q_{kt},\quad\textrm{or}\quad 0=\sum_{k=1}^{n} a_k q_{k,n+t}.$$ By plugging this into the first equation, we now see that the respectively other sum must also vanish, i.e. we have that 
    $$0=\sum_{k=1}^{n} a_k q_{kt}, \quad\textrm{and}\quad0=\sum_{k=1}^{n} a_k q_{k,n+t},$$
for all $1\leq t\leq n.$

This however contradicts the fact that the columns of $q$ span $\cn^{2n},$ which means that there exists a column $q_t,$ with $1\leq t\leq 2n$ such that $$\langle q_t,\Bar{a}\rangle=\sum_{k=1}^{n} a_k q_{kt}\neq0.$$
We conclude that $\hat{\phi}_a$ and thus $\phi_a$ are submersions.
In particular, $0$ is a regular value, which, in light of Corollary \ref{phi0}, is attained by $\phi_a$ for any positive integer $n.$ Consequently, by Theorem \ref{GM}, $\phi_a^{-1}(\{0\})$ is a minimal submanifold for any non-zero $a\in\mathbb{C}^{2n}.$\end{proof}

\chapter{The Symmetric Space $\SO{2n}/\U{n}$ }
In this chapter we examine the symmetric space $\SO{2n}/\U{n}.$ Section \ref{so2nunprelim} gives us some useful background information. We will first show in Proposition \ref{embeddingunso2n} how $\U{n}$ is embedded into $\SO{2n},$ and then decompose the Lie algebra $\so{2n}$ of $\SO{2n}.$ In Section \ref{efso2nunsection}, we then turn our attention to the eigenfunctions on $\SO{2n}/\U{n}$ which Gudmundsson, Siffert and Sobak constructed in their paper \cite{Gud-Sif-Sob-2}. In Theorem \ref{so2nunnew}, we show that the conditions on these eigenfunctions can be relaxed. 

In Section \ref{cmsso2nun}, we aim to apply Theorem \ref{GM}. We provide examples and will show that a particular family of eigenfunctions as given in Theorem \ref{so2nunnew} can be used to construct compact minimal submanifolds.

\section{Preliminaries}\label{so2nunprelim}
We will first show that the unitary group $\U{n}$ can be identified with a subgroup of $\SO{2n}.$ 
\begin{proposition}{\rm \cite{Lind}}\label{embeddingunso2n}
The map $$\psi:\U{n}\rightarrow\SO{2n}, \ x+iy\mapsto \begin{pmatrix}
    x&-y\\
    y&x
\end{pmatrix}$$ is an embedding of $\U n$ into $\SO{2n}.$
\end{proposition}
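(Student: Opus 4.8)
The plan is to establish that $\psi$ is an embedding by verifying, in turn, that it is well defined as a map into $\SO{2n}$, that it is a Lie group homomorphism, that it is injective, and finally that it is a topological embedding onto its image. First I would record the defining relations of $\U n$: writing $g=x+iy$ with $x,y$ real $n\times n$ matrices, the condition $g^*g=I_n$ is equivalent to the two real identities $x^tx+y^ty=I_n$ and $x^ty=y^tx$. These will drive the entire argument.

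Next I would check that $\psi(g)$ lands in $\O{2n}$ by a direct block computation. With $\psi(g)^t=\begin{pmatrix} x^t & y^t\\ -y^t & x^t\end{pmatrix}$, the product $\psi(g)^t\psi(g)$ has diagonal blocks $x^tx+y^ty=I_n$ and off-diagonal blocks $\mp(x^ty-y^tx)=0$, so that $\psi(g)^t\psi(g)=I_{2n}$. To upgrade membership from $\O{2n}$ to $\SO{2n}$, rather than computing the determinant through the identity $\det\psi(g)=|\det(x+iy)|^2$, I would prefer a connectedness argument: the map $\det\circ\,\psi:\U n\to\{\pm1\}$ is continuous, $\U n$ is connected, and $\psi$ sends the identity $I_n$ to $I_{2n}$ with determinant $1$; hence $\det\psi(g)\equiv1$. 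This is the step I expect to be the main (if mild) obstacle, and the connectedness trick disposes of it cleanly.

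The homomorphism property is the same computation as in Proposition \ref{propuninspn}, only with the present sign convention: expanding $g_1g_2=(x_1x_2-y_1y_2)+i(x_1y_2+y_1x_2)$ and comparing with the block product $\psi(g_1)\psi(g_2)$ yields agreement. Smoothness is immediate, since $\psi$ is the restriction of a real-linear map on matrix entries, and injectivity follows by reading off the blocks, equivalently $\ker\psi=\{I_n\}$. Finally, being an injective Lie group homomorphism, $\psi$ has trivial (hence discrete) kernel, so $\diff\psi_e$ is injective and, by left-equivariance, $\psi$ is an immersion; since $\U n$ is compact and $\SO{2n}$ is Hausdorff, the continuous injection $\psi$ is automatically a homeomorphism onto its image. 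Therefore $\psi$ is an embedding of $\U n$ into $\SO{2n}$.
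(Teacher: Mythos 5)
Your proof is correct, and it differs from the paper's in one genuinely interesting place: the verification that $\det\psi(g)=1$. The paper computes this determinant explicitly, by a sequence of block row and column operations that reduces $\det\begin{pmatrix} x&-y\\ y&x\end{pmatrix}$ to $\overline{\det(x+iy)}\cdot\det(x+iy)=|\det(x+iy)|^2=1$. You instead observe that $\det\circ\,\psi$ is a continuous map from the connected group $\U n$ into $\{\pm1\}$ sending $I_n$ to $1$, hence constant. Your route is shorter and avoids the determinant manipulation entirely, at the cost of invoking connectedness of $\U n$ (a standard fact, but one the paper does not need to cite); the paper's computation is longer but entirely self-contained and yields the stronger, occasionally useful identity $\det\psi(g)=|\det g|^2$. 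The remaining steps (the orthogonality check, the homomorphism property, injectivity via $\ker\psi=\{I_n\}$) coincide with the paper's. You are in fact more careful than the paper about the word ``embedding'': the paper stops at ``group isomorphism onto its image,'' whereas you supply the immersion argument (discrete kernel forces $\diff\psi_e$ injective, then left-equivariance) and the compact-to-Hausdorff argument for the topological embedding. That extra care is welcome and closes a small gap the paper leaves implicit.
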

\begin{proof}

The map is well defined. Indeed, since $x+iy\in\U n,$ $$(x+iy)(x^t-iy^t)=xx^t+yy^t+i(yx^t-xy^t)=I_n.$$
Thus, since $x$ and $y$ have real-valued entries, we have that
    $$I_n=xx^t+yy^t\quad{\rm and} \quad
    0=yx^t-xy^t.
$$
As a consequence, it follows that \begin{eqnarray*}
\psi(x+iy)\cdot(\psi(x+iy))^t&=&\begin{pmatrix}
    x&-y\\
    y&x
\end{pmatrix}\cdot \begin{pmatrix}
    x^t&y^t\\
    -y^t&x^t
\end{pmatrix}\\
&=&\begin{pmatrix}
    xx^t+yy^t&xy^t-yx^t\\
    yx^t-xy^t&xx^t+yy^t
\end{pmatrix}\\
&=&\begin{pmatrix}
    I_n&0\\
    0&I_n
\end{pmatrix}\\
&=&I_{2n}.
\end{eqnarray*}
Further, $\det(\psi(x+iy))=1,$ as shown in the following computation. Here we use that for $x+iy\in\U{n},$ $|\det(x+iy)|=1.$
\begin{eqnarray*}
    \det(\psi(x+iy))&=&\det\begin{pmatrix}
        x&-y\\
        y&x
    \end{pmatrix}\\
    &=&-i\cdot\det\begin{pmatrix}
        x&-y\\
        iy&ix
    \end{pmatrix}\\
    &=&-\det\begin{pmatrix}
        x&-iy\\
        x+iy&-(x+iy)
    \end{pmatrix}\\
    &=&-\det\begin{pmatrix}
        x-iy&-iy\\
        0&-(x+iy)
    \end{pmatrix}\\
    &=&\det\begin{pmatrix}
        x-iy&-iy\\
        0&(x+iy)
    \end{pmatrix}\\
    &=&\overline{\det(x+iy)}\cdot\det(x+iy)\\
    &=&|\det(x+iy)|^2=1.
\end{eqnarray*} Thus, $\psi(\U n)\in\SO{2n}.$

Further, $\psi$ is a group homomorphism. For $x+iy,a+ib\in\U n,$ \begin{eqnarray*}
\psi((x+iy)\cdot(a+ib))&=&\begin{pmatrix}
    xa-yb&-(xb+ya)\\
    xb+ya&xa-yb
\end{pmatrix}\\
&=&\begin{pmatrix}
    x&-y\\
    y&x
\end{pmatrix}\cdot \begin{pmatrix}
    a&-b\\
    b&a
\end{pmatrix}\\
&=&\psi(x+iy)\cdot\psi(a+ib).
\end{eqnarray*}
Lastly, $\psi$ is injective. Indeed, $\psi(x+iy)=I_{2n}$ if and only if $y=0$ and $x=I_n.$ Hence, $\ker\psi=I_n.$

We conclude that $\psi$ is a group isomorphism of $\U n$ onto its image and we may henceforth identify $\U n$ with $\psi(\U n)\in\SO{2n}.$
\end{proof}
\begin{remark}{\rm \cite{Lind}}
    We will now show that $\SO{2n}/\U{n}$ is indeed a symmetric space. For this, let $$J_n=\begin{pmatrix}
    0&I_n\\
    -I_n&0
\end{pmatrix}$$ and consider the map $\sigma:\SO{2n}\rightarrow\SO{2n}$ given by $$\sigma(x)=J_nxJ_n^t.$$ If we write $x$ as a block matrix $$x=\begin{pmatrix}
        x_{11}&x_{12}\\
        x_{21}&x_{22}
    \end{pmatrix},$$ we see that $$\sigma(x)=\begin{pmatrix}
        x_{11}&-x_{21}\\
        -x_{12}&x_{22}
    \end{pmatrix}.$$ Hence, if $x\in\U{n}$ then $x$ is fixed by $\sigma.$ Further, we see that $\sigma$ is an ivolutive automorphism since \begin{enumerate}
        \item $\sigma(xy)=J_nxyJ_n^t=J_nxJ_n^t J_nyJ_n^t=\sigma(x)\sigma(y),$
        \item $\sigma$ is bijective,
        \item $\sigma^2(x)=x.$
    \end{enumerate}
    By Theorem \ref{GKsymspace}, $\SO{2n}/\U{n}$ is a symmetric space. 
\end{remark}

Proposition \ref{decompositionso2nun} shows us how to decompose the Lie algebra $\so{2n}$ of $\SO{2n}.$
\begin{proposition}\label{decompositionso2nun}{\rm \cite{Lind}}
    For the orthogonal decomposition $$\so{2n}=\u{n}\oplus\m,$$ where $\u{n}$ is the Lie algebra of $\U{n}\subset\SO{2n},$ a basis of $\m$ is given by $$ \mathcal{B}_\m=\left\{Y_{rs}^{a}=\frac{1}{\sqrt{2}}\begin{pmatrix}
        Y_{rs}&0\\
        0&-Y_{rs}
    \end{pmatrix},Y_{rs}^{b}=\frac{1}{\sqrt{2}}\begin{pmatrix}
        0&Y_{rs}\\
        Y_{rs}&0
    \end{pmatrix}\ \rvert \ 1\leq r<s\leq n\right\}. $$
\end{proposition}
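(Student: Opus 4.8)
The plan is to realise $\m$ as the $(-1)$-eigenspace of the differential $\diff\sigma$ of the involution $\sigma(x)=J_nxJ_n^t$ introduced above, and then to read off the basis by elementary block-matrix bookkeeping. By Lemma \ref{lemmasympair}, applied to the symmetric pair $(\SO{2n},\U{n})$, the Lie algebra splits as $\so{2n}=\u{n}\oplus\m$, where $\u{n}$ is the $(+1)$-eigenspace and $\m$ the $(-1)$-eigenspace of $\diff\sigma$. Since $\sigma$ is an isometric involution (conjugation by the orthogonal matrix $J_n$ preserves $g(Z,W)=\trace(Z^tW)$ and $\sigma^2=\Id$), the map $\diff\sigma$ is an orthogonal involution, hence self-adjoint, so its two eigenspaces are automatically orthogonal. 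This already yields the orthogonal direct sum in the statement and reduces everything to describing the $(-1)$-eigenspace explicitly and exhibiting a basis for it.

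First I would compute $\diff\sigma$ on $\so{2n}$. As conjugation is linear, $\diff\sigma(X)=J_nXJ_n^t$, and writing $X$ in $n\times n$ blocks $X_{ij}$ a direct multiplication gives
$$\diff\sigma(X)=\begin{pmatrix} X_{22}&-X_{21}\\ -X_{12}&X_{11}\end{pmatrix}.$$
Imposing $\diff\sigma(X)=-X$ together with the skew-symmetry condition $X^t=-X$ forces $X_{22}=-X_{11}$ and $X_{12}=X_{21}$, with both $X_{11}$ and $X_{12}$ skew-symmetric. Hence
$$\m=\left\{\begin{pmatrix} A&C\\ C&-A\end{pmatrix} \ \rvert \ A,C\in\so{n}\right\}.$$
In particular $\dim\m=2\binom n2=n(n-1)$, which matches $\dim\so{2n}-\dim\u{n}=(2n^2-n)-n^2$; I would keep this count as a running consistency check.

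Next I would verify that the proposed elements lie in $\m$ and span it. Each $Y_{rs}^{a}$ corresponds to $A=\tfrac1{\sqrt2}Y_{rs}$, $C=0$, and each $Y_{rs}^{b}$ to $A=0$, $C=\tfrac1{\sqrt2}Y_{rs}$; since the $Y_{rs}$ are skew-symmetric, all of these sit in $\m$. By Proposition \ref{basisson} the family $\{Y_{rs}\ \rvert \ 1\le r<s\le n\}$ is an orthonormal basis of $\so{n}$, so expanding the blocks $A$ and $C$ of a general element of $\m$ in this basis exhibits it as a linear combination of the $Y_{rs}^{a}$ and $Y_{rs}^{b}$; thus $\mathcal B_\m$ spans $\m$, and the cardinality $2\binom n2=\dim\m$ forces linear independence. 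I would then upgrade ``basis'' to ``orthonormal basis'' using $\trace(Z^tW)$ and the identity $\trace\begin{pmatrix} P&*\\ *&Q\end{pmatrix}=\trace P+\trace Q$: the diagonal placement of $Y_{rs}$ in $Y_{rs}^{a}$ versus its off-diagonal placement in $Y_{rs}^{b}$ kills every cross term $g(Y_{rs}^{a},Y_{r's'}^{b})$, while the factor $\tfrac1{\sqrt2}$ compensates for the two copies of $Y_{rs}$ in each matrix, giving $g(Y_{rs}^{a},Y_{r's'}^{a})=g(Y_{rs}^{b},Y_{r's'}^{b})=\delta_{rr'}\delta_{ss'}$.

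The computation is essentially routine, so there is no serious obstacle; the one place demanding care is the block evaluation of $\diff\sigma$ and the correct bookkeeping of the transpose conditions. In particular, the subtle point is that the off-diagonal block $C$ of an element of $\m$ must be \emph{skew}-symmetric rather than symmetric, which is exactly what makes the $Y_{rs}^{b}$ (built from skew-symmetric $Y_{rs}$) land in $\m$ and not in $\u{n}$, consistent with the embedding $\psi$ of Proposition \ref{embeddingunso2n}.
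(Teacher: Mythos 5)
Your proof is correct, and it reaches the same block description of $\m$ as the paper but by a different route. The paper's own proof starts from the embedding $\psi:\U{n}\rightarrow\SO{2n}$ of Proposition \ref{embeddingunso2n}, writes down $\u{n}=\left\{\left(\begin{smallmatrix} Y&-W\\ W&Y\end{smallmatrix}\right) \rvert\ W^t=W,\ Y^t=-Y\right\}$ directly, and then simply asserts that the complementary block shape $\left(\begin{smallmatrix} Y&W\\ W&-Y\end{smallmatrix}\right)$ with both blocks skew-symmetric is "easily verified" to be the orthogonal complement spanned by $\mathcal{B}_\m$. You instead realise $\m$ as the $(-1)$-eigenspace of $\diff\sigma(X)=J_nXJ_n^t$, which buys you two things the paper leaves implicit: orthogonality of $\u{n}$ and $\m$ comes for free from the self-adjointness of the orthogonal involution $\diff\sigma$, and the dimension count $\dim\m=n(n-1)=\dim\so{2n}-\dim\u{n}$ (together with $\u{n}$ being contained in the $(+1)$-eigenspace, since $\sigma$ fixes $\U{n}$ pointwise) pins down both eigenspaces without having to match your block formulas against the image of $\psi$. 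Your identification of the subtle point — that the off-diagonal block of an element of $\m$ is skew-symmetric, in contrast to the symmetric off-diagonal block appearing in $\u{n}$ — is exactly the bookkeeping the paper's "easily verified" is hiding, and your verification that $\mathcal{B}_\m$ is in fact orthonormal (not merely a basis) is a small strengthening of what is claimed.
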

\begin{proof}
    In light of Proposition \ref{embeddingunso2n}, we note that the Lie algebra of $\u{n}$ in $\so{2n}$ is the vector subspace $$\u{n}=\left\{\begin{pmatrix}
        Y&-W\\
        W&Y
    \end{pmatrix}\ \rvert \ W^t-W=0,\ \textrm{and} \ Y^t+Y=0\right\}.$$ It can now be easily verified that the subspace $$\m=\left\{\begin{pmatrix}
        Y&W\\
        W&-Y
    \end{pmatrix}\in\so{2n} \ \rvert \ Y^t+Y=0,\ \textrm{and} \ W^t+W=0\right\}$$ of $\so{2n}$, spanned by $\mathcal{B}_\m$ as given above, is the orthogonal complement of $\u{n}$ in $\so{2n}.$
\end{proof}

\section{Eigenfunctions on $\SO{2n}/\U n$ }\label{efso2nunsection}
In their paper \cite{Gud-Sif-Sob-2}, Gudmundsson, Siffert and Sobak constructed the family of eigenfuntions given in Proposition \ref{so2nunprop}. In Proposition \ref{so2nunnew}, we show that the conditions on the vectors $a,b$ can be relaxed. The proof still works as before. This serves two purposes. We not only obtain more eigenfunctions, but as we will see in Section \ref{cmsso2nun}, the eigenfunctions given in Proposition \ref{so2nunprop} might not always be regular over $0\in\cn$. Further, the computation of the gradient can be very complicated. We  however successfully apply Theorem \ref{GM} to a subset of the newly found eigenfunctions, see Theorem \ref{thmsso2nun}.

\begin{remark}
Notice that for $y\in\U{n},$ $$\Phi(y)=yJ_ny^t=J_n.$$
Hence $\Phi$ is $\U{n}$-invariant, in the sense that for any $y\in\U{n},x\in\SO{2n},$ $$\Phi(xy)=xyJ_ny^tx^t=xJ_nx^t=\Phi(x).$$
\end{remark}

\begin{proposition}{\rm \cite{Gud-Sif-Sob-2}}\label{so2nunprop}
Let $V$ be an isotropic subspace of $\cn^{2n}$ and let $a,b\in V$ be linearly independent. Let $A\in\cn^{2n\times2n}$ be the skew-symmetric matrix $$A=\sum_{j,\alpha=1}^{2n} a_jb_\alpha Y_{j\alpha}.$$ Then $$\hat{\phi}_{a,b}(x)=\trace(A\Phi(x))=\trace(AxJ_nx^t)$$
is a $\U{n}$-invariant eigenfunction on $\SO{2n},$ and hence induces an eigenfunction $\phi_{a,b}$ on the quotient space $\SO{2n}/\U n.$ The eigenvalues $\lambda$ and $\mu$ of $\phi_{a,b}$ satisfy $$\lambda=-2(n-1)\quad \textrm{and}\quad \mu=-1.$$
\end{proposition}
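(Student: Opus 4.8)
The plan is to prove the eigenfunction property on the total space $\SO{2n}$ and then descend to the quotient. The preceding remark shows that $\Phi(xy)=\Phi(x)$ for every $y\in\U{n}$, so $\hat{\phi}_{a,b}=\trace(A\,\Phi(\cdot))$ is $\U{n}$-invariant and therefore of the form $\phi_{a,b}\circ\pi$ for the natural projection $\pi\colon\SO{2n}\to\SO{2n}/\U{n}$. Since $\pi$ is a harmonic Riemannian submersion, Corollary \ref{inducedefs} reduces the whole statement to showing that $\hat{\phi}_{a,b}$ is a $(\lambda,\mu)$-eigenfunction on $\SO{2n}$ with $\lambda=-2(n-1)$ and $\mu=-1$; the linear independence of $a,b$ guarantees $A\neq 0$ and hence $\hat{\phi}_{a,b}\not\equiv 0$.

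First I would put $\hat{\phi}_{a,b}$ into a workable shape. Writing $A=\tfrac{1}{\sqrt2}(ab^{t}-ba^{t})$ and using $\trace(ab^{t}M)=b^{t}Ma$ together with $J_n^{t}=-J_n$, one obtains
\[
\hat{\phi}_{a,b}(x)=\sqrt2\,b^{t}xJ_nx^{t}a=\sqrt2\sum_{r=1}^{n}\bigl(h_rf_{r+n}-h_{r+n}f_r\bigr),
\]
where $f_s=\sum_{p}a_p x_{ps}$ and $h_s=\sum_{p}b_p x_{ps}$ are the linear coordinate functions pairing $a$, respectively $b$, with the $s$-th column of $x$. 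The computational heart is that, by $\cn$-linearity of $\tau$, $\cn$-bilinearity of $\kappa$, and Lemma \ref{xijeigenfunctions} applied to $\SO{2n}$,
\[
\tau(f_s)=-\tfrac{2n-1}{2}\,f_s,\qquad \kappa(f_s,f_t)=-\tfrac12\,f_sf_t,\qquad \kappa(f_s,h_t)=-\tfrac12\,f_th_s,
\]
with the analogous relations for $h$. The crucial point is that the $\delta$-correction terms in $\kappa$ drop out exactly because $a,b$ span an isotropic subspace, i.e. $(a,a)=(b,b)=(a,b)=0$; this is the single place where the isotropy hypothesis enters.

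The tension field then follows from the product rule (Proposition \ref{taukappa}). For the antisymmetric pair $\Psi_r=h_rf_{r+n}-h_{r+n}f_r$, the two $\tau$-terms contribute $-(2n-1)\Psi_r$, while the conformality corrections $2\kappa(h_r,f_{r+n})-2\kappa(h_{r+n},f_r)$ collapse back to exactly $+\Psi_r$; summing over $r$ gives $\tau(\hat{\phi}_{a,b})=-2(n-1)\hat{\phi}_{a,b}$, so $\lambda=-2(n-1)$.

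The conformality operator is the main obstacle, since $\kappa(\hat{\phi}_{a,b},\hat{\phi}_{a,b})$ is quartic in the coordinate functions. Abbreviating $U=\sum_r h_rf_{r+n}$ and $W=\sum_r h_{r+n}f_r$, so that $\hat{\phi}_{a,b}=\sqrt2\,(U-W)$, I would expand by bilinearity and the product rule, substitute the three $\kappa$-relations above, and track the resulting monomials. The mixed monomials $h_rh_{r+n}f_{r'}f_{r'+n}$ and $f_rf_{r+n}h_{r'}h_{r'+n}$ cancel in pairs, and the survivors reassemble as $-U^2-W^2+2UW=-(U-W)^2$, whence $\kappa(\hat{\phi}_{a,b},\hat{\phi}_{a,b})=-\hat{\phi}_{a,b}^2$ and $\mu=-1$. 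The genuine labour is this monomial bookkeeping; what makes it succeed is the clean collapse into a perfect square, which depends on both the skew-symmetry carried by $J_n$ and the isotropy relations isolated above.
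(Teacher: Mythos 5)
Your argument is correct, and the key identities check out: with $(a,a)=(b,b)=(a,b)=0$ the relations $\kappa(f_s,f_t)=-\tfrac12 f_sf_t$, $\kappa(f_s,h_t)=-\tfrac12 f_th_s$ do follow from Lemma \ref{xijeigenfunctions} applied to $\SO{2n}$, the tension-field bookkeeping gives $-(2n-1)\Psi_r+\Psi_r=-2(n-1)\Psi_r$, and the cross terms $\bigl(\sum_r f_rf_{r+n}\bigr)\bigl(\sum_r h_rh_{r+n}\bigr)$ appear with coefficient $-2$ in $\kappa(U,U)+\kappa(W,W)$ and $+2$ in $-2\kappa(U,W)$, so the quartic expansion really does collapse to $-(U-W)^2$. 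However, your route differs from the one taken here (following \cite{Gud-Sif-Sob-2}): the proof in the text works not with the linear functions $f_s,h_s$ but with the quadratic $\U{n}$-invariants $\Phi_{j\alpha}$, the components of $\Phi(x)=xJ_nx^t$, whose $\tau$- and $\kappa$-relations are taken as the starting point, and then expands $\hat\phi_{a,b}=-\sum A_{j\alpha}\Phi_{j\alpha}$ bilinearly. Your version is more elementary and self-contained, needing only the coordinate-function lemma; but it spends the full isotropy hypothesis early, inside $\kappa(f_s,f_t)$, $\kappa(h_s,h_t)$ and $\kappa(f_s,h_t)$ separately. The $\Phi_{j\alpha}$-route instead carries the error term along uncollapsed and ends with $\kappa(\hat\phi_{a,b},\hat\phi_{a,b})=-\hat\phi_{a,b}^2+2\bigl((a,a)(b,b)-(a,b)^2\bigr)$, which exposes that only the single scalar condition $(a,a)(b,b)-(a,b)^2=0$ is needed --- exactly the relaxation exploited in Proposition \ref{so2nunnew}. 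So your proof establishes the proposition as stated, but would need to be reorganised (delaying the use of the $\delta$-corrections until the final assembly) to recover that sharper statement.
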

\begin{remark}
    Let $X$ be an elements of the Lie algebra $\so{2n}$ of $\SO{2n}.$ Then $X$ acts on the function $\hat{\phi}_{a,b}$ as follows.
    \begin{eqnarray*}
        X(\hat{\phi}_{a,b})&=&\frac{d}{ds}(\hat{\phi}_{a,b}(x\cdot \exp(sX))\rvert_{s=0}\\
        &=&\trace((XJ_n+J_nX^t)x^tAx).
    \end{eqnarray*}
    We have seen in Remark \ref{decompositionso2nun} that if $X\in\u{n},$ then $X$ is of the form $$X=\begin{pmatrix}
        Y&-W\\
        W&Y
    \end{pmatrix},$$ where $W^t-W=0$ and $Y^t+Y=0$.
    In this case, $$XJ_n+J_nX^t=\begin{pmatrix}
        W-W^t&Y+Y^t\\
        -(Y+Y^t)&W-W^t
    \end{pmatrix}=0.$$
    This demonstrates that $\hat{\phi}_{a,b}$ is $\U{n}$-invariant.
\end{remark}

    The following result is the same as Proposition \ref{so2nunprop} from \cite{Gud-Sif-Sob-2}, except that the conditions on $a,b$ are slightly weakened.

\begin{proposition}\label{so2nunnew}
    Let $a,b\in\cn^{2n}$ be linearly independent vectors satisfying $$(a,a)(b,b)-(a,b)^2=0.$$ Let $A\in\cn^{2n\times2n}$ be the skew-symmetric matrix $$A=\sum_{j,\alpha=1}^{2n} a_jb_\alpha Y_{j\alpha}.$$ Then $$\hat{\phi}_{a,b}(x)=\trace(A\Phi(x))=\trace(AxJ_nx^t)$$
is an $\U{n}$-invariant eigenfunction on $\SO{2n},$ and hence induces an eigenfunction $\phi_{a,b}$ on the quotient space $\SO{2n}/\U n.$
\end{proposition}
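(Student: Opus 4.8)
The plan is to follow the proof of Proposition~\ref{so2nunprop} essentially verbatim and to pinpoint the one place where the hypothesis on $a,b$ is actually used, checking that it can be met by the weaker assumption $(a,a)(b,b)-(a,b)^2=0$. The argument has two independent halves: the $\U{n}$-invariance of $\hat\phi_{a,b}$ and the two eigenfunction identities. The invariance is already established in the Remark preceding Proposition~\ref{so2nunprop}, where the computation $XJ_n+J_nX^t=0$ for $X\in\u{n}$ shows $X(\hat\phi_{a,b})=0$; this uses nothing about $a$ and $b$, so it carries over unchanged. Consequently $\hat\phi_{a,b}$ descends to a function $\phi_{a,b}$ on $\SO{2n}/\U{n}$ with $\hat\phi_{a,b}=\phi_{a,b}\circ\pi$, and since $\pi\colon\SO{2n}\to\SO{2n}/\U{n}$ is a harmonic Riemannian submersion, Corollary~\ref{inducedefs} reduces everything to showing that $\hat\phi_{a,b}$ is a $(\lambda,\mu)$-eigenfunction on $\SO{2n}$ itself.

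For the tension field I would work on $\SO{2n}$ with the bi-invariant metric, so that $\tau(\hat\phi_{a,b})=\sum_X X(X(\hat\phi_{a,b}))$ over an orthonormal basis of $\so{2n}$. Writing $\hat\phi_{a,b}=\trace(AxJ_nx^t)$ as a fixed linear combination of the coordinate functions $x\mapsto(xJ_nx^t)_{j\alpha}$ and applying the product rules of Proposition~\ref{taukappa} together with the $\SO{2n}$-relations of Lemma~\ref{xijeigenfunctions}, the cross terms collapse by antisymmetry and tracelessness of $J_n$, yielding $\tau(\hat\phi_{a,b})=-2(n-1)\,\hat\phi_{a,b}$ with no condition on $a,b$. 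The crux is the conformality identity. Expanding $\kappa(\hat\phi_{a,b},\hat\phi_{a,b})$ by bilinearity and substituting the action of the basis elements on the coordinate functions produces $\mu\,\hat\phi_{a,b}^2$ together with a block of remainder terms. The key point I expect is that these remainder terms assemble into a scalar multiple of $\trace(AA^t)$, and since $A=\tfrac{1}{\sqrt 2}(ab^t-ba^t)$ a direct computation gives $$\trace(AA^t)=(a,a)(b,b)-(a,b)^2.$$ Thus the sole obstruction to $\hat\phi_{a,b}$ being an eigenfunction is the vanishing of this Gram determinant, which is exactly the hypothesis, giving $\kappa(\hat\phi_{a,b},\hat\phi_{a,b})=-\hat\phi_{a,b}^2$.

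The main obstacle will be the bookkeeping in the conformality step: one must verify that every contribution to $\kappa(\hat\phi_{a,b},\hat\phi_{a,b})$ which is not already a multiple of $\hat\phi_{a,b}^2$ can be grouped so as to factor through $(a,a)(b,b)-(a,b)^2$, rather than needing the termwise vanishing $(a,a)=(b,b)=(a,b)=0$ that an isotropic subspace $V$ would supply. This is the precise sense in which the original isotropy hypothesis is stronger than necessary: only the quadratic invariant $\trace(AA^t)$ enters, not the individual inner products. Once this grouping is confirmed, the assumption $(a,a)(b,b)-(a,b)^2=0$ forces the remainder to vanish, and together with the tension field identity and the $\U{n}$-invariance this shows $\hat\phi_{a,b}$, and hence $\phi_{a,b}$, is an eigenfunction with the same eigenvalues $\lambda=-2(n-1)$ and $\mu=-1$ as in Proposition~\ref{so2nunprop}.
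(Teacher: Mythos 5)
Your proposal is correct and follows essentially the same route as the paper: invariance is untouched by the weakened hypothesis, the tension field identity is unconditional, and the only obstruction in the conformality computation is the constant remainder, which the paper likewise identifies as $2\sum_{j,k}A_{jk}^2=2\bigl((a,a)(b,b)-(a,b)^2\bigr)$, i.e.\ $2\trace(AA^t)$ exactly as you predict. The only cosmetic difference is that the paper quotes the $\tau$- and $\kappa$-relations for the components $\Phi_{j\alpha}$ directly from \cite{Gud-Sif-Sob-2} rather than rederiving them from the coordinate functions via the product rules.
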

\begin{proof}
In this proof we proceed as in the proof of Proposition \ref{so2nunprop}. Let $\Phi:\SO{2n}\rightarrow\SO{2n}$ be the map $$\Phi(x)=x\cdot J_n\cdot x^t.$$ Then, as shown in \cite{Gud-Sif-Sob-2}, the components of $\Phi$ satisfy the following:
\begin{eqnarray*}
    \tau\left(\Phi_{j\alpha}\right)&=&-2(n-1)\cdot\Phi_{j\alpha},\\
    \kappa\left(\Phi_{j\alpha},\Phi_{k\beta}\right)&=&-(\Phi_{j\beta}\Phi_{k\alpha}+\Phi_{jk}\Phi_{\alpha\beta})-(\delta_{k\alpha}\delta_{j\beta}-\delta_{jk}\delta_{\alpha\beta}).
\end{eqnarray*}
Since $A_{j\alpha}=\frac{1}{\sqrt{2}}(a_jb_\alpha-a_\alpha b_j),$ we may now write $$\hat{\phi}_{a,b}(x)=\trace(A\Phi(x))=-\sum_{j,\alpha=1}^{2n} A_{j\alpha}\Phi_{j\alpha}.$$
Using linearity of $\tau,$ we immediately obtain that \begin{eqnarray*}
    \tau(\hat{\phi}_{a,b})&=&-\sum_{j,\alpha=1}^{2n} A_{j\alpha}\tau(\Phi_{j\alpha})\\
    &=&\sum_{j,\alpha=1}^{2n} A_{j\alpha}\cdot2(n-1)\cdot\Phi_{j\alpha}\\
    &=&-2(n-1)\cdot\hat{\phi}_{a,b}.
\end{eqnarray*}
We now calculate $\kappa(\hat{\phi}_{a,b},\hat{\phi}_{a,b}).$
\begin{eqnarray*}
    \kappa(\hat{\phi}_{a,b},\hat{\phi}_{a,b})&=&\sum_{j,\alpha,k,\beta=1}^{2n} A_{j\alpha}A_{k\beta}\cdot\kappa(\Phi_{j\alpha},\Phi_{k\beta})\\
    &=&-\sum_{j,\alpha,k,\beta=1}^{2n} A_{j\alpha}A_{k\beta}\cdot(\Phi_{j\beta}\Phi_{k\alpha}+\Phi_{jk}\Phi_{\alpha\beta})\\
    &&-\sum_{j,\alpha,k,\beta=1}^{2n} A_{j\alpha}A_{k\beta}\cdot(\delta_{k\alpha}\delta_{j\beta}-\delta_{jk}\delta_{\alpha\beta})\\
    &=&-\frac{1}{2}\cdot\sum_{j,\alpha,k,\beta=1}^{2n}(a_jb_\alpha-a_\alpha b_j)(a_kb_\beta-a_\beta b_k)\cdot(\Phi_{j\beta}\Phi_{k\alpha}+\Phi_{jk}\Phi_{\alpha\beta})\\
    &&-\sum_{j,k=1}^{2n}A_{jk}A_{kj}+\sum_{j,k=1}^{2n}A_{jk}^2.
\end{eqnarray*}
The first sum now simplifies to $2\cdot\hat{\phi}_{a,b}^2,$ using the identity $$\hat{\phi}_{a,b}^2=2\cdot\left(\sum_{j,\alpha} a_jb_\alpha\Phi_{j\alpha}\right)^2.$$ Terms of the form $$\left(\sum_{j,\alpha}a_ja_\alpha\Phi_{j\alpha}\right)\cdot\left(\sum_{k,\beta}a_ka_\beta\Phi_{k\beta}\right)$$ appear twice with a plus and twice with a minus sign, and thus they cancel.
By skew-symmetry of $A,$ we have that $$-\sum_{j,k=1}^{2n}A_{jk}A_{kj}+\sum_{j,k=1}^{2n}A_{jk}^2=2\cdot\sum_{j,k=1}^{2n}A_{jk}^2.$$
    As in the proof of Proposition \ref{so2nunprop}, we obtain the following equation:
    $$\kappa(\hat{\phi}_{a,b},\hat{\phi}_{a,b})=-\hat{\phi}_{a,b}^2+2\cdot\left((a,a)(b,b)-(a,b)^2\right).$$
    Since we have assumed that $(a,a)(b,b)-(a,b)^2=0$ we conclude that $$\kappa(\hat{\phi}_{a,b},\hat{\phi}_{a,b})=-\hat{\phi}_{a,b}^2.$$ 
\end{proof}

\section{Compact Minimal Submanifolds of $\SO{2n}/\U n$}\label{cmsso2nun}
We will now investigate the regularity over $0\in\cn$ of the functions given in Propositions \ref{so2nunprop} and \ref{so2nunnew}. 

In Example \ref{exampleso2nunold} we show that the functions described in Proposition \ref{so2nunprop} are not necessarily regular over $0\in\cn.$
On the other hand, Theorem \ref{thmsso2nun} shows that a particular subset of the functions given in Proposition \ref{so2nunnew} are indeed regular over $0\in\cn.$ This allows us to construct compact minimal submanifolds using Theorem \ref{GM}.

Remark \ref{remarkso2n} provides us with some useful computations.
\begin{remark}\label{remarkso2n}
    Let $\hat{\phi}_{a,b}(x)=\trace(AxJ_nx^t)$ be the function given in Proposition \ref{so2nunprop}. 
    %Recall that we require both of the linearly independent vectors $a,b\in\cn^{2n}\backslash\{0\}$ to satisfy $(a,a)=(b,b)=0.$
    Skew-symmetry of $A$ and $\Phi$ yields \begin{eqnarray*}
        \hat{\phi}_{a,b}(x)&=&-\sum_{j,\alpha} A_{j\alpha}\Phi_{j\alpha}\\
        &=&-\frac{1}{\sqrt{2}}\cdot\sum_{j,\alpha}\left((a_jb_\alpha-b_ja_\alpha)\cdot\sum_{t=1}^n(-x_{j,n+t}x_{\alpha t}+x_{jt}x_{\alpha,n+t})\right)\\
        &=&\frac{2}{\sqrt{2}}\cdot\sum_{t=1}^n\left((a,x_{n+t})(b,x_t)-(a,x_t)(b,x_{n+t})\right).
    \end{eqnarray*}
Consider $Y_{rs}^{a}$ and $Y_{rs}^{b}$ as defined in Proposition \ref{decompositionso2nun}. A computation shows that $Y_{rs}^{a}(\hat{\phi}_{a,b})=0$ if and only if %$$0=\sum_{j,\alpha =1}^{2n}(a_jb_\alpha-b_ja_\alpha)\cdot(-x_{\alpha,n+s}\cdot x_{j,r}+x_{\alpha,n+r}\cdot x_{j,s}-x_{\alpha,s}\cdot x_{j,n+r}+x_{\alpha,r}\cdot x_{j,n+s}),$$ 
$$0=(a,x_r)(b,x_{n+s})+(a,x_{n+r})(b,x_s)-(a,x_s)(b,x_{n+r})-(a,x_{n+s})(b,x_r)$$
and similarly, $Y_{rs}^{b}(\hat{\phi}_{a,b})=0$ if and only if 
%$$0=\sum_{j,\alpha =1}^{2n}(a_jb_\alpha-b_ja_\alpha)\cdot(-x_{\alpha,s}\cdot x_{j,r}+x_{\alpha,r}\cdot x_{j,s}+x_{\alpha,n+s}\cdot x_{j,n+r}-x_{\alpha,n+r}\cdot x_{j,n+s}).$$
$$0=(a,x_r)(b,x_{s})+(a,x_{n+s})(b,x_{n+r})-(a,x_s)(b,x_{r})-(a,x_{n+r})(b,x_{n+s}).$$
\end{remark}

\begin{example}\label{exampleso2nunold}
    Let $n=2,$ and consider $a=(1,i,0,0),b=(0,0,1,i).$ It is clear that $a,b$ satisfy $(a,a)(b,b)-(a,b)^2$. Let $$x=\begin{pmatrix}
        0&0&0&-1\\
        0&0&1&0\\
        1&0&0&0\\
        0&1&0&0
    \end{pmatrix}\in\SO{2n}.$$ We will now use the computations from Remark \ref{remarkso2n}. It is easy to see that $$(a,x_1)=(a,x_2)=(b,x_3)=(b,x_4)=0$$ and $$(a,x_3)=i,\ (a,x_4)=-1,\ (b,x_1)=1,\ (b,x_2)=i.$$
    Then, since $$(a,x_3)(b,x_1)+(a,x_4)(b,x_2)=i-i=0,$$ it follows that $$\hat{\phi}_{a,b}(x)=0.$$ Further, $Y_{12}^{a}(\hat{\phi}_{a,b})=0 \ \textrm{and}\ Y_{12}^{b}(\hat{\phi}_{a,b})=0.$ Consequently, $\hat{\phi}_{a,b}$ is not regular over $0$ for our choices of $a,b.$ 
\end{example}
We will now show that with the relaxed conditions of Proposition \ref{so2nunnew}, it is easier to find suitable vectors $a,b$ that render $\hat{\phi}_{a,b}$ regular over $0$. We start with an easy example in the setting $n=2.$
\begin{example}\label{example2so2nun}
    Let now $\hat{\phi}_{a,b}$ be as in Proposition \ref{so2nunnew}, with the relaxed condition on $b.$ Let $n=2.$ We set $a=(1,i,0,0),$ and $b=(0,0,1,0).$ Clearly the conditions $$(a,a)=0, (a,b)=0$$ are satisfied, and consequently $\hat{\phi}_{a,b}$ as defined in Proposition \ref{so2nunnew} is an eigenfunction on $\SO{4}/\U{2}.$
    
    We now wish to show that $\hat{\phi}_{a,b}$ is regular over $0.$ For our choice of $a$ and $b$ we have that
    $$\hat{\phi}_{a,b}(x)=\frac{2}{\sqrt{2}}((x_{13}+ix_{23})x_{31}+(x_{14}+ix_{24})x_{32}-(x_{11}+ix_{21})x_{33}-(x_{12}+ix_{22})x_{34})).$$
    Since all entries of $x$ are real-valued, $\hat{\phi}_{a,b}(x)=0$ if and only if the following two equations are satisfied:
    \begin{eqnarray}
        0&=&x_{13}x_{31}+x_{14}x_{32}-x_{11}x_{33}-x_{12}x_{34},\label{so2nunphi3}\\
        0&=&x_{23}x_{31}+x_{24}x_{32}-x_{21}x_{33}-x_{22}x_{34}.
    \end{eqnarray}
    Similarly, by separating the real and complex parts, $Y_{12}^{a}(\hat{\phi}_{a,b})=Y_{12}^{b}(\hat{\phi}_{a,b})=0$ if and only if 
    \begin{eqnarray}
        0&=&-x_{14}x_{31}+x_{13}x_{32}-x_{12}x_{33}+x_{11}x_{34},\label{so2nunrow1}\\
        0&=&-x_{24}x_{31}+x_{23}x_{32}-x_{22}x_{33}+x_{21}x_{34},\\
        0&=&-x_{12}x_{31}+x_{11}x_{32}+x_{14}x_{33}-x_{13}x_{34},\label{so2nunrow2}\\
        0&=&-x_{22}x_{31}+x_{21}x_{32}+x_{24}x_{33}-x_{23}x_{34}.
    \end{eqnarray}
    Lastly, $x\in\SO{4}$ yields the following equations:
    \begin{eqnarray}
    0&=&x_{11}x_{31}+x_{12}x_{32}+x_{13}x_{33}+x_{14}x_{34},\label{eqso4}\\
    0&=&x_{21}x_{31}+x_{22}x_{32}+x_{23}x_{33}+x_{24}x_{34}.
    \end{eqnarray}
    We now have 4 equations where entries of the third row and entries of the first appear, namely Equations \ref{so2nunphi3},\ref{so2nunrow1},\ref{so2nunrow2} and \ref{eqso4}. Now assume that for some $x\in\SO{4},$ they all hold true. This means in particular that the non-zero vector $(x_{31},x_{32},x_{33},x_{34})^t$ belongs to the kernel of the matrix $$M=\begin{pmatrix}
        x_{13}&x_{14}&-x_{11}&-x_{12}\\
        -x_{14}&x_{13}&-x_{12}&x_{11}\\
        -x_{12}&x_{11}&x_{14}&-x_{13}\\
        x_{11}&x_{12}&x_{13}&x_{14}.
    \end{pmatrix}.$$
    However, $$\det(M)=(x_{11}^2+x_{12}^2+x_{13}^2+x_{14}^2)^2=1,$$ 
    since $x\in\SO{4}.$ By a result from elementary linear algebra, $\ker M=\{0\}.$
    This contradicts the assumption that the third row has non-zero entries. It follows that $\hat{\phi}_{a,b}$ is regular over $0.$ 

    As a consequence, we see that $\phi_{a,b}$ is regular over $0$ and that $\phi_{a,b}^{-1}(0)$ is a minimal submanifold, due to Theorem \ref{GM}.
\end{example}
We will now give a more general statement on how to choose vectors $a,b$ such that the corresponding function $\phi_{a,b}$ is regular over $0.$
\begin{theorem}\label{thmsso2nun}
    Let  $a,b\in\cn^{2n}$ satisfy the following conditions: \begin{enumerate}
        \item $a$ and $b$ are linearly independent,
        \item $(a,a)=(a,b)=0,$
        \item $b=z\cdot\Tilde{b}$ for some $z\in\cn^*$ and $\Tilde{b}\in\rn^{2n}.$
    \end{enumerate}
    Let $$A=\sum_{j,\alpha=1}^{2n} a_jb_\alpha Y_{j\alpha}.$$
     Then the eigenfunction $$\phi_{a,b}:\SO{2n}/\U{n}\rightarrow\cn,$$ induced by the $\U{n}$-invariant function $\hat{\phi}_{a,b}:\SO{2n}\rightarrow\cn,$ given by $$\hat{\phi}_{a,b}(x)=\trace(AxJ_nx^t),$$ is regular over $0\in\cn$ and further, the fibre $\phi_{a,b}^{-1}(\{0\})$ is a compact minimal submanifold of $\SO{2n}/\U{n}$ of codimension two.
\end{theorem}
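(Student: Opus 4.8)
The plan is to collapse the whole statement into a single regularity claim and then settle it with a short piece of linear algebra. By Proposition~\ref{so2nunnew} the map $\phi_{a,b}$ is an eigenfunction on $\SO{2n}/\U{n}$, with eigenvalues $\lambda=-2(n-1)$ and $\mu=-1$, so $\lambda\neq\mu$ and Corollary~\ref{phi0} guarantees that $0$ is attained. Since $\hat\phi_{a,b}$ is $\U{n}$-invariant and $\pi\colon\SO{2n}\to\SO{2n}/\U{n}$ is a Riemannian submersion, $\hat\phi_{a,b}$ is constant on fibres, so $\nabla\hat\phi_{a,b}$ is horizontal and $\phi_{a,b}$ is a submersion at a fibre point precisely when $\nabla\hat\phi_{a,b}$ is non-zero there. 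Thus the theorem reduces to showing that no $x\in\SO{2n}$ with $\hat\phi_{a,b}(x)=0$ is a critical point; once this is known, Theorem~\ref{GM} produces a codimension-two minimal submanifold, which is compact because it is closed in the compact $\SO{2n}/\U{n}$.

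First I would normalise the data: scaling $b$ by $z^{-1}$ multiplies $A$ and $\hat\phi_{a,b}$ by a constant and alters neither the zero fibre nor the critical set, so by hypothesis~(iii) I may assume $b\in\rn^{2n}$. For fixed $x\in\SO{2n}$ put $u=x^{t}a$ and $v=x^{t}b$, so that $u_{k}=(a,x_{k})$ and $v_{k}=(b,x_{k})$ are the column inner products of Remark~\ref{remarkso2n}; here $v$ is real, and orthogonality of $x$ gives $(u,u)=(a,a)=0$ and $(u,v)=(a,b)=0$. Writing $'$ and $''$ for the first and last $n$ coordinate blocks, and using the basis of $\m$ from Proposition~\ref{decompositionso2nun}, the formulas of Remark~\ref{remarkso2n} translate into the statements that $Y_{rs}^{a}(\hat\phi_{a,b})=0$ for all $r<s$ is equivalent to symmetry of the rank-two matrix $M_{A}=v'(u'')^{t}+v''(u')^{t}$, that $Y_{rs}^{b}(\hat\phi_{a,b})=0$ for all $r<s$ is equivalent to symmetry of $M_{B}=v''(u'')^{t}-v'(u')^{t}$, and that $\hat\phi_{a,b}(x)=0$ is equivalent to $(u'',v')=(u',v'')$. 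So a critical point on the zero fibre means exactly: $M_{A}$ and $M_{B}$ symmetric, $(u'',v')=(u',v'')$, together with $(u,u)=(u,v)=0$.

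The core is a dichotomy on the real span $W=\span_{\rn}\{v',v''\}\subseteq\rn^{n}$, and here the reality of $v$ is essential: positive-definiteness of the standard inner product on $\rn^{n}$ gives $(W^{\perp})^{\perp}=W$, which is exactly what fails for the isotropic complex vectors of Example~\ref{exampleso2nunold}. If $v'$ and $v''$ are linearly independent, then testing symmetry of $M_{A}$ against real vectors orthogonal to $v'$ and $v''$ forces $u',u''\in\span_{\cn}\{v',v''\}$; writing $u'=\alpha v'+\beta v''$ and $u''=\gamma v'+\delta v''$ and imposing symmetry of $M_{A}$, symmetry of $M_{B}$ and $(u'',v')=(u',v'')$ yields $\delta=\alpha$, $\gamma=-\beta$ and $\beta(|v'|^{2}+|v''|^{2})=0$, hence $\beta=\gamma=0$ and $u=\alpha v$; then $x^{t}a=\alpha x^{t}b$ gives $a=\alpha b$, contradicting hypothesis~(i). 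If instead $v'$ and $v''$ are linearly dependent, they are real multiples $\lambda e,\mu e$ of a unit vector $e$ with $(\lambda,\mu)\neq(0,0)$; symmetry of $M_{A}$ and $M_{B}$ gives $\lambda u''+\mu u'\in\cn e$ and $\mu u''-\lambda u'\in\cn e$, and since $\lambda^{2}+\mu^{2}\neq0$ the invertible matrix $\bigl(\begin{smallmatrix}\mu&\lambda\\ -\lambda&\mu\end{smallmatrix}\bigr)$ forces $u'=c_{1}e$, $u''=c_{2}e$; finally $\hat\phi_{a,b}(x)=0$ reads $\lambda c_{2}=\mu c_{1}$ and $(u,v)=0$ reads $\lambda c_{1}+\mu c_{2}=0$, a nonsingular real system giving $c_{1}=c_{2}=0$, i.e. $u=0$ and $a=0$, again a contradiction.

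I expect the dependent case to be the main obstacle to handle cleanly, because there the localisation $u',u''\in\cn e$ needs both symmetry conditions simultaneously, and because one must track that hypotheses~(i)--(iii) are each used where needed: independence in the non-degenerate case, $(a,b)=0$ in the degenerate case, and the reality of $b$ throughout to keep the orthogonal-complement step valid. With $\nabla\hat\phi_{a,b}$ shown non-vanishing on the zero fibre, regularity descends to $\phi_{a,b}$ exactly as in Theorem~\ref{thsuso}, and Theorem~\ref{GM} finishes the proof.
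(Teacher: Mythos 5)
Your argument is correct, and it reaches the conclusion by a genuinely different route from the one in the text. Both proofs start identically: reduce to $b\in\rn^{2n}$ by scaling, get the attainment of $0$ from Corollary \ref{phi0}, and reduce the whole theorem to the non-vanishing of $\nabla\hat{\phi}_{a,b}$ along the zero fibre. From there the text decomposes $a=u+iv$ into real and imaginary parts, takes real parts of the critical equations, and packages a selection of them as a $2n\times 2n$ real linear system in the unknowns $(b,x_1),\dots,(b,x_{2n})$ whose coefficient matrix has the form $(u,x_1)\cdot I+S$ with $S$ real skew-symmetric; Lemma \ref{SIinvertible} then forces $b=0$, a contradiction. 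You instead keep $u=x^{t}a$ complex, recast the full set of critical equations as the symmetry of the two matrices $M_A$, $M_B$ of rank at most two built from the blocks of $u$ and $v=x^{t}b$, and run a dichotomy on whether the blocks $v'$, $v''$ are linearly independent over $\rn$, concluding in either case that $a$ is a scalar multiple of $b$ (or that $a=0$), against hypothesis (i). I checked the translation into the symmetry conditions and both branches of the case analysis; they are sound. Your version is more structural: it isolates exactly where the reality of $b$ enters, namely the step $(W^\perp)^\perp=W$ for the real span $W$ of $v',v''$, which is precisely what fails for the isotropic $b$ of Example \ref{exampleso2nunold}, and it exhibits the obstruction to regularity as literal proportionality of $a$ and $b$. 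The price is the two-case analysis and the bookkeeping needed to verify that symmetry of $M_A$ and $M_B$ encodes the vanishing of all the $Y_{rs}^{a}$- and $Y_{rs}^{b}$-derivatives; the text trades that structure for a single appeal to the invertibility of scalar-plus-skew matrices.
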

\begin{proof}
     Certainly $\phi_{a,b}$ satisfies the conditions of Proposition \ref{so2nunnew}, and thus $\phi_{a,b}$ is an eigenfunction on $\SO{2n}/\U{n}.$ Due to Corollary \ref{phi0}, it is clear that $0$ lies in the image of $\phi_{a,b}.$
     For $x\in\SO{2n},$ we denote $x_1,\dots,x_{2n}$ the linearly independent columns of $x.$ 

     We will first consider the case $b\in\rn^{2n}.$
     Assume towards a contradiction that for a fixed $x\in\SO{2n},$ $\hat{\phi}_{a,b}(x)=0$ and $\hat{\phi}_{a,b}$ is not regular at $x,$ meaning for all $1\leq r<s\leq2n$ we have that $Y_{rs}^{a}(\hat{\phi}_{a,b})=0 \  \textrm{and} \  Y_{rs}^{b}(\hat{\phi}_{a,b})=0.$

     In the following we will use the formulae given in Remark \ref{remarkso2n}. Further, we decompose $a$ into its real and imaginary parts $$a=u+i\cdot v,$$ where $u,v\in\rn^{2n}\backslash\{0\}.$ 
 \begin{eqnarray*}
        \hat{\phi}_{a,b}(x)
        &=&\frac{2}{\sqrt{2}}\cdot\sum_{t=1}^n\left((u+iv,x_{n+t})(b,x_t)-(u+iv,x_t)(b,x_{n+t})\right).
    \end{eqnarray*}
    Since all entries of the matrix $x$ and the vector $b$ are real-valued, $\hat{\phi}_{a,b}(x)=0$ implies that   \begin{eqnarray*}
        0&=&\sum_{t=1}^n(u,x_{n+t})(b,x_t)-\sum_{t=1}^n(u,x_{j,t})(b,x_{n+t}),\label{so2neq1}
    \end{eqnarray*}
    We now turn our attention to the gradient.
    Remark \ref{remarkso2n} states that $Y_{rs}^{a}(\hat{\phi}_{a,b})=0$ if and only if 
$$0=(a,x_{n+s})(b,x_r)-(a,x_{n+r})(b,x_s)+(a,x_s)(b,x_{n+r})-(a,x_r)(b,x_{n+s})$$
and similarly, $Y_{rs}^{b}(\hat{\phi}_{a,b})=0$ if and only if 
$$0=(a,x_s)(b,x_{r})-(a,x_r)(b,x_{s})-(a,x_{n+s})(b,x_{n+r})+(a,x_{n+r})(b,x_{n+s}).$$

    By taking the real parts of the equations $Y_{rs}^{a}(\hat{\phi}_{a,b})=0$ and $Y_{rs}^{b}(\hat{\phi}_{a,b})=0$ for $1\leq r<s\leq2n,$ we obtain the following equations:
    \begin{eqnarray*}
        0&=&-(u,x_{n+s})  (b,x_{r})+(u,x_{n+r})  (b,x_{s})-(u,x_{s})  (b,x_{n+r})+(u,x_{r})  (b,x_{n+s}),\\
        0&=&-(u,x_{s})  (b,x_{r})+(u,x_{r})  (b,x_{s})+(u,x_{n+s})  (b,x_{n+r})-(u,x_{n+r})  (b,x_{n+s}),\\
    \end{eqnarray*}
As we have established in Lemma \ref{lemmaiso}, $u\neq0.$  Since the columns $x_1,\dots,x_{2n}$ form an orthonormal basis of $\rn^{2n},$ there exist at least one index $1\leq r\leq2n,$ such that $$(u,x_r)\neq0.$$ Without loss of generality, assume $r=1.$ Further, we note that $$0=(a,b)=(u+iv,b)=(u,b)+i(v,b)=\sum_{t=1}^{2n}(u,x_t)(b,x_t)+i\cdot\sum_{t=1}^{2n}(v,x_t)(b,x_t).$$ Then clearly the real part of the right hand side vanishes, i.e. $$0=\sum_{t=1}^{2n}(u,x_t)(b,x_t).$$
Then for $ 1< k\leq n,$ we obtain the following system of $2(n-1)+2=2n$ equations.
     \begin{eqnarray*}
        0&=&\sum_{t=1}^{2n}(u,x_t)(b,x_{t}),\\
        0&=&-(u,x_{k})  (b,x_{r})+(u,x_{r})  (b,x_{k})+(u,x_{n+k})  (b,x_{n+r})-(u,x_{n+r})  (b,x_{n+k}),\\
        0&=&-\sum_{t=1}^n(u,x_{n+t})(b,x_{t})+\sum_{t=1}^n(u,x_{t})(b,x_{n+t}),\\
        0&=&-(u,x_{n+k})  (b,x_{r})+(u,x_{n+r})  (b,x_{k})-(u,x_{k})  (b,x_{n+r})+(u,x_{r})  (b,x_{n+k}).
    \end{eqnarray*}
These equations are true if and only if the vector $$((b,x_1),\dots,(b,x_{2n}))^t$$ belongs to the kernel of the following matrix $M.$
    {\small $$\begin{pmatrix}
        (u,x_1)&(u,x_2)&(u,x_3)&\dots&(u,x_n)&(u,x_{n+1})&(u,x_{n+2})&(u,x_{n+3})&\dots&(u,x_{2n})\\
        -(u,x_2)&(u,x_1)&0&\dots&0&(u,x_{n+2})&-(u,x_{n+1})&0&\dots&0\\
        -(u,x_3)&0&(u,x_1)&\dots&0&(u,x_{n+3})&0&-(u,x_{n+1})&\dots&0\\
        -(u,x_n)&0&0&\dots&(u,x_1)&(u,x_{2n})&0&0&\dots&-(u,x_{n+1})\\
        \vdots&\vdots&\vdots&\vdots&\vdots&\vdots&\vdots&\vdots&\vdots&\vdots\\
        -(u,x_{n+1})&(u,x_{n+2})&(u,x_{n+3})&\dots&(u,x_{2n})&(u,x_{1})&(u,x_{2})&(u,x_{3})&\dots&(u,x_{n})\\
        -(u,x_{n+2})&(u,x_{n+1})&0&\dots&0&-(u,x_{2})&(u,x_{1})&0&\dots&0\\
        -(u,x_{n+3})&0&(u,x_{n+1})&\dots&0&-(u,x_{3})&0&(u,x_{1})&\dots&0\\
        \vdots&\vdots&\vdots&\vdots&\vdots&\vdots&\vdots&\vdots&\vdots&\vdots\\
        -(u,x_{2n})&0&0&\dots&(u,x_{n+1})&-(u,x_{n})&0&0&\dots&(u,x_1)\\
    \end{pmatrix}$$}
     
    It is now easy to see that $M=S+(u,x_1)\cdot I,$ where $S$ is a skew-symmetric real-valued matrix. It is well known that matrices of this form are invertible (as shown in Lemma \ref{SIinvertible}), and consequently $((b,x_1),\dots,(b,x_{2n}))^t=(0,\dots,0).$
We conclude that $b=0$. This contradicts our assumptions, and we conclude that the $\U{n}$-invariant function $\hat{\phi}_{a,b}:\SO{2n}\rightarrow\cn$ is regular over 0. It follows that the compact fibre $\phi_{a,b}^{-1}(0)$ is regular.

        According to Theorem \ref{GM}, $\phi_{a,b}^{-1}(\{0\})$ is a compact minimal submanifold of $\SO{2n}/\U n$ of codimension two.
        
    If $b=z\cdot\Tilde{b},$ for some non-zero $z\in\cn$ and $\Tilde{b}\in\rn^{2n},$ then due to linearity of the form $(\cdot,\cdot),$ we may divide all equations by $z$ before taking real and complex parts. We then proceed as in the real case.

\end{proof}

%%%%%%%%%%%%%%%%%%%%%%%%%%%%%%%%%%%%%%%%%%%%%%%%%%%%%%%%%%%%%%

%%%%%%%%%%%%%%%%%%%%%%%%%%%%%%%%%%%%%%%%

\chapter{The Symmetric Space $\SU{2n}/\Sp{n}$}
 We start off with some basic calculations in Section \ref{prelimsu2nspn}, and in Section \ref{sectionefsu2nspn} we look at a family of eigenfunctions, which was constructed by Gudmundsson, Siffert and Sobak in their paper \cite{Gud-Sif-Sob-2}. In Section \ref{cmssu2nspn} we use those eigenfunctions to construct compact minimal submanifolds of the symmetric space $\SU{2n}/\Sp{n}.$ 

\section{Preliminaries}\label{prelimsu2nspn}
We show how the quaternionic unitary group $\Sp{n}$ can be identified with a subgroup of $\SU{2n}.$ We also present the corresponding orthogonal decomposition of the Lie algebra $\su{n}$ of $\SU{n},$ which we will work with later.
\begin{proposition}\label{spninsu2n}
The map $\psi:\GLH{n}\rightarrow\GLC{2n}$ given by $$\psi:q=z+jw\mapsto\begin{pmatrix}
    z&-\Bar{w}\\
    w&\Bar{z}
\end{pmatrix}$$ is an injective homomorphism of Lie groups. Consequently, we may identify $\Sp{n}$ with its image under $\psi.$ Further, $\psi(\Sp{n})\subset\SU{2n}.$
\end{proposition}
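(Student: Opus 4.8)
The plan is to check three things in order: that $\psi$ is a group homomorphism, that it is injective, and that it carries $\Sp{n}$ into $\SU{2n}$. Smoothness of $\psi$ is immediate, since the entries of $\psi(q)$ depend real-linearly on the complex matrices $z,w$; thus ``homomorphism of Lie groups'' reduces to the algebraic homomorphism property.

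First I would record the bookkeeping rules governing the complex form $q=z+jw$. Working entrywise, the anticommutation of $j$ with $\cn$ gives $jc=\bar c\,j$ and $cj=j\,\bar c$ for $c\in\cn$, hence $jW=\bar Wj$ and $Wj=j\bar W$ for complex matrices $W$. Expanding a product and collecting the $\cn$-part and the $j$-part then yields
\[
(z_1+jw_1)(z_2+jw_2)=(z_1z_2-\bar w_1w_2)+j\,(w_1z_2+\bar z_1w_2).
\]
A direct block multiplication of $\psi(q_1)\psi(q_2)$ reproduces exactly these two components in all four blocks, proving $\psi(q_1q_2)=\psi(q_1)\psi(q_2)$. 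Injectivity is then read off the block form: $\psi(q)=\psi(q')$ forces $z=z'$ and $w=w'$, and in particular $\ker\psi=\{I_n\}$. Finally $\psi(q)$ is invertible for $q\in\GLH n$ because $\psi(q)\psi(q^{-1})=\psi(I_n)=I_{2n}$, so $\psi$ indeed lands in $\GLC{2n}$.

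For the inclusion $\psi(\Sp n)\subset\SU{2n}$ I would first establish the intertwining identity $\psi(q^*)=\psi(q)^*$, where $*$ on the left is the quaternionic conjugate-transpose and on the right the Hermitian adjoint. Using $\overline{jw}=-jw$ together with the transpose I get $q^*=\bar z^t-jw^t$, and comparing $\psi(q^*)$ block-by-block with $\overline{\psi(q)}^{\,t}$ confirms the identity. Since $q\in\Sp n$ means $qq^*=I_n$, applying $\psi$ gives $\psi(q)\psi(q)^*=I_{2n}$, so $\psi(q)\in\U{2n}$.

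The remaining point, $\det\psi(q)=1$, is the step I expect to require the most care. My preferred route is to show that $\psi(q)$ is in addition complex-symplectic. Writing out $q^*q=I_n$ in the complex form produces the two real relations $z^*z+w^*w=I_n$ and $z^tw=w^tz$; feeding these into the product $\psi(q)^t J\,\psi(q)$, with $J=\begin{pmatrix}0&I_n\\-I_n&0\end{pmatrix}$, collapses all four blocks to those of $J$, so $\psi(q)\in\SpC{2n}$. As every complex-symplectic matrix has determinant one (for instance by the Pfaffian identity $\pf(S^tJS)=\det(S)\,\pf(J)$), we conclude $\det\psi(q)=1$, and together with unitarity this gives $\psi(q)\in\SU{2n}$. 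An alternative for this last step avoids the symplectic computation: the relation $J\psi(q)=\overline{\psi(q)}J$ shows $\psi(q)$ is similar to $\overline{\psi(q)}$, whence $\det\psi(q)$ is real; being also of modulus one it equals $\pm1$, and since $\Sp n$ is connected and $\det\psi$ is continuous with value $1$ at the identity, it is identically $1$.
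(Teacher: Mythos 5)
Your proposal is correct and follows essentially the same route as the paper: the homomorphism property via the quaternionic multiplication rule and block multiplication, injectivity from the kernel, unitarity from the relations encoded in $qq^*=I_n$, and $\det\psi(q)=1$ via the complex-symplectic identity $\psi(q)^tJ\psi(q)=J$ together with the Pfaffian formula $\pf(S^tJS)=\det(S)\pf(J)$. Your alternative ending (realness of the determinant from $J\psi(q)=\overline{\psi(q)}J$ plus connectedness of $\Sp{n}$) is a valid bonus but not needed.
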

\begin{proof}
    We will first show that $\psi$ is a homomorphism. \begin{eqnarray*}
        \psi((z_1+jw_1)\cdot(z_2+jw_2))&=&\psi(z_1z_2-\Bar{w}_1w_2+j(w_1z_2+\Bar{z}_1w_2))\\
        &=&\begin{pmatrix}z_1z_2-\Bar{w}_1w_2& -\Bar{w}_1\Bar{z}_2-{z}_1\Bar{w}_2\\
        w_1z_2+\Bar{z}_1w_2&\Bar{z}_1\Bar{z}_2-{w}_1\Bar{w}_2
        \end{pmatrix}\\
        &=&\begin{pmatrix}
            z_1&-\Bar{w}_1\\
    w_1&\Bar{z}_1
        \end{pmatrix}\cdot \begin{pmatrix}
            z_2&-\Bar{w}_2\\
    w_2&\Bar{z}_2
        \end{pmatrix}\\
        &=&\psi(z_1+jw_1)\cdot\psi(z_2+jw_2).
    \end{eqnarray*}
Clearly $\psi$ is injective since $\ker\psi=I_n.$
If $q=z+jw\in\Sp{n},$ then $$q\Bar{q}^t=z\Bar{z}^t+\Bar{w}w^t+j(w\Bar{z}^t-\Bar{z}w^t)=I_n.$$ It follows that 
\begin{eqnarray*}
    I_n&=&z\Bar{z}^t+\Bar{w}w^t,\\
    0&=&w\Bar{z}^t-\Bar{z}w^t.
\end{eqnarray*}
Hence, \begin{eqnarray*}
    \psi(z+jw)\cdot\overline{\psi(z+jw)}^t&=&\begin{pmatrix}
        z\Bar{z}^t+\Bar{w}w^t&z\Bar{w}^t-\Bar{w}z^t\\
        w\Bar{z}^t-\Bar{z}w^t&\Bar{z}z^t+w\Bar{w}^t
    \end{pmatrix}\\
    &=&\begin{pmatrix}
        I_n&0\\
        0&I_n
    \end{pmatrix}\\
    &=&I_{2n}.
\end{eqnarray*}
The previous result shows that $|\det(\psi(z+jw))|=1.$ It is left to show that $\det(\psi(z+jw))=1.$ Let $$J_n=\begin{pmatrix}
    0&I_n\\
    -I_n&0
\end{pmatrix}.$$ As we will show in Lemma \ref{PhiSu2nSpn}, for $z+jw\in\Sp{n},$ it holds that $$\psi(z+jw)\cdot J_n\cdot\psi(z+jw)=J_n.$$  Using an identity of the {\it Pfaffian}, we see that $$\pf(J_n)=\pf\left(\psi(z+jw)\cdot J_n\cdot\psi(z+jw)\right)=\det(\psi(z+jw))\cdot \pf(J_n).$$ Since $\pf(J_n)^2=\det(J_n)\neq 0,$ it follows that $\det(\psi(z+jw))=1.$ A definition of the Pfaffian and detailed statements of its properties can be found in Haber's lecture notes \cite{Haber}.
\end{proof}

\begin{remark}{\rm \cite{Lind}}
    The map $\sigma:\SU{2n}\rightarrow\SU{2n}$ given by $$\sigma(z)=J_n\Bar{z}J_n$$ is an involutive automorphism which fixes $\Sp{n}.$ By Lemma \ref{GKsymspace}, $\SU{2n}/\Sp{n}$ is indeed a symmetric space.
\end{remark}

\begin{proposition}\label{orthdecompsu2nspn}
    The Lie algebra $\su{2n}$ of $\SU{2n}$ admits an orthogonal decomposition $$\su{2n}=\sp{n}\oplus\m,$$
    where $$\m=\left\{\begin{pmatrix}
        Z&W\\
        -\Bar{W}&Z^t
    \end{pmatrix}\ \rvert \ Z^*+Z=0, \ \trace(Z)=0, \ W^t+W=0\right\}.$$
\end{proposition}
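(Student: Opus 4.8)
The plan is to read off the decomposition from the symmetric-pair structure of $\SU{2n}/\Sp{n}$ recorded in the preceding remark, together with Lemma~\ref{lemmasympair}, rather than checking the three defining properties of $\m$ one at a time. Recall first that $\su{2n}$ is the space of traceless skew-Hermitian matrices, $\su{2n}=\{X\in\cn^{2n\times2n}\mid X^*+X=0,\ \trace X=0\}$, and that by Proposition~\ref{spninsu2n} we regard $\Sp{n}$ as the fixed-point subgroup of the involution $\sigma(z)=J_n\Bar{z}J_n^{-1}$ of $\SU{2n}$, where $J_n=\begin{pmatrix}0&I_n\\-I_n&0\end{pmatrix}$. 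Differentiating at the identity and using $\overline{\exp(sX)}=\exp(s\Bar{X})$ from Proposition~\ref{comprulesexp}, I would first establish that $\diff\sigma(X)=J_n\Bar{X}J_n^{-1}$ for $X\in\su{2n}$, and that this is an involution. Since $J_n$ is orthogonal and complex conjugation preserves the standard inner product of Definition~\ref{standardmetric}, the map $\diff\sigma$ is a linear isometry, so its $+1$ and $-1$ eigenspaces are orthogonal and together span $\su{2n}$. By Lemma~\ref{lemmasympair} the $+1$ eigenspace is exactly $\sp{n}$; setting $\m$ to be the $-1$ eigenspace then yields the orthogonal direct sum $\su{2n}=\sp{n}\oplus\m$ for free, and the only remaining task is to describe that eigenspace explicitly.

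To identify the $-1$ eigenspace, I would write $X=\begin{pmatrix}A&B\\C&D\end{pmatrix}$ in $n\times n$ blocks. The eigenvalue equation $\diff\sigma(X)=-X$ is equivalent to $\Bar{X}=J_nXJ_n$, which after carrying out the two block multiplications by $J_n$ reduces to a pair of relations tying $D$ to $A$ and $C$ to $B$. Imposing in addition the membership conditions $X^*+X=0$ and $\trace X=0$ then collapses the free data to a single traceless skew-Hermitian block $Z:=A$, forces the lower-right block to equal $Z^t$ (via $\Bar{Z}=-Z^t$ for skew-Hermitian $Z$), and leaves a single skew-symmetric off-diagonal block $W:=B$ satisfying $W^t+W=0$, whose partner in the lower-left corner is determined by $W$. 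This reproduces the displayed description of $\m$.

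The step I expect to be most delicate is precisely this last block computation: one must track carefully how complex conjugation interacts with the block-swap and the sign pattern coming from $J_n$, since the relation of the lower-left block to $\Bar{W}$ and the identity $D=Z^t$ are exactly where sign slips occur. As an independent sanity check I would compare real dimensions: $\dim_\rn\su{2n}=4n^2-1$ and $\dim_\rn\sp{n}=2n^2+n$, so $\dim_\rn\m$ must equal $2n^2-n-1$; on the other hand a traceless skew-Hermitian $Z$ contributes $n^2-1$ real parameters and a complex skew-symmetric $W$ contributes $n(n-1)$, summing to $2n^2-n-1$. This agreement both confirms the condition $W^t+W=0$ and certifies that $\sp{n}$ and $\m$ are genuinely complementary, so that the eigenspace decomposition is the asserted one. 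Should one prefer to bypass the involution, the same conclusion follows by directly verifying that every matrix of the stated form is traceless skew-Hermitian, that $\sp{n}\cap\m=\{0\}$, that the dimensions add up, and that $\langle\,\cdot\,,\cdot\,\rangle$ pairs $\sp{n}$ with $\m$ trivially.
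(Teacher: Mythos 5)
Your argument is correct in substance but takes a genuinely different route from the paper. The paper's proof works directly with the orthogonal complement: it recalls the block form of $\sp{n}$ inside $\su{2n}$, writes down the general shape of a traceless skew-Hermitian $2n\times 2n$ matrix, and then reads off (rather tersely, without displaying the orthogonality computation against the standard metric) which blocks survive in the complement, finishing with the observation that skew-Hermiticity of $Z$ upgrades $\Im\trace(Z)=0$ to $\trace(Z)=0$. You instead exploit the symmetric-pair structure: $\diff\sigma(X)=J_n\Bar{X}J_n^{-1}$ is an isometric involution of $\su{2n}$, so its $\pm1$ eigenspaces are automatically orthogonal and complementary, Lemma \ref{lemmasympair} identifies the $+1$ eigenspace with $\sp{n}$, and your dimension count $(n^2-1)+n(n-1)=2n^2-n-1=\dim_\rn\su{2n}-\dim_\rn\sp{n}$ certifies that the proposed block description exhausts the $-1$ eigenspace. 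Your route buys a structural reason for the orthogonality (which the paper simply posits) at the modest cost of verifying that $\diff\sigma$ is an isometry of the metric of Definition \ref{standardmetric}; both arguments are legitimate, and the dimension check is a genuine added safeguard.

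One concrete warning about the block computation you rightly flagged as delicate: carried out correctly, the eigenvalue equation $\Bar{X}=J_nXJ_n$ combined with $X^*+X=0$ forces the lower-left block to equal $+\Bar{W}$ (equivalently $-W^*$) once $W^t+W=0$, not $-\Bar{W}$. Indeed, a matrix of the form $\begin{pmatrix} Z&W\\ -\Bar{W}&Z^t\end{pmatrix}$ with $W$ skew-symmetric is skew-Hermitian only when $W=0$, and the basis elements $Y_{rs}^{b}$ and $Y_{rs}^{c}$ listed in the proposition immediately following this one have lower-left block equal to $\Bar{W}$, confirming the plus sign. So do not force your computation to "reproduce the displayed description" verbatim: your method produces the correct subspace, and the sign discrepancy sits in the displayed formula rather than in your approach.
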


\begin{proof}
    In Definition \ref{liealgebraspn}, we stated that the Lie algebra $\sp{n}$ of $\Sp{n}$ can be written as $$\sp{n}=\left\{ \begin{pmatrix}
    Z&-\Bar{W}\\
    W&\Bar{Z}
\end{pmatrix}\in\mathbb{C}^{2n\times2n} \ \rvert \ Z^*+Z=0, \ W^t-W=0 \right\}. $$
Generally, since $X\in\su{2n}$ satisfies $X^*+X=0$ and $\trace(X)=0,$ an element of $\su{2n}$ must be of the form $$X=\begin{pmatrix}
    Z&W\\
    -W^*&Y
\end{pmatrix},$$ such that $Z^*+Z=Y^*+Y=0,$ and $\trace(Z)+\trace(Y)=0.$
    Let  $$\su{2n}=\sp{n}\oplus\m$$ be an orthogonal decomposition. Then  $\m$ has elements of the form $\begin{pmatrix}
        Z&W\\
        -\Bar{W}&-\Bar{Z}
    \end{pmatrix}, $ where $Z,W$ satisfy \begin{enumerate}
        \item $Z^*+Z=0,$
        \item $\Im\trace(Z)=0,$
        \item $W^t+W=0.$
    \end{enumerate}
Since the first condition on $Z$ implies that the real parts of the diagonal entries of $Z$ vanish, we see that due to the second condition $\trace(Z)=0.$
Thus, $$\m=\left\{\begin{pmatrix}
        Z&W\\
        -\Bar{W}&Z^t
    \end{pmatrix}\ \rvert \ Z^*+Z=0, \ \trace(Z)=0, \ W^t+W=0\right\}.$$
\end{proof}

\begin{proposition} Consider the orthogonal decomposition of the Lie algebra $\su{2n}$ of $\SU{2n}$ given by $$\su{2n}=\sp{n}\oplus\m.$$
    Then $\m$ is spanned by $$\mathcal{S}_\m=\left\{Y_{rs}^{a}=\frac{1}{2}\begin{pmatrix}
    Y_{rs}&0\\
    0&-Y_{rs}
\end{pmatrix},X_{rs}^{a}=\frac{1}{2}\begin{pmatrix}
    iX_{rs}&0\\
    0&iX_{rs}
\end{pmatrix},Y_{rs}^{b}=\frac{1}{2}\begin{pmatrix}
    0&Y_{rs}\\
    Y_{rs}&0
\end{pmatrix},\right.$$ $$\left.Y_{rs}^{c}=\frac{1}{2}\begin{pmatrix}
    0&iY_{rs}\\
    -iY_{rs}&0
\end{pmatrix} \  | \  1\leq r<s\leq n\right\}$$
$$\bigcup \ \left\{D_{rs}^{a}=\frac{1}{2}\begin{pmatrix}
    iD_{rs}&0\\
    0&iD_{rs}
\end{pmatrix}, \ \rvert \ 1\leq r<s\leq n\right\},$$ where $D_{rs}=\frac{1}{\sqrt{2}}\cdot(E_{rr}-E_{ss}).$
\end{proposition}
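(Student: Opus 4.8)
The plan is to verify the two assertions hidden in the word ``spanned'': first that every matrix listed in $\mathcal{S}_\m$ genuinely lies in $\m$, and second that their real-linear span exhausts $\m$. Throughout I would use the explicit description of $\m$ obtained in Proposition \ref{orthdecompsu2nspn}, namely that an element of $\m$ is completely determined by a pair $(Z,W)$, where $Z$ is a skew-Hermitian, trace-free $n\times n$ matrix occupying the diagonal blocks (as $Z$ and $Z^t$) and $W$ is a complex skew-symmetric $n\times n$ matrix occupying the off-diagonal blocks. Hence $\m$ splits as the direct sum of the subspace $\m_Z$ of matrices with $W=0$ and the subspace $\m_W$ of matrices with $Z=0$, and it suffices to span each summand separately.

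For membership I would read off the pair $(Z,W)$ from each generator and check the defining conditions, which are routine given the symmetry properties of the building blocks. The generators $Y_{rs}^{a},\,X_{rs}^{a},\,D_{rs}^{a}$ all have $W=0$; here $Z$ equals $\tfrac12 Y_{rs}$, $\tfrac{i}{2}X_{rs}$, $\tfrac{i}{2}D_{rs}$ respectively, and one checks $Z^{*}+Z=0$ and $\trace Z=0$ using that $Y_{rs}$ is skew-symmetric while $X_{rs},D_{rs}$ are symmetric with vanishing diagonal trace, together with the observation that the bottom-right block is indeed $Z^t$ in each case. The generators $Y_{rs}^{b},\,Y_{rs}^{c}$ have $Z=0$ and $W=\tfrac12 Y_{rs}$, $\tfrac{i}{2}Y_{rs}$ respectively, both skew-symmetric as required; the bottom-left block equals $-W^{*}$, which reduces to $\bar W$ precisely because $W$ is skew-symmetric, so it matches the displayed matrices.

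For spanning I would decompose the two defining parameters into real and imaginary parts: write $Z=A+iB$ with $A$ real skew-symmetric and $B$ real symmetric and trace-free, and $W=C+iD$ with $C,D$ real skew-symmetric. By Proposition \ref{basisson} the family $\{Y_{rs}\}$ is a basis of the real skew-symmetric matrices, so $\{Y_{rs}^{a}\}$ produces every admissible $A$, while $\{Y_{rs}^{b}\}$ and $\{Y_{rs}^{c}\}$ produce every $C$ and every $iD$, hence all of $\m_W$. By Proposition \ref{basissun} the family $\{X_{rs},D_{rs}\}$ generates the real symmetric trace-free matrices, so $\{X_{rs}^{a}\}$ together with $\{D_{rs}^{a}\}$ produces every $iB$; combined with $\{Y_{rs}^{a}\}$ this yields every skew-Hermitian trace-free $Z$, i.e.\ all of $\m_Z$. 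Since $\m=\m_Z\oplus\m_W$, the five families span $\m$.

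The one point that deserves care---and the reason the statement reads ``spanned by'' rather than ``is an orthonormal basis''---is that the diagonal generators $D_{rs}^{a}$ are indexed by all pairs $1\le r<s\le n$, that is $\binom{n}{2}$ matrices, whereas the trace-free diagonal matrices they span form a space of dimension only $n-1$; this family is therefore deliberately redundant. As a consistency check I would confirm that the honest span has the correct size: $\{Y_{rs}^{a}\},\{X_{rs}^{a}\},\{Y_{rs}^{b}\},\{Y_{rs}^{c}\}$ contribute $4\binom{n}{2}$ independent directions and the $D_{rs}^{a}$ contribute a further $n-1$, for a total of $2n(n-1)+(n-1)=2n^{2}-n-1=\dim_{\mathbb R}\m$. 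No genuinely hard step arises; the only friction is the bookkeeping of the real/imaginary and symmetric/skew-symmetric splittings while keeping the block positions straight.
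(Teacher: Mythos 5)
Your proposal is correct and follows essentially the same route as the paper: both start from the block description of $\m$ in Proposition \ref{orthdecompsu2nspn}, split off the $Z$-part (which is $\su{n}$, spanned by $Y_{rs}, iX_{rs}, iD_{rs}$ via Propositions \ref{basisson} and \ref{basissun}) from the $W$-part (complex skew-symmetric matrices, spanned by $Y_{rs}, iY_{rs}$), and match these to the five displayed families. Your additional membership verification, the observation that the bottom-left block is $-W^{*}=\bar W$ for skew-symmetric $W$, the remark on the redundancy of the $D_{rs}^{a}$, and the dimension count $2n^{2}-n-1$ are all correct details that the paper's proof leaves implicit.
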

\begin{proof}
    In Proposition \ref{orthdecompsu2nspn} we have seen that an element of $\m$ is of the form $$\begin{pmatrix}
        Z&W\\
        -\Bar{W}&Z^t
    \end{pmatrix},$$ where \begin{enumerate}
        \item $Z^*+Z=0$,
        \item $\trace(Z)=0$ and 
        \item $ W^t+W=0.$
    \end{enumerate}
    We must now find generators of all matrices of the forms $Z$ and $W,$ as described above.
    
    We see that such matrices $Z$ must be in the Lie algebra $\su{n}$ of $\SU{n},$ which is is spanned by $$\{Y_{rs},iX_{rs},iD_{rs} \ \rvert \ 1\leq r<s\leq n\}.$$ The corresponding elements in $\mathcal{S}_m$ are $Y_{rs}^{a},X_{rs}^{a},D_{rs}^{a}.$ 
    
    Further, we note that the subspace of matrices $W\in\cn^{n\times n}$ is generated by $$\{Y_{rs},iY_{rs} \ \rvert \ 1\leq r<s\leq n\}.$$ The corresponding elements in $\mathcal{S}_\m$ are $Y_{rs}^{b}$ and $Y_{rs}^{c}.$
\end{proof}

\section{Eigenfunctions on $\SU{2n}/\Sp{n}$}\label{sectionefsu2nspn}
We now aim to construct eigenfunctions on $\SU{2n}/\Sp{n}.$ Lemma \ref{PhiSu2nSpn} helps us to construct $\Sp{n}$-invariant functions on $\SU{2n}.$ Proposition \ref{propsusp} from Gudmundsson, Siffert and Sobak's paper \cite{Gud-Sif-Sob-2} then provides us with a family of eigenfunctions on $\SU{2n}/\Sp{n}.
$
\begin{lemma}\label{PhiSu2nSpn}{\rm \cite{Hall}}

    Let $\Phi:\SU{2n}\rightarrow\SU{2n}$ be the map given by $$\Phi(z)=z\cdot J_n\cdot z^t,$$ where $$J_n=\begin{pmatrix}
    0&I_n\\
    -I_n&0
\end{pmatrix}.$$ An easy computation shows that if $z\in\Sp{n}$ then
$\Phi(z)=J_n$. In particular, $\Phi$ is $\Sp{n}$-invariant.
\end{lemma}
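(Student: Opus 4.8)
The plan is to establish the substantive claim, namely that $\Phi(q) = qJ_nq^t = J_n$ for every $q \in \Sp{n}$ (identified with $\psi(\Sp{n}) \subseteq \SU{2n}$ via Proposition \ref{spninsu2n}); the asserted $\Sp{n}$-invariance will then follow from a one-line formal computation. Writing $q = \psi(z+jw) = \begin{pmatrix} z & -\Bar{w} \\ w & \Bar{z} \end{pmatrix}$, the first step I would carry out is to verify the structural identity
$$J_n \Bar{q} = q J_n,$$
which expresses that $q$ commutes with the quaternionic structure $J_n \circ (\text{conjugation})$. This is a direct $2\times 2$ block multiplication: both sides equal $\begin{pmatrix} \Bar{w} & z \\ -\Bar{z} & w \end{pmatrix}$, and it holds for every matrix in the image of $\psi$, independently of unitarity.

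Second, I would combine this with the elementary facts $J_n^t = -J_n = J_n^{-1}$ and $J_n^2 = -I_{2n}$, together with the unitarity of $q$. Since $q \in \SU{2n} \subseteq \U{2n}$, we have $\Bar{q}^t q = I_{2n}$, that is, $\Bar{q}^t = q^{-1}$. Rearranging the structural identity gives $\Bar{q} = J_n^{-1} q J_n$; transposing and using $(J_n^{-1})^t = J_n$ yields $\Bar{q}^t = -J_n q^t J_n$. Equating this with $\Bar{q}^t = q^{-1}$ gives $q^{-1} = -J_n q^t J_n$, and left-multiplying by $q$ produces $I_{2n} = -q J_n q^t J_n$. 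Right-multiplying by $J_n$ and simplifying with $J_n^2 = -I_{2n}$ then produces exactly $q J_n q^t = J_n$.

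Finally, the invariance statement follows formally: for $z \in \SU{2n}$ and $q \in \Sp{n}$,
$$\Phi(zq) = (zq) J_n (zq)^t = z (q J_n q^t) z^t = z J_n z^t = \Phi(z).$$

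The computation is genuinely routine, so there is no deep obstacle; the only point requiring care is the bookkeeping of transposes and of the signs arising from $J_n^t = -J_n$ and $J_n^2 = -I_{2n}$. An equally valid alternative, which avoids the structural identity, would be to expand $q J_n q^t$ directly in block form and substitute the four block relations $z\Bar{z}^t + \Bar{w}w^t = I_n$, $z\Bar{w}^t = \Bar{w}z^t$, $w\Bar{z}^t = \Bar{z}w^t$, and $w\Bar{w}^t + \Bar{z}z^t = I_n$ obtained from $q\Bar{q}^t = I_{2n}$ in Proposition \ref{spninsu2n}; I would nonetheless favour the structural-identity route, as it is shorter and makes the geometric meaning transparent.
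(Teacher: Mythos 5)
Your proof is correct and complete; the paper itself offers no argument here, dismissing the claim as ``an easy computation,'' so there is nothing to compare against beyond the direct block expansion you sketch as your alternative (which is presumably what the author has in mind, and which does go through using the four block relations extracted from $q\Bar{q}^t=I_{2n}$ in Proposition \ref{spninsu2n}). Your structural-identity route $J_n\Bar{q}=qJ_n$ is a genuine improvement in clarity: it isolates the one fact special to the image of $\psi$, reduces the rest to unitarity plus the sign bookkeeping $J_n^t=-J_n$, $J_n^2=-I_{2n}$, and makes visible the standard characterisation of $\Sp{n}$ as the unitary matrices satisfying $qJ_nq^t=J_n$. One point worth making explicit, since Proposition \ref{spninsu2n} invokes this lemma in order to prove $\det(\psi(q))=1$: your argument uses only $\Bar{q}^t=q^{-1}$, i.e.\ membership in $\U{2n}$, which is established in that proposition before and independently of the determinant computation, so there is no circularity --- you might replace ``$q\in\SU{2n}\subseteq\U{2n}$'' by ``$q\in\U{2n}$'' to make this transparent.
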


\begin{proposition}{\rm \cite{Gud-Sif-Sob-2}}\label{propsusp}
For non-zero linearly independent elements $a,b\in\mathbb{C}^{2n},$ let $A\in\mathbb{C}^{2n\times2n}$ be the skew-symmetric matrix $$A=\sum_{i,j=1}^{2n} a_ib_jY_{ij}.$$ 
Let $\Phi$ be the map given by $$\Phi(z)=z\cdot J_n\cdot z^t.$$ Define the function $\hat{\phi}_{a,b}:\SU{2n}\rightarrow\mathbb{C}$ by $$\hat{\phi}_{a,b}(z)=\trace(A\cdot\Phi(z))=\trace(z^tAzJ_n)=-\sum_{j,\alpha=1}^{2n}A_{j\alpha}\Phi_{j\alpha}(z).$$ Then $\hat{\phi}_{a,b}$ is a $\Sp{n}$-invariant eigenfunction on $\SU{2n},$ and thus induces an eigenfunction $\phi_{a,b}$ on the quotient space $\SU{2n}/\Sp{n}.$ The eigenvalues $\lambda$ and $\mu$ of $\phi_{a,b}$ satisfy $$\lambda=-\frac{2(2n^2-n-1)}{n}\quad\textrm{and}\quad\mu=-\frac{2(n-1)}{n}.$$
\end{proposition}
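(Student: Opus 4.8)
The plan is to push everything up to the total space $\SU{2n}$ and then invoke the quotient machinery already established. First I would verify that $\hat{\phi}_{a,b}$ is $\Sp{n}$-invariant: by Lemma \ref{PhiSu2nSpn} we have $gJ_ng^t=J_n$ for $g\in\Sp{n}$, so $\Phi(zg)=zg J_n g^t z^t=zJ_nz^t=\Phi(z)$, and hence $\hat{\phi}_{a,b}(zg)=\trace(A\Phi(zg))=\hat{\phi}_{a,b}(z)$. Thus $\hat{\phi}_{a,b}$ is constant on the right $\Sp{n}$-cosets and descends to a well-defined $\phi_{a,b}$ on $\SU{2n}/\Sp{n}$ with $\hat{\phi}_{a,b}=\phi_{a,b}\circ\pi$. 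Since the projection $\pi\colon\SU{2n}\to\SU{2n}/\Sp{n}$ is a harmonic Riemannian submersion, Corollary \ref{inducedefs} then guarantees that $\phi_{a,b}$ is a $(\lambda,\mu)$-eigenfunction on the quotient exactly when $\hat{\phi}_{a,b}$ is one upstairs, with the same eigenvalues. The linear independence of $a,b$ enters only to ensure $A=\tfrac{1}{\sqrt2}(ab^t-ba^t)\neq0$, so that $\hat{\phi}_{a,b}$ is not identically zero.

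The heart of the proof is therefore the computation on $\SU{2n}$. Writing $m=2n$ and $\Phi=zJ_nz^t$, each component $\Phi_{j\alpha}=\sum_{p,q}(J_n)_{pq}z_{jp}z_{\alpha q}$ is quadratic in the coordinate functions $z_{j\alpha}$ of $\SU{m}$, so I would feed the coordinate identities of Lemma \ref{coordsun} (with $n$ replaced by $m$) into the product rules of Proposition \ref{taukappa}. For $\tau$, the "cross" term produced by the $\kappa$-part can be re-indexed using $(J_n)^t=-J_n$ and collapses back to $\Phi_{j\alpha}$, and I expect to obtain $\tau(\Phi_{j\alpha})=-\tfrac{2(m^2-m-2)}{m}\,\Phi_{j\alpha}$. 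By linearity this gives $\tau(\hat{\phi}_{a,b})=\lambda\,\hat{\phi}_{a,b}$ with $\lambda=-\tfrac{2(m-2)(m+1)}{m}=-\tfrac{2(2n^2-n-1)}{n}$, as claimed. The analogous four-fold expansion for $\kappa$ should yield
$$\kappa(\Phi_{j\alpha},\Phi_{k\beta})=-2\bigl(\Phi_{j\beta}\Phi_{k\alpha}+\Phi_{jk}\Phi_{\alpha\beta}\bigr)+\tfrac{4}{m}\,\Phi_{j\alpha}\Phi_{k\beta}.$$
The decisive structural point, in contrast with the $\SO{2n}/\U n$ case of Proposition \ref{so2nunnew}, is that the coordinate conformality $\kappa(z_{ab},z_{cd})=-z_{ad}z_{cb}+\tfrac1m z_{ab}z_{cd}$ carries no Kronecker-delta term, so no additive constant survives in $\kappa(\Phi_{j\alpha},\Phi_{k\beta})$; this is precisely why no isotropy condition on $a,b$ will be required.

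It then remains to contract with $A_{j\alpha}A_{k\beta}$. Using $\hat{\phi}_{a,b}=-\sum_{j,\alpha}A_{j\alpha}\Phi_{j\alpha}$, so that $\hat{\phi}_{a,b}^2=\sum A_{j\alpha}A_{k\beta}\Phi_{j\alpha}\Phi_{k\beta}$, I would rewrite the two remaining contractions as matrix traces: by the skew-symmetry of $A$ and $\Phi$ and cyclicity of the trace, both $\sum A_{j\alpha}A_{k\beta}\Phi_{j\beta}\Phi_{k\alpha}$ and $\sum A_{j\alpha}A_{k\beta}\Phi_{jk}\Phi_{\alpha\beta}$ equal $\trace\bigl((A\Phi)^2\bigr)$. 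Substituting the rank-two form $A=\tfrac1{\sqrt2}(ab^t-ba^t)$ and using $a^t\Phi a=b^t\Phi b=0$ (once more from skew-symmetry of $\Phi$) collapses $\trace\bigl((A\Phi)^2\bigr)$ to $\tfrac12\hat{\phi}_{a,b}^2$, giving $\kappa(\hat{\phi}_{a,b},\hat{\phi}_{a,b})=\bigl(-2+\tfrac4m\bigr)\hat{\phi}_{a,b}^2=-\tfrac{2(n-1)}{n}\,\hat{\phi}_{a,b}^2$, i.e.\ $\mu=-\tfrac{2(n-1)}{n}$.

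The main obstacle will be the middle step: organizing the four-term product-rule expansion of $\kappa(\Phi_{j\alpha},\Phi_{k\beta})$ and the subsequent contraction against $A_{j\alpha}A_{k\beta}$ without sign or index errors. Everything downstream is forced once the trace identity $\trace\bigl((A\Phi)^2\bigr)=\tfrac12\hat{\phi}_{a,b}^2$ is in hand, so I would prioritize pinning down the $\kappa(\Phi_{j\alpha},\Phi_{k\beta})$ formula exactly and then exploit the rank-two structure of $A$ together with $\Phi^t=-\Phi$ to finish cleanly.
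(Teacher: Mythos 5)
Your proposal is correct: the invariance argument, the descent via Corollary \ref{inducedefs}, the component formulas $\tau(\Phi_{j\alpha})=-\tfrac{2(m^2-m-2)}{m}\Phi_{j\alpha}$ and $\kappa(\Phi_{j\alpha},\Phi_{k\beta})=-2(\Phi_{j\beta}\Phi_{k\alpha}+\Phi_{jk}\Phi_{\alpha\beta})+\tfrac{4}{m}\Phi_{j\alpha}\Phi_{k\beta}$, and the trace identity $\trace\bigl((A\Phi)^2\bigr)=\tfrac12\hat{\phi}_{a,b}^2$ all check out and yield the stated eigenvalues. This is essentially the same computation the paper carries out for the analogous $\SO{2n}/\U{n}$ case (Proposition \ref{so2nunnew}) and the one in the cited source, including your correct observation that the absence of a Kronecker-delta term in $\kappa(z_{j\alpha},z_{k\beta})$ is what removes any isotropy condition on $a,b$ here.
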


\section{Compact Minimal Submanifolds of $\SU{2n}/\Sp{n}$}\label{cmssu2nspn}
We wish to apply Theorem \ref{GM} to the functions stated in proposition \ref{propsusp}. As Theorem \ref{minsubmsuspn} shows, they are regular over $0\in\cn$ and thus can be used to construct compact minimal submanifolds of $\SU{2n}/\Sp{n}.$

\begin{theorem}\label{minsubmsuspn}
    Let $n\geq2$. Let $a,b\in\cn^{2n}$ be linearly independent and let $A$ be the skew-symmetric matrix given by $$A=\sum_{i,j=1}^{2n} a_ib_jY_{ij}.$$ Let $\phi_{a,b}:\SU{2n}/\Sp{n}\rightarrow\cn$  be the eigenfunction induced by $$\hat{\phi}_{a,b}=\trace(z^tAzJ_n).$$ Then $\phi_{a,b}$ is regular over $0\in\cn,$ and $\phi_{a,b}^{-1}(\{0\})$ is a compact minimal submanifold of $\SU{2n}/\Sp{n}$ of codimension two.
\end{theorem}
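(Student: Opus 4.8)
The plan is to follow the template established in the earlier chapters: first invoke Proposition~\ref{propsusp} to record that $\phi_{a,b}$ is an eigenfunction with the stated eigenvalues, then prove that $0$ is a regular value by showing the gradient cannot vanish on the fibre, and finally apply Theorem~\ref{GM}. For the fact that $0$ is attained I would compute $\lambda-\mu=-4(n-1)$, which is nonzero for $n\geq2$, so Corollary~\ref{phi0} guarantees $0\in\phi_{a,b}(\SU{2n}/\Sp{n})$. Since $\SU{2n}/\Sp{n}$ is compact and $\phi_{a,b}^{-1}(\{0\})$ is closed, the fibre is automatically compact, so the only real work is the regularity over $0$.

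The heart of the proof is computing how the basis $\mathcal{S}_\m$ of $\m\cong T_o(\SU{2n}/\Sp{n})$ acts on $\hat{\phi}_{a,b}$. Writing $P_k=(a,z_k)$ and $Q_k=(b,z_k)$ for the entries of the vectors $z^ta$ and $z^tb$, I would first rewrite $\hat{\phi}_{a,b}(z)=-\sqrt2\,P^tJ_nQ$. Since the derivative of $s\mapsto\hat{\phi}_{a,b}(z\exp(sX))$ at $s=0$ satisfies $X(P_k)=(X^tP)_k$ and $X(Q_k)=(X^tQ)_k$, the product rule collapses to the clean identity $X(\hat{\phi}_{a,b})=-\sqrt2\,P^t(XJ_n+J_nX^t)Q$. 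Evaluating the matrix $M_X=XJ_n+J_nX^t$ in block form on the five families $Y_{rs}^{a},Y_{rs}^{b},Y_{rs}^{c},X_{rs}^{a},D_{rs}^{a}$ of $\mathcal{S}_\m$, and abbreviating the $2\times2$ minors $m_{jk}=P_jQ_k-P_kQ_j$, each equation $X(\hat{\phi}_{a,b})=0$ turns into a linear relation among the $m_{jk}$.

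Carrying out these evaluations, I expect the following: $Y_{rs}^{b}$ and $Y_{rs}^{c}$ together force $m_{rs}=m_{n+r,n+s}=0$, so all minors within a single $n$-block vanish; $Y_{rs}^{a}$ and $X_{rs}^{a}$ together force the off-diagonal cross minors $m_{r,n+s}=m_{s,n+r}=0$ for $r\neq s$; and $D_{rs}^{a}$ forces all diagonal cross minors $m_{r,n+r}$ to share a common value $c$. At this stage the gradient equations alone do not kill $c$, which is precisely why the theorem claims regularity over $0$ rather than a global submersion. Invoking that we are on the fibre, $\hat{\phi}_{a,b}(z)=-\sqrt2\sum_{t=1}^{n}m_{t,n+t}=-\sqrt2\,nc=0$ yields $c=0$. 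Hence every minor $m_{jk}$ vanishes, i.e. $P$ and $Q$ are linearly dependent. But $z^t$ is invertible, so $P=z^ta$ and $Q=z^tb$ are linearly independent because $a,b$ are; this contradiction shows $\nabla\hat{\phi}_{a,b}\neq0$ along the fibre, and the same holds for $\nabla\phi_{a,b}$ on the quotient since $\mathcal{S}_\m$ spans the horizontal space.

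The main obstacle will be the bookkeeping in the basis computation: correctly assembling $M_X$ in block form for each generator and recognising that the diagonal cross minors $m_{r,n+r}$ are left undetermined by the gradient and are pinned down only by the fibre condition $\hat{\phi}_{a,b}=0$. Once all minors are shown to vanish, regularity over $0$ follows, and Theorem~\ref{GM} delivers that $\phi_{a,b}^{-1}(\{0\})$ is a compact minimal submanifold of $\SU{2n}/\Sp{n}$ of codimension two.
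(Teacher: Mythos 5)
Your proposal is correct and follows essentially the same route as the paper: compute the action of the basis of $\m$ on $\hat{\phi}_{a,b}$, observe that the gradient equations kill all the $2\times 2$ minors of $(z^ta,z^tb)$ except the diagonal cross ones, use the fibre condition $\hat{\phi}_{a,b}(z)=0$ to eliminate those, and conclude that $a,b$ would be linearly dependent, a contradiction. The only cosmetic difference is that you invoke $D_{rs}^{a}$ to equalise the diagonal minors before summing, whereas the paper instead picks an index $t$ with $(a,z_t)\neq0$ and shows all coordinates of $z^tb$ are proportional to those of $z^ta$; both arguments rest on exactly the same identities.
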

\begin{proof}
First of all, $\phi_{a,b}$ is an eigenfunction on $\SU{2n}/\Sp{n} $ by Proposition \ref{propsusp}. Due to Corollary \ref{phi0}, $0$ lies in the image of $\phi_{a,b}.$
    
    Assume towards a contradiction that for $z\in\SU{2n}$ we have that $\hat{\phi}_{a,b}(z)=0,$ and for every $Y\in\m,$ $Y(\hat{\phi}_{a,b})=0.$ We will show that under those assumptions, $z$ does not have full rank. We will start by unravelling what our assumptions mean.
    
    For $Y\in\m,$ $$Y(\hat{\phi}_{a,b})=\trace(Az(YJ_n+J_nY^t)z^t).$$ For simplicity we will now write $(a,z_t)=\sum_{j} a_jz_{jt}$ for all $1\leq t\leq 2n.$
    It can now be shown that for all $1\leq r<s\leq n,$ $$Y_{rs}^{a}(\hat{\phi}_{a,b})=0, \ Y_{rs}^{b}(\hat{\phi}_{a,b})=0,\ X_{rs}^{a}(\hat{\phi}_{a,b})=0, \ Y_{rs}^{c}(\hat{\phi}_{a,b})=0 \ $$ if and only if 
\begin{eqnarray}
    0&=&(a,z_{n+s})(b,z_{r})-(a,z_{n+r})(b,z_{s})+(a,z_{s})(b,z_{n+r})-(a,z_{r})(b,z_{n+s}),\quad\quad\label{suspeq1}\\
    0&=&(a,z_{s})(b,z_{r})-(a,z_{r})(b,z_{s})-(a,z_{n+s})(b,z_{n+r})+(a,z_{n+r})(b,z_{n+s}),\label{suspeq2}\\
    0&=&(a,z_{n+s})(b,z_{r})+(a,z_{n+r})(b,z_{s})-(a,z_{s})(b,z_{n+r})-(a,z_{r})(b,z_{n+s}),\label{suspeq3}\\
    0&=&(a,z_{s})(b,z_{r})-(a,z_{r})(b,z_{s})+(a,z_{n+s})(b,z_{n+r})-(a,z_{n+r})(b,z_{n+s}),\label{suspeq4}
\end{eqnarray}
By adding and subtracting Equations \ref{suspeq1} and \ref{suspeq3}, and Equations \ref{suspeq2} and \ref{suspeq4} respectively, we see that for all $1\leq r<s\leq n,$
\begin{eqnarray}
    0&=&(a,z_{n+s})(b,z_{r})-(a,z_r)(b,z_{n+s}),\label{suspeq5}\\
    0&=&(a,z_{n+r})(b,z_{s})-(a,z_s)(b,z_{n+r}),\label{suspeq6}\\
    0&=&(a,z_s)(b,z_{r})-(a,z_r)(b,z_{s}).\label{suspeq7}\\
    0&=&(a,z_r)(b,z_{s})-(a,z_s)(b,z_{r}).\label{suspeq8}
\end{eqnarray}  
Since the columns of $\Bar{z}$ form a basis of $\cn^{2n}$, there exists an index $1\leq t\leq 2n$ such that $$\langle a,\Bar{z}_t\rangle =(a,z_t)\neq0.$$ Without loss of generality, $1\leq t\leq n.$ Equations \ref{suspeq5},\ref{suspeq6},\ref{suspeq7} and \ref{suspeq8} now show that for all $1\leq s\leq n, \ s\neq t,$ it holds that $$(b,z_{n+s})=\frac{(a,z_{n+s})}{(a,z_t)}\cdot(b,z_t), \ \textrm{and} \ (b,z_{s})=\frac{(a,z_s)}{(a,z_t)}\cdot(b,z_t).$$
Now note that
 $\hat{\phi}_{a,b}(z)=0$ if and only if $$0=(a,z_{n+1})(b,z_{1})+\dots +(a,z_{2n})(b,z_{n})-(a,z_{1})(b,z_{n+1})-\dots-(a,z_{n})(b,z_{2n}).$$
Entering our latest findings into this equation, we note that
\begin{eqnarray*}
    0&=&(a,z_{n+t})(b,z_t)+\sum_{s\neq t}(a,z_{n+s})\cdot\frac{(a,z_s)}{(a,z_t)}\cdot(b,z_t)\\
    &&-(a,z_t)(b,z_{n+t})-\sum_{s\neq t}(a,z_s)\cdot\frac{(a,z_{n+s})}{(a,z_t)}\cdot(b,z_t)\\
    &=&(a,z_{n+t})(b,z_{t})-(a,z_t)(b,z_{n+t}).
\end{eqnarray*}
Thus, we also have that $$(b,z_{n+t})=\frac{(a,z_{n+t})}{(a,z_t)}\cdot (b,z_{t}).$$
To summarise, we have now established that for every $1\leq s\leq 2n,$ $$\langle b,\Bar{z}_s\rangle =(b,z_{s})=\frac{(a,z_{s})}{(a,z_t)}\cdot(b,z_t)=\frac{(b,z_t)}{(a,z_t)}\cdot\langle a, \Bar{z}_s\rangle.$$ 
Thus, $$b=\sum_{s=1}^{2n}\langle b,\Bar{z}_s\rangle\cdot \Bar{z}_s=\frac{(b,z_t)}{(a,z_t)}\cdot\sum_{s=1}^{2n}\langle a,\Bar{z}_s\rangle\cdot \Bar{z}_s=\frac{(b,z_t)}{(a,z_t)}\cdot a.$$ 
This is a contradiction, since we assumed $a$ and $b$ to be linearly independent. It follows that $\phi_{a,b}$ is regular over $0.$ 
By Theorem \ref{GM}, $\phi_{a,b}^{-1}(\{0\})$ is a minimal submanifold of $\SU{2n}/\Sp{n}.$
\end{proof}

\chapter{The Real Grassmannians $\SO{m+n}/\SO{m}\times\SO{n}$}
Section \ref{efrealgrass} provides us with some useful details on the real Grassmannians. We then study a family of eigenfunctions on $\SO{m+n}/\SO{m}\times\SO{n}$, which stems from Ghandour and Gudmundsson's article \cite{Gha-Gud-4}.
In Section \ref{section2realgrass}, we study minimal submanifolds of $\SO{m+n}/\SO{m}\times\SO{n}$.
We show in Theorem \ref{exsososo2} that none of the aforementioned eigenfunctions is regular over $0.$ Example \ref{exsososo1} illustrates this fact in the setting $m=n=2.$
For $m=3$ and $n=1,$ Example \ref{consolationprize} shows the existence of a compact minimal submanifold.

For the rest of the chapter we assume that the positive integers $m,n$ are not simultaneously equal to $1.$

\section{Eigenfunctions on $\SO{m+n}/\SO{m}\times\SO{n}$}\label{efrealgrass}
We first define the space $\SO{m+n}/\SO{m}\times\SO{n}$ and give a suitable decomposition of the Lie algebra $\so{m+n}$ of $\SO{m+n}.$ Thereafter, we show how to construct eigenfunctions on $\SO{m+n}/\SO{m}\times\SO{n}$, which we will state in Theorem \ref{thsososo}.
\begin{remark}
    
$\SO{m}\times\SO{n}$ consists of the block matrices of the form $$\SO{m}\times\SO{n}=\left\{ \begin{pmatrix}
    x&0\\
    0&y
\end{pmatrix}\ \rvert \ \ x\in\SO{m}, y\in\SO{n} \right\}. $$
    Let $\so{m+n}=\mathfrak{k}\oplus\mathfrak{m}$ be an orthogonal decomposition, where $\mathfrak{k}$ is the Lie algebra of the subgroup $\SO{m}\times\SO{n}$ of $\SO{m+n}$. Then an orthonormal basis for the subspace $\m$ of $\so{m+n}$ is given by
    $$\mathcal{B}_\mathfrak{m}=\{ Y_{rs} \ \rvert \ 1\leq r\leq m<s\leq m+n \}. $$
\end{remark}

We now study $\SO{m}\times\SO{n}$-invariant eigenfunctions on $\SO{m+n}.$ They stem from Gudmundsson and Ghandour's work \cite{Gha-Gud-4}. We start off by looking at the coordinate functions, which are our smallest building blocks. 
\begin{lemma}{\rm \cite{Gha-Gud-4}}\label{lemmasososocoord}
    For $1\leq j,k,\alpha,\beta\leq m+n,$ let $x_{j\alpha}:\SO{m+n}\rightarrow\rn$ be the real-valued matrix elements of the standard representation of $\SO{m+n}.$ Then the following relations hold:
    $$\tau (x_{j\alpha})=-\frac{m+n-1}{2}\cdot x_{j\alpha},\ \ \textrm{and}\ \ \kappa (x_{j\alpha},x_{k\beta})=-\frac{1}{2}\cdot(x_{j\beta}x_{k\alpha}-\delta_{jk}\delta_{\alpha\beta}).$$
\end{lemma}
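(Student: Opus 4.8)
The plan is to observe first that the $x_{j\alpha}$ are precisely the matrix-element functions of the standard representation of the \emph{full} group $\SO{m+n}$, so these identities are nothing but Lemma \ref{xijeigenfunctions} with $n$ replaced by $m+n$, and the same argument applies verbatim. For completeness I would re-derive them directly, working on $\SO{N}$ with $N=m+n$ equipped with the bi-invariant metric of Definition \ref{standardmetric}. The starting point is the orthonormal basis $\{Y_{rs}\mid 1\leq r<s\leq N\}$ of $\so{N}$ from Proposition \ref{basisson}, together with the fact that for a bi-invariant metric one has $\nabla_{Y_{rs}}Y_{rs}=\tfrac12[Y_{rs},Y_{rs}]=0$, so that the two operators reduce to purely second- and first-order sums over the basis:
$$\tau(\phi)=\sum_{r<s}Y_{rs}\big(Y_{rs}(\phi)\big),\qquad \kappa(\phi,\psi)=\sum_{r<s}Y_{rs}(\phi)\,Y_{rs}(\psi).$$

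The core computation is the action of a single basis field on a coordinate function. Using $Y_{rs}(x_{j\alpha})(x)=\tfrac{d}{ds}\big|_{s=0}(x\exp(sY_{rs}))_{j\alpha}=(xY_{rs})_{j\alpha}$ and the explicit entries of $Y_{rs}=\tfrac{1}{\sqrt2}(E_{rs}-E_{sr})$, I would record the identity
$$Y_{rs}(x_{j\alpha})=\tfrac{1}{\sqrt2}\,(x_{jr}\delta_{\alpha s}-x_{js}\delta_{\alpha r}),$$
which already appears in Example \ref{xijgeneral}. Applying $Y_{rs}$ a second time gives $Y_{rs}(Y_{rs}(x_{j\alpha}))=-\tfrac12(\delta_{\alpha r}+\delta_{\alpha s})\,x_{j\alpha}$, and summing over all pairs $r<s$ I would simply count that the index $\alpha$ occurs in exactly $N-1$ of them, yielding $\tau(x_{j\alpha})=-\tfrac{m+n-1}{2}x_{j\alpha}$.

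For the conformality identity I would form the product $Y_{rs}(x_{j\alpha})\,Y_{rs}(x_{k\beta})$, expand it into four terms weighted by products of Kronecker deltas, and sum over $r<s$. The two ``mixed'' terms contribute $-\tfrac12 x_{j\beta}x_{k\alpha}$, since exactly one of them survives the constraint $r<s$ when $\alpha\neq\beta$ and both vanish when $\alpha=\beta$; the two ``diagonal'' terms survive only when $\alpha=\beta$ and there combine into $\tfrac12\delta_{\alpha\beta}\sum_{t\neq\alpha}x_{jt}x_{kt}$. The decisive step is to invoke the orthonormality of the rows of $x$, namely $\sum_t x_{jt}x_{kt}=(xx^t)_{jk}=\delta_{jk}$ since $x\in\SO{N}$; this collapses the diagonal sum to $\tfrac12\delta_{\alpha\beta}(\delta_{jk}-x_{j\alpha}x_{k\alpha})$, and recombining the cases $\alpha=\beta$ and $\alpha\neq\beta$ produces exactly $\kappa(x_{j\alpha},x_{k\beta})=-\tfrac12(x_{j\beta}x_{k\alpha}-\delta_{jk}\delta_{\alpha\beta})$.

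The only genuine obstacle is bookkeeping: keeping the index constraint $r<s$ consistent with the Kronecker deltas throughout the $\kappa$ computation, and contracting on the correct (column) index so that the orthogonality relation produces the $\delta_{jk}\delta_{\alpha\beta}$ term rather than a spurious one. No new idea beyond Lemma \ref{xijeigenfunctions} is required, as the entire computation is insensitive to the splitting $N=m+n$ and takes place on the total group $\SO{m+n}$ before any passage to the quotient.
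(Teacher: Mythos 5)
Your proposal is correct: the lemma is indeed just Lemma \ref{xijeigenfunctions} stated on $\SO{m+n}$ instead of $\SO{n}$, and your basis computation with $\{Y_{rs}\}$, the vanishing of $\nabla_{Y_{rs}}Y_{rs}$ for the bi-invariant metric, and the row-orthonormality $\sum_t x_{jt}x_{kt}=\delta_{jk}$ reproduce both identities exactly. The paper itself gives no proof here (it cites \cite{Gha-Gud-4}), and your derivation is the standard one underlying that reference.
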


\begin{remark}\label{sigmasososo}
Let $$I_{m,n}=\begin{pmatrix}
I_m & 0 \\
0 & -I_n
\end{pmatrix}.$$ A simple calculation shows that if $x\in\SO{m}\times\SO{n}$, then
\begin{eqnarray*}
    \sigma(x)=x\cdot I_{m,n}\cdot x^t=I_{m,n}.
\end{eqnarray*} Hence, $\sigma$ is $\SO{m}\times\SO{n}$-invariant. Indeed, for all $w\in\SO{m+n}$ and $x\in\SO{m}\times\SO{n},$ it holds that $$\sigma(wx)=wx\cdot I_{m,n}\cdot x^tw^t=w\cdot I_{m,n}\cdot w^t=\sigma(w).$$
\end{remark}
Using Lemma \ref{lemmasososocoord} and Remark \ref{sigmasososo}, Gudmundsson and Ghandour constructed $\SO{m}\times\SO{n}$-invariant functions on $\SO{m+n}$. 
\begin{lemma}{\rm \cite{Gha-Gud-4}}\label{lemmasososo}
    For $1\leq j,\alpha\leq m+n,$ define the function $\hat{\phi}_{j\alpha}:\SO {m+n}\rightarrow\rn$ by $$\hat{\phi}_{j\alpha}(x)=\sum_{t=1}^m x_{jt}x_{\alpha t}.$$ This function is $\SO{m}\times\SO{n}$-invariant and the tension field $\tau $ and the conformality operator $\kappa $ on $\SO{m+n}$ satisfy
    \begin{eqnarray*}
        \tau (\hat{\phi}_{j\alpha})&=&-(m+n)\hat{\phi}_{j\alpha}+\delta_{j\alpha}\cdot m,\\
        \kappa (\hat{\phi}_{j\alpha},\hat{\phi}_{k\beta})&=&-(\hat{\phi}_{j\beta}\cdot\hat{\phi}_{k\alpha}+\hat{\phi}_{jk}\cdot\hat{\phi}_{\alpha\beta})+\frac{1}{2}(\delta_{jk}\hat{\phi}_{\alpha\beta}+\delta_{\alpha\beta}\hat{\phi}_{jk}+\delta_{j\beta}\hat{\phi}_{k\alpha}+\delta_{k\alpha}\hat{\phi}_{j\beta}).
    \end{eqnarray*}
\end{lemma}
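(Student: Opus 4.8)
The plan is to realise $\hat{\phi}_{j\alpha}$ as a sum of products of the coordinate functions $x_{j\alpha}$ and then to push the operators $\tau$ and $\kappa$ through these products using the Leibniz-type rules of Proposition \ref{taukappa}, reducing everything to the action on coordinate functions recorded in Lemma \ref{lemmasososocoord}. Writing $P$ for the diagonal matrix with $1$'s in its first $m$ entries and $0$'s elsewhere, one has $\hat{\phi}_{j\alpha}(x)=(xPx^t)_{j\alpha}$, and since every block-diagonal $x\in\SO{m}\times\SO{n}$ satisfies $xPx^t=P$, invariance under right multiplication follows from $(wx)P(wx)^t=w(xPx^t)w^t=wPw^t$. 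Alternatively this is immediate from Remark \ref{sigmasososo} together with $xx^t=I_{m+n}$, since $P=\tfrac12(I_{m+n}+I_{m,n})$ gives $\hat{\phi}_{j\alpha}(x)=\tfrac12(\delta_{j\alpha}+\sigma(x)_{j\alpha})$.

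For the tension field I would apply the product rule $\tau(\phi\psi)=\phi\,\tau(\psi)+2\kappa(\phi,\psi)+\psi\,\tau(\phi)$ to each summand $x_{jt}x_{\alpha t}$. Feeding in $\tau(x_{jt})=-\tfrac{m+n-1}{2}x_{jt}$ and $\kappa(x_{jt},x_{\alpha t})=-\tfrac12(x_{jt}x_{\alpha t}-\delta_{j\alpha})$ from Lemma \ref{lemmasososocoord} collapses each term to $-(m+n)x_{jt}x_{\alpha t}+\delta_{j\alpha}$; summing over $t=1,\dots,m$ and recognising $\sum_t x_{jt}x_{\alpha t}=\hat{\phi}_{j\alpha}$ yields $\tau(\hat{\phi}_{j\alpha})=-(m+n)\hat{\phi}_{j\alpha}+\delta_{j\alpha}\cdot m$, as claimed.

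The conformality identity is the substantial computation. By bilinearity $\kappa(\hat{\phi}_{j\alpha},\hat{\phi}_{k\beta})=\sum_{s,t=1}^m\kappa(x_{js}x_{\alpha s},x_{kt}x_{\beta t})$, and I would expand each term by the product rule for $\kappa$, then insert $\kappa(x_{pc},x_{qd})=-\tfrac12(x_{pd}x_{qc}-\delta_{pq}\delta_{cd})$. Each of the four resulting pieces splits into a quartic part and a Kronecker part. The organising observation is that every quartic double sum factorises as a product of two of the $\hat{\phi}$'s via $\sum_{s}x_{ps}x_{qs}=\hat{\phi}_{pq}$ (using also $\hat{\phi}_{pq}=\hat{\phi}_{qp}$): the four quartic sums collapse to $2\hat{\phi}_{j\beta}\hat{\phi}_{k\alpha}+2\hat{\phi}_{jk}\hat{\phi}_{\alpha\beta}$, while the four $\delta_{st}$-terms, after summing over the diagonal $s=t$, produce $\delta_{jk}\hat{\phi}_{\alpha\beta}+\delta_{\alpha\beta}\hat{\phi}_{jk}+\delta_{j\beta}\hat{\phi}_{k\alpha}+\delta_{k\alpha}\hat{\phi}_{j\beta}$. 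Multiplying by the overall factor $-\tfrac12$ then gives precisely the stated expression.

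The main obstacle is purely the bookkeeping in this last step: there are sixteen monomials to track (four product-rule terms, each contributing a quartic and a delta piece), and the crux is matching the row and column indices so that each double sum factors through $\hat{\phi}$. To keep the expansion correct I would use the derivation form of the product rule, $\kappa(\phi_1\phi_2,\psi_1\psi_2)=\phi_1\psi_1\kappa(\phi_2,\psi_2)+\phi_1\psi_2\kappa(\phi_2,\psi_1)+\phi_2\psi_1\kappa(\phi_1,\psi_2)+\phi_2\psi_2\kappa(\phi_1,\psi_1)$, which holds because $\kappa(\cdot,\cdot)=g(\nabla\cdot,\nabla\cdot)$ is a derivation in each argument; no deeper input than Lemma \ref{lemmasososocoord} and Proposition \ref{taukappa} is required.
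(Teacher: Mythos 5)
Your proposal is correct, and it supplies a proof that the thesis itself omits: Lemma \ref{lemmasososo} is only cited there from \cite{Gha-Gud-4}, so there is no in-paper argument to compare against. Your derivation is exactly the computation one would expect in the source: the invariance argument via $\hat{\phi}_{j\alpha}(x)=(xPx^t)_{j\alpha}$ with $P=\tfrac12(I_{m+n}+I_{m,n})$ is clean and consistent with Remark \ref{sigmasososo}; the tension-field computation correctly collapses each summand to $-(m+n)x_{jt}x_{\alpha t}+\delta_{j\alpha}$; and the sixteen-term expansion of $\kappa$ checks out, with the four quartic double sums factoring as $2\hat{\phi}_{j\beta}\hat{\phi}_{k\alpha}+2\hat{\phi}_{jk}\hat{\phi}_{\alpha\beta}$ and the four $\delta_{st}$-terms producing the stated Kronecker corrections. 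One small but worthwhile point: the version of the product rule you use, with second term $\phi_1\psi_2\kappa(\phi_2,\psi_1)$, is the correct one; the statement of Proposition \ref{taukappa} in the thesis contains a typo ($\kappa(\phi_2,\psi_2)$ appears twice), so you have implicitly corrected it, and it would be worth flagging that explicitly rather than silently.
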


We now introduce a family of eigenfunctions on the real Grassmannian, which Gudmundsson and Ghandour constructed in \cite{Gha-Gud-4}.
\begin{theorem}{\rm \cite{Gha-Gud-4}}\label{thsososo}
Let $A$ be a complex symmetric $(m+n)\times(m+n)$ matrix such that $A^2=0.$ Let $$\hat{\phi}_{j\alpha}(x)=\sum_{t=1}^m x_{jt}x_{\alpha t}.$$ Then the $\SO{m}\times\SO{n}$-invariant function $\hat{\Phi}_A: \SO{m+n}\rightarrow\mathbb{C}$ given by $$\hat{\Phi}_A(x)=\sum_{j,\alpha=1}^{m+n} A_{j\alpha}\cdot\hat{\phi}_{j\alpha}(x)$$ induces an eigenfunction $\Phi_A:\SO{m+n}/\SO{m}\times\SO{n}\rightarrow\mathbb{C} $ on the real Grassmannian, if $\rank A=1$ and $\trace A=0.$ In this case,
$$\tau(\Phi_A)=-(m+n)\cdot\Phi_A \ \ \textrm{and} \ \ \kappa(\Phi_A,\Phi_A)=-2\cdot\Phi_A^2.$$
\end{theorem}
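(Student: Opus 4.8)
The plan is to verify the two eigenvalue identities for the invariant function $\hat{\Phi}_A$ directly on the total space $\SO{m+n}$, using only linearity of $\tau$, bilinearity of $\kappa$, and the formulas of Lemma \ref{lemmasososo}, and then to pass to the quotient by Corollary \ref{inducedefs}. It is convenient to collect the building blocks into the symmetric matrix $P=(\hat{\phi}_{j\alpha})_{j,\alpha=1}^{m+n}$; symmetry holds because $\hat{\phi}_{j\alpha}=\hat{\phi}_{\alpha j}$, and since $A$ is symmetric we have $\hat{\Phi}_A=\sum_{j,\alpha}A_{j\alpha}\hat{\phi}_{j\alpha}=\trace(AP)$.

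For the tension field, linearity and the first relation of Lemma \ref{lemmasososo} give
$$\tau(\hat{\Phi}_A)=\sum_{j,\alpha}A_{j\alpha}\bigl(-(m+n)\hat{\phi}_{j\alpha}+\delta_{j\alpha}\,m\bigr)=-(m+n)\,\hat{\Phi}_A+m\,\trace(A),$$
and the final term vanishes because $\trace(A)=0$, establishing $\tau(\hat{\Phi}_A)=-(m+n)\hat{\Phi}_A$.

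The heart of the argument is the conformality operator. By bilinearity, $\kappa(\hat{\Phi}_A,\hat{\Phi}_A)=\sum_{j,\alpha,k,\beta}A_{j\alpha}A_{k\beta}\,\kappa(\hat{\phi}_{j\alpha},\hat{\phi}_{k\beta})$, and substituting the second relation of Lemma \ref{lemmasososo} produces six contractions. Each of the four Kronecker-delta terms identifies a pair of indices between the two copies of $A$ and therefore contracts to an entry of $A^2$; since $A^2=0$ all four vanish. The two remaining quadratic terms, namely $-A_{j\alpha}A_{k\beta}\hat{\phi}_{j\beta}\hat{\phi}_{k\alpha}$ and $-A_{j\alpha}A_{k\beta}\hat{\phi}_{jk}\hat{\phi}_{\alpha\beta}$, can each be rewritten, using the symmetry of $A$ and of $P$, as $-\trace(APAP)$; hence $\kappa(\hat{\Phi}_A,\hat{\Phi}_A)=-2\,\trace(APAP)$. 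The main bookkeeping obstacle is precisely recognising that both quadratic terms collapse to the same matrix trace.

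The hypothesis $\rank A=1$ now supplies the final identity $\trace(APAP)=\trace(AP)^2=\hat{\Phi}_A^2$. Writing the symmetric rank-one matrix as $A=vv^t$ for some $v\in\cn^{m+n}$ (so that $\trace A=(v,v)=0$ forces $v$ isotropic, which is exactly the condition $A^2=0$), one computes $APAP=(v^tPv)\,AP$, whence $\trace(APAP)=(v^tPv)\,\trace(AP)=\trace(AP)^2$. This yields $\kappa(\hat{\Phi}_A,\hat{\Phi}_A)=-2\,\hat{\Phi}_A^2$. Finally, each $\hat{\phi}_{j\alpha}$ is $\SO{m}\times\SO{n}$-invariant by Lemma \ref{lemmasososo}, so $\hat{\Phi}_A$ descends to a function $\Phi_A$ on the quotient with $\hat{\Phi}_A=\Phi_A\circ\pi$; since the projection $\pi$ is a harmonic Riemannian submersion for the normal metric, Corollary \ref{inducedefs} transfers both identities and shows that $\Phi_A$ is a $\bigl(-(m+n),-2\bigr)$-eigenfunction on $\SO{m+n}/\SO{m}\times\SO{n}$, as claimed.
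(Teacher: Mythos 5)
Your argument is correct and complete. Note first that the thesis itself gives no proof of Theorem \ref{thsososo}: it is quoted from \cite{Gha-Gud-4}, so there is no in-text proof to compare against. Your verification is the natural one and every step checks out. The tension-field computation is immediate from linearity and $\trace A=0$. For the conformality operator, the four Kronecker-delta contractions do each produce an entry of $A^2$ (using $A^t=A$, e.g.\ $\sum_j A_{j\alpha}A_{j\beta}=(A^2)_{\alpha\beta}$), so they vanish; and writing $P=(\hat{\phi}_{j\alpha})$, both quadratic terms equal $\trace(APAP)$, since $\sum_{j,\alpha,k,\beta}A_{j\alpha}A_{k\beta}\hat{\phi}_{j\beta}\hat{\phi}_{k\alpha}=\sum_{\alpha,\beta}(AP)_{\alpha\beta}(AP)_{\beta\alpha}$ and $\sum_{j,\alpha,k,\beta}A_{j\alpha}A_{k\beta}\hat{\phi}_{jk}\hat{\phi}_{\alpha\beta}=\sum_{\alpha,k}(AP)_{\alpha k}(AP)_{k\alpha}$, both by symmetry of $A$ and $P$. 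The rank-one step $\trace(APAP)=(v^tPv)\trace(AP)=\trace(AP)^2$ with $A=vv^t$ is exactly the content of Lemma \ref{lemmaAaat} combined with your observation that $A^2=(v,v)A$, which also explains the apparent redundancy between the hypotheses $A^2=0$ and $\trace A=0$ for symmetric rank-one $A$. The descent to the quotient via Corollary \ref{inducedefs} is correctly invoked. The only cosmetic remark is that you could state explicitly that $P=x\Pi x^t$ with $\Pi=\mathrm{diag}(I_m,0)$, which makes the symmetry of $P$ and the $\SO{m}\times\SO{n}$-invariance transparent, but this is already implicit in Lemma \ref{lemmasososo} and Remark \ref{sigmasososo}.
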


 \begin{remark}
     Lemma \ref{lemmaAaat} implies that all matrices $A$ which satisfy the conditions of Theorem \ref{thsososo} must be of the form $A=aa^t$ for some isotropic $a\in\cn^{m+n}\backslash\{0\}.$ To underline this fact and for consistency of notation, we will write $\Phi_a$ instead of $\Phi_A$ for the remainder of this chapter.
 \end{remark}

\section{Minimal Submanifolds of $\SO{m+n}/\SO{m}\times\SO{n}$}\label{section2realgrass}
We now show that none of the functions of the form $\Phi_a,$ as defined in Theorem \ref{thsososo}, are regular over $0\in\cn.$ We will first provide an easy example, and will then prove the general case in Theorem \ref{exsososo2}.

\begin{example}\label{exsososo1}
    Let $m=n=2,$ $a=(1,i,0,0)$ and $A=aa^t.$ We now want to find out whether $0$ is a regular value of the function $$\hat{\Phi}_a(x)=\sum_{j,\alpha=1}^{4} a_ja_\alpha\cdot\hat{\phi}_{j\alpha}(x)$$ from Theorem \ref{thsososo}.
Further, for $r=1,2$ and $s=3,4,$ an easy computation shows that $Y_{rs}(\hat{\Phi}_a(x))=0$ is equivalent to 
    \begin{eqnarray}\label{eqnphiA}
        0=(x_{1r}+ix_{2r})\cdot(x_{1s}+ix_{2s}).
    \end{eqnarray}
    If this equation holds, then at least one factor must be zero.
Since all entries of $x$ are real-valued, $x_{1t}+ix_{2t}=0$ implies that $x_{1t}=x_{2t}=0.$ 
Now assume that Equation \ref{eqnphiA} holds for all choices of $r$ and $s.$ Then either $$x_{11}=x_{21}=x_{12}=x_{22}=0\ \  \textrm{or}\ \  x_{13}=x_{23}=x_{14}=x_{24}=0.$$
We further note that $$\hat{\Phi}_a(x)=(x_{11}-x_{22}+i(x_{21}+x_{12}))\cdot(x_{11}+x_{22}+i(x_{21}-x_{12})).$$

Thus, $\nabla\hat{\Phi}_a=0$ at $x$ and $\hat{\Phi}_a(x)=0,$ if $x$ is of the form $$\begin{pmatrix}
    0&0&\cos\theta&\sin\theta\\
    0&0&-\sin\theta&\cos\theta\\
    \cos\alpha&\sin\alpha&0&0\\
    \sin\alpha&-\cos\alpha&0&0\\
\end{pmatrix} $$ for some $\alpha,\theta\in\rn.$
This shows that the set of critical points in the preimage of $\{0\}$ of $\hat{\Phi}_a$ is not discrete. Clearly $0$ is not a regular value.
\end{example}

We will now make a general statement.
\begin{theorem}\label{exsososo2}
    Let $a\in\cn\backslash\{0\}$ and assume $(a,a)=0.$ Let
$$\hat{\phi}_{j\alpha}(x)=\sum_{t=1}^m x_{jt}x_{\alpha t}$$ and let the $\SO{m}\times\SO{n}$-invariant function $\hat{\Phi}_a: \SO{m+n}\rightarrow\mathbb{C}$ be given by $$\hat{\Phi}_a(x)=\sum_{j,\alpha=1}^{m+n} a_ja_\alpha\cdot\hat{\phi}_{j\alpha}(x).$$ Define
$\Phi_a$ to be the eigenfunction on $\SO{m+n}/\SO{m}\times\SO{n}$ induced by $\hat{\Phi}_a$. Then $\Phi_a$ is not regular over $0\in\cn.$
\end{theorem}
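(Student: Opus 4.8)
The plan is to exhibit a single point in the fibre $\hat\Phi_a^{-1}(\{0\})$ at which $\hat\Phi_a$ is critical; since $\hat\Phi_a$ is $\SO m\times\SO n$-invariant, its derivatives along $\mathfrak k$ already vanish, so criticality is detected entirely by the horizontal basis $\mathcal B_\m=\{Y_{rs}\mid 1\le r\le m<s\le m+n\}$, and such a point descends to a critical point of $\Phi_a$ lying over $0$. That is exactly the assertion that $0$ fails to be a regular value. First I would record a closed form for the lift: writing $x_1,\dots,x_{m+n}$ for the columns of $x\in\SO{m+n}$ and $(a,x_t)=\sum_j a_j x_{jt}$, one has
$$\hat\Phi_a(x)=\sum_{j,\alpha=1}^{m+n}a_ja_\alpha\sum_{t=1}^m x_{jt}x_{\alpha t}=\sum_{t=1}^m (a,x_t)^2.$$
A computation entirely parallel to Example \ref{exsososo1} (differentiate $\hat\Phi_a(x\exp(sY_{rs}))$ at $s=0$, equivalently commute $Y_{rs}$ with the projection $\mathrm{diag}(I_m,0_n)$) then yields
$$Y_{rs}(\hat\Phi_a)=-\sqrt2\,(a,x_r)(a,x_s),\qquad 1\le r\le m<s\le m+n.$$
So the goal reduces to finding one $x\in\SO{m+n}$ with $\sum_{t=1}^m(a,x_t)^2=0$ and $(a,x_r)(a,x_s)=0$ for every admissible pair.

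The structural input is the isotropy of $a$. Writing $a=u+iv$ with $u,v\in\rn^{m+n}$, the equation $(a,a)=0$ splits into $|u|^2=|v|^2$ and $(u,v)=0$; by Lemma \ref{lemmaiso} neither part vanishes, so $u,v$ span a real $2$-plane and $\{u,v\}^\perp$ has dimension $m+n-2$. I would also note the Parseval identity $\sum_{t=1}^{m+n}(a,x_t)^2=(a,a)=0$, valid because the real columns of $x$ form an orthonormal basis, whence $\sum_{t=1}^m(a,x_t)^2=-\sum_{s=m+1}^{m+n}(a,x_s)^2$. Finally, $(a,x_t)=0$ is equivalent to $x_t\in\{u,v\}^\perp$.

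The construction is then a dimension count, split according to the standing hypothesis $(m,n)\neq(1,1)$. If $n\ge2$, then $m\le m+n-2$, so I may choose orthonormal $x_1,\dots,x_m\in\{u,v\}^\perp$ and complete them to an element of $\SO{m+n}$, flipping the sign of a free column to fix the determinant; then $(a,x_r)=0$ for all $r\le m$ annihilates every gradient product and the value simultaneously. If instead $m\ge2$ (which covers the remaining case $n=1$), then $n\le m+n-2$, so I may place orthonormal $x_{m+1},\dots,x_{m+n}\in\{u,v\}^\perp$ and complete; now $(a,x_s)=0$ for all $s>m$ kills the gradient products, and the Parseval identity forces $\hat\Phi_a(x)=0$ automatically. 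In either case the chosen $x$ is a critical point of $\hat\Phi_a$ in its zero fibre, so its image is a critical point of $\Phi_a$ over $0$; Corollary \ref{phi0} confirms that $0$ does lie in $\Phi_a(M)$, and we conclude that $0$ is not a regular value.

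The only genuine obstacle is the dimension bookkeeping—guaranteeing that one of the two column-blocks can be forced wholesale into the codimension-two space $\{u,v\}^\perp$. This is precisely where $(m,n)\neq(1,1)$ is used: if $m=n=1$ then $\{u,v\}^\perp=\{0\}$ and neither block fits, so the argument (correctly) would not apply. Everything else is routine: verifying that a partial orthonormal system extends to $\SO{m+n}$, and that vanishing of the descended gradient is equivalent to the simultaneous vanishing of all the $Y_{rs}(\hat\Phi_a)$.
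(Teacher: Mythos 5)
Your proposal is correct and follows essentially the same strategy as the paper: compute $\hat\Phi_a(x)=\sum_{t=1}^m(a,x_t)^2$ and $Y_{rs}(\hat\Phi_a)=-\sqrt2\,(a,x_r)(a,x_s)$, then exhibit a critical zero of $\hat\Phi_a$ by forcing an entire column block into the real $2$-codimensional space $\{u,v\}^\perp$, exactly as the paper does for $n\ge2$. The one place you diverge is the case $n=1$, $m\ge2$: the paper explicitly builds a matrix with columns $u/|u|,\,v/|v|,\dots,\tilde x$ and checks $\hat\Phi_a(x)=|u|^2-|v|^2=0$ via Lemma \ref{lemmaiso}, whereas you place the last $n$ columns in $\{u,v\}^\perp$ and deduce the vanishing of the value from the Parseval identity $\sum_{t=1}^{m+n}(a,x_t)^2=(a,a)=0$ — a slightly cleaner and more symmetric argument that buys you a uniform treatment of both cases.
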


\begin{proof}
Let $a\in\cn\backslash\{0\}$ be a vector satisfying $(a,a)=0.$ Let $a=u+iv$ be the decomposition of $a$ into its real and imaginary parts.
By Theorem \ref{thsososo} and Lemma \ref{lemmaAaat}, $$\Phi_a:\SO{m+n}/\SO{m}\times\SO{n}\rightarrow\cn$$ is a well defined eigenfunction.
In the following, we will denote by $x_t$ the $t$-th column of a point $x\in\SO{m+n}/\SO{m}\times\SO{n}.$
Through computation it can be shown that \begin{eqnarray*}
        \hat{\Phi}_a(x)&=&\sum_{j,\alpha=1}^{m+n} a_ja_\alpha\cdot\hat{\phi}_{j\alpha}(x)\\
        &=&\sum_{j,\alpha=1}^{m+n}\sum_{t=1}^m (a_ja_\alpha x_{jt}x_{\alpha t})\\
        &=&\sum_{t=1}^{m}\left(\left(\sum_{j=1}^{m+n} a_jx_{jt}\right)\cdot \left(\sum_{\alpha=1}^{m+n} a_\alpha x_{\alpha t}\right)\right)\\
        &=&\sum_{t=1}^{m}(a,x_t)^2.
    \end{eqnarray*}
    Further, for $Y_{rs}\in\mathcal{B}_{\m},$ with $1\leq r\leq m<s\leq m+n,$ we have that $Y_{rs}(\hat{\Phi}_a)=0$ if and only if $$0=\left(\sum_{j=1}^{m+n} a_jx_{jr}\right)\cdot \left(\sum_{\alpha=1}^{m+n} a_\alpha x_{\alpha s}\right)=(a,x_r)\cdot(a,x_s).$$
    We will now construct an element $x\in\SO{m+n}$ such that $\hat{\Phi}_a(x)=0$ and for every $Y_{rs}\in\mathcal{B}_\m$, we have that $Y_{rs}(\hat{\Phi}_a)=0$ at $x.$
    
       Assume first that $n\geq2.$ Then we can choose $m$ pairwise orthogonal unit vectors $x_1,\dots,x_m$ of $\rn^{m+n},$ which are also orthogonal to $u$ and $v.$ We may now choose $n$ vectors $x_{m+1},\dots x_{m+n}\in\rn^{m+n}$ that extend
       $\{x_1,\dots,x_m\}$ to an orthonormal basis of $\rn^{m+n}.$ Then the matrix $x$ with columns $$(x_1,\dots x_{m+n})$$ clearly belongs to $\O{m+n}.$ If $\det x=-1,$ multiply one of the columns by $-1.$ Hence we may assume that there exists a matrix $x\in\SO{m+n},$ such that the first $m$ columns $x_r, 1\leq r\leq m$ satisfy $$(a,x_r)=(u,x_r)+i(v,x_r)=0.$$  It follows that for all choices $1\leq r\leq m<s\leq m+n,$ we have that $$Y_{rs}(\hat{\Phi}_a)=(a,x_r)\cdot(a,x_s)=0.$$ It trivially holds that $$\hat{\Phi}_a(x)=\sum_{t=1}^{m}(a,x_t)^2=0.$$ We conclude that if $n>1,$ then $0$ is not a regular value of $\hat{\Phi}_a,$ and neither of $\Phi_a.$
       
       Let us now consider the case $m>1, n=1.$ Recall from Lemma \ref{lemmaiso} that $u,v\in\rn^{m+1}$ are linearly independent, non-zero and orthogonal. We now have to adapt our proof, since there do not exist $m$ pairwise orthogonal unit vectors $x_1,\dots,x_m$ of $\rn^{m+1}$ that satisfy %$$\sum_{j=1}^{m+n} (u_j+i\cdot v_j)x_{j,r}=0$$ for all $1\leq r\leq m.$
       $$(u+iv, x_r)=0$$ for all $1\leq r\leq m.$ 
       Hence, if $$Y_{r,m+1}(\hat{\Phi}_a)=0$$ for all such $r,$ we require that $$(a,x_{m+1})=0.$$ Let $\Tilde{x}\in\rn^{m+1}$ be any real non-zero unit vector that is orthogonal to both $u$ and $v,$ i.e. it satisfies $$(a,\Tilde{x})=0.$$ Such a vector exists since $m\geq2.$
       If $m=2,$ let $x$ be the matrix with columns $$\left(\frac{u}{|u|},\frac{v}{|v|},\Tilde{x}\right).$$ Otherwise,
       we may now choose $m-2$ vectors $x_1,\dots,x_{m-2}\in\rn^{m+1}$ that extend $\left\{\frac{u}{|u|},\frac{v}{|v|},\Tilde{x}\right\}$ to an orthonormal basis of $\rn^{m+1}.$ 
       By the same argument as above, we may assume that the matrix $x$ with columns $$\left(\frac{u}{|u|},\frac{v}{|v|},x_1,\dots,x_{m-2},\Tilde{x}\right)$$ belongs to $\SO{m+1}.$ 
       By construction, $$Y_{r,m+1}(\hat{\Phi}_a)=0$$ at $x$ for all $1\leq r\leq m.$ This means that $\hat{\Phi}_a$ is not regular at $x.$ 
       Further, we have that \begin{eqnarray*}
        \hat{\Phi}_a(x)&=&\sum_{t=1}^{m}(a,x_t)^2\\
        &=&\frac{1}{|u|^2}\cdot(u+iv,u)^2+\frac{1}{|v|^2}\cdot(u+iv,v)^2\\
        &=&\frac{1}{|u|^2}\cdot(u,u)^2+\frac{1}{|v|^2}\cdot(iv,v)^2\\
        &=&\frac{1}{|u|^2}\cdot|u|^4-\frac{1}{|v|^2}\cdot|v|^4\\
        &=&|u|^2-|v|^2.\\
        &=&0.
    \end{eqnarray*}
    Here, we again used Lemma \ref{lemmaiso}. It follows that $\Phi_a$ is not regular over 0.
    This proves the statement for $m>1,n=1.$ 

\end{proof}

As a consequence,the application of Theorem \ref{GM} to functions of the form $\Phi_a$ does not yield compact minimal submanifolds. 
For sake of completeness, we will state an Example of a compact minimal submanifold on $\SO{4}/\SO{3},$ namely the famous Clifford Torus. 

\begin{example}\label{consolationprize}
    (A consolation prize.) As we have seen in Example \ref{spheresoso}, $$S^3\cong\SO{4}/\SO{3}.$$ Since $\SO{1}=\{e\}$ is the trivial group, we may write $$S^3\cong\SO{4}/\SO{3}\times\SO{1}.$$ A well-known minimal submanifold of $S^3$ is the Clifford torus $$T_{{\pi/4}}=\left(\tfrac{1}{\sqrt2}\cdot e^{i\alpha},\tfrac{1}{\sqrt2}\cdot e^{i\beta}\right).$$ We have already discussed it in more detail in Example \ref{clifford}.
\end{example}
%%%%%%%%%%%%%%%%%%%%%%%%%%%%%%%%%%%%%%%%%%

\chapter{The Complex Grassmannians $\U{m+n}/\U{m}\times\U{n}$}
We will now turn our attention to the complex Grassmannians. 
In Section \ref{sectionefununun} we study eigenfunctions on $\U{m+n}/\U{m}\times\U{n}$.
Lemma \ref{lemma1ununun} corrects a lemma given in Gudmundsson and Ghandour's paper \cite{Gha-Gud-5}. This allows us to construct a new family of eigenfunctions on $\U{m+n}/\U{m}\times\U{n}$, which is done in Theorem \ref{theigenfunctionununun}. 
We then study minimal submanifolds of $\U{m+n}/\U{m}\times\U{n}$ in Section \ref{msununun}.
We show with Theorem \ref{thunununnotregular} that those eigenfunctions are never regular over $0\in\cn.$  
Overall, this chapter uses similar arguments as in the last chapter on the real Grassmannians. 

\section{Eigenfunctions on $\U{m+n}/\U{m}\times\U{n}$}\label{sectionefununun}
We first show in Lemma \ref{coordfunctionsununun} how the tension field and conformality operator act on the coordinate functions. This Lemma stems from Gudmundsson and Ghandour's paper \cite{Gha-Gud-5}. We then use it in Lemma \ref{lemma1ununun} to correct a similar statement made in \cite{Gha-Gud-5}. This allows us to construct a family of eigenfunctions on $\U{m+n}/\U{m}\times\U{n}$, as shown in Theorem \ref{theigenfunctionununun}.

\begin{lemma}{\rm \cite{Gha-Gud-5}}\label{coordfunctionsununun}
Let $z_{j\alpha}:\U{n}\rightarrow\cn$ be the matrix elements of the standard representation of the unitary group $\U{n}.$ Then, the tension field and conformality operators $\tau,\kappa$ satisfy the following relations:
\begin{eqnarray*}
        \tau(z_{j\alpha})&=&-n\cdot z_{j\alpha}, \quad \kappa(z_{j\alpha},z_{k\beta})=-z_{j\beta}z_{k\alpha},\\
        \tau(\Bar{z}_{j\alpha})&=&-n\cdot \Bar{z}_{j\alpha}, \quad \kappa(\Bar{z}_{j\alpha},\Bar{z}_{k\beta})=-\Bar{z}_{j\beta}\Bar{z}_{k\alpha},\\
        \kappa(z_{j\alpha},\Bar{z}_{k\beta})&=&\delta_{jk}\delta_{\alpha\beta}.\\
    \end{eqnarray*}
\end{lemma}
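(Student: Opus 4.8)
The plan is to use the standard description of $\tau$ and $\kappa$ on a compact Lie group. The metric of Definition \ref{standardmetric} restricts to a bi-invariant ($\Ad$-invariant) metric on $\U{n}$, so the Levi-Civita connection satisfies $\nabla_X X=0$ for every left-invariant field $X$; consequently, for any fixed orthonormal basis $\mathcal{B}_{\u{n}}$ of $\u{n}$ and any smooth $\phi,\psi:\U{n}\to\cn$,
$$\tau(\phi)=\sum_{X\in\mathcal{B}_{\u{n}}}X^2(\phi),\qquad \kappa(\phi,\psi)=\sum_{X\in\mathcal{B}_{\u{n}}}X(\phi)\,X(\psi).$$
Since $\u{n}$ consists of the skew-Hermitian matrices, I would fix the orthonormal basis $\mathcal{B}_{\u{n}}=\{Y_{rs},\,iX_{rs}\mid 1\le r<s\le n\}\cup\{iE_{tt}\mid 1\le t\le n\}$, whose cardinality is $2\binom{n}{2}+n=n^2=\dim\u{n}$.

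First I would reduce everything to a single directional derivative. Identifying $X\in\u{n}$ with the left-invariant field it generates and differentiating $z\mapsto(z\exp(sX))_{j\alpha}$ at $s=0$ gives $X(z_{j\alpha})=(zX)_{j\alpha}=\sum_k z_{jk}X_{k\alpha}$, whence $X^2(z_{j\alpha})=(zX^2)_{j\alpha}$. Because each $X$ acts as a real first-order operator, the conjugate coordinates obey $X(\Bar{z}_{j\alpha})=\overline{X(z_{j\alpha})}$; this lets me obtain the three $\Bar{z}$-relations from the $z$-relations by taking complex conjugates, so the genuine work concerns only $\tau(z_{j\alpha})$, $\kappa(z_{j\alpha},z_{k\beta})$ and the mixed $\kappa(z_{j\alpha},\Bar{z}_{k\beta})$.

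The heart of the argument, and the step I expect to be the main obstacle, is the pair of completeness identities for the chosen basis,
$$\sum_{X\in\mathcal{B}_{\u{n}}}X_{p\alpha}X_{q\beta}=-\delta_{p\beta}\delta_{q\alpha},\qquad \sum_{X\in\mathcal{B}_{\u{n}}}X_{p\alpha}\Bar{X}_{q\beta}=\delta_{pq}\delta_{\alpha\beta}.$$
These follow by inserting the explicit entries of $Y_{rs}$, $iX_{rs}$ and $iE_{tt}$ and collecting Kronecker deltas: the off-diagonal generators $Y_{rs}$ and $iX_{rs}$ combine so that only one index pairing survives (the cross terms cancelling because of the factor $i^2=-1$ on $iX_{rs}$), while the diagonal generators $iE_{tt}$ exactly account for the case $p=\alpha$. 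I would carry out this bookkeeping once rather than case-splitting repeatedly. Setting $\alpha=q=k$ in the first identity and summing over $k$ then produces the Casimir element $\sum_X X^2=-n\,I_n$.

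Finally I would substitute. The Casimir computation gives $\tau(z_{j\alpha})=\bigl(z\sum_X X^2\bigr)_{j\alpha}=-n\,z_{j\alpha}$. Expanding $\kappa(z_{j\alpha},z_{k\beta})=\sum_{p,q}z_{jp}z_{kq}\sum_X X_{p\alpha}X_{q\beta}$ and applying the first completeness relation collapses the double sum to $-z_{j\beta}z_{k\alpha}$. For the mixed operator, $\kappa(z_{j\alpha},\Bar{z}_{k\beta})=\sum_{p,q}z_{jp}\Bar{z}_{kq}\sum_X X_{p\alpha}\Bar{X}_{q\beta}=\delta_{\alpha\beta}\sum_p z_{jp}\Bar{z}_{kp}=\delta_{\alpha\beta}(z\Bar{z}^t)_{jk}$, and here I would invoke unitarity $z\Bar{z}^t=I_n$ to conclude $\kappa(z_{j\alpha},\Bar{z}_{k\beta})=\delta_{jk}\delta_{\alpha\beta}$. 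The three conjugated relations then drop out by the conjugation principle noted above.
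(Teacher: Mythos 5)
Your proposal is correct and is essentially the standard computation used in the cited source \cite{Gha-Gud-5} (and in the analogous coordinate-function lemmas for $\SO{n}$, $\SU{n}$ and $\Sp{n}$): bi-invariance gives $\tau=\sum_X X^2$ and $\kappa(\phi,\psi)=\sum_X X(\phi)X(\psi)$ over an orthonormal basis of $\u{n}$, and the two completeness identities for $\{Y_{rs},iX_{rs},iE_{tt}\}$ together with unitarity $z\bar z^t=I_n$ yield all five relations. The paper itself states this lemma without proof, so your argument supplies exactly the omitted verification and contains no gaps.
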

We offer the following correction of Lemma 8.1 of Ghandour and Gudmundsson's paper \cite{Gha-Gud-5}. We would like to point out that in his thesis \cite{Lind}, Lindström independently came to the same result as us using a different proof. 
\begin{lemma}{\rm \cite{Gha-Gud-5}}\label{lemma1ununun}
For $1\leq j,\alpha\leq m+n,$ we define the complex-valued functions $$\hat{\phi}_{j\alpha}:\U{m+n}\rightarrow\cn$$ on the unitary group by $$\hat{\phi}_{j\alpha}(z)=\sum_{t=1}^m z_{jt}\Bar{z}_{\alpha t}.$$ 
The tension field $\tau $ and the conformality operator $\kappa $ on the unitary group $\U{m+n}$ satisfy
$$\tau (\hat{\phi}_{j\alpha})=-2(m+n)\cdot\hat{\phi}_{j\alpha}+2m\cdot\delta_{j\alpha},$$
and %$$\kappa (\hat{\phi}_{jk},\hat{\phi}_{lm})=-2\cdot\hat{\phi}_{jk}\hat{\phi}_{lm}.$$
$$\kappa (\hat{\phi}_{j\alpha},\hat{\phi}_{k\beta})=-2\cdot\hat{\phi}_{j\beta}\hat{\phi}_{k\alpha}+\delta_{j\beta}\cdot\hat{\phi}_{k\alpha}+\delta_{k\alpha}\cdot\hat{\phi}_{j\beta}.$$
\end{lemma}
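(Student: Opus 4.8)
The plan is to reduce both identities to the coordinate-function relations of Lemma~\ref{coordfunctionsununun} by means of the product rules for $\tau$ and $\kappa$ recorded in Proposition~\ref{taukappa}. Since $\hat{\phi}_{j\alpha}$ is a function on $\U{m+n}$, the relevant coordinate identities are those of Lemma~\ref{coordfunctionsununun} with $n$ replaced throughout by $m+n$; in particular $\tau(z_{jt})=-(m+n)z_{jt}$, $\tau(\Bar{z}_{\alpha t})=-(m+n)\Bar{z}_{\alpha t}$, and the mixed conformality term reads $\kappa(z_{jt},\Bar{z}_{\alpha s})=\delta_{j\alpha}\delta_{ts}$.

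For the tension field, first I would use linearity to write $\tau(\hat{\phi}_{j\alpha})=\sum_{t=1}^m\tau(z_{jt}\Bar{z}_{\alpha t})$ and then apply the first product rule of Proposition~\ref{taukappa} to each summand:
\begin{align*}
\tau(z_{jt}\Bar{z}_{\alpha t})
&= z_{jt}\,\tau(\Bar{z}_{\alpha t})+2\,\kappa(z_{jt},\Bar{z}_{\alpha t})+\Bar{z}_{\alpha t}\,\tau(z_{jt})\\
&= -2(m+n)\,z_{jt}\Bar{z}_{\alpha t}+2\,\delta_{j\alpha}.
\end{align*}
Summing over $t=1,\dots,m$ collapses the first term to $-2(m+n)\hat{\phi}_{j\alpha}$ and the constant to $2m\,\delta_{j\alpha}$, which is the asserted formula.

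For the conformality operator I would expand $\kappa(\hat{\phi}_{j\alpha},\hat{\phi}_{k\beta})=\sum_{t,s=1}^m\kappa(z_{jt}\Bar{z}_{\alpha t},z_{ks}\Bar{z}_{\beta s})$ using bilinearity, and apply the second (Leibniz) product rule of Proposition~\ref{taukappa} to each term with $(\phi_1,\phi_2)=(z_{jt},\Bar{z}_{\alpha t})$ and $(\psi_1,\psi_2)=(z_{ks},\Bar{z}_{\beta s})$. Substituting the four coordinate identities then produces four double sums: the term carrying the holomorphic prefactors $z_{jt}z_{ks}$ and the term carrying the antiholomorphic prefactors $\Bar{z}_{\alpha t}\Bar{z}_{\beta s}$ each refold into $-\hat{\phi}_{j\beta}\hat{\phi}_{k\alpha}$ (after collecting $\sum_t z_{jt}\Bar{z}_{\beta t}$ and $\sum_s z_{ks}\Bar{z}_{\alpha s}$), while the two mixed terms, carrying the Kronecker deltas $\delta_{k\alpha}\delta_{st}$ and $\delta_{j\beta}\delta_{ts}$, collapse one summation index and yield $\delta_{k\alpha}\hat{\phi}_{j\beta}$ and $\delta_{j\beta}\hat{\phi}_{k\alpha}$ respectively. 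Adding the four contributions gives $-2\hat{\phi}_{j\beta}\hat{\phi}_{k\alpha}+\delta_{j\beta}\hat{\phi}_{k\alpha}+\delta_{k\alpha}\hat{\phi}_{j\beta}$, as claimed.

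The computation is in essence routine, so the only real care needed is bookkeeping. The main obstacle is twofold: tracking which of the five coordinate identities applies to each of the four pieces of the Leibniz expansion, and correctly refolding the resulting double sums back into the functions $\hat{\phi}$; a cross-term between a holomorphic and an antiholomorphic factor must be matched against $\kappa(z,\Bar{z})$ rather than $\kappa(z,z)$, and it is exactly the appearance of $\kappa(z_{jt},\Bar{z}_{\alpha s})=\delta_{j\alpha}\delta_{ts}$ in two of the four pieces that produces the linear correction terms distinguishing this corrected statement from the version in \cite{Gha-Gud-5}. I would also note explicitly that the Leibniz rule for $\kappa$ is being used in its correct form, namely $\kappa(\phi_1\phi_2,\psi_1\psi_2)=\phi_1\psi_1\kappa(\phi_2,\psi_2)+\phi_1\psi_2\kappa(\phi_2,\psi_1)+\phi_2\psi_1\kappa(\phi_1,\psi_2)+\phi_2\psi_2\kappa(\phi_1,\psi_1)$, since the second summand as printed in Proposition~\ref{taukappa} contains a typographical error.
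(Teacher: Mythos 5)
Your proposal is correct and follows essentially the same route as the paper: both reduce the claim to the coordinate identities of Lemma \ref{coordfunctionsununun} (applied on $\U{m+n}$), use the product rule for $\tau$ termwise, and expand $\kappa(z_{jt}\Bar{z}_{\alpha t},z_{ks}\Bar{z}_{\beta s})$ by the Leibniz rule into four double sums, with the two mixed $\kappa(z,\Bar{z})$ terms producing the linear Kronecker-delta corrections. Your explicit remark about the correct form of the second product rule is a fair point of care, and the bookkeeping you describe matches the paper's computation exactly.
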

\begin{proof}
    Using Lemma \ref{coordfunctionsununun}, we now obtain the following:
    \begin{eqnarray*}
        \tau(\hat{\phi}_{j\alpha})&=&\sum_{t=1}^m \tau(z_{jt}\Bar{z}_{\alpha t})\\
        &=&-2(m+n)\cdot\sum_{t=1}^m z_{jt}\Bar{z}_{\alpha t}+2\cdot\sum_{t=1}^m \delta_{j\alpha}\\
        &=&-2(m+n)\hat{\phi}_{j\alpha}+2m\cdot\delta_{j\alpha}.
    \end{eqnarray*}

\begin{eqnarray*}
    \kappa(\hat{\phi}_{j\alpha},\hat{\phi}_{k\beta})&=&\sum_{s=1}^m\sum_{t=1}^m\kappa(z_{js}\Bar{z}_{\alpha s},z_{kt}\Bar{z}_{\beta t})\\
    &=&\sum_{s=1}^m\sum_{t=1}^m\Bar{z}_{\alpha s}\Bar{z}_{\beta t}\kappa(z_{js},z_{kt})+z_{js}z_{kt}\kappa(\Bar{z}_{\alpha s},\Bar{z}_{\beta t})\\
&&\quad\quad\quad+\Bar{z}_{\alpha s}z_{kt}\kappa(z_{js},\Bar{z}_{\beta t})+z_{js}\Bar{z}_{\beta t}\kappa(\Bar{z}_{\alpha s},z_{kt})\\
&=&\sum_{s=1}^m\sum_{t=1}^m(-\Bar{z}_{\alpha s}\Bar{z}_{\beta t}z_{jt}z_{ks}+z_{js}z_{kt}\Bar{z}_{\alpha t}\Bar{z}_{\beta s}+\Bar{z}_{\alpha s}z_{kt}\delta_{j\beta}\delta_{st}+z_{js}\Bar{z}_{\beta t}\delta_{\alpha k}\delta_{st})\\
&=&-2\cdot\hat{\phi}_{j\beta}\hat{\phi}_{k\alpha}+\delta_{k\alpha}\cdot\hat{\phi}_{j\beta}+\delta_{j\beta}\cdot\hat{\phi}_{k\alpha}.\\
\end{eqnarray*}
\end{proof}

\begin{remark}
    For any $z\in\U{m},$ it holds by definition that $z\cdot z^*=I_m.$ Consequently,
    \begin{equation*}
        (z\cdot z^*)_{j\alpha}=\sum_{t=1}^m z_{jt}\Bar{z}_{\alpha t}=\delta_{j\alpha}.
    \end{equation*}
    If $j<\alpha,$ it is now clear that the functions $\hat{\phi}_{j\alpha}:\U{m+n}\rightarrow\cn,$ defined by $\hat{\phi}_{j\alpha}(z)=\sum_{t=1}^m z_{jt}\Bar{z}_{\alpha t}$ are $\U{m}\times\U{n}$-invariant and hence induce functions $$\phi_{j\alpha}:\U{m+n}/\U{m}\times\U{n}\rightarrow\cn.$$
\end{remark}

\begin{theorem}{\rm \cite{Gha-Gud-5}, \cite{Lind}}
    For a fixed natural number $1\leq\alpha<m+n,$ the set
    $$\mathcal{E}_\alpha=\{\phi_{j\alpha}:\U{m+n}/\U{m}\times\U{n}\rightarrow\cn \ \rvert \ 1\leq j\leq m+n, \ j\neq\alpha\}$$
    is an eigenfamily on the complex Grassmannian $\U{m+n}/\U{m}\times\U{n}$ such that the tension field $\tau$ and the conformality operator $\kappa$ satisfy
    $$\tau(\phi_{j\alpha})=-2\cdot(m+n)\cdot\phi_{j\alpha},$$ and $$\kappa(\phi_{j\alpha},\phi_{k\alpha})=-2\cdot\phi_{j\alpha}\cdot\phi_{k\alpha}$$ for all $\phi_{j\alpha},\phi_{k\alpha}\in\mathcal{E}_\alpha.$
\end{theorem}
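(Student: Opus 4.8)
The plan is to establish the eigenfamily relations first on the total space $\U{m+n}$ for the $\U{m}\times\U{n}$-invariant functions $\hat{\phi}_{j\alpha}$, and then to push them down to the quotient $\U{m+n}/\U{m}\times\U{n}$ using the fact that the natural projection is a harmonic Riemannian submersion. All of the analytic content is already contained in Lemma \ref{lemma1ununun}; what remains is a specialisation of indices followed by an invocation of Proposition \ref{efquotient}.

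First I would verify the tension-field relation. Fixing $\alpha$ and taking any $j\neq\alpha$, the term $2m\cdot\delta_{j\alpha}$ in Lemma \ref{lemma1ununun} vanishes, so that $\tau(\hat{\phi}_{j\alpha})=-2(m+n)\cdot\hat{\phi}_{j\alpha}$ on $\U{m+n}$. Thus every member of the family is eigen with respect to $\tau$ with the common eigenvalue $\lambda=-2(m+n)$. Next I would treat the conformality operator by setting $\beta=\alpha$ in the second formula of Lemma \ref{lemma1ununun}, which gives
$$\kappa(\hat{\phi}_{j\alpha},\hat{\phi}_{k\alpha})=-2\cdot\hat{\phi}_{j\alpha}\hat{\phi}_{k\alpha}+\delta_{j\alpha}\cdot\hat{\phi}_{k\alpha}+\delta_{k\alpha}\cdot\hat{\phi}_{j\alpha}.$$
Since both $j$ and $k$ are taken distinct from $\alpha$, the two Kronecker deltas vanish and we are left with $\kappa(\hat{\phi}_{j\alpha},\hat{\phi}_{k\alpha})=-2\cdot\hat{\phi}_{j\alpha}\hat{\phi}_{k\alpha}$, which is exactly the required eigenfamily relation with $\mu=-2$.

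Finally comes the descent to the quotient. Each $\hat{\phi}_{j\alpha}$ is $\U{m}\times\U{n}$-invariant, as follows from the row-orthonormality of the upper-left block lying in $\U{m}$ (the computation indicated in the remark preceding the theorem), and hence factors as $\hat{\phi}_{j\alpha}=\phi_{j\alpha}\circ\pi$ through the projection $\pi:\U{m+n}\rightarrow\U{m+n}/\U{m}\times\U{n}$. Because $\pi$ is a harmonic Riemannian submersion, Proposition \ref{efquotient} yields $\tau(\hat{\phi}_{j\alpha})=\tau(\phi_{j\alpha})\circ\pi$ and $\kappa(\hat{\phi}_{j\alpha},\hat{\phi}_{k\alpha})=\kappa(\phi_{j\alpha},\phi_{k\alpha})\circ\pi$; since $\pi$ is surjective, the two identities established above transfer verbatim to the functions $\phi_{j\alpha}$ on $\U{m+n}/\U{m}\times\U{n}$, giving the stated eigenvalues.

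The argument is essentially bookkeeping once Lemma \ref{lemma1ununun} is in place, so I do not anticipate a serious obstacle. The one point genuinely needing care, and the place where a careless version could go wrong, is the justification that $\pi$ is indeed a harmonic Riemannian submersion for the metric in use: this is the standard property of the normal homogeneous metric on $\U{m+n}/\U{m}\times\U{n}$ that underlies Proposition \ref{efquotient} and Corollary \ref{inducedefs}, and with it assumed the conclusion is immediate.
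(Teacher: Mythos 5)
Your proposal is correct and follows essentially the same route as the paper, whose proof consists of the single remark that the statement follows immediately from Lemma \ref{lemma1ununun}: you specialise $\beta=\alpha$ there, observe that the Kronecker-delta terms vanish because $j,k\neq\alpha$, and descend to the quotient via the $\U{m}\times\U{n}$-invariance together with Proposition \ref{efquotient}. The only difference is that you spell out the bookkeeping and the harmonic-Riemannian-submersion step that the paper leaves implicit.
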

\begin{proof}
    The statement  follows immediately from Lemma \ref{lemma1ununun}. 
\end{proof}

Building up on the previous lemmas, we now construct a family of $\U{m}\times U{n}$-invariant eigenfunctions on $\U{m+n}$. Those constructions are very similar to the ones we have seen in the last chapter, c.f. Theorem \ref{thsososo}. 
\begin{theorem}\label{theigenfunctionununun}
    Let $a,b\in\cn^{m+n}\backslash\{0\}$ satisfy $\langle a,\Bar{b}\rangle=0.$ Then the complex-valued function $\hat{\Phi}_{a,b}:\U{m+n}\rightarrow\cn$, given by $$\hat{\Phi}_{a,b}(z)=\sum_{j,\alpha=1}^{m+n} a_jb_\alpha\hat{\phi}_{j\alpha }(z),$$ is an $\U{m}\times\U{n}$-invariant eigenfunction and thus induces an eigenfunction $\Phi_{a,b}:\U{m+n}/\U{m}\times\U{n}\rightarrow\cn.$ In particular, 
    $$\tau (\hat{\Phi}_{a,b})=-2(m+n)\cdot\hat{\Phi}_{a,b}, \quad \kappa (\hat{\Phi}_{a,b},\hat{\Phi}_{a,b})=-2\cdot\hat{\Phi}_{a,b}^2.$$
\end{theorem}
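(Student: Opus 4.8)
The plan is to avoid any fresh differential-geometric computation and instead reduce everything to Lemma \ref{lemma1ununun}, exploiting the linearity of $\tau$ and the bilinearity of $\kappa$. First I would settle the $\U{m}\times\U{n}$-invariance of each building block $\hat\phi_{j\alpha}$, and hence of the linear combination $\hat\Phi_{a,b}$. The cleanest way is to observe that $\hat\phi_{j\alpha}(z)=(zPz^*)_{j\alpha}$, where $P=\bigl(\begin{smallmatrix} I_m & 0\\ 0 & 0\end{smallmatrix}\bigr)$ is the orthogonal projection onto the first $m$ coordinates. For any $k=\bigl(\begin{smallmatrix} x & 0\\ 0& y\end{smallmatrix}\bigr)\in\U{m}\times\U{n}$ one has $kPk^*=P$, since $xx^*=I_m$, and therefore $\hat\phi_{j\alpha}(zk)=(zkPk^*z^*)_{j\alpha}=(zPz^*)_{j\alpha}=\hat\phi_{j\alpha}(z)$. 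Invariance of $\hat\Phi_{a,b}$ then follows by linearity, so $\hat\Phi_{a,b}$ descends to a well-defined function $\Phi_{a,b}$ on the quotient.

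Next I would compute the tension field. By linearity and Lemma \ref{lemma1ununun},
$$\tau(\hat\Phi_{a,b})=\sum_{j,\alpha=1}^{m+n}a_jb_\alpha\bigl(-2(m+n)\hat\phi_{j\alpha}+2m\,\delta_{j\alpha}\bigr)=-2(m+n)\hat\Phi_{a,b}+2m\sum_{j=1}^{m+n}a_jb_j.$$
Here the key observation is that $\sum_j a_jb_j=\langle a,\Bar{b}\rangle$, which vanishes by hypothesis; this kills the inhomogeneous term and leaves $\tau(\hat\Phi_{a,b})=-2(m+n)\hat\Phi_{a,b}$.

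For the conformality operator I would expand by bilinearity and substitute the second formula of Lemma \ref{lemma1ununun}, producing three groups of terms. After the index bookkeeping, the leading term factors as
$$-2\sum_{j,\alpha,k,\beta}a_jb_\alpha a_kb_\beta\,\hat\phi_{j\beta}\hat\phi_{k\alpha}=-2\Bigl(\sum_{j,\beta}a_jb_\beta\hat\phi_{j\beta}\Bigr)\Bigl(\sum_{k,\alpha}a_kb_\alpha\hat\phi_{k\alpha}\Bigr)=-2\,\hat\Phi_{a,b}^2,$$
while each of the two Kronecker-delta terms collapses, upon contracting the delta, to $\langle a,\Bar{b}\rangle\cdot\hat\Phi_{a,b}=0$ by hypothesis. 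This yields $\kappa(\hat\Phi_{a,b},\hat\Phi_{a,b})=-2\,\hat\Phi_{a,b}^2$.

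Finally, having shown that $\hat\Phi_{a,b}$ is a $(-2(m+n),-2)$-eigenfunction on $\U{m+n}$ that is constant on the fibres of the harmonic Riemannian submersion $\pi:\U{m+n}\to\U{m+n}/\U{m}\times\U{n}$, I would invoke Corollary \ref{inducedefs} to conclude that the induced function $\Phi_{a,b}$ is an eigenfunction on the quotient with the same eigenvalues. There is no deep obstacle in this argument; the only delicate point is the conformality computation, namely the careful handling of the four summation indices $j,\alpha,k,\beta$ when factoring the leading term and contracting the deltas. The structural insight that makes the whole proof work is recognizing that the hypothesis $\langle a,\Bar{b}\rangle=0$ is precisely what eliminates both the inhomogeneous $\tau$-term and the two delta-terms appearing in $\kappa$.
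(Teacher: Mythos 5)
Your proposal is correct and follows essentially the same route as the paper: both reduce the claim to Lemma \ref{lemma1ununun} via linearity of $\tau$ and bilinearity of $\kappa$, and both use the hypothesis $\langle a,\Bar{b}\rangle=\sum_j a_jb_j=0$ to kill the inhomogeneous term in $\tau$ and the two contracted delta-terms in $\kappa$. Your explicit invariance check via $\hat\phi_{j\alpha}(z)=(zPz^*)_{j\alpha}$ and the appeal to Corollary \ref{inducedefs} merely spell out what the paper relegates to the surrounding remarks.
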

\begin{proof}
    We will make use of Lemma \ref{lemma1ununun}. The rest of the computation is simple. 
    \begin{eqnarray*}
        \tau (\hat{\Phi}_{a,b})&=&\sum_{j,\alpha =1}^{m+n} a_jb_\alpha \tau (\hat{\phi}_{j\alpha })\\
        &=&\sum_{j,\alpha =1}^{m+n} a_jb_\alpha (-2(m+n)\cdot\hat{\phi}_{j\alpha }+2m\cdot\delta_{j\alpha })\\
        &=&-2(m+n)\cdot\hat{\Phi}_{a,b}+2m\cdot\sum_{j=1}^{m+n} a_jb_j\\
        &=&-2(m+n)\cdot\hat{\Phi}_{a,b}+2m\cdot(a,b)\\
        &=&-2(m+n)\cdot\hat{\Phi}_{a,b}.
    \end{eqnarray*}
    In the last step we used that $(a,b)=0.$
    \begin{eqnarray*}
        \kappa (\hat{\Phi}_{a,b},\hat{\Phi}_{a,b})&=&\sum_{j,\alpha =1}^{m+n}\sum_{k,\beta=1}^{m+n}a_jb_\alpha a_kb_\beta\cdot\kappa (\hat{\phi}_{j\alpha },\hat{\phi}_{k\beta})\\
        &=&\sum_{j,\alpha =1}^{m+n}\sum_{k,\beta=1}^{m+n}a_jb_\alpha a_kb_\beta(-2\cdot\hat{\phi}_{j\beta}\hat{\phi}_{k\alpha }+\delta_{j\beta}\hat{\phi}_{k\alpha }+\delta_{k\alpha }\hat{\phi}_{j\beta})\\
&=&-2\cdot\left(\sum_{j,\beta=1}^{m+n} a_jb_\beta\hat{\phi}_{j\beta}\right)\cdot\left(\sum_{k,\alpha =1}^{m+n} a_kb_\alpha \hat{\phi}_{k\alpha }\right)\\
&& +2\cdot\left(\sum_{j=1}^{m+n} a_jb_j\right)\cdot\left(\sum_{k,\alpha =1}^{m+n} a_kb_\alpha \hat{\phi}_{k\alpha }\right)\\
&=&-2\cdot\hat{\Phi}_{a,b}^2+2\cdot(a,b)\cdot\hat{\Phi}_{a,b}\\
&=&-2\cdot\hat{\Phi}_{a,b}^2.
    \end{eqnarray*}
\end{proof}

\section{Minimal Submanifolds of $\U{m+n}/\U{m}\times\U{n}$}\label{msununun}
We will now show with Theorem \ref{thunununnotregular} that the functions defined in Theorem \ref{theigenfunctionununun} are not regular over $0\in\cn.$ Therefore we may not apply Theorem \ref{GM}.
We first decompose the Lie algebra $\u{m+n}$ of $\U{m+n},$ which we will use in the proof of Theorem \ref{thunununnotregular}. 
\begin{proposition}
    Consider the orthogonal decomposition $$\u{m+n}=\mathfrak{k}\oplus \mathfrak{m},$$ where $\k$ is the Lie algebra of $\U{m}\times\U{n},$ and $\m$ is the subspace corresponding to $\U{m+n}/\U{m}\times\U{n}.$
Then
$$\mathcal{B}_\mathfrak{m}=\{Y_{rs}, iX_{rs}\ |\ 1\leq r\leq m,\space m+1\leq s\leq m+n\}$$ is a basis of $\m.$
\end{proposition}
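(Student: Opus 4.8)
The plan is to work entirely at the level of the Lie algebra $\u{m+n}$, which by Proposition \ref{liealgebraonun} is the space of skew-Hermitian matrices $\{X\in\glc{m+n}\mid \bar{X}^t+X=0\}$, equipped with the standard metric $g(Z,W)=\Re\trace(\bar{Z}^t W)$ of Definition \ref{standardmetric}. First I would identify $\k$, the Lie algebra of $\U m\times\U n$: since this subgroup consists of the block-diagonal matrices $\mathrm{diag}(x,y)$ with $x\in\U m$ and $y\in\U n$, its Lie algebra is the space of block-diagonal skew-Hermitian matrices with blocks in $\u m$ and $\u n$.

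Next I would compute the orthogonal complement $\m=\k^\perp$ inside $\u{m+n}$. Writing a general skew-Hermitian matrix in block form and pairing it against an arbitrary block-diagonal skew-Hermitian matrix, a short trace computation shows that the pairing vanishes for all such matrices if and only if the two diagonal blocks are zero; here one uses that $g$ restricted to each $\u m$, $\u n$ is positive definite. Hence
$$\m=\left\{\begin{pmatrix}0&C\\ -\bar{C}^t&0\end{pmatrix}\ \middle|\ C\in\cn^{m\times n}\right\},$$
which is manifestly $\Ad(\U m\times\U n)$-invariant and gives $\u{m+n}=\k\oplus\m$. Counting the free complex entries of $C$ yields $\dim_\rn\m=2mn$.

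Then I would verify that each listed element lies in $\m$ and that the family is orthonormal. For $1\le r\le m<s\le m+n$ the matrix $Y_{rs}=\tfrac{1}{\sqrt2}(E_{rs}-E_{sr})$ is real skew-symmetric and supported off the diagonal blocks, while $iX_{rs}=\tfrac{i}{\sqrt2}(E_{rs}+E_{sr})$ is skew-Hermitian and likewise off-diagonal, so both lie in $\m$. Using $E_{pq}E_{qp}=E_{pp}$ one finds $g(Y_{rs},Y_{rs})=g(iX_{rs},iX_{rs})=1$ and $g(Y_{rs},iX_{rs})=0$, and for distinct index pairs the supports are disjoint, so all remaining inner products vanish. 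Thus $\mathcal{B}_\m$ is an orthonormal set.

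Finally I would conclude by a dimension count: there are $mn$ admissible pairs $(r,s)$ and two elements per pair, so $|\mathcal{B}_\m|=2mn=\dim_\rn\m$, and an orthonormal set of the correct cardinality is a basis. Equivalently, observing that $Y_{rs}$ and $iX_{rs}$ supply exactly the real and imaginary parts of the single entry $C_{r,\,s-m}$ shows directly that $\mathcal{B}_\m$ spans $\m$. The only real care needed is the block bookkeeping — confirming that the range $m+1\le s\le m+n$ places $E_{rs}$ in the upper-right block — and checking that the two families together generate the full complex off-diagonal block rather than a proper subspace; both points are settled by this span remark.
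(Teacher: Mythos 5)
Your proof is correct, but it takes a more self-contained route than the paper. The paper argues top-down: it invokes the set $\{Y_{rs},iX_{rs}\mid 1\leq r<s\leq m+n\}$ as a generating set of $\u{m+n}$ and simply observes that the elements whose indices both lie in $\{1,\dots,m\}$ or both in $\{m+1,\dots,m+n\}$ belong to $\k$, so the remaining ones account for $\m$. (Strictly speaking that set is not a full basis of $\u{m+n}$ — the imaginary diagonal elements $iE_{tt}$ are missing — but since those lie in $\k$ the conclusion is unaffected.) You instead compute $\m=\k^\perp$ explicitly as the off-diagonal-block skew-Hermitian matrices $\left\{\left(\begin{smallmatrix}0&C\\ -\bar{C}^t&0\end{smallmatrix}\right)\right\}$, verify that each listed element lies there and that the family is orthonormal, and close with the dimension count $2mn=\dim_\rn\m$. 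This buys you two things the paper's argument leaves implicit: a proof that the listed set actually spans $\m$ (rather than merely lying in it), and the additional information that the basis is orthonormal for the standard metric, which is what one actually uses when computing gradients in the subsequent section. Both arguments are valid; yours is longer but airtight.
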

\begin{proof}
    As shown in Proposition \ref{liealgebraonun},
 $$\mathcal{B}_{\u{m+n}}=\{Y_{rs}, iX_{rs}\ \rvert \ 1\leq r<s\leq m+n\}$$ is a basis of the Lie algebra $\u{m+n}$ of $\U{m+n}.$
It is easily seen that if $1\leq r<s\leq m$ or $m+1\leq r<s\leq m+n,$ then $Y_{rs}$ and $iX_{rs}$ belong to $\k.$ This proves the claim. 
\end{proof}
We are now ready to show the main result of this chapter. The statement and proof are similar to Theorem \ref{exsososo2} from the chapter on the real Grassmannians.

\begin{theorem}\label{thunununnotregular}
Let $a,b\in\cn^{m+n}\backslash\{0\}$ with $\langle a,\Bar{b}\rangle =0.$
    Let $$\Phi_{a,b}:\U{m+n}/\U{m}\times\U{n}$$ be the eigenfunction induced by the function $\hat{\Phi}_{a,b}:\U{m+n}\rightarrow\cn$ given by $$\hat{\Phi}_{a,b}(z)=\sum_{j,\alpha=1}^{m+n} a_jb_\alpha\hat{\phi}_{j\alpha }(z).$$
Then $\Phi_{a,b}$ is not regular over $0\in\cn.$
\end{theorem}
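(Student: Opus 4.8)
The plan is to mirror the proof of Theorem \ref{exsososo2}: I will exhibit an explicit point $z\in\U{m+n}$ at which $\hat\Phi_{a,b}$ vanishes together with its gradient. Since $\hat\Phi_{a,b}$ is $\U m\times\U n$-invariant, its gradient is automatically horizontal, so it suffices to kill the derivatives along all elements of the basis $\{Y_{rs},iX_{rs}\mid 1\leq r\leq m<s\leq m+n\}$ of $\m$. First I would rewrite the function column-wise: writing $z_t$ for the $t$-th column of $z$ and setting $P_t=\sum_{j=1}^{m+n}a_jz_{jt}$ and $Q_t=\sum_{\alpha=1}^{m+n}b_\alpha\Bar z_{\alpha t}$, the definition $\hat\phi_{j\alpha}(z)=\sum_{t=1}^m z_{jt}\Bar z_{\alpha t}$ collapses to the compact form $\hat\Phi_{a,b}(z)=\sum_{t=1}^m P_tQ_t$.

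Next I would compute the action of the basis vectors on $\hat\Phi_{a,b}$ via $X(f)(z)=\tfrac{d}{ds}f(z\exp(sX))|_{s=0}$, using $Y_{rs}(z_{jt})=\tfrac1{\sqrt2}(z_{jr}\delta_{ts}-z_{js}\delta_{tr})$ together with its conjugate for $\Bar z_{jt}$, and taking care of the extra sign with which $iX_{rs}$ acts on the conjugated coordinates $\Bar z_{\alpha t}$. Because $s>m$ while $t$ ranges only over $1,\dots,m$, only the $t=r$ terms survive, and the calculation should give
\[
Y_{rs}(\hat\Phi_{a,b})=-\tfrac1{\sqrt2}\bigl(P_sQ_r+P_rQ_s\bigr),\qquad iX_{rs}(\hat\Phi_{a,b})=\tfrac{i}{\sqrt2}\bigl(P_sQ_r-P_rQ_s\bigr).
\]
Adding and subtracting, the gradient vanishes at $z$ precisely when $P_sQ_r=0$ and $P_rQ_s=0$ for every $1\leq r\leq m<s\leq m+n$.

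Finally I would build such a $z$ inside the zero fibre. The hypothesis $\langle a,\Bar b\rangle=0$ gives $\langle\Bar a,b\rangle=\overline{\langle a,\Bar b\rangle}=0$, so $\Bar a$ and $b$ are Hermitian-orthogonal and nonzero, hence span a genuine $2$-plane $\Pi\subset\cn^{m+n}$; note $P_s=\langle z_s,\Bar a\rangle$ and $Q_s=\overline{\langle z_s,b\rangle}$, so a column $z_s$ lies in $\Pi^\perp$ iff $P_s=Q_s=0$. I would then pick an orthonormal basis of $\cn^{m+n}$ one block of whose columns lies in $\Pi^\perp$: if $m\geq2$, take the $n$ columns indexed $m+1,\dots,m+n$ inside $\Pi^\perp$ (possible since $n\leq m+n-2$), forcing $P_s=Q_s=0$ for $s>m$; if instead $n\geq2$, take the $m$ columns indexed $1,\dots,m$ in $\Pi^\perp$, forcing $P_r=Q_r=0$ for $r\leq m$. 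In either case both vanishing conditions hold. For the value, in the first case $\Bar a,b\in\mathrm{span}\{z_1,\dots,z_m\}$, so by a Parseval expansion $\sum_{t=1}^m P_tQ_t=\sum_{t=1}^{m+n}P_tQ_t=\overline{\langle\Bar a,b\rangle}=0$; in the second case $\hat\Phi_{a,b}(z)=\sum_{t=1}^m P_tQ_t=0$ trivially. Thus $z$ is a critical point of $\hat\Phi_{a,b}$ lying in $\hat\Phi_{a,b}^{-1}(\{0\})$, and $\Phi_{a,b}$ is not regular over $0$.

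The main obstacle I anticipate is the edge cases $n=1$ and $m=1$, where a single block is too small to absorb the whole $2$-plane $\Pi$. The key realisation is that one never needs \emph{both} blocks orthogonal to $\Pi$; the assumption that $m,n$ are not simultaneously $1$ is exactly what guarantees $\min\{m,n\}\leq m+n-2$, so at least one of the two constructions above is available. Secondary points requiring care are the precise sign in the action of $iX_{rs}$ on $\Bar z_{\alpha t}$ and the verification of the Parseval identity $\sum_{t=1}^{m+n}P_tQ_t=\overline{\langle\Bar a,b\rangle}$, which is where the orthogonality hypothesis on $a$ and $b$ enters the vanishing of the value.
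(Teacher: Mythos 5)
Your proposal is correct and follows essentially the same route as the paper: both reduce the gradient conditions to $P_sQ_r=P_rQ_s=0$ for $1\leq r\leq m<s\leq m+n$ (the paper's $\langle z_s,\Bar a\rangle\langle b,z_r\rangle=\langle z_r,\Bar a\rangle\langle b,z_s\rangle=0$) and then build a critical point in the zero fibre by completing $\{\Bar a/|\Bar a|,b/|b|\}$ to an orthonormal basis and placing the plane $\mathrm{span}\{\Bar a,b\}$ entirely inside one of the two blocks of columns, with the same case split on which block is large enough. Your formulation via $\Pi^\perp$ and the Parseval identity is a slightly tidier packaging of the paper's explicit Gram--Schmidt construction, but the argument is the same.
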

\begin{proof}
Let $a,b\in\cn^{m+n}\backslash\{0\}$ with $\langle a,\Bar{b}\rangle =0$ be given. %By Theorem \ref{theigenfunctionununun}, the function $\hat{\Phi}_{a,b}:\U{m+n}\rightarrow\cn$ given by $$\hat{\Phi}_{a,b}(z)=\sum_{j,\alpha=1}^{m+n} a_jb_\alpha\hat{\phi}_{j\alpha }(z)$$ induces an eigenfunction $\Phi_{a,b}$ on $\U{m+n}/\U{m}\times\U{n}. $

We now wish to construct $z\in\U{m+n}$ such that for all $X\in\m, \ X(\hat{\Phi}_{a,b})=0$ at $z$ and $\hat{\Phi}_{a,b}(z)=0.$ 

Let $z_t$ denote the $t$-th column of $z.$
It can be shown that for $1\leq r\leq m<s\leq m+n,$ $Y_{rs}(\hat{\Phi}_{a,b})=0$ if and only if
$$0=\sum_{j,k=1}^{m+n}a_jb_k(z_{js}\Bar{z}_{kr}+z_{jr}\Bar{z}_{ks}),$$ and 
$iX_{rs}(\hat{\Phi}_{a,b})=0$ if and only if
$$0=\sum_{j,k=1}^{m+n}a_jb_k(-z_{js}\Bar{z}_{kr}+z_{jr}\Bar{z}_{ks}).$$
Thus, it holds that $Y_{rs}(\hat{\Phi}_{a,b})=iX_{rs}(\hat{\Phi}_{a,b})=0,$ for all $1\leq r\leq m<s\leq m+n$ if and only if \begin{eqnarray}
    0&=&\sum_{j,k=1}^{m+n}a_jb_kz_{js}\Bar{z}_{kr}=\langle z_s,\Bar{a}\rangle \langle b,z_r\rangle ,\label{unununyrs}\\ 
    0&=&\sum_{j,k=1}^{m+n}a_jb_kz_{jr}\Bar{z}_{ks}=\langle z_r,\Bar{a}\rangle \langle b,z_s\rangle .\label{unununxrs}
\end{eqnarray}

Similarly, we may write \begin{eqnarray}
\hat{\Phi}_{a,b}(z)&=&\sum_{t=1}^m \langle z_t,\Bar{a}\rangle \langle b,z_t\rangle .\label{eqnphiununun}\end{eqnarray}
Since $\langle \Bar{a},b\rangle =0$ and $a$ and $b$ are non-zero, it is clear that $\Bar{a}$ and $b$ are linearly independent. Further, we can choose $m+n-2$ linearly independent vectors $\{e_1,\dots,e_{m+n-2}\}\in\cn^{m+n}$ such that the set $$S=\{\Bar{a},b,e_1,\dots,e_{m+n-2}\}$$ consists of $m+n$ linearly independent vectors of $\cn^{m+n}.$ Then, for example with the well-known Gram-Schmidt-algorithm, we can derive $m+n$ linearly independent, mutually orthogonal unit vectors of $\cn^{m+n}$ from $S:$
$$\left\{\frac{\Bar{a}}{|\Bar{a}|},\frac{b}{|b|},v_1,\dots,v_{m+n-2}\right\}.$$
If $n\geq2,$ we let $$z=\left(v_1,\dots,v_{m+n-2},\frac{\Bar{a}}{|\Bar{a}|},\frac{b}{|b|}\right).$$ Then $z\in\U{m+n}$ by construction. Further, for all $1\leq r\leq m, $ we have that $$0=\langle z_r,\Bar{a}\rangle =\langle b,z_r\rangle .$$ It follows that Equations \ref{unununyrs},\ref{unununxrs}, and \ref{eqnphiununun} are satisfied. Hence $z\in\hat{\Phi}_{a,b}^{-1}(\{0\}),$ and $\nabla(\hat{\Phi}_{a,b})=0$ at $z.$

If $n=1,$ let $$z=\left(\frac{\Bar{a}}{|\Bar{a}|},\frac{b}{|b|},v_1,\dots,v_{m+n-2}\right).$$ Again, $z\in\U{m+n}.$ Clearly, Equations \ref{unununyrs} and \ref{unununxrs} are satisfied since $$0=\langle z_n,\Bar{a}\rangle =\langle z_n,b\rangle .$$ Further, $\hat{\Phi}_{a,b}(z)=0$ since $\langle \Bar{a},b\rangle =0.$ In this case we now also have constructed a matrix $z$ such that $z\in\hat{\Phi}_{a,b}^{-1}(\{0\}),$ and $\nabla(\hat{\Phi}_{a,b})=0$ at $z.$

We conclude that $\Phi_{a,b}$ is not regular over $0\in\cn.$
\end{proof}

%%%%%%%%%%%%%%%%%%%%%%%%%%%%%%%%%%%%%%%%%%%

\chapter{The Quaternionic Grassmannians $\Sp{m+n}/\Sp{m}\times\Sp{n}$}
In Section \ref{efspspspsection} we define eigenfunctions on the quaternionic Grassmannians $G_m(\H^{m+n})=\Sp{m+n}/\Sp{m}\times\Sp{n}$ and show in Section \ref{sectionmsspspsp} that they are not regular over $0$. Our main reference is Ghandour and Gudmundsson's paper \cite{Gha-Gud-5}.  

\section{Eigenfunctions on $\Sp{m+n}/\Sp{m}\times\Sp{n}$}\label{efspspspsection}
We now study eigenfunctions on the quaternionic Grassmannians. We first define an eigenfamily found by Gudmundsson and Ghandour in their paper \cite{Gha-Gud-5}. In Theorem \ref{thspspspnew}, we then define a new eigenfunction, which can be seen as the analogue to the functions given in Theorems \ref{thsososo} and \ref{theigenfunctionununun}.
\begin{lemma}{\rm \cite{Gha-Gud-5}}\label{quatgrasscoord}
    For $1\leq j,\alpha\leq n,$ let $z_{j\alpha},w_{j\alpha}:\Sp{n}\rightarrow\cn$ be the complex-valued matrix elements of the standard representation of the quaternionic unitary group $\Sp{n}$. Then, the tension field $\tau$ and the conformality operator $\kappa$ on $\Sp{n}$ satisfy the following relations:
        $$\tau(z_{j\alpha})=-\frac{2n+1}{2}\cdot z_{j\alpha}, \ \tau(w_{j\alpha})=-\frac{2n+1}{2}\cdot w_{j\alpha},$$
        $$\kappa(z_{j\alpha},z_{k\beta})=-\frac{1}{2}\cdot z_{j\beta}z_{k\alpha}, \ \kappa(w_{j\alpha},w_{k\beta})=-\frac{1}{2}\cdot w_{j\beta}w_{k\alpha},$$
$$\tau(\overline{z}_{j\alpha})=-\frac{2n+1}{2}\cdot \overline{z}_{j\alpha}, \ \tau(\overline{w}_{j\alpha})=-\frac{2n+1}{2}\cdot \overline{w}_{j\alpha},$$
$$\kappa(\overline{z}_{j\alpha},\overline{z}_{k\beta})=-\frac{1}{2}\cdot \overline{z}_{j\beta}\overline{z}_{k\alpha}, \ \kappa(\overline{w}_{j\alpha},\overline{w}_{k\beta})=-\frac{1}{2}\cdot \overline{w}_{j\beta}\overline{w}_{k\alpha},$$
    $$\kappa(\overline{z}_{j\alpha},\overline{w}_{k\beta})=-\frac{1}{2}\cdot \overline{w}_{j\beta}\overline{z}_{k\alpha},\  \kappa({z}_{j\alpha},\overline{z}_{k\beta})=\frac{1}{2}\cdot( w_{j\beta}\overline{w}_{k\alpha}+\delta_{jk}\delta_{\alpha\beta}),$$
   $$ \kappa({z}_{j\alpha},\overline{w}_{k\beta})=-\frac{1}{2}\cdot w_{j\beta}\overline{w}_{k\alpha},\ \kappa({w}_{j\alpha},\overline{z}_{k\beta})=-\frac{1}{2}\cdot w_{j\beta}\overline{z}_{k\alpha},$$
    $$\kappa({w}_{j\alpha},\overline{w}_{k\beta})=\frac{1}{2}\cdot( z_{j\beta}\overline{z}_{k\alpha}+\delta_{jk}\delta_{\alpha\beta}),\ \kappa(z_{j\alpha},w_{k\beta})=-\frac{1}{2}\cdot w_{j\beta}z_{k\alpha}.$$

\end{lemma}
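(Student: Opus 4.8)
The plan is to exploit that the standard metric of Definition~\ref{standardmetric} is bi-invariant on the compact group $\Sp{n}$, so that for the orthonormal basis $\mathcal{B}_{\sp{n}}$ of Proposition~\ref{liealgebraspn} the tension field and conformality operator reduce to
\begin{equation*}
\tau(\phi)=\sum_{X\in\mathcal{B}_{\sp{n}}}X(X(\phi)),\qquad
\kappa(\phi,\psi)=\sum_{X\in\mathcal{B}_{\sp{n}}}X(\phi)\cdot X(\psi),
\end{equation*}
since $\nabla_XX=\tfrac12[X,X]=0$ for every left-invariant field $X$, in agreement with Definition~\ref{conformalityop}. Because each coordinate function comes from the standard $2n$-dimensional representation, a left-invariant field acts by right matrix multiplication: for $X\in\sp{n}$ and $g\in\Sp{n}$ one has $X(g_{j\beta})=(gX)_{j\beta}$ and hence $X(X(g_{j\beta}))=(gX^2)_{j\beta}$, with the statements for the conjugated entries $\overline z_{j\alpha},\overline w_{j\alpha}$ obtained by applying complex conjugation.

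First I would treat $\tau$. Summing the squares of the basis elements of Proposition~\ref{liealgebraspn} yields the Casimir matrix $\sum_{X}X^2$, which by irreducibility of the standard representation and Schur's lemma must be a scalar multiple of $I_{2n}$; I expect the explicit computation to give $\sum_{X}X^2=-\tfrac{2n+1}{2}\,I_{2n}$. Substituting into $X(X(g_{j\beta}))=(gX^2)_{j\beta}$ then delivers $\tau(z_{j\alpha})=-\tfrac{2n+1}{2}z_{j\alpha}$ and $\tau(w_{j\alpha})=-\tfrac{2n+1}{2}w_{j\alpha}$ simultaneously, while the conjugated identities follow at once because the basis is real-valued, so $\tau$ commutes with conjugation.

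Next I would compute $\kappa$. For each ordered pair of coordinate functions I would record the action of the seven families $Y^a_{rs},X^a_{rs},X^b_{rs},X^c_{rs},D^a_t,D^b_t,D^c_t$, form the products, and sum. The purely holomorphic combinations collapse to single monomials such as $-\tfrac12 z_{j\beta}z_{k\alpha}$, whereas the mixed combinations $\kappa(z_{j\alpha},\overline z_{k\beta})$ and $\kappa(w_{j\alpha},\overline w_{k\beta})$ additionally generate the Kronecker terms $\tfrac12\delta_{jk}\delta_{\alpha\beta}$. These deltas appear only after invoking the defining relation $g\,\overline g^{\,t}=I_{2n}$ of $\Sp{n}\subset\U{2n}$, namely $\sum_t\left(z_{jt}\overline z_{kt}+w_{jt}\overline w_{kt}\right)=\delta_{jk}$ together with its symplectic companions; keeping the $z$-block and $w$-block contributions separate is precisely what produces the surviving cross terms $\tfrac12 w_{j\beta}\overline w_{k\alpha}$ and $\tfrac12 z_{j\beta}\overline z_{k\alpha}$ in the two ``unitary'' lines of the statement.

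The main obstacle will be the bookkeeping: there are seven basis families, each acting differently on the $z$- and $w$-entries and on their conjugates, so the conformality sums contain many terms that must cancel in pairs before the symplectic constraints are applied. I would organise this by first tabulating $X(z_{j\alpha})$ and $X(w_{j\alpha})$ for every $X\in\mathcal{B}_{\sp{n}}$, as was partially carried out in the proof of Theorem~\ref{thminsubmspn}, and only then assemble the twelve conformality identities, so that the remaining work is systematic substitution rather than ad hoc manipulation.
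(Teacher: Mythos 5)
The thesis states this lemma as a quotation from Ghandour--Gudmundsson \cite{Gha-Gud-5} and gives no proof of its own, so there is no in-paper argument to compare against; judged on its own terms, your outline is the standard and correct route, and it matches the computational machinery the thesis does use elsewhere (Lemma \ref{xijeigenfunctions}, Lemma \ref{coordspn}, and the tabulation of $X(z_{j\alpha})$, $X(w_{j\alpha})$ in the proof of Theorem \ref{thminsubmspn}). Your quantitative anchor checks out: with the orthonormal basis of Proposition \ref{liealgebraspn} one has $\trace\bigl(\sum_X X^2\bigr)=-\dim\sp{n}=-n(2n+1)$, and Schur's lemma on the irreducible $2n$-dimensional standard representation forces $\sum_X X^2=-\tfrac{2n+1}{2}I_{2n}$, giving the stated $\tau$-eigenvalue. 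The description of where the $\delta_{jk}\delta_{\alpha\beta}$ and the cross terms $w_{j\beta}\overline{w}_{k\alpha}$, $z_{j\beta}\overline{z}_{k\alpha}$ come from is also right: the tensor $\sum_X X\otimes \overline{X}$ for $\sp{n}\subset\u{2n}$ contributes both a Kronecker part and a symplectic part, which are then contracted using $q\,\overline{q}^{\,t}=I_{2n}$ and $qJ_nq^t=J_n$.

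One justification is wrong, though the conclusion it supports is not: you claim the identities for $\overline{z}_{j\alpha},\overline{w}_{j\alpha}$ follow ``because the basis is real-valued.'' It is not --- the elements $X^a_{rs},X^b_{rs},D^a_t,D^b_t$ of $\mathcal{B}_{\sp{n}}$ have imaginary entries. The correct reason is that the left-invariant fields are real vector fields, so $X(\overline{\phi})=\overline{X(\phi)}$ for any $X$ and any complex-valued $\phi$; hence $\tau(\overline{\phi})=\overline{\tau(\phi)}$ and $\kappa(\overline{\phi},\overline{\psi})=\overline{\kappa(\phi,\psi)}$, and since the eigenvalue $-\tfrac{2n+1}{2}$ is real and conjugation simply bars every monomial, all the conjugated identities follow from the unconjugated ones. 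With that repair, and granting that the promised bookkeeping for the twelve $\kappa$-identities is carried out, the proof is complete.
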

From Lemma \ref{quatgrasscoord}, Ghandour and Gudmundsson obtained the following result.
\begin{lemma}{\rm \cite{Gha-Gud-5}}\label{efspspsp}
For $1\leq j<\alpha\leq m+n$ we define the complex-valued functions $\hat{\phi}_{j\alpha}:\Sp{m+n}\rightarrow\cn$ by $$\hat{\phi}_{j\alpha}(q)=\sum_{t=1}^m(z_{jt}\Bar{z}_{\alpha t}+w_{jt}\Bar{w}_{\alpha t}).$$ Then the tension field $\tau $ and the conformality operator $\kappa $ on $\Sp{m+n}$ satisfy $$\tau (\hat{\phi}_{j\alpha})=-2(m+n)\cdot\hat{\phi}_{j\alpha}, \quad \kappa (\hat{\phi}_{j\alpha},\hat{\phi}_{k\alpha})=-\hat{\phi}_{j\alpha}\hat{\phi}_{k\alpha}, $$ where $j,k\neq\alpha.$
\end{lemma}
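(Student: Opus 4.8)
The plan is to prove both identities by direct computation, reducing everything to the coordinate relations of Lemma \ref{quatgrasscoord} via the product rules for $\tau$ and $\kappa$ recorded in Proposition \ref{taukappa}. Since $\hat\phi_{j\alpha}$ lives on $\Sp{m+n}$, I would apply Lemma \ref{quatgrasscoord} with $n$ replaced by $m+n$, so that $\tau(z_{jt})=-\tfrac{2(m+n)+1}{2}z_{jt}$ and similarly for $w_{jt},\Bar z_{jt},\Bar w_{jt}$. Throughout, the hypothesis $j,k\neq\alpha$ is used to discard the Kronecker-delta terms $\delta_{jk}\delta_{\alpha\beta}$ appearing in the mixed relations $\kappa(z_{j\alpha},\Bar z_{k\beta})$ and $\kappa(w_{j\alpha},\Bar w_{k\beta})$.

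For the tension field I would apply the Leibniz rule $\tau(\phi\psi)=\phi\,\tau(\psi)+2\kappa(\phi,\psi)+\psi\,\tau(\phi)$ to each summand. A short calculation gives $\tau(z_{jt}\Bar z_{\alpha t})=-(2(m+n)+1)z_{jt}\Bar z_{\alpha t}+w_{jt}\Bar w_{\alpha t}+\delta_{j\alpha}$ and, symmetrically, $\tau(w_{jt}\Bar w_{\alpha t})=-(2(m+n)+1)w_{jt}\Bar w_{\alpha t}+z_{jt}\Bar z_{\alpha t}+\delta_{j\alpha}$, where the middle terms arise from $2\kappa(z_{jt},\Bar z_{\alpha t})$ and $2\kappa(w_{jt},\Bar w_{\alpha t})$. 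Adding the two, the cross terms $z_{jt}\Bar z_{\alpha t}$ and $w_{jt}\Bar w_{\alpha t}$ shift the eigenvalue by one unit, leaving $-2(m+n)(z_{jt}\Bar z_{\alpha t}+w_{jt}\Bar w_{\alpha t})+2\delta_{j\alpha}$; summing over $t=1,\dots,m$ and using $\delta_{j\alpha}=0$ yields $\tau(\hat\phi_{j\alpha})=-2(m+n)\hat\phi_{j\alpha}$.

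For the conformality operator I would expand $\kappa(\hat\phi_{j\alpha},\hat\phi_{k\alpha})$ by $\cn$-bilinearity into four double sums over $s,t$, indexed by the pairings $(z\Bar z,z\Bar z)$, $(z\Bar z,w\Bar w)$, $(w\Bar w,z\Bar z)$ and $(w\Bar w,w\Bar w)$, and apply the product rule for $\kappa$ to each summand, producing sixteen monomials per block. Writing $\Phi^z_{j\alpha}=\sum_t z_{jt}\Bar z_{\alpha t}$ and $\Phi^w_{j\alpha}=\sum_t w_{jt}\Bar w_{\alpha t}$, so that $\hat\phi_{j\alpha}=\Phi^z_{j\alpha}+\Phi^w_{j\alpha}$, I expect the ``like'' blocks to contribute $-\Phi^z_{j\alpha}\Phi^z_{k\alpha}$ and $-\Phi^w_{j\alpha}\Phi^w_{k\alpha}$ (their purely holomorphic $\kappa$-terms collapsing, after the interchange $s\leftrightarrow t$, into products of the defining sums), and the two ``mixed'' blocks to contribute the cross pieces $-\Phi^z_{j\alpha}\Phi^w_{k\alpha}$ and $-\Phi^w_{j\alpha}\Phi^z_{k\alpha}$; together these give exactly $-\hat\phi_{j\alpha}\hat\phi_{k\alpha}$.

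The main obstacle is the bookkeeping: I must verify that every remaining monomial of mixed $z$--$w$ type generated by the relations $\kappa(z_{j\alpha},\Bar z_{k\beta})$, $\kappa(w_{j\alpha},\Bar w_{k\beta})$, $\kappa(z_{j\alpha},\Bar w_{k\beta})$ and $\kappa(w_{j\alpha},\Bar z_{k\beta})$ cancels across the four blocks. This is precisely where the hypothesis $j,k\neq\alpha$ (which removes the $\delta_{jk}\delta_{\alpha\beta}$ contributions) and the interchange symmetry of the double sum over $s,t$ are indispensable, and it is the only place where genuine care, rather than routine substitution, is required.
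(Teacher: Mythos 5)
Your proposal is correct and coincides with the paper's (implicit) route: the thesis takes this lemma from \cite{Gha-Gud-5} and proves the generalisation in Lemma \ref{quatgrassphigeneral} by precisely the computation you outline, namely expanding with the product rules of Proposition \ref{taukappa} and substituting the coordinate relations of Lemma \ref{quatgrasscoord} with $n$ replaced by $m+n$. Your structural claims check out: the like blocks each produce, besides $-\Phi^z_{j\alpha}\Phi^z_{k\alpha}$ resp.\ $-\Phi^w_{j\alpha}\Phi^w_{k\alpha}$, the extraneous terms $\tfrac12\bigl(\sum_t z_{jt}w_{kt}+\sum_t z_{kt}w_{jt}\bigr)\bigl(\sum_t \Bar{z}_{\alpha t}\Bar{w}_{\alpha t}\bigr)$, and these are exactly annihilated by the corresponding terms of opposite sign coming from the two mixed blocks, whose remaining contributions assemble into $-\Phi^z_{j\alpha}\Phi^w_{k\alpha}-\Phi^w_{j\alpha}\Phi^z_{k\alpha}$. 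One practical warning: when you carry out the bookkeeping, the relation $\kappa(z_{j\alpha},\overline{w}_{k\beta})=-\tfrac12\, w_{j\beta}\overline{w}_{k\alpha}$ as printed in Lemma \ref{quatgrasscoord} is inconsistent with $\kappa(w_{j\alpha},\overline{z}_{k\beta})=-\tfrac12\, w_{j\beta}\overline{z}_{k\alpha}$ under the symmetries $\kappa(\phi,\psi)=\kappa(\psi,\phi)$ and $\kappa(\bar\phi,\bar\psi)=\overline{\kappa(\phi,\psi)}$; the correct right-hand side is $-\tfrac12\, z_{j\beta}\overline{w}_{k\alpha}$, and with the printed version the cancellation of the mixed monomials fails.
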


\begin{remark}
    The functions $\hat{\phi}_{j\alpha}$ are all $\Sp{m}\times\Sp{n}$-invariant and hence induce functions $\phi_{j\alpha}:\Sp{m+n}/\Sp{m}\times\Sp{n}\rightarrow\cn$ on the quotient space.
\end{remark}
Lemma \ref{efspspsp} led Ghandour and Gudmundsson to an eigenfamily on the quaternionic Grassmannians.
\begin{theorem}{\rm \cite{Gha-Gud-5}}
For a fixed natural number $1\leq r<m+n,$ the set 
$$\mathcal{E}_r=\{\phi_{j\alpha}:\Sp{m+n}/\Sp{m}\times\Sp{n}\rightarrow\cn \ \rvert \ 1\leq j\leq r<\alpha\leq m+n\}$$ is an eigenfamily on the quaternionic Grassmannian $\Sp{m+n}/\Sp{m}\times\Sp{n}$ such that the tension field $\tau$ and conformality operator $\kappa$ satisfy
$$\tau(\phi)=-2(m+n)\cdot\phi,\quad \kappa(\phi,\psi)=-\phi\cdot\psi$$ for all $\phi,\psi\in\mathcal{E}_r.$
\end{theorem}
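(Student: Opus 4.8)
The plan is to prove everything upstairs on the group $\Sp{m+n}$ for the $\Sp{m}\times\Sp{n}$-invariant functions $\hat\phi_{j\alpha}$ and then transport the conclusion down to the quotient. Since each $\hat\phi_{j\alpha}$ is $\Sp{m}\times\Sp{n}$-invariant it descends to $\phi_{j\alpha}$ on $\Sp{m+n}/\Sp{m}\times\Sp{n}$, and the natural projection $\pi:\Sp{m+n}\to\Sp{m+n}/\Sp{m}\times\Sp{n}$ is a harmonic Riemannian submersion; hence Proposition \ref{efquotient} gives $\tau(\hat\phi_{j\alpha})=\tau(\phi_{j\alpha})\circ\pi$ and $\kappa(\hat\phi_{j\alpha},\hat\phi_{k\beta})=\kappa(\phi_{j\alpha},\phi_{k\beta})\circ\pi$. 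It therefore suffices to verify, for all indices with $1\le j,k\le r<\alpha,\beta\le m+n$, that $\tau(\hat\phi_{j\alpha})=-2(m+n)\,\hat\phi_{j\alpha}$ and $\kappa(\hat\phi_{j\alpha},\hat\phi_{k\beta})=-\hat\phi_{j\alpha}\hat\phi_{k\beta}$. The tension-field identity is immediate: it is already recorded in Lemma \ref{efspspsp}, and linearity of $\tau$ does the rest.

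The substance of the proof is the conformality identity for pairs whose column indices $\alpha,\beta$ need not coincide, a case not covered directly by Lemma \ref{efspspsp}. First I would expand $\kappa(\hat\phi_{j\alpha},\hat\phi_{k\beta})=\sum_{s,t=1}^m \kappa\big(z_{js}\bar z_{\alpha s}+w_{js}\bar w_{\alpha s},\,z_{kt}\bar z_{\beta t}+w_{kt}\bar w_{\beta t}\big)$ by bilinearity and the product rule of Proposition \ref{taukappa}, reducing each summand to the coordinate-level relations of Lemma \ref{quatgrasscoord}. I would then group the resulting terms into the holomorphic--holomorphic, antiholomorphic--antiholomorphic and mixed (holomorphic--antiholomorphic) blocks. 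The mixed block is where the quaternionic case departs from the complex one: the relations $\kappa(z_{j\alpha},\bar z_{k\beta})=\tfrac12(w_{j\beta}\bar w_{k\alpha}+\delta_{jk}\delta_{\alpha\beta})$ and $\kappa(w_{j\alpha},\bar w_{k\beta})=\tfrac12(z_{j\beta}\bar z_{k\alpha}+\delta_{jk}\delta_{\alpha\beta})$ carry extra $w\bar w$ and $z\bar z$ pieces that are absent for $\U{m+n}$, and these are precisely the pieces that must recombine the naively ``crossed'' products into the ``uncrossed'' product $\hat\phi_{j\alpha}\hat\phi_{k\beta}$ demanded by the eigenfamily form.

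Two bookkeeping facts would then finish the computation. Every Kronecker factor that survives the expansion links one row index ($j$ or $k$) to one column index ($\alpha$ or $\beta$); since the defining constraint of $\mathcal{E}_r$ separates the row indices $\{1,\dots,r\}$ from the column indices $\{r+1,\dots,m+n\}$, each such factor vanishes. This is the general-$\mathcal{E}_r$ analogue of the hypothesis $j,k\ne\alpha$ used in Lemma \ref{efspspsp}. The remaining products should collapse, after summation over $s$ and $t$, into $-\hat\phi_{j\alpha}\hat\phi_{k\beta}$. Carrying out this collapse --- verifying that the spurious ``mixed $zw$'' sums of the form $\sum_s z_{js}w_{ks}$ cancel between the four blocks, and that the leading terms assemble with the correct (uncrossed) index pairing --- is the one genuinely delicate step, and it is where I expect the main difficulty to lie; everything else is routine.

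Once $\kappa(\hat\phi_{j\alpha},\hat\phi_{k\beta})=-\hat\phi_{j\alpha}\hat\phi_{k\beta}$ is established, descending through Proposition \ref{efquotient} yields $\tau(\phi)=-2(m+n)\,\phi$ and $\kappa(\phi,\psi)=-\phi\cdot\psi$ for all $\phi,\psi\in\mathcal{E}_r$, so that $\mathcal{E}_r$ is a $(-2(m+n),-1)$-eigenfamily, as claimed.
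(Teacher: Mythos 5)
The thesis does not actually prove this statement --- it is quoted from \cite{Gha-Gud-5} without proof --- so there is nothing in-paper to compare your argument against. Your descent through Proposition \ref{efquotient} and your handling of the tension field via Lemma \ref{efspspsp} are exactly the mechanisms the thesis uses elsewhere and are fine. The gap sits precisely at the step you flag as ``genuinely delicate'' and leave unverified: if you carry out the expansion you describe, the leading quadratic terms do \emph{not} assemble into the uncrossed product $-\hat\phi_{j\alpha}\hat\phi_{k\beta}$; they assemble into the \emph{crossed} product $-\hat\phi_{j\beta}\hat\phi_{k\alpha}$. You can see this already in the very first term of the product-rule expansion,
$$\sum_{s,t=1}^m z_{js}z_{kt}\,\kappa(\bar z_{\alpha s},\bar z_{\beta t})=-\tfrac12\Bigl(\sum_{s=1}^m z_{js}\bar z_{\beta s}\Bigr)\Bigl(\sum_{t=1}^m z_{kt}\bar z_{\alpha t}\Bigr),$$
which pairs $j$ with $\beta$ and $k$ with $\alpha$ regardless of any index restrictions, and it is confirmed by the thesis's own Lemma \ref{quatgrassphigeneral}, whose general formula reads $\kappa(\hat\phi_{j\alpha},\hat\phi_{k\beta})=-\hat\phi_{j\beta}\hat\phi_{k\alpha}+\tfrac12\hat\phi_{k\alpha}\delta_{j\beta}+\tfrac12\hat\phi_{j\beta}\delta_{k\alpha}$. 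The same crossing appears in the real and complex analogues (Lemmas \ref{lemmasososo} and \ref{lemma1ununun}).

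Your observation that the Kronecker terms vanish because $\{1,\dots,r\}$ and $\{r+1,\dots,m+n\}$ are disjoint is correct, but it does not repair this: for two members $\phi_{j\alpha},\phi_{k\beta}$ of $\mathcal E_r$ with $j\neq k$ \emph{and} $\alpha\neq\beta$ (possible as soon as $2\le r\le m+n-2$), the crossed and uncrossed products are genuinely different functions --- $\hat\phi_{j\alpha}$ is an entry of a Hermitian projection of rank $2m$, and a $2\times2$ minor of its off-diagonal block has no reason to vanish --- so no cancellation of the mixed $z\bar w$ sums can convert one into the other. As written, your argument establishes the eigenfamily relation only for pairs with $j=k$ or $\alpha=\beta$ (for which crossed equals uncrossed); this is exactly the shape of the complex-Grassmannian statement the thesis gives, where $\alpha$ is held fixed. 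To complete the proof you must either restrict the family accordingly, or supply an identity forcing $\hat\phi_{j\beta}\hat\phi_{k\alpha}=\hat\phi_{j\alpha}\hat\phi_{k\beta}$ on the relevant index range, or reconcile the quoted statement with Lemma \ref{quatgrassphigeneral}; until one of these is done the proposal does not go through.
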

In the following, we will generalise Lemma \ref{efspspsp}. Here, we eased our restrictions on the indices $j$ and $\alpha.$
\begin{lemma}\label{quatgrassphigeneral}
    For $1\leq j,\alpha,k,\beta\leq m+n,$ we define the complex-valued functions $\hat{\phi}_{j\alpha}:\Sp{m+n}\rightarrow\cn$ by $$\hat{\phi}_{j\alpha}(q)=\sum_{t=1}^m(z_{jt}\Bar{z}_{\alpha t}+w_{jt}\Bar{w}_{\alpha t}).$$ Then, the tension field $\tau $ and the conformality operator $\kappa $ on $\Sp{m+n}$ satisfy $$\tau (\hat{\phi}_{j\alpha})=-2(m+n)\cdot\hat{\phi}_{j\alpha}+2\cdot\delta_{j\alpha}, \quad \kappa (\hat{\phi}_{j\alpha},\hat{\phi}_{k\beta})=-\hat{\phi}_{j\beta}\hat{\phi}_{k\alpha}+\frac{1}{2}\cdot\hat{\phi}_{k\alpha}\delta_{j\beta}+\frac{1}{2}\cdot\hat{\phi}_{j\beta}\delta_{k\alpha}. $$ 
\end{lemma}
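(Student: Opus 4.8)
The plan is to derive both identities directly from the coordinate-function relations of Lemma \ref{quatgrasscoord} together with the product rules for $\tau$ and $\kappa$ of Proposition \ref{taukappa}, mirroring the proof of the unitary analogue Lemma \ref{lemma1ununun}. Throughout I write $N=m+n$ and use that the relations of Lemma \ref{quatgrasscoord}, stated there on $\Sp{n}$, hold on $\Sp{m+n}$ with $n$ replaced by $N$; in particular $\tau(z_{j\alpha})=-\tfrac{2N+1}{2}z_{j\alpha}$ and $\tau(\bar z_{j\alpha})=-\tfrac{2N+1}{2}\bar z_{j\alpha}$, and likewise for $w$.

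For the tension field I would apply the $\tau$-product rule to each summand of $\hat\phi_{j\alpha}=\sum_{t=1}^m(z_{jt}\bar z_{\alpha t}+w_{jt}\bar w_{\alpha t})$. Using $\tau(z_{jt}\bar z_{\alpha t})=z_{jt}\tau(\bar z_{\alpha t})+2\kappa(z_{jt},\bar z_{\alpha t})+\bar z_{\alpha t}\tau(z_{jt})$ together with the mixed relation $\kappa(z_{jt},\bar z_{\alpha t})=\tfrac12(w_{jt}\bar w_{\alpha t}+\delta_{j\alpha})$, the two outer terms give $-(2N+1)z_{jt}\bar z_{\alpha t}$ while the middle term gives $w_{jt}\bar w_{\alpha t}+\delta_{j\alpha}$. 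The analogous computation for $w_{jt}\bar w_{\alpha t}$ produces $z_{jt}\bar z_{\alpha t}$ from its middle term, so when the $z$- and $w$-pieces are summed the coefficient $-(2N+1)$ is corrected to $-2N$ and a residual $\delta_{j\alpha}$-term survives for each $t$. Summing over $t=1,\dots,m$ then gives the stated form $\tau(\hat\phi_{j\alpha})=-2(m+n)\hat\phi_{j\alpha}+c\,\delta_{j\alpha}$; I would fix the constant $c$ carefully from the sum over $t$, noting that each $t$ contributes the same $\delta_{j\alpha}$-term.

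For the conformality operator I would expand $\kappa(\hat\phi_{j\alpha},\hat\phi_{k\beta})$ by bilinearity into a double sum over $s,t$ of the four blocks $\kappa(z_{js}\bar z_{\alpha s},z_{kt}\bar z_{\beta t})$, $\kappa(z_{js}\bar z_{\alpha s},w_{kt}\bar w_{\beta t})$, $\kappa(w_{js}\bar w_{\alpha s},z_{kt}\bar z_{\beta t})$ and $\kappa(w_{js}\bar w_{\alpha s},w_{kt}\bar w_{\beta t})$. To each block I apply the $\kappa$-product rule, producing four elementary brackets per block, and then substitute the relations listed in Lemma \ref{quatgrasscoord}, using symmetry of $\kappa$ to reduce brackets such as $\kappa(\bar z,w)$ to the tabulated ones. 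The two key observations are: the Kronecker terms $\tfrac12\delta_{jk}\delta_{\alpha\beta}$ arise only from the same-type mixed brackets $\kappa(z,\bar z)$ and $\kappa(w,\bar w)$, and after the deltas collapse one of the two sums these assemble exactly into $\tfrac12\delta_{j\beta}\hat\phi_{k\alpha}+\tfrac12\delta_{k\alpha}\hat\phi_{j\beta}$; whereas the purely quadratic terms recombine, via the index swaps built into the coordinate relations, into the four cross products constituting $\hat\phi_{j\beta}\hat\phi_{k\alpha}$, with total coefficient $-1$.

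The main obstacle is precisely this last recombination. Each block also produces mixed $z$--$w$ quartic monomials, for instance $\tfrac12\sum_{s,t}z_{js}w_{ks}\bar z_{\beta t}\bar w_{\alpha t}$ coming from the $zz$-block, which are not themselves of $\hat\phi\hat\phi$ shape, and one must verify that these spurious terms cancel in pairs against the matching contributions of the $zw$- and $wz$-blocks, leaving only the genuine cross products. I would organise this by sorting all sixteen surviving monomials according to their $(z,w,\bar z,\bar w)$-type and checking cancellation type by type; this bookkeeping is routine but lengthy, and is the only step demanding real care, the remainder being mechanical substitution.
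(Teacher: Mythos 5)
Your proposal is essentially the paper's own proof: the paper disposes of this lemma with the single line that the result ``follows from a computation employing Lemma \ref{quatgrasscoord}'', and the computation you outline --- the product rules of Proposition \ref{taukappa} applied to the coordinate relations, with the spurious mixed quartic monomials cancelling across the four blocks --- is exactly that computation, correctly mirroring Lemma \ref{lemma1ununun}. One caveat worth recording: carrying out your $\tau$-computation as you describe it (each $t$ contributes the same $2\delta_{j\alpha}$, summed over $t=1,\dots,m$) yields $+2m\cdot\delta_{j\alpha}$ rather than the stated $+2\cdot\delta_{j\alpha}$, in line with the constants $m\delta_{j\alpha}$ and $2m\delta_{j\alpha}$ of Lemmas \ref{lemmasososo} and \ref{lemma1ununun}; this points to a typo in the lemma's statement rather than a flaw in your method, and is harmless downstream since the term is annihilated by $(a,a)=0$ in Theorem \ref{thspspspnew}.
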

\begin{proof}
    The result follows from a computation employing Lemma \ref{quatgrasscoord}.
\end{proof}
The following Theorem is inspired by Theorems \ref{thsososo} and \ref{theigenfunctionununun} in the settings of the real and complex Grassmannians respectively.
\begin{theorem}\label{thspspspnew}
Let $a\in\cn^{m+n}\backslash\{0\}$ satisfy $(a,a)=0.$ Then the complex-valued function $\hat{\Phi}_a:\Sp{m+n}\rightarrow\cn$ given by
$$\hat{\Phi}_a=\sum_{j,\alpha}a_ja_\alpha\hat{\phi}_{j\alpha}$$ satisfies $$\tau (\hat{\Phi}_a)=-2(m+n)\cdot\hat{\Phi}_a\ \textrm{and}\ \kappa (\hat{\Phi}_a,\hat{\Phi}_a)=-\hat{\Phi}_a^2.$$ Since $\hat{\Phi}_a$ is $\Sp{m}\times\Sp{n}$-invariant, it induces an eigenfunction $\Phi_a$ on the quaternionic Grassmannian $\Sp{m+n}/\Sp{m}\times\Sp{n}.$
\end{theorem}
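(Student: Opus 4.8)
The plan is to mimic the proofs of Theorems \ref{thsososo} and \ref{theigenfunctionununun} almost verbatim, the only new ingredient being the generalised identities of Lemma \ref{quatgrassphigeneral}. First I would compute the tension field using linearity of $\tau$:
$$\tau(\hat{\Phi}_a)=\sum_{j,\alpha=1}^{m+n} a_ja_\alpha\,\tau(\hat{\phi}_{j\alpha})=-2(m+n)\cdot\hat{\Phi}_a+2\sum_{j=1}^{m+n}a_j^2,$$
and then observe that the leftover term equals $2\cdot(a,a)=0$ by the isotropy hypothesis $(a,a)=0$. This immediately gives $\tau(\hat{\Phi}_a)=-2(m+n)\cdot\hat{\Phi}_a$, so $\lambda=-2(m+n)$.

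Next I would expand $\kappa(\hat{\Phi}_a,\hat{\Phi}_a)$ by bilinearity into the quadruple sum $\sum_{j,\alpha,k,\beta}a_ja_\alpha a_ka_\beta\cdot\kappa(\hat{\phi}_{j\alpha},\hat{\phi}_{k\beta})$ and substitute the second identity of Lemma \ref{quatgrassphigeneral}. The leading contribution $-\sum a_ja_\alpha a_ka_\beta\,\hat{\phi}_{j\beta}\hat{\phi}_{k\alpha}$ factors as
$$-\left(\sum_{j,\beta}a_ja_\beta\hat{\phi}_{j\beta}\right)\left(\sum_{k,\alpha}a_ka_\alpha\hat{\phi}_{k\alpha}\right)=-\hat{\Phi}_a^2.$$
Each of the two Kronecker-delta correction terms, after collapsing the delta, factors into $\frac{1}{2}\cdot(a,a)\cdot\hat{\Phi}_a$, which again vanishes because $(a,a)=0$. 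Hence $\kappa(\hat{\Phi}_a,\hat{\Phi}_a)=-\hat{\Phi}_a^2$, giving $\mu=-1$.

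Finally, to descend to the quotient, I would note that each $\hat{\phi}_{j\alpha}$ is $\Sp{m}\times\Sp{n}$-invariant, as recorded in the remark following Lemma \ref{efspspsp}, so by linearity $\hat{\Phi}_a$ is invariant as well and therefore factors as $\hat{\Phi}_a=\Phi_a\circ\pi$ through the natural projection $\pi\colon\Sp{m+n}\to\Sp{m+n}/\Sp{m}\times\Sp{n}$. Since this projection is a harmonic Riemannian submersion for the normal homogeneous metric, Corollary \ref{inducedefs} transfers the eigenfunction property from $\hat{\Phi}_a$ to the induced function $\Phi_a$ with the same eigenvalues $\lambda$ and $\mu$. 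I do not expect any genuine obstacle here: the whole argument is a routine transcription of the complex-Grassmannian proof, and the only point requiring care is the bookkeeping that exhibits every Kronecker-delta term as a multiple of $(a,a)$, so that it is annihilated by the isotropy assumption.
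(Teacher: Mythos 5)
Your proposal is correct and follows essentially the same route as the paper's own proof: expand by linearity of $\tau$ and bilinearity of $\kappa$, substitute the identities of Lemma \ref{quatgrassphigeneral}, and observe that every Kronecker-delta correction term collapses to a multiple of $(a,a)=0$. The descent to the quotient via the $\Sp{m}\times\Sp{n}$-invariance of the $\hat{\phi}_{j\alpha}$ and Corollary \ref{inducedefs} is exactly how the paper handles the induced eigenfunction $\Phi_a$.
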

\begin{proof}
    We will make use of Lemma \ref{quatgrassphigeneral}. For the tension field, we obtain the following computation.
    \begin{eqnarray*}
        \tau (\hat{\Phi}_a)&=&\tau \left(\sum_{j,\alpha}a_ja_\alpha\hat{\phi}_{j\alpha}\right)\\
        &=&\sum_{j,\alpha}a_ja_\alpha\tau (\hat{\phi}_{j\alpha})\\
    &=&\sum_{j,\alpha}a_ja_\alpha(-2(m+n)\cdot\hat{\phi}_{j\alpha}+2\cdot\delta_{j\alpha})\\
        &=&-2(m+n)\hat{\Phi}_a+2\cdot\sum_j a_j^2\\
        &=&-2(m+n)\hat{\Phi}_a.
    \end{eqnarray*}
    Lastly, we compute the conformality operator.
    \begin{eqnarray*}
        \kappa (\hat{\Phi}_a,\hat{\Phi}_a)&=&\kappa \left(\sum_{j,\alpha}a_ja_\alpha\hat{\phi}_{j\alpha},\sum_{k,\beta}a_ka_\beta\hat{\phi}_{k\beta}\right)\\
        &=&\sum_{j,\alpha,k,\beta}a_ja_\alpha a_ka_\beta\cdot\kappa (\hat{\phi}_{j\alpha},\hat{\phi}_{k\beta})\\
        &=&\sum_{j,\alpha,k,\beta}a_ja_\alpha a_ka_\beta\cdot\left(-\hat{\phi}_{j\beta}\hat{\phi}_{k\alpha}+\frac{1}{2}\cdot\hat{\phi}_{k\alpha}\delta_{j\beta}+\frac{1}{2}\cdot\hat{\phi}_{j\beta}\delta_{k\alpha}\right)\\
       &=&-\hat{\Phi}_a^2+\sum_{j,\alpha,k,\beta}a_ja_\alpha a_ka_\beta\cdot\left(\frac{1}{2}\cdot\hat{\phi}_{k\alpha}\delta_{j\beta}+\frac{1}{2}\cdot\hat{\phi}_{j\beta}\delta_{k\alpha}\right)\\
       &=&-\hat{\Phi}_a^2+\sum_{j}a_j^2\cdot\hat{\Phi}_a\\
       &=&-\hat{\Phi}_a^2.
    \end{eqnarray*}
\end{proof}

\section{Minimal Submanifolds of $\Sp{m+n}/\Sp{m}\times\Sp{n}$}\label{sectionmsspspsp}
We first define a basis of the tangent space of $\Sp{m+n}/\Sp{m}\times\Sp{n}$. We use this to show that the eigenfunctions given in Section \ref{efspspspsection} are not regular. 
\begin{proposition}\label{liealgspspsp}
    For $G=\Sp{m+n},$ and $K=\Sp{m}\times\Sp{n},$ we have the orthogonal decomposition $\g=\k\oplus\m,$ where $\k$ is the Lie algebra of $K$ and $\m$ is the subspace associated with $M=\Sp{m+n}/\Sp{m}\times\Sp{n}.$ It follows that an 
orthonormal basis of $\m$ is given by 
\begin{eqnarray*}
    \mathcal{B}_{\m}
    &=&\left\{Y_{rs}^{a}=\frac{1}{\sqrt{2}}\begin{pmatrix}
        Y_{rs}&0\\
        0&Y_{rs}
    \end{pmatrix}, \ X_{rs}^{a}=\frac{1}{\sqrt{2}}\begin{pmatrix}
        iX_{rs}&0\\
        0&-iX_{rs}
    \end{pmatrix},\right.\\
    &&\ \ X_{rs}^{b}=\frac{1}{\sqrt{2}}\begin{pmatrix}
        0&iX_{rs}\\
        iX_{rs}&0
    \end{pmatrix},   X_{rs}^{c}=\frac{1}{\sqrt{2}}\begin{pmatrix}
        0&X_{rs}\\
        -X_{rs}&0
    \end{pmatrix}\\
    && \ \ \rvert \ 1\leq r\leq m<s\leq m+n\Big\}.
\end{eqnarray*}
\end{proposition}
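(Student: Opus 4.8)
The plan is to follow the same template used in Propositions \ref{liealgspnun} and \ref{orthdecompsu2nspn}: realise $\m$ as the orthogonal complement of $\k$ inside $\sp{m+n}$ with respect to the standard metric, and then check that the listed matrices are exactly an orthonormal basis of that complement. First I would recall from Proposition \ref{liealgebraspn} that, under the standard complex representation on $\cn^{2(m+n)}$, the Lie algebra of $\Sp{m+n}$ consists of the block matrices
$$\begin{pmatrix}Z&W\\-\Bar{W}&\Bar{Z}\end{pmatrix},\qquad Z^*+Z=0,\quad W^t-W=0,$$
where $Z,W$ are now $(m+n)\times(m+n)$ complex matrices. The subgroup $K=\Sp{m}\times\Sp{n}$ embeds as the block-diagonal quaternionic matrices with respect to the splitting $\cn^{m+n}=\cn^{m}\oplus\cn^{n}$, so its Lie algebra $\k=\sp{m}\oplus\sp{n}$ is precisely the set of elements for which both $Z$ and $W$ are block-diagonal for this splitting.

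Next I would describe $\m=\k^{\perp}$. With respect to the metric $g(A,B)=\Re\trace(\Bar{A}^tB)$ of Definition \ref{standardmetric}, a block-diagonal matrix and a matrix supported on the off-diagonal $(m,n)$-blocks have disjoint nonzero entries, hence are orthogonal; therefore the orthogonal complement $\m$ of $\k$ in $\sp{m+n}$ consists of exactly those elements for which $Z$ and $W$ have vanishing diagonal blocks. By the defining relations $Z^*=-Z$ and $W^t=W$, such an element is freely determined by its upper-right blocks $Z_{12},W_{12}\in\cn^{m\times n}$, so $\dim\m=2\cdot 2mn=4mn$. I would then verify that each of the four families $Y_{rs}^{a},X_{rs}^{a},X_{rs}^{b},X_{rs}^{c}$ with $1\leq r\leq m<s\leq m+n$ genuinely satisfies the two defining conditions $Z^*+Z=0$ and $W^t-W=0$, using that $Y_{rs}$ is real and skew-symmetric while $X_{rs}$ is real and symmetric (the computation is identical to the one in Proposition \ref{liealgebraspn}); and that each lies in $\m$, since the restriction $r\leq m<s$ places the supporting entries $(r,s),(s,r)$ of $Y_{rs}$ and $X_{rs}$ in the off-diagonal blocks.

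To close the argument I would record that the normalising prefactors make the family orthonormal for $g$, again exactly as in the $\sp{n}$ computation, and invoke the dimension count
$$\dim\sp{m+n}-\dim\sp{m}-\dim\sp{n}=(m+n)(2(m+n)+1)-m(2m+1)-n(2n+1)=4mn.$$
Since the $4mn$ proposed vectors are orthonormal, hence linearly independent, and all lie in the $4mn$-dimensional space $\m$, they constitute an orthonormal basis, as claimed. (That $\m$ is genuinely $\Ad_K$-invariant and complementary is already guaranteed by the symmetric-space structure via Lemma \ref{lemmasympair}, so no separate verification of reductivity is needed here.)

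I do not expect a deep obstacle: the content is bookkeeping. The one point requiring care is keeping the two different $2\times2$ block structures apart — the ``quaternionic'' splitting $\begin{pmatrix}Z&W\\-\Bar{W}&\Bar{Z}\end{pmatrix}$ and the ``Grassmannian'' $m+n$ splitting of each of $Z$ and $W$ — and confirming that the off-diagonal description of $\m$ matches the explicit generators for all four types simultaneously. Verifying the $W^t-W=0$ condition for the $X^{b}$ and $X^{c}$ families (where the nonzero data sits in $W$ rather than $Z$) is the step most prone to sign errors, so that is where I would be most careful.
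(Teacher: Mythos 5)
Your proposal is correct and follows essentially the same route as the paper: both identify $\m$ as the orthogonal complement of the block-diagonally embedded $\k=\sp{m}\oplus\sp{n}$ inside $\sp{m+n}$ and recognise the listed elements as exactly those members of the orthonormal basis of $\sp{m+n}$ from Proposition \ref{liealgebraspn} whose supporting indices straddle the splitting $r\leq m<s$. The paper simply partitions that known basis into $\mathcal{B}_{\k}\sqcup\mathcal{B}_{\m}$, whereas you additionally verify the count $\dim\m=4mn$ explicitly; both are fine.
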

\begin{proof}
    In Proposition \ref{liealgebraspn}, we have seen that a basis of the Lie algebra $\sp{m+n}$ of $\Sp{m+n}$ is given by \begin{eqnarray*}
    \mathcal{B}_{\sp{m+n}}&=&\left\{Y_{rs}^{a}=\frac{1}{\sqrt{2}}\begin{pmatrix}
        Y_{rs}&0\\
        0&Y_{rs}
    \end{pmatrix}, \ X_{rs}^{a}=\frac{1}{\sqrt{2}}\begin{pmatrix}
        iX_{rs}&0\\
        0&-iX_{rs}
    \end{pmatrix},\right. \\
    &&\ \ X_{rs}^{b}=\frac{1}{\sqrt{2}}\begin{pmatrix}
        0&iX_{rs}\\
        iX_{rs}&0
    \end{pmatrix}, X_{rs}^{c}=\frac{1}{\sqrt{2}}\begin{pmatrix}
        0&X_{rs}\\
        -X_{rs}&0
    \end{pmatrix} \ \\ 
    &&\ \ \rvert \ 1\leq r<s\leq m+n\Big\}.
\end{eqnarray*}
It is clear that if $1\leq r<s\leq m$ or $m+1\leq r<s\leq m+n,$ then the elements $Y_{rs}^{a},X_{rs}^{a},X_{rs}^{b},X_{rs}^{c}$ belong to $\mathcal{B}_\k.$ The fact that $\mathcal{B}_{\sp{m+n}}$ is the disjoint union of $\mathcal{B}_\k$ and $\mathcal{B}_\m$ now proves the statement.
\end{proof}

In the following, we will show that the eigenfunctions defined on Lemma \ref{efspspsp} are not regular over $0\in\cn.$ In the proof, we will proceed similar to Theorems \ref{exsososo2} and \ref{thunununnotregular}.
\begin{theorem}
    Let $1\leq j<\alpha\leq m+n.$ Let $\phi_{j\alpha}:\Sp{m+n}/\Sp{m}\times\Sp{n}\rightarrow\cn$ be the eigenfunction on the quaternionic Grassmannians induced by the function $\hat{\phi}_{j\alpha}:\Sp{m+n}\rightarrow\cn$ defined by $$\hat{\phi}_{j\alpha}(q)=\sum_{t=1}^m(z_{jt}\Bar{z}_{\alpha t}+w_{jt}\Bar{w}_{\alpha t}).$$ Then $\phi_{j\alpha}$ is not regular over $0\in\cn.$
\end{theorem}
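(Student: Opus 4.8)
The plan is to imitate the strategy of Theorems \ref{exsososo2} and \ref{thunununnotregular}: I will exhibit a single $q\in\Sp{m+n}$ lying in the fibre $\hat\phi_{j\alpha}^{-1}(\{0\})$ at which $\nabla\hat\phi_{j\alpha}$ vanishes, so that $0$ fails to be a regular value. Since $\hat\phi_{j\alpha}$ is $\Sp{m}\times\Sp{n}$-invariant, the derivatives along the basis $\mathcal{B}_\m$ of Proposition \ref{liealgspspsp} are exactly the derivatives of the induced function $\phi_{j\alpha}$ on the quotient, so it suffices to kill these. Throughout I keep the pair $j<\alpha$ fixed, write $t$ for the column index summed over in $\hat\phi_{j\alpha}$, and $r,s$ for the indices of the generators of $\m$.

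First I would compute the action of the tangent directions on $\hat\phi_{j\alpha}$. By Proposition \ref{liealgspspsp} an orthonormal basis of $\m$ is $\{Y_{rs}^a,X_{rs}^a,X_{rs}^b,X_{rs}^c\}$ with $1\le r\le m<s\le m+n$, and these act on the coordinate functions $z_{kt},w_{kt}$ by the formulas recorded in Theorem \ref{thminsubmspn} (with $n$ replaced by $m+n$). Applying the product rule and summing over $t=1,\dots,m$, the crucial point is that in every such formula the Kronecker symbol $\delta_{st}$ is forced to vanish (because $s>m\ge t$) while $\delta_{rt}$ selects $t=r\le m$. Consequently each of the four derivatives is a sum of monomials, each being the product of one entry of row $j$ or row $\alpha$ (of $z$ or $w$) taken in the column $r\le m$ and one such entry taken in the column $s>m$; for instance one finds
\[
X_{rs}^a(\hat\phi_{j\alpha})=\tfrac{i}{\sqrt2}\bigl(z_{js}\bar z_{\alpha r}-z_{jr}\bar z_{\alpha s}-w_{js}\bar w_{\alpha r}+w_{jr}\bar w_{\alpha s}\bigr),
\]
and analogously for $Y_{rs}^a,X_{rs}^b,X_{rs}^c$.

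This bilinear structure gives two ways to annihilate the whole gradient. Either (i) all entries of rows $j$ and $\alpha$ of $z$ and of $w$ in the columns $1,\dots,m$ vanish, or (ii) all such entries in the columns $m+1,\dots,m+n$ vanish; in each case every monomial loses a factor and $\nabla\hat\phi_{j\alpha}=0$. In case (i) the value $\hat\phi_{j\alpha}(q)=\sum_{t=1}^m(z_{jt}\bar z_{\alpha t}+w_{jt}\bar w_{\alpha t})$ vanishes term by term; in case (ii) the vanishing tail lets me complete the sum to $t=1,\dots,m+n$ and use $q\in\Sp{m+n}\subset\U{2(m+n)}$, which forces $zz^*+ww^*=I_{m+n}$, whence $\hat\phi_{j\alpha}(q)=(zz^*+ww^*)_{j\alpha}=\delta_{j\alpha}=0$. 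It then remains to realise one of (i), (ii) by an actual element of $\Sp{m+n}$. Reading $q$ through its quaternionic columns $q_1,\dots,q_{m+n}\in\mathbb{H}^{m+n}$, condition (i) says the first $m$ columns lie in $W=\{v\in\mathbb{H}^{m+n}: v_j=v_\alpha=0\}\cong\mathbb{H}^{m+n-2}$, and (ii) says the last $n$ columns do. When $n\ge2$ we have $\dim_{\mathbb{H}}W=m+n-2\ge m$, so I pick an orthonormal quaternionic $m$-frame inside $W$ and extend it to a quaternionic unitary basis of $\mathbb{H}^{m+n}$, giving $q\in\Sp{m+n}$ realising (i); when $m\ge2$ the same argument with an $n$-frame realises (ii). The standing hypothesis $(m,n)\neq(1,1)$ (as in the real Grassmannian chapter; indeed for $\Sp2/\Sp1\times\Sp1\cong S^4$ the function is actually regular over $0$) guarantees that one of the two cases applies. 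The resulting $q$ is a critical point of $\phi_{j\alpha}$ in its zero fibre, so $\phi_{j\alpha}$ is not regular over $0$, and Theorem \ref{GM} therefore cannot be invoked.

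The main obstacle I anticipate is the bookkeeping of the second step: correctly assembling the actions of all four families $Y^a,X^a,X^b,X^c$ on $\hat\phi_{j\alpha}$ and verifying that every resulting monomial genuinely pairs a column-$r$ factor with a column-$s$ factor, since this clean separation is exactly what makes both (i) and (ii) succeed simultaneously. A secondary point that needs care is confirming that the quaternionic frame I build really satisfies the defining relation of $\Sp{m+n}$ under the standard complex representation $q=z+jw\mapsto\left(\begin{smallmatrix}z&w\\-\bar w&\bar z\end{smallmatrix}\right)$, i.e.\ that quaternionic orthonormality of the columns translates into $q\in\Sp{m+n}$.
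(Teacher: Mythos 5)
Your proposal is correct and follows essentially the same route as the paper: both exhibit an explicit critical point in the zero fibre by arranging that the rows $j,\alpha$ of $z$ and $w$ vanish on the columns $1,\dots,m$ (when $n\ge2$) or on the columns $m+1,\dots,m+n$ (when $m\ge2$), the paper doing this with $w=0$ and complex orthonormal columns placed either after or before $e_j,e_\alpha$ rather than with your quaternionic frames. Your explicit exclusion of $(m,n)=(1,1)$ is a point the paper leaves implicit --- its construction also degenerates there, since the list $v_1,\dots,v_{m+n-2}$ is empty --- so flagging it is a small improvement rather than a deviation.
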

\begin{proof}
    We know from Lemma \ref{efspspsp} that $\phi_{j\alpha}:\Sp{m+n}/\Sp{m}\times\Sp{n}\rightarrow\cn$ is indeed an eigenfunction, which justifies the statement. We wish to construct an element $q=z+jw\in\Sp{m+n}$ such that $\hat{\phi}_{j\alpha}(q)=0$ and for every $X\in\mathcal{B}_\m$ as specified in Proposition \ref{liealgspspsp}, we have that $X(\hat{\phi}_{j\alpha})=0$ at $q.$ 
    A computation shows that for $1\leq r\leq m<s\leq m+n,$ the following holds.
\begin{eqnarray*}
    Y_{rs}^{a}(\hat{\phi}_{j\alpha})&=&\frac{-1}{\sqrt{2}}(z_{js}\Bar{z}_{\alpha r}+z_{jr}\Bar{z}_{\alpha s}+w_{js}\Bar{w}_{\alpha r}+w_{jr}\Bar{w}_{\alpha s}),\\
    X_{rs}^{a}(\hat{\phi}_{j\alpha})&=&\frac{i}{\sqrt{2}}(z_{js}\Bar{z}_{\alpha r}-z_{jr}\Bar{z}_{\alpha s}-w_{js}\Bar{w}_{\alpha r}+w_{jr}\Bar{w}_{\alpha s}),\\
    X_{rs}^{b}(\hat{\phi}_{j\alpha})&=&\frac{i}{\sqrt{2}}(w_{js}\Bar{z}_{\alpha r}-z_{jr}\Bar{w}_{\alpha s}+z_{js}\Bar{w}_{\alpha r}-w_{jr}\Bar{z}_{\alpha s}),\\
    X_{rs}^{c}(\hat{\phi}_{j\alpha})&=&\frac{1}{\sqrt{2}}(-w_{js}\Bar{z}_{\alpha r}-z_{jr}\Bar{w}_{\alpha s}+z_{js}\Bar{w}_{\alpha r}+w_{jr}\Bar{z}_{\alpha s}).
\end{eqnarray*}
Hence $$Y_{rs}^{a}(\hat{\phi}_{j\alpha})=0, \ X_{rs}^{a}(\hat{\phi}_{j\alpha})=0,\ X_{rs}^{b}(\hat{\phi}_{j\alpha})=0, \ \textrm{and}\ X_{rs}^{c}(\hat{\phi}_{j\alpha})=0$$ if and only if \begin{eqnarray}
    0&=&z_{js}\Bar{z}_{\alpha r}+w_{jr}\Bar{w}_{\alpha  s},\label{spspspeqns1}\\ 
    0&=&z_{jr}\Bar{z}_{\alpha s}+w_{js}\Bar{w}_{\alpha r},\label{spspspeqns2}\\
    0&=&w_{js}\Bar{z}_{\alpha r}-w_{jr}\Bar{z}_{\alpha s},\label{spspspeqns3}\\
    0&=&z_{jr}\Bar{w}_{\alpha s}-z_{js}\Bar{w}_{\alpha r}.\label{spspspeqns4}
\end{eqnarray}
Let $e_j,e_\alpha$ be the $j$-th and $\alpha$-th standard unit vectors of $\rn^{m+n}$ respectively. Let $v_1,\dots,v_{m+n-2}$ denote $m+n-2$ pairwise orthogonal unit vectors of $\cn^{m+n}$, which are also orthogonal to $e_j,e_\alpha.$ Such vectors exist, for example we may just take the remaining elements of the canonical basis above. 

If $n\geq2,$ let $z=(v_1,\dots,v_{m+n-2},e_j,e_\alpha)$ and $w=0.$ By construction, $q=z+jw$ satisfies $$I_n=z\Bar{z}^t+\Bar{w}w^t, \quad 0=w\Bar{z}^t-\Bar{z}w^t.$$ Thus $q\in\Sp{m+n}.$ For every column $z_t$ of $z,$ we have that $$z_{jt}=\langle z_t,e_j\rangle $$ and $$\Bar{z}_{\alpha t}=\langle e_\alpha,z_t\rangle .$$ Since $v_1,\dots,v_{m+n-2}$ were chosen to satisfy $$\langle v_t,e_j\rangle =\langle e_\alpha,v_t\rangle =0$$ for every $1\leq t\leq m+n-2,$ we see that \begin{eqnarray*}
    \hat{\phi}_{j\alpha}(z+jw)&=&\sum_{t=1}^m(z_{jt}\Bar{z}_{\alpha t}+w_{jt}\Bar{w}_{\alpha t})\\
    &=&\sum_{t=1}^m\langle z_t,e_j\rangle \langle e_\alpha,z_t\rangle \\
    &=&0.
\end{eqnarray*}
Thus, $\hat{\phi}_{j\alpha}(q)=0.$
Further, for every $1\leq r\leq m, $ clearly $z_{jr}=\langle z_r,e_j\rangle =0$ and $\Bar{z}_{\alpha r}=\langle e_\alpha,z_r\rangle =0.$ Since all entries of $w$ vanish, Equations \ref{spspspeqns1} through \ref{spspspeqns4} are satisfied. 
We conclude that $\hat{\phi}_{j\alpha}$ is not regular at $q$.
This concludes the proof for $n\geq2.$

In the case that $n=1,$ we let $z=(e_j,e_\alpha,v_1,\dots,v_{m+n-2})$ and $w=0.$ A similar computation as above shows that $\hat{\phi}_{j\alpha}$ is not regular at $z+jw,$ and $\hat{\phi}_{j\alpha}(z+jw)=0$.

\end{proof}

\appendix
\chapter{Notation and Useful Lemmas}

\begin{remark}
    We denote $$(\cdot,\cdot):\cn^n\times\cn^n\rightarrow\cn,$$to be the bilinear form $$(x,y)=\sum_{k=1}^n x_ky_k.$$ When restricted to $\rn^n\times\rn^n,$ this yields the standard inner product on $\rn^n.$ We call a vector $a\in\cn^n$ isotropic if it satisfies $(a,a)=0.$

    The standard inner product on $\cn^n$ $$\langle \cdot,\cdot\rangle :\cn^n\times\cn^n\rightarrow\cn$$ is given by $$\langle z,w\rangle =\sum_{k=1}^n z_k\Bar{w}_k.$$ 
\end{remark}
 The following statement is widely known and the proof is a standard exercise in Linear Algebra. 
 
\begin{lemma}\label{lemmaiso}
Let $$(\cdot,\cdot):\cn^n\times\cn^n\rightarrow\cn,$$ be the bilinear form $$(x,y)=\sum_{k=1}^n x_ky_k.$$
    Let $a=u+i\cdot v\in\cn^n$ be a non-zero vector, where $u,v\in\rn^n.$ If $(a,a)=0,$ then $u,v$ are both non-zero, orthogonal and linearly independent. Further, $$|u|^2=|v|^2.$$
\end{lemma}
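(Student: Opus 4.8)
The plan is to expand the isotropy condition $(a,a)=0$ using the bilinearity of the form and then read off all three conclusions by separating real and imaginary parts. First I would substitute $a=u+iv$ and use bilinearity to obtain
$$(a,a)=(u,u)-(v,v)+2i\,(u,v).$$
Since the restriction of $(\cdot,\cdot)$ to $\rn^n\times\rn^n$ is the standard Euclidean inner product, this rewrites as $|u|^2-|v|^2+2i\langle u,v\rangle$, where $\langle\cdot,\cdot\rangle$ denotes the usual real inner product on $\rn^n$.

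Next, because $u$ and $v$ are real vectors, both $|u|^2-|v|^2$ and $2\langle u,v\rangle$ are real numbers, so they are precisely the real and imaginary parts of $(a,a)$. Setting $(a,a)=0$ therefore forces each to vanish, which immediately gives $|u|^2=|v|^2$ (the final claim of the lemma) and $\langle u,v\rangle=0$ (orthogonality of $u$ and $v$).

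I would then deduce that $u$ and $v$ are both non-zero: since $a\neq0$, at least one of $u,v$ is non-zero, and the equality $|u|^2=|v|^2$ propagates this to the other. Finally, linear independence follows from the standard fact that two non-zero orthogonal vectors are independent: if $\alpha u+\beta v=0$ with $\alpha,\beta\in\rn$, pairing with $u$ and then with $v$ and using orthogonality yields $\alpha|u|^2=0$ and $\beta|v|^2=0$, so $\alpha=\beta=0$.

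The argument is entirely routine; the only point that warrants care is the identification of the complex bilinear form with the Euclidean inner product on real inputs, together with the clean separation of $(a,a)$ into its real and imaginary parts, which is exactly what makes all three conclusions drop out simultaneously. I expect no genuine obstacle here.
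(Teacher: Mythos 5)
Your proposal is correct and follows essentially the same route as the paper: expand $(a,a)=|u|^2-|v|^2+2i\langle u,v\rangle$, separate real and imaginary parts, and conclude non-vanishing and independence from orthogonality. The only cosmetic difference is that you verify independence via a general linear combination while the paper argues by contradiction from $v=\lambda u$; the two arguments are interchangeable.
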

\begin{proof}
   Let $a\in\cn\backslash\{0\}$ and assume $(a,a)=0.$ Note that
       $$0=(a,a)=(u,u)+2i(u,v)-(v,v).$$ It follows that $|u|^2=|v|^2$ and $(u,v)=0.$ Since $a\neq0,$ at least one of $u,v$ is non-zero. But from the previous assertion it follows that both $u,v$ are non-zero.
       If $u,v$ were linearly dependent, then $v=\lambda\cdot u$ for some $\lambda\in\rn\backslash\{0\}.$ But then $$0=(u,v)=(u,\lambda\cdot u)=\lambda\cdot(u,u)$$ implies that both $u=v=0,$ since $(\cdot,\cdot)$ defines the standard inner product on $\rn^n\times\rn^n.$
       Thus, $u$ and $v$ must be linearly independent.
\end{proof}
The following lemma was used in the proof of Theorem \ref{thmsso2nun} and is covered in basic level courses in Linear Algebra or Matrix Theory.
\begin{lemma}\label{SIinvertible}
    Suppose that $S$ is a skew-symmetric and real-valued $n\times n$ matrix. Then $S+I$ is invertible.
\end{lemma}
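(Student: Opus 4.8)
The plan is to show that $S+I$ has trivial kernel; since $S+I$ is a square matrix, injectivity is equivalent to invertibility. So it suffices to prove that the only real vector $x\in\rn^n$ satisfying $(S+I)x=0$ is $x=0$.

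The key observation I would exploit is that a real skew-symmetric matrix $S$ satisfies $x^tSx=0$ for every $x\in\rn^n$. Indeed, $x^tSx$ is a scalar, hence equal to its own transpose, so $x^tSx=(x^tSx)^t=x^tS^tx=-x^tSx$, which forces $x^tSx=0$. I would state and prove this little identity first, as it is the heart of the argument.

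Next I would take any $x$ in the kernel of $S+I$, so that $Sx=-x$. Multiplying on the left by $x^t$ gives $x^tSx=-x^tx=-|x|^2$. Combining this with the identity $x^tSx=0$ yields $|x|^2=0$, and hence $x=0$. This shows $\ker(S+I)=\{0\}$, so $S+I$ is injective and therefore invertible.

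There is essentially no serious obstacle here; the only point requiring a moment of care is ensuring the argument stays within the real field, since the vanishing of $x^tSx$ relies on $x$ being real (the analogous statement fails for the complex bilinear form, which is exactly why one phrases everything in terms of $\rn^n$ and the genuine norm $|x|^2=x^tx$). I would note in passing that the same conclusion also follows from the fact that the eigenvalues of a real skew-symmetric matrix are purely imaginary, so $-1$ cannot be an eigenvalue of $S$, but I prefer the elementary kernel computation above as it is fully self-contained.
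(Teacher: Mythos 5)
Your proposal is correct and follows essentially the same route as the paper: both reduce invertibility to triviality of the kernel and derive $|x|^2=0$ from $Sx=-x$ together with the skew-symmetry identity $x^tSx=-x^tSx$. The only cosmetic difference is that you isolate $x^tSx=0$ as a separate observation, whereas the paper folds it into a single chain of equalities ending in $|x|^2=-|x|^2$.
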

\begin{proof}
    Assume towards a contradiction that $S+I$ is not invertible. Then there exists a non-zero vector $x\in\rn^n$ such that $$(S+I)\cdot x=S\cdot x+x=0.$$ Equivalently, $$S\cdot x=-x.$$ Consider the following computation.
    \begin{eqnarray*}
        |x|^2&=&x^tx\\
        &=&-x^tSx\\
        &=&x^tS^tx\\
        &=&-x^tx\\
        &=&-|x|^2.
    \end{eqnarray*}
    Hence $x=0,$ which is a contradiction. We conclude that $S+I$ must be invertible.
\end{proof}

The next statement was used in the chapter on the real Grassmannians. This is well known and is partially discussed in Gudmundsson and Ghandour's work \cite{Gha-Gud-4}.

\begin{lemma}\label{lemmaAaat}
    Let $A\in\cn^{n\times n}\backslash\{0\}$ be a symmetric matrix. Then $\rank A=1$ if and only if $A=aa^t$ for some $a\in\cn^n\backslash\{0\}.$ In this case, $\trace A=0$ if and only if $a$ is isotropic, and further, $A^2=0$.
\end{lemma}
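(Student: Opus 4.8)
The plan is to split the biconditional and the two auxiliary claims into independent pieces. First I would establish the rank characterisation. The easy direction is $A=aa^t\Rightarrow\rank A=1$: since $(aa^t)^t=aa^t$ the matrix is symmetric, its columns are all scalar multiples of $a$, so its rank is at most one, and it is exactly one because $a\neq0$ forces $aa^t\neq0$.

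For the converse, suppose $\rank A=1$. Then the column space is spanned by a single non-zero vector $a$, so each column is a multiple of $a$ and we may write $A=ac^t$ for some $c\in\cn^n$, with $c\neq0$ since $A\neq0$. Here is where symmetry enters and where I expect the only real subtlety: imposing $A=A^t$ gives $ac^t=ca^t$, and reading off a row indexed by some $i$ with $a_i\neq0$ yields $c=\lambda a$ with $\lambda=c_i/a_i\neq0$, so that $A=\lambda\,aa^t$. Absorbing the scalar requires a complex square root $\mu$ of $\lambda$, which is available since we work over $\cn$; then $A=(\mu a)(\mu a)^t$, and after renaming $\mu a$ as $a$ we obtain $A=aa^t$.

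The remaining two claims are direct computations that I would carry out using the bilinear form $(\cdot,\cdot)$ from the appendix. For the trace I would compute $\trace(aa^t)=\sum_k a_k^2=(a,a)$, so that $\trace A=0$ holds exactly when $a$ is isotropic. For the nilpotency I would use associativity to write $A^2=a\,(a^t a)\,a^t=(a,a)\cdot aa^t=(a,a)\cdot A$, which vanishes precisely when $(a,a)=0$; in particular, under the standing hypothesis $\trace A=0$ we get $A^2=0$.

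The main obstacle, such as it is, lies in the symmetry step of the converse: one must resist concluding $A=aa^t$ prematurely from $A=ac^t$, and instead genuinely invoke $A=A^t$ to force $c$ to be proportional to $a$, and then use the complex square root to absorb the resulting scalar. Everything else is routine bookkeeping with the bilinear form.
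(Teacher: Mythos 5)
Your proof is correct and follows essentially the same route as the paper: write a rank-one $A$ as an outer product, use symmetry to force proportionality of the two factors, and compute $\trace A=(a,a)$ and $A^2=(a,a)\,A$. If anything, your treatment of the converse is slightly more careful than the paper's, which asserts $v=w$ "due to symmetry" without making explicit the rescaling by a complex square root that your argument spells out.
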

\begin{proof}
    Let $a\in\cn^n$ and $A=aa^t.$ Since any two rows of $A$ are multiples of one another, the rank of $A$ is one. 
    
    Conversely, assume $A$ has rank one. Then the columns are all multiples of some $v\in\cn^n.$ Hence for the standard basis $e_1,\dots,e_n,$ $$A\cdot e_i=\lambda_i\cdot v.$$ But this means that if we write $w=(\lambda_1,\dots\lambda_n)^t,$ $A=vw^t.$ It can now be shown that due to symmetry, $v=w.$
    
    The second statement follows immediately from the fact that $$(a,a)=a_1^2+\dots+a_{m+n}^2=\trace(A).$$
    If $(a,a)=0$, then for all $1\leq i,j\leq n,$ $$(A^2)_{ij}=\sum_{t=1}^n A_{it}A_{tj}=a_ia_j\sum_{t=1}^n a_t^2=0.$$ This shows that $A^2=0.$
 \end{proof}

%\chapter{Computations in Maple}
%\newpage

%\pagenumbering{gobble}
%\begin{figure}[htbp]
%\hspace*{-2cm}   
%    \adjustimage{scale=1,left}{Maple-1.eps}
%\end{figure}

%%%%%%%%%%%%%%%%%%%%%%%%%%%%%%%  %%%%%%%%%%%%%%%%%%%%%%%

\backcover

\end{document}